\newcommand{\R}{\mathbb{R}}
\newcommand{\N}{\mathbb{N}}
\newcommand{\pr}{\mathbb{P}}
\newcommand{\ex}{\mathbb{E}}
\newcommand{\h}{\mathcal{H}}
\newcommand{\e}{\mathcal{E}}
\newcommand{\blangle}{\big\langle}
\newcommand{\brangle}{\big\rangle}
\newtheorem{lem}{Lemma}
\newtheorem{prop}{Proposition}
\newtheorem{thm}{Theorem}
\newtheorem{dfn}{Definition}
\newenvironment{customthm}[1]
{\innercustomthm}
{\endinnercustomthm}
\theoremstyle{definition}
\newtheorem{rem}{Remark}
\newtheorem{example}{Example}
\newcommand{\leqnomode}{\tagsleft@true\let\veqno\@@leqno}
\newcommand{\reqnomode}{\tagsleft@false\let\veqno\@@eqno}
\DeclareSymbolFont{fouriersymbols}{FMS}{futm}{m}{n}
\DeclareSymbolFont{fourierlargesymbols}{FMX}{futm}{m}{n}
\DeclareMathDelimiter{\VERT}{\mathord}{fouriersymbols}{152}{fourierlargesymbols}{147}
\numberwithin{lem}{section}
\numberwithin{cor}{section}
\numberwithin{prop}{section}
\numberwithin{thm}{section}
\numberwithin{dfn}{section}
\title{Uniform attraction and exit problems for stochastic damped wave equations}
\author{Ioannis Gasteratos}
\address[Ioannis Gasteratos]{Technische Universit\"at Berlin, Institute of Mathematics}
\email{i.gasteratos@tu-berlin.de}
 \author{Michael Salins}
 \address[Michael Salins]{Boston University, Department of Mathematics and Statistics}
 \email{msalins@bu.edu}
 \author{Konstantinos Spiliopoulos}
 \address[Konstantinos Spiliopoulos]{Boston University, Department of Mathematics and Statistics}
 \email{kspiliop@bu.edu}
	\thanks{\noindent\textit{2010 Mathematics Subject Classification}. 60F10, 60H15, 35R60, 60G40, 37L15.\\  \textit{Key words and phrases}. Stochastic partial diﬀerential equations, dynamical systems, stochastic wave equation, damped wave equation, local solutions, large deviations, metastability, Freidlin-Wentzell exit problem, exit time, optimal control.\\
	 KS was partially supported by the National Science Foundation (DMS 2107856, DMS 2311500) and Simons Foundation Award 672441. IG acknowledges financial support from the EPSRC grant EP/T032146/1.}      
\begin{document}

\begin{abstract} We consider a class of wave equations with constant damping and polynomial nonlinearities that are perturbed by small, multiplicative, space-time white noise. The equations are defined on a one-dimensional bounded interval with Dirichlet boundary conditions, continuous initial position and distributional initial velocity. In the first part of this work, we study the corresponding deterministic dynamics and prove that certain neighborhoods of asymptotically stable equilibria are uniformly attracting in the topology of uniform convergence. Then, we consider exit problems for local solutions of the stochastic damped wave equations from bounded domains $D$ of uniform attraction. Using tools from large deviations along with novel controllability results, we obtain logarithmic asymptotics for exit times and exit places, in the vanishing noise limit, that are expressed in terms of the corresponding quasipotential. In doing so, we develop arguments that take into account the lack of both smoothing and exact controllability that are inherent to the problem at hand. Moreover, our exit time results provide asymptotic lower bounds for the mean explosion time of local solutions.  We introduce a novel notion of "regular" boundary points allowing to avoid the question of boundary smoothness in infinite dimensions and leading to the proof of a large deviations lower bound for the exit place.  We illustrate this notion by providing explicit examples for different classes of domains $D$. Conditions under which lower and upper bounds for exit time and exit place logarithmic asymptotic hold, are also presented.  In addition, we obtain deterministic stability results for linear damped wave equations that are of independent interest.    
 \end{abstract}

	\maketitle


 \section{Introduction}
  We are concerned with the stochastic damped wave equations
	      \begin{equation}\label{eq:model}
	    \left\{\begin{aligned}
	    &\partial^2_t u^\epsilon+\alpha\partial_t u^\epsilon=\partial^2_x u^{\epsilon}  +b\big( u^\epsilon\big)+\epsilon\sigma\big(u^{\epsilon}\big)\dot{W}
	    \\&u^{\epsilon}(0,x)=u(x), \partial_t u^{\epsilon}(0,x)=v(x),\;  x\in(0,\ell),\\& u^{\epsilon}(t,x)=0,\; (t,x)\in[0,\infty)\times\{0,\ell\}\;,\epsilon>0
	    \end{aligned}\right.
	    \end{equation}
     defined on the one-dimensional spatial interval $(0,\ell)$ with Dirichlet boundary conditions. The solution $u^\epsilon$ can be thought as the displacement field of a one-dimensional  string of length $\ell>0$ with fixed ends and unit density.     The string is subject to small, state-dependent random forcing $\sigma\dot{W}$ of intensity $\epsilon\ll 1,$ a nonlinear polynomial force $b$ and a damping force of constant magnitude $\alpha>0$ that is proportional to the velocity field $\partial_t u^\epsilon.$ Throughout this work, $\dot{W}$ is space-time white noise and the initial position/velocity pair $(u,v)\in C_0(0,\ell)\times C^{-1}(0,\ell)$ is given respectively by a continuous function that vanishes on the boundary points and a distribution that can be thought as the generalized derivative of a continuous function (see Section \ref{Sec:Setup} for precise definitions). 

    Random perturbations of dynamical systems such as \eqref{eq:model} have been extensively studied in the literature since the classical work of Freidlin-Wentzell in the 70's \cite{freidlin1998random}. A significant part of their theory is devoted to studying the long-time effects of small noise on the behavior of a finite-dimensional dynamical system. As a prototypical example, they consider the exit problem of a finite-dimensional diffusion process $Z^\epsilon$ from a domain of attraction. Letting $D$ be a bounded, uniformly attracting domain that contains an asymptotically stable equilibrium $z^*$ of the noiseless dynamics, they prove logarithmic asymptotics for the exit times $\tau^\epsilon_D$ of $Z^\epsilon$ from $D$ in the zero-noise limit $\epsilon\to 0.$ Moreover, their analysis provides asymptotic results for the law of exit places $Z^\epsilon(\tau^\epsilon_D)$ that is supported on the boundary $\partial D.$

    Roughly speaking, as $\epsilon\to 0,$ exit times from $D$ are expected to diverge since the noiseless dynamics will stay trapped in $D$ without exiting. When $\epsilon$ is small but non-zero, random trajectories issued from points in $D$ tend to spend a large amount of time near small neighborhoods of $z^*$ with overwhelming probability. Nevertheless, fast "rare" exits to the boundary take place in time scales of order $\exp(1/\epsilon^2).$ Moreover, it can be shown that such exits are facilitated when the random trajectories remain close to certain deterministic paths of minimal "energy". Following this rationale and making use of large deviations estimates and the Markov property, Freidlin-Wentzell \cite{freidlin1998random} show that, when $\epsilon\approx 0,$
       $$ \tau_D^\epsilon\approx\exp\bigg(  \frac{V(z^*,\partial D)}{\epsilon^2}\bigg), $$
    where the asymptotic growth rate is given by the \textit{quasipotential} $V(z^*,\partial D).$ The latter quantifies the minimal energy required by a controlled trajectory issued  from $z^*$ to reach $\partial D$ in finite time. It is defined, see also Definition \ref{dfn:quasipotential}, by 
    \begin{equation}\label{eq:QuasipotentialIntro}
        V(z^*,\partial D):=\inf\bigg\{ I_{z,T}(\phi)\;\bigg|\; T>0, \phi\in C([0,T];\e): \phi(0)=z^*, \phi(T)\in\partial D    \bigg\}
    \end{equation}    
    where $\e$ is the corresponding state space and 
    $I_{z^*,T}:C([0,T];\e)\rightarrow[0,\infty]$ the large deviations \textit{rate function} of $\{Z^\epsilon; \epsilon>0\}:$
    $$  I_{z,T}(\phi):=\inf\bigg\{\frac{1}{2}|u|^2_{L^2([0,T];H)}\bigg| u\in L^2([0,T];H):  \phi=Z_z^u       \bigg\}.$$
    Here, $u, Z_z^u, H$ denote respectively a deterministic control, a controlled trajectory of the underlying deterministic equation (also known as the "skeleton" equation) issued from $z\in D$ and an appropriate Hilbert space depending on the covariance of the driving noise (for $d$-dimensional diffusions $H:=\R^d)$ . As for the distribution of exit places, the classical theory shows that $Z^\epsilon(\tau^\epsilon_D)$ concentrates on minimizers of $V(z^*,\partial D)$ with probability converging to $1$ as $\epsilon\to 0.$ Such results on exit problems for diffusions can be found e.g. in \cite[Chapter 4]{freidlin1998random} (see also \cite[Section 5.7]{dembo2009large} and references therein). Several extensions in finite dimensions have been explored in settings such as characteristic boundaries (i.e. when $\partial D$ contains saddle points) \cite{day1987recent, day1989boundary,day1990large}, delay equations \cite{lipshutz2018exit} and McKean-Vlasov equations \cite{tugaut2012exit}.

   Turning to infinite dimensions, a natural question is whether similar asymptotics continue to hold when $Z^\epsilon$ is the solution of a parabolic Stochastic Partial Differential Equation (SPDE). Essential difficulties arise when it comes to both controllability properties and regularity of the quasipotential. In particular, lower bounds for the exit times $\tau^{\epsilon}_D$ rely on certain continuity properties of $V.$ In order to prove the latter, one needs to show that two states in $\e$ that are arbitrarily close can be joined with a controlled path $Z_z^u$ with arbitrarily small energy. In contrast to finite dimensions, where linear paths are sufficient to obtain such statements, the presence of unbounded operators prevents from using classical arguments. Nevertheless, the authors of \cite{salins2021metastability} were able to overcome these obstacles and proved exit time and place asymptotics for a class of parabolic SPDEs with Allen-Cahn type nonlinearities, in spatial dimension $d=1$, and in the case where $\e=C_0(0,\ell)$ and $D$ is a domain with characteristic boundary (thus extending results from \cite{day1990large} to an infinite-dimensional setting). In \cite{salins2021metastability}, the smoothing properties of the heat kernel along with the superlinear dissipativity (or "coming-down-from-infinity")  of the dynamics are key ingredients that allow for such extensions.  Related results for SPDEs in any spatial dimension $d>1$ with "smooth" additive noise and Lipschitz nonlinearities have been obtained in \cite{cerrai2011approximation}; see also \cite{chenal1997uniform, da1991minimum, debussche2013dynamics}.
   
   In the present work, we are interested in exit problems for hyperbolic SPDEs such as \eqref{eq:model}. In the literature, similar problems have been considered  exclusively in the context of small-mass asymptotics (or Smoluchowski-Kramers approximations) which  amount to studying \eqref{eq:model} with a small constant $\mu$ multiplying the acceleration term $\partial^2_t u^\epsilon$. In particular, \cite{chen2005smoluchowski, freidlin2004some} consider finite-dimensional Lengevin equations and compare exit problem asymptotics for the position component as $\mu,\epsilon\to 0.$ As for the infinite-dimensional case, \cite{cerrai2016smoluchowski} focuses on small-mass, small-noise damped wave equations in $d\geq 1$ spatial dimensions, with additive colored noise and Lipschitz nonlinearities. Letting $\e=L^2\times H^{-1}, G\subset L^2$ open and bounded, they  generalize exit problem results from \cite{chen2005smoluchowski} to infinite dimensions
   by studying quasipotentials and exits from cylindrical domains $D=G\times H^{-1}$ as $\epsilon,\mu\to0.$ Next, we mention a number of works that develop general approaches to Freidlin-Wentzell exit problems for small-noise stochastic evolution equations that take values in Banach spaces: \cite{salins2014general} obtains exit time and place asymptotics for equations with multiplicative noise under strong dissipativity assumptions on the nonlinearity $b$ and in the case of a single equilibrium point. Uniform large deviation principles and their applications to exit problems are investigated in \cite{salins2019uniform} under the assumptions that the associated semigroups are smoothing and that the quasipotential is regular. Generic estimates for exit time and place asymptotics for additive noise SPDEs are included in \cite[Chapter 12.5]{da2014stochastic}. We conclude the literature review by mentioning a few other works concerned with small-noise, long time behaviour for wave equations: \cite{martirosyan2017large} proves large deviations for invariant measures of a $3d$-version of \eqref{eq:model} driven by spatially smooth noise. Finally, mean transition frequencies between metastable states for $1$d, additive-noise, undamped wave equations with double well-potentials are computed in \cite{newhall2017metastability}, under the assumption that initial data is sampled from a stationary distribution. Recently, using potential-theoretic methods, the authors of \cite{barashkov2024eyring} generalized \cite{newhall2017metastability} to the singular regime of spatial dimensions $d=2,3,$ with $b(u)=u-u^3,$ periodic boundary conditions and working under the assumption that $\ell<2\pi$ which guarantees the existence of a single saddle point between the wells. 

   In contrast to the aforementioned works, we aim to study the long-time, small-noise behaviour of the Markov process $Z^\epsilon=(u^\epsilon, \partial_tu^\epsilon)$ consisting of the displacement/velocity components of $\eqref{eq:model}$ and viewed as a random evolution on the phase space $\e=C_0(0,\ell)\times C^{-1}(0,\ell).$ To be more precise, we obtain exit time and exit place (or exit shape) asymptotics for general bounded domains $D\subset\e$ that are uniformly attracted to asymptotically stable equilibria $z^*=(x^*, 0)\in\e.$ 
   The nonlinearity $b$ is allowed to be any degree polynomial such that a certain stability condition for the elliptic problem $\partial^{2}_{x}u+b(u)=0$ with Dirichlet boundary conditions holds, Assumption \ref{Assumption:xstar}, and the noise coefficient $\sigma$ is locally Lipschitz.
   Consequently, the noiseless dynamics $Z^0$ will typically feature multiple stable equilibria and the corresponding quasipotential is not available in closed form. 
   As we shall discuss below, our choice for the phase space $\e$ and the boundedness of $D$ enable us to prove our results for local solutions. In turn, we bypass several restrictive assumptions on $b$ that imply global well-posedness. Nevertheless, for the sake of completeness, we shall also discuss exit problems for global solutions in Section \ref{sec:GlobalSolutions}.

   Our setting presents a number of significant difficulties due to the following reasons:
   
   a) \textit{Lack of superlinear dissipativity:} this poses challenges at the level of the deterministic dynamics. Since $b$ is non-Lipschitz, non-dissipative and we work with supremum-norm topologies, $L^2-$ type energy estimates are not sufficient to prove uniform attraction to stable equilibria. Consequently, we need to investigate existence of uniformly attracting and invariant domains $D$ in the topology of $\e=C_0(0,\ell)\times C^{-1}(0,\ell)$ before studying the exit problem for the stochastic dynamics. Such investigations take place in Section \ref{sec:NonlinearStability} and our main result on uniformly attracting domains for the noiseless dynamics is Theorem \ref{thm:domainofattraction}. Moreover, stability results for the (one-dimensional) linear damped wave equation in the topology of $\e$ are not available in the literature (see Section \ref{Sec:Linfty-decay} below). For this reason, we provide here such a result (Theorem \ref{thm:Linftydecay}) that is necessary for our subsequent analysis but is also of independent interest.
   
    b)\textit{ Lack of smoothing properties:} Since wave equations do not regularize "rough" initial data and the noise only appears in the velocity component of  $Z^\epsilon,$
    the controllability arguments from the parabolic case in \cite{salins2021metastability} are no longer applicable here. Moreover, exact controllability in $H^1\times L^2$ is known to fail in the setting of wave equations with superlinear nonlinearities (see e.g. \cite[Theorem 2]{Zuazua1993}; other exact controllability results for wave equations have appeared e.g. in \cite{FuYongZhang_ExactControlability2007,lasiecka1991exact} and are also not applicable to the setting we are working on, see Section \ref{sec:Controllability}). Therefore, proving regularity properties for the quasipotential and exit time lower bounds becomes more involved. To be precise, we are only able to show that any point in $z\in H^1(0,\ell)\times L^2(0,\ell)$ can be connected to $z^*$ by paths of small energy, provided that $z^*, z$ are close. Nevertheless, this controllability property is sufficient to prove that an associated quasipotential is inner regular, a property that is stated as an assumption in \cite{salins2019uniform}. These results are contained in Section \ref{sec:Controllability}.
    
    In this work, we suggest that the natural topology to investigate metastability for wave equations with superlinear forcing is $\e=C_0(0,\ell)\times C^{-1}(0,\ell).$ The Hilbert space topology $L^2 \times H^{-1},$ in which the wave semigroup is a contraction, is used throughout the previously mentioned literature on exit problems for wave equations. This topology is particularly well-suited for the important but limited setting of nonlinearities $b$ that are globally Lipschitz continuous. In contrast, and in alignment with applications in the physical literature (see e.g. \cite{campbell1983resonance} and references therein), we allow $b$ to be a polynomial of arbitrary degree and hence the Hilbert space $L^2 \times H^{-1}$ is unsuitable for our purposes. Briefly, if $u$ is a square-integrable function defined on $[0,\ell]$ and $b(u) = -u^r, r\geq 3$ is a nonlinearity of Klein-Gordon type, then the functional composition $b(u(\cdot))$ is no longer guaranteed to belong to $L^2$.
    
    Apart from resolving such issues with function composition, the supremum norm topology allows for a definition of \textit{local mild solutions} that exist until the first time that the position reaches an infinite value (see Definition \ref{dfn:local mild solutions}, below). Such localization methods enable us to study Freidlin-Wentzell exit problems from bounded domains $D \subset \mathcal{E}$. We believe that similar localization methods are not possible in $L^p$ topologies. Moreover, while it remains an open problem whether global solutions to the one-dimensional stochastic wave equation exist when $r>3$ \cite{millet2001nonlinear}, our exit time results can be used to prove quantitative lower bounds on the mean explosion time of local solutions (Remark \ref{rem:explosiontimes} below). Technical aspects aside, the topology of $\e$ allows for a more natural interpretation of the exit problem: exiting neighborhoods in the uniform topology amounts to exiting from "tubes" around the stable equilibria $z^*.$ Finally, by working in $\e,$ we obtain intuitive  descriptions of geometric notions such as "inward" and "outward" pointing vectors (see Section \ref{Sec:BoundaryPoints} and in particular Examples \ref{ex:regularculinders}- \ref{ex:regularspheres}). For these reasons we argue that $C_0(0,\ell)\times C^{-1}(0,\ell)$ is the most useful topology for investigating Freidlin-Wentzell exit problems, and we work in this topology for the rest of the paper.

    To the best of our knowledge, the Freidlin-Wentzell exit problem for (local) solutions of damped wave equations with multiplicative noise and multiple stable equilibria is considered here for the first time. In summary, our contribution is fourfold:
    \begin{itemize}
\item[a)] We prove new linear and non-linear stability results for the \textit{deterministic damped wave equation} in topologies of uniform convergence. Recent developments in the linear, deterministic damped wave equation literature \cite{chitour2024exponential,chitour2024p,kafnemer2022lp} have focused on the long time behavior of the damped wave semigroup in $L^p$ topologies, $p\in (1,\infty)$ and general stability estimates in the case $p=\infty$ have been conjectured to hold in \cite[pp.4]{chitour2024exponential}. We believe that our Theorem \ref{thm:Linftydecay} is the first to address the topology of $\e$ and, thus, is of independent interest.

\item[b)] Our mean exit time asymptotic upper and lower bounds from Theorem \ref{thm:ExitTimeUpperBnd}(1), \ref{thm:ExitTimeLowerBnd}(2)
    read: for all $z\in D:$
    \begin{equation}\label{eq:meanexittimeIntro}
         V(z^*, \partial D)\leq \liminf_{\epsilon\to 0}\epsilon^2\log\ex\big[ \tau_D^{\epsilon,z}\big]\leq \limsup_{\epsilon\to 0}\epsilon^2\log\ex\big[ \tau_D^{\epsilon,z}\big]\leq V(z^*, \bar{D}^c),    
    \end{equation}
    where the quasipotential on the right-hand side is defined analogously to \eqref{eq:QuasipotentialIntro} and
    $\tau_D^{\epsilon,z}$ is the first exit time from $D$ of the trajectories $Z^{\epsilon}_z=(u_z^\epsilon, \partial_t u_z^\epsilon)$ starting from $Z^{\epsilon}_z(0)=z\in\e.$ We investigate the equality of the upper and lower bounds by  introducing a notion of "regular" boundary points (Definition \ref{def:regularpoints}) and identifying conditions on $D$ that guarantee   $V(z^*, \bar{D}^c)=V(z^*, \partial D)$ (Section \ref{Sec:BoundaryPoints}). This equality is automatic in finite dimensions as a consequence of smoothness of the boundary and is often stated as an assumption in relevant literature on infinite dimensions. 
\item[c)] Apart from the typical behavior of asymptotic exit shapes (Theorem \ref{thm:exitshapeldp}(1)), we also obtain large deviations bounds for exit shapes (albeit with non-matching upper and lower bounds); see Theorem \ref{thm:exitshapeldp}(2),(3). The exit shape large deviations bounds are in terms of two other related quasipotentials $V_D$ and $V_{\bar{D}}$  that play a significant role and they are defined similarly to $V$ but with the additional requirement that the trajectories stay in $D$ and $\bar{D}$ respectively for  all times (see Section \ref{sec:Controllability}). In the context of wave equations, such estimates cannot be found elsewhere in the literature.
\item[d)] Our exit time and exit shape asymptotic results apply to local solutions of \eqref{eq:model} and do not rely on stationarity of solutions. 
Moreover, they provide asymptotic lower bounds for the mean explosion time of local solutions per Remark \ref{rem:explosiontimes}.
    \end{itemize}
    
    The rest of this article is organized as follows: In Section \ref{Sec:Setup} we introduce all the necessary notation, function spaces and assumptions on \eqref{eq:model}. In Section \ref{Sec:Linfty-decay} we present our first main result, Theorem \ref{thm:Linftydecay}, on linear stability of damped wave equations. Section \ref{sec:NonlinearStability} is devoted to the study of the nonlinear, noiseless dynamics. The main result here is Theorem \ref{thm:domainofattraction} in which we prove uniform attraction for the nonlinear deterministic equation in $\e.$ In Section \ref{sec:wellposedness} we first show existence and uniqueness of local, $\e-$valued mild solutions
 to a controlled version of \eqref{eq:model} under the assumption that $b$ is a polynomial of arbitrary degree (Assumption \ref{Assumption:b}). Then we prove estimates and continuity properties for the "skeleton" equation as well as a local Large Deviation Principle for $\{Z_z^\epsilon;\epsilon>0\}$ that is uniform over bounded sets of initial data (ULDP). In Section \ref{sec:Controllability} we introduce  quasipotentials $V_D$  and $V_{\bar{D}}$, \eqref{eq:VDquasipotential} and \eqref{eq:VDbarquasipotential} respectively, and prove controllability and inner regularity results for $V_D$ Lemma \ref{lem:innerregularity}. Section \ref{Sec:Metastability} contains all our analysis and results for exit problems. The main results here, given by Theorems \ref{thm:ExitTimeUpperBnd}, \ref{thm:ExitTimeLowerBnd}, \ref{thm:exitshapeldp}, treat exit time upper bounds, exit time lower bounds and exit place/shape asymptotics respectively. As mentioned above, this section also contains a discussion on the matching of the upper and lower bounds in \eqref{eq:meanexittimeIntro}. Moreover, we provide here several examples of regular and irregular boundary points of cylindrical and spherical domains. In Section \ref{sec:GlobalSolutions} we consider the case of global solutions which requires the additional assumption that $b$ has a contracting nonlinearity of at-most-cubic growth (e.g. $b(u)=u-u^3$). Global well-posedness results exist in the literature but for spaces  $\h=H\times H^{-1}$, see \cite{cerrai2003stochastic,cerrai2006smoluchowski}. 
 In Section \ref{sec:GlobalSolutions}, we prove global well-posedness on the smaller space $\e$, which we believe is of independent interest.
     Proofs of several auxiliary results are collected in Appendix \ref{section:Appendix}.

 \section{Setup and preliminaries}\label{Sec:Setup}
 \subsection{Notation, function spaces and assumptions}\label{Sec:Notation} The lattice notation $\wedge, \vee$ is used to denote minimum and maximum of real numbers respectively. We use $\mathds{1}_E$ to denote the indicator of a set $E.$ We write $\textnormal{sgn}$ to denote the sign function $\textnormal{sgn}(x)=1$ if $x>0,$ $\textnormal{sgn}(x)=-1$ if $x<0$ and $\textnormal{sgn}(0)=0.$ The identity operator on a vector space will be denoted by $I,$ provided that there is no need for further indication of its domain and co-domain.
 
    We denote by $E^\star$ the continuous dual of a Banach space $E.$  Given another Banach space $E',$ $\mathscr{L}(E;E')$ is the space of bounded, linear operators from $E$ to $E'.$ The open ball of radius $R>0,$ centered at $x\in E,$ the closure, interior, boundary and complement of a set $D\subset E$ are denoted respectively by $B_E(x,R), \bar{D}, \textnormal{int}(D),$ $\partial D,$ $ D^c.$ We write $\textnormal{dist}_E(x, D):=\inf_{y\in D}|  x-y |_E$  for the distance between $x$ and $D.$ For another set $D'\subset E,$  $\textnormal{dist}_E(D, D'):=\inf_{x\in D', y\in D}|x-y|_E. $ Moreover, we use the notation $\bar{D}^c$ to denote the set $E\setminus\bar{D}.$

     Throughout this work and unless otherwise stated  the brackets $\langle\cdot,\cdot\rangle$ with no subscript will denote the duality pairing between dual topological vector spaces.
      The subdifferential of a function $f:E\rightarrow\R$ at the point $x_0\in E$ is defined to be the set
      \begin{equation*}\label{eq:subdifferentialdfn}
          \begin{aligned}
              \partial f(x_0):=\big\{ x^\star\in E^\star: f(x)\geq f(x_0)+\langle x^\star,x-x_0\rangle \big\}.
          \end{aligned}
      \end{equation*}
      If $f$ is Lipschitz continuous, we denote its Lipschitz constant by $|f|_{Lip}$ .

For $\ell>0$ we set $H:=L^2(0,\ell)$ and denote respectively by $\{e_k\}_{k\in\N}\subset H,$ $ \{a_{k}\}_{k\in\N}\subset[0,\infty)$ the orthonormal basis consisting of the Dirichlet eigenfunctions of the second derivative operator $-\partial_x^2$ in $H$ and the corresponding eigenvalues. In particular, we have for each $x\in(0,\ell), k\in\N$ 
\begin{equation*}\label{eq:Laplacianeigenpairs}
    \begin{aligned}
        e_k(x)=\sqrt{\frac{2}{\ell}}\sin\bigg(\frac{k\pi x}{\ell}   \bigg)\;,\;\; a_k=\frac{k^2\pi^2}{\ell^2}.
    \end{aligned}
\end{equation*}

For $\delta\in\R,$ $H^\delta$ is the completion of $C^\infty(0,\ell)$ with respect to the norm
$$|f|^2_{H^\delta}:=\sum_{n=1}^{\infty}a^\delta_k\langle f, e_k\rangle^2_H=\sum_{n=1}^{\infty}a^\delta_kf_k.$$ With this notation we have $H^0=H=L^2((0,\ell)).$ Moreover, we shall refer to $f_k$ as the $k-$th Fourier coefficient of $f.$
We define
	      $$\h_\delta:=H^{\delta}(0,\ell)\oplus H^{\delta-1}(0,\ell),$$
       where $\oplus$ denotes the Hilbert space direct sum and we shall frequently use the notation $$\h:=\mathcal{H}_0=L^2(0,\ell)\oplus H^{-1}(0,\ell).$$ The  space of real-valued continuous functions $f,$ defined on $[0,\ell]$ and such that $f(0)=f(\ell)=0$ is denoted by    $C_0(0,\ell).$ The latter is  a Banach space with the topology of uniform convergence induced by the supremum norm $|\cdot|_\infty$.  $\mathcal{M}(0,\ell):=C(0,\ell)^\star$ denotes the continuous dual of $C(0,\ell)$ i.e.  the space of finite, Borel signed measures on $(0,\ell).$

       The space of distributions on $(0,\ell)$ that can be expressed as weak derivatives of continuous functions is denoted by $C^{-1}(0,\ell).$ In particular,
        \begin{equation}\label{eq:C-1definition}
            C^{-1}(0,\ell):=\bigg\{ f\in (C^{\infty}_c(0,\ell))^\star\bigg|\; \exists \hat{f}\in C(0,\ell),\; \forall \phi\in\; C^{\infty}_c(0,\ell)\;:  \langle f,\phi\rangle=-\int_{0}^{\ell}\hat{f}(y)\phi'(y)dy
    	       \bigg\},
        \end{equation}
     
       where $C_c^\infty(0,\ell)$ is the set of smooth, compactly supported test functions.
            $C^{-1}(0,\ell)$ is a Banach space endowed with the norm 
\begin{equation*}\label{eq:Cminusnorm}
            \begin{aligned}  |f|_{C^{-1}}:=|\hat{f}|_{\infty}\equiv\sup_{x\in(0,\ell)}\bigg|\int_0^x f(y)dy\bigg| 
            \end{aligned}
        \end{equation*}
        where we remark that the anti-derivative notation is used formally for both functions and distributions. Furthermore, we emphasize that we shall frequently use the notation $\hat{f}$ for the antiderivative of $f$ vanishing at $0.$

        In the sequel we fix
    	   \begin{equation}\label{eq:Espace}
     \e:=C_0(0,\ell)\oplus C^{-1}(0,\ell)
    	   \end{equation}
      and note that $\e\subset\h$ with continuous inclusion. The Hilbert spaces $\h_\delta$ come with natural projection operators $\Pi_1:\h_\delta \rightarrow H^{\delta}(0,\ell), \Pi_2: \h\rightarrow H^{\delta-1}(0,\ell)$ defined, for $i=1,2,$ by $ \Pi_i(u_1,u_2)=u_i.$ Unless otherwise stated, the Banach space direct sums  $\e,\h$ are respectively endowed with the norms $|\cdot|_{\e}:=|\cdot|_\infty+|\cdot|_{C^{-1}} ,$ $|\cdot|_{\h}:=|\cdot|_H+|\cdot|_{H^{-1}}.$

      \begin{rem} The space $C^{-1}(0,\ell)$ is a closed subspace of the Sobolev space $W_0^{-1,\infty}(0,\ell).$ Indeed the latter can be represented as the space of distributions $f$ for which there exists $\hat{f}\in L^{\infty}(0,\ell)$ such that $f=-\partial \hat{f}$ (see e.g. \cite[Theorem 3.12]{adams2003sobolev}). Note that, in contrast to $W_0^{-1,\infty},$  $C^{-1}(0,\ell)$ is separable.
      \end{rem}

      We turn to our assumptions for the coefficients of the wave equation \eqref{eq:model}.
      \begin{customthm}{1}\label{Assumption:b}   
    The function $b:\R\rightarrow \R$ is a polynomial of degree $\gamma\in\N.$
       \end{customthm}

       Assumption \ref{Assumption:b} guarantees local well-posedness for the stochastic wave equation \eqref{eq:model} on $\e$ (see Proposition \ref{prop:WellPosedness} below) and we impose it throughout Sections \ref{sec:NonlinearStability}-\ref{Sec:Metastability}.

       Regarding the noise coefficient $\sigma$ we have: 
          \begin{customthm}{2}\label{Assumption:sigma} The function $\sigma:\R\rightarrow\R$ is locally Lipschitz continuous.
   \end{customthm}

   In Section \ref{sec:GlobalSolutions}, we show that all the results of this work are valid for global solutions under a more restrictive set of assumptions on $b$ and $\sigma$.

   The random forcing $\dot{W}$ is a space-time white noise on a complete filtered probability space $(\Omega, \mathcal{F}, \{  \mathcal{F}_t  \}_{t\geq 0}, \pr)$, i.e. the distributional time-derivative of a cylindrical Wiener process $W$ i.e.
   a Gaussian process with covariance given for $t,s\geq 0, f,g\in H$ by
   \begin{equation*}
       \begin{aligned}
           \ex[\langle W(t), g\rangle \langle W(s), g\rangle       ]=t\wedge s\langle f,g\rangle_H.
       \end{aligned}
   \end{equation*}

Next, we introduce an assumption that concerns the stability properties of the deterministic problem. 
  \begin{customthm}{3}\label{Assumption:xstar} There exists at least one solution $x^*\in C_0(0,\ell)$ of the elliptic problem $\partial^2_x u  +b\big( u\big)=0$ with Dirichlet boundary conditions. Moreover, the linear, self-adjoint operator $-\partial^2_x-b'(x^*),$ defined on $H,$ has a countable, non-decreasing sequence of eigenvalues $\{a_k^b\}_{k\in\N}$ corresponding to a complete orthonormal set of eigenvectors $\{e_n^b\}_{n\in\N}\subset C_0(0,\ell).$ Finally, we assume that  $\lambda_b:=a_2^b-a_1^b>0.$
   \end{customthm}

   Assumption \ref{Assumption:xstar} is always assumed to hold from Section \ref{sec:NonlinearStability} onwards. For a discussion of examples that are covered by this assumption we refer to Example \ref{rem:Xstarexamples} below. Finally, we shall impose a non-degeneracy condition on $\sigma$ from Section \ref{sec:Controllability} onwards.

   \begin{customthm}{4}\label{Assumption:sigmaNondeg} $\sigma$ is non-degenerate, i.e. there exists $C_\sigma>0$ such that for all $x\in\R:$ $\sigma(x)>C_\sigma.$
   \end{customthm}

     Two additional assumptions, Assumption \ref{Assumption:Dinterior}, \ref{Assumption:MinimizingRegularPoints}, on domains of attraction related to the exit problem will be introduced in Section \ref{Sec:Metastability}.

 \subsection{Semigroups and linear stability} Letting $v^\epsilon:=\partial_tu^\epsilon$ denote the velocity of the solution to  \eqref{eq:model}, we re-write the latter as a first-order system 
 \begin{equation}\label{eq:modelsystem}
     \begin{aligned}
        \partial_t\begin{pmatrix}
		u^\epsilon \\v^\epsilon
	\end{pmatrix}=A_\alpha
		\begin{pmatrix}u^\epsilon \\v^\epsilon
	\end{pmatrix}+\begin{pmatrix}0\\b(u^\epsilon) \end{pmatrix}+\begin{pmatrix}0\\ \epsilon\sigma\big(u^{\epsilon}\big)\end{pmatrix} \dot{W}\;,\;\;\begin{pmatrix} u^\epsilon(0) \\v^\epsilon(0)\end{pmatrix}=\begin{pmatrix} u \\v
	\end{pmatrix},
     \end{aligned}
 \end{equation}
 where \begin{equation}\label{eq:dampedwavegenerator}
 	A_\alpha=\begin{pmatrix}
 		0&I\\
 		\partial_x^2& -\alpha I
 	\end{pmatrix},\;\;\alpha\geq 0
 \end{equation}
is the damped wave operator. For any $\delta\in\R,\alpha>0,$ $A_\alpha$ generates a strongly continuous group $\{S_\alpha(t)\}_{t\in\R}\subset\mathscr{L}(\h_\delta)$ of bounded linear operators; a proof of this can be found e.g. in \cite[Section 7.4]{pazy1983semigroups}, \cite[Proposition 2.4]{cerrai2006PTRFsmoluchowski1}. For $u,v\in\h_\delta$ and
\begin{equation*}
    \begin{aligned}
        \Pi_1S_\alpha(t)(u,v)=\sum_{k=1}^{\infty}f_ke_k\;,\;\;\Pi_2S_\alpha(t)(u,v)=\sum_{k=1}^{\infty}g_ke_k,
    \end{aligned}
\end{equation*}
it is possible to express the Fourier coefficients $f_k, g_k$ explicitly in terms of those of $u,v$ via:
 \begin{equation*}
    f_k=\frac{1}{2}\exp\bigg(-\frac{\alpha t}{2}\bigg)\bigg\{\bigg[\bigg(1+\frac{\alpha}{2\gamma^\alpha_k}\bigg)\exp(\gamma_k^\alpha t)+ \bigg(1-\frac{\alpha}{2\gamma^\alpha_k}\bigg)\exp(-\gamma_k^\alpha t)    \bigg]u_k+\frac{1}{\gamma_k^\alpha}\bigg[ \exp(\gamma_k^\alpha t)-\exp(-\gamma_k^\alpha t)      \bigg]v_k\bigg\},
 \end{equation*}
 and 
 \begin{equation*}
    g_k=\frac{1}{2}\exp\bigg(-\frac{\alpha t}{2}\bigg)\bigg\{-\frac{\alpha a_k}{\gamma_k}\bigg[ \exp(\gamma_k^\alpha t)-\exp(-\gamma_k^\alpha t)      \bigg]u_k+ \bigg[\bigg(1-\frac{\alpha}{2\gamma^\alpha_k}\bigg)\exp(\gamma_k^\alpha t)+ \bigg(1+\frac{\alpha}{2\gamma^\alpha_k}\bigg)\exp(-\gamma_k^\alpha t)    \bigg]    v_k           \bigg\},
 \end{equation*}
 where $\gamma_k^\alpha=\sqrt{\alpha^2-4a_k}/2,$  $a_k$ are given in Section \ref{eq:Laplacianeigenpairs} and, if $\gamma^\alpha_k=0,$ we use the convention $$\frac{1}{\gamma^\alpha_k}(e^{\gamma_k t}+e^{-\gamma_k t})=2t.  $$  A similar formula for the Fourier coefficients in the case of wave equations with small mass and unit friction can be found e.g. in \cite[Proposition 2.2]{cerrai2006PTRFsmoluchowski1}.

 Turning to the action of the semigroup on $\e,$ we shall start with the case $\alpha=0.$ To this end, we let $u^o\in C(\R)$ denote the odd, $2\ell-$periodic extension of a function $u\in C(0,\ell)$ by \begin{equation*}
	u^o(x) := \begin{cases}
		u(x) & \text{ if } x \in [0,\ell],\\
		-u(-x) & \text{ if } x \in [-\ell,0],\\
		u^o(x + 2\ell k ) := u^o(x) & \text { for all  }  x \in [-\ell,\ell], k \in \mathbb{Z}.
	\end{cases}
\end{equation*}
Similarly one can define an even $2\ell-$periodic extension $u^e.$

 For each $(u,v)\in\e, t\geq 0, x\in(0,\ell)$ we have, by virtue of d'Alembert's formula,
 \begin{equation}\label{equation:D'Alembert1}
\begin{aligned}
    \Pi_1S_0(t)(u,v)(x)=\frac{1}{2}\bigg(u^o(x-t)+u^o(x+t)\bigg)+\frac{1}{2}\int^{x+t}_{x-t} v^o(y)dy.
\end{aligned}
 \end{equation}
 When the initial velocity is a distribution, the second term on the right-hand side should be interpreted as the antiderivative of the distribution $v^o\in C^{-1}(\R).$ In turn, $v^o$ is the odd, $2\ell-$periodic extension of $v\in C^{-1}_0(0,\ell)$ i.e. the unique distribution $v$ such that
 \begin{equation*}\label{eq:oddextension}
     \begin{aligned}
	v^o := \begin{cases}
		v & \text{ on } [0,\ell],\\
		-\check{v} & \text{ on } [-\ell,0],\\
		T_{2\ell k} v^o := v^o & \text { on} [-\ell, \ell] , k \in \mathbb{Z},
	\end{cases}
     \end{aligned}
 \end{equation*}
 where, for any test function $\phi$ and $r\in\R,$ $\check{\phi}(x):=\phi(-x), T_r\phi(x)=\phi(x+r)$ and analogously $\langle \check{v},\phi\rangle:=\langle v,\check{\phi}\rangle, \langle T_rv, \phi\rangle=\langle v, T_{-r}\phi\rangle.$ Existence and uniqueness of such a distribution can be found e.g. in \cite[Theorem 24.1]{treves2016topological}.
For $t>0, \phi\in C_c^\infty(0,\ell)$ the component of $S_0(t)$ in $C^{-1}(0,\ell)$ is given by \begin{equation}\label{equation:D'Alembert2}
\begin{aligned}
    \blangle \Pi_2S_0(t)(u,v), \phi\brangle=\frac{1}{2}\blangle u^o(\cdot-t)-u^o(\cdot+t), \phi'\brangle
-\frac{1}{2}\blangle  V^o(\cdot+t)+V^o(\cdot-t), \phi'\brangle, 
\end{aligned}
 \end{equation}
 where $V^o\in C(\R)$ is the anti-derivative of $v^0$ vanishing at $0$ as in \eqref{eq:C-1definition}.
 From the last two displays it is straightforward to deduce that $\{S_0(t)\}_{t\geq 0}\subset\mathscr{L}(\e)$ is a $C_0-$semigroup.
 
 \begin{rem} \begin{enumerate}
     \item The function $z(t)=\Pi_1S_0(t)(u,v)$ is the unique solution of the linear wave equation $$(\partial^2_t- \partial^2_x)z=0, (z(0),\partial_tz(0))=(u,v)\in\e.$$  

 \item The formula for $\Pi_2S_0(t)(u,v):=\partial_t \Pi_1S_0(t)(u,v)$ follows by integration-by-parts since, for any test function $\phi\in C^\infty_{c}(\R),$ it holds that
\begin{equation*}\label{testv}
\begin{aligned}
\langle   \Pi_1S_0(t)(u,v)(t,\cdot), \phi\rangle&=\frac{1}{2}\langle u^o, \phi(\cdot-t)+\phi(\cdot+t)\rangle
+\frac{1}{2}\langle V^o, \phi(\cdot-t)-\phi(\cdot+t)\rangle.
\end{aligned}
\end{equation*}
\noindent Differentiation under the sign of the integral and another integration by parts then yields \eqref{equation:D'Alembert2}.
\end{enumerate}   
 \end{rem}

Due to the presence of friction, the semigroup  $\{S_\alpha(t)\}_{t\geq 0}\subset\mathscr{L}(\h)$ is of negative type. The following stability estimate provides an explicit upper bound for the exponential decay rate in the topology of the Hilbert space $\h$:
\begin{prop}{\cite[Ch.IV, Proposition 1.2]{temam2012infinite}}\label{prop:L2-decay} Let $\alpha>0,$ and $ a_1$ as in \eqref{eq:Laplacianeigenpairs}.  For any $\theta<\tfrac{\alpha}{8}\wedge\tfrac{a_1}{\alpha},$ there exists $M>0$ such that for all $t>0$
$$ |S_\alpha(t)(u,v)|_{\h}\leq Me^{-\theta t}|(u,v)|_{\h}.      $$
\end{prop}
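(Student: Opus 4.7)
The statement is cited to Temam, so I would reconstruct the classical multiplier/Lyapunov argument adapted to the $\mathcal{H}$-norm $|u|_H^2+|v|_{H^{-1}}^2$. The plan is to diagonalize in the Dirichlet eigenbasis $\{e_k\}_{k\in\mathbb{N}}$, construct a modified energy functional for each Fourier mode, and extract the rate $\theta$ by balancing the overdamped and underdamped regimes.

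\textbf{Step 1: Mode decomposition.} Writing $u=\sum_k u_k e_k$, $v=\sum_k v_k e_k$, the first-order system \eqref{eq:modelsystem} (noiseless, linear part) splits into decoupled planar ODEs
\[
\dot u_k = v_k,\qquad \dot v_k = -a_k u_k - \alpha v_k,\qquad k\in\mathbb{N}.
\]
Moreover $|(u,v)|_\mathcal{H}^2 = \sum_k\bigl(u_k^2 + v_k^2/a_k\bigr)$, so it suffices to produce a bound of the form $u_k^2(t) + v_k^2(t)/a_k \leq Me^{-2\theta t}\bigl(u_k^2(0) + v_k^2(0)/a_k\bigr)$ with $M$ independent of $k$.

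\textbf{Step 2: Modified energy.} For a small parameter $\beta\in(0,\alpha)$ to be optimized, introduce the mode-wise Lyapunov functional
\[
E_k(t) := \tfrac{1}{2}u_k^2(t) + \tfrac{1}{2}\frac{v_k^2(t)}{a_k} + \beta\frac{u_k(t)v_k(t)}{a_k} + \frac{\alpha\beta}{2}\frac{u_k^2(t)}{a_k}.
\]
By Young's inequality and the uniform lower bound $a_k\geq a_1$, one checks that $E_k$ is equivalent to $u_k^2 + v_k^2/a_k$, with equivalence constants depending only on $\alpha,\beta,a_1$ (this gives the constant $M$). A direct differentiation along the ODE yields, after cancellation of cross terms,
\[
\dot E_k(t) = -\beta\, u_k^2(t) - (\alpha-\beta)\frac{v_k^2(t)}{a_k}.
\]

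\textbf{Step 3: Exponential decay of $E_k$.} Combining the estimate in Step 2 with the equivalence from Step 2 and applying Young's inequality to the cross term $\beta u_k v_k/a_k$, one obtains
\[
\dot E_k(t) \leq -2\theta\, E_k(t)
\]
provided $\theta$ is chosen strictly smaller than $\beta/(1+\alpha\beta/a_1+\beta)$ and $(\alpha-\beta)/(1+\beta/a_1)$. Optimizing in $\beta$: for small $\alpha$ the binding constraint comes from the $\beta$ factor (choose $\beta\sim\alpha/2$, giving $\theta<\alpha/8$ after absorbing the Young constant $1/2$); for large $\alpha$ the overdamped modes dominate and $\theta<a_1/\alpha$ emerges. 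Gronwall then gives $E_k(t)\leq E_k(0)e^{-2\theta t}$, and summing over $k$ yields the claim.

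\textbf{Main obstacle.} The delicate point is the $k$-uniformity of the constant $M$ and of the decay rate $\theta$. Modes with $a_k$ close to $\alpha^2/4$ (the critically damped regime) are the bottleneck: the naive characteristic-root calculation on the explicit formulas for $f_k,g_k$ given earlier in the paper shows that the slowest decay is $-\alpha/2+\sqrt{\alpha^2/4-a_1}\approx -a_1/\alpha$ for $\alpha\gg\sqrt{a_1}$ and $-\alpha/2$ otherwise, which is precisely where the threshold $\alpha/8\wedge a_1/\alpha$ comes from. The Lyapunov construction with the weight $1/a_k$ on the velocity component is what makes the argument go through in $\mathcal{H}=L^2\oplus H^{-1}$ rather than in the more standard energy space $H^1\oplus L^2$.
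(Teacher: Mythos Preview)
The paper does not give its own proof of this proposition; it is simply a citation to Temam \cite[Ch.~IV, Proposition 1.2]{temam2012infinite}. Your proposal correctly reconstructs the standard multiplier/Lyapunov argument that underlies Temam's result: the mode decomposition, the skewed energy $E_k$ with cross term $\beta u_k v_k/a_k$, and the derivative computation $\dot E_k = -\beta u_k^2 -(\alpha-\beta)v_k^2/a_k$ are all accurate and match the classical approach.

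The only soft spot is Step~3: the stated constraints on $\theta$ and the phrase ``optimizing in $\beta$'' are heuristic rather than a complete derivation of the precise threshold $\tfrac{\alpha}{8}\wedge\tfrac{a_1}{\alpha}$. In Temam's treatment the specific constant $\tfrac{\alpha}{8}$ arises from a concrete choice of $\beta$ (essentially $\beta=\alpha/4$) together with explicit Young-inequality coefficients when bounding $E_k$ above and below by $u_k^2+v_k^2/a_k$; your sketch gestures at this but does not carry it through. This is a matter of bookkeeping rather than a conceptual gap, and since the proposition is imported wholesale from Temam, your reconstruction is adequate for the purpose.
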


Section \ref{Sec:Linfty-decay}  and in particular Theorem \ref{thm:Linftydecay} shows that an interpolation of this estimate yields stronger stability estimates for the solution of the linear damped wave equation in the topology of $\e$ defined in \eqref{eq:Espace}.

\section{Exponential decay of the damped wave semigroup in the supremum norm} \label{Sec:Linfty-decay}

In this section we prove Theorem \ref{thm:Linftydecay}, which shows that the damped wave semigroup $S_\alpha(t)$ decays exponentially in the $\e$ norm for any size of friction $\alpha>0$. Beyond the Hilbert space setting of \cite{temam2012infinite}, stability estimates for the one-dimensional damped wave semigroup with initial data in the Banach spaces $W_0^{1,p}(0,\ell)\times L^p(0,\ell), p\in(1,\infty)$ have been recently proved in \cite[Theorem 4.3]{kafnemer2022lp} with Fourier multiplier techniques. Regarding the case $p=\infty,$ the authors obtain a partial exponential stability result (Proposition 5.1 in the same reference) in the case where the damping constant $\alpha$ is small enough. Moreover, they conjecture that such stability estimates are true without the imposed restrictions on $\alpha$ (see also \cite[pp.4]{chitour2024exponential}, \cite[Section 3.3]{chitour2024p} for more recent references to the open case of $p=\infty$). To the best of our knowledge, Theorem \ref{thm:Linftydecay} is the first result to provide such estimates with initial data in $C(0,\ell)\times C^{-1}(0,\ell)$ (i.e. in Banach spaces that impose no weak differentiability). In fact, from a closer look at the proof of Theorem \ref{thm:Linftydecay}, it is straightforward to obtain the same result with initial data in $W_0^{k,\infty}(0,\ell)\times W^{k-1,\infty}(0,\ell)$ for any $k\in\N.$ In particular, the case $k=1$ generalizes \cite[Proposition 5.1]{kafnemer2022lp} to arbitrary values of $\alpha,$ thus confirming the aforementioned conjecture in the case of constant damping.

\begin{thm}\label{thm:Linftydecay} 
	Let $(u,v)\in\e$. Then for any $ \theta< \tfrac{\alpha}{8}\wedge \tfrac{a_1}{4\alpha}$, there exists $M=M(\theta)>0$ such that for any $t>0$,
	\begin{equation*}
		\left|S_{\alpha}(t)(u,v) \right|_{\e} \leq M e^{-\theta t} |(u,v)|_{\e}.
	\end{equation*}
\end{thm}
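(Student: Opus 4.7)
The plan is to promote the Hilbert-space stability of Proposition \ref{prop:L2-decay} to the Banach space $\mathcal{E}$ via a Gagliardo--Nirenberg-style interpolation, combined with a spectral decomposition of the initial data. My first step would be to observe that the energy argument yielding Proposition \ref{prop:L2-decay} goes through verbatim in each Hilbert space $\mathcal{H}_s=H^s\oplus H^{s-1}$, since the Dirichlet eigenbasis $\{e_k\}$ simultaneously diagonalises $A_\alpha$ and the Laplacian; this yields $|S_\alpha(t)z|_{\mathcal{H}_s}\le M_s e^{-\theta t}|z|_{\mathcal{H}_s}$ for every real $s$ and every $\theta<\alpha/8\wedge a_1/\alpha$. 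In one spatial dimension the classical Agmon/Gagliardo--Nirenberg inequalities
\[
|u|_\infty\le C|u|_{L^2}^{1/2}|u|_{H^1}^{1/2},\qquad |v|_{C^{-1}}\le C|v|_{H^{-1}}^{1/2}\bigl(|v|_{H^{-1}}+|v|_{L^2}\bigr)^{1/2}
\]
then allow the $\mathcal{E}$-norm of $S_\alpha(t)z$ to be controlled by a geometric mean of its $\mathcal{H}_0$- and $\mathcal{H}_1$-norms.

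The principal obstacle is that the embedding $\mathcal{E}\hookrightarrow\mathcal{H}_1$ fails, so the interpolation cannot be closed on generic initial data $z\in\mathcal{E}$. I would circumvent this by splitting $z=P_Nz+(I-P_N)z$, where $P_N$ is the projection onto the first $N$ Dirichlet modes, with $N$ chosen so that $a_{N+1}>\alpha^2/4$ (separating the overdamped and underdamped regimes). The finite-dimensional low-frequency component $P_Nz$ is smooth and admits arbitrary higher Hilbert-space norms bounded by a Lebesgue-constant $O(\log N)$ multiple of $|z|_\mathcal{E}$; its evolution is governed by the explicit Fourier formulas for $f_k,g_k$ given in the preliminaries, with decay rate at least $|\lambda_1^+|=2a_1/(\alpha+\sqrt{\alpha^2-4a_1})\ge a_1/\alpha$ in the overdamped regime. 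The high-frequency complementary part is treated via the substitution $\tilde u=e^{\alpha t/2}\Pi_1 S_\alpha(t)(I-P_N)z$, which solves the tachyonic Klein--Gordon equation $\tilde u_{tt}-\tilde u_{xx}=(\alpha^2/4)\tilde u$; on those modes the characteristic roots $\pm i\omega_k$ are purely imaginary with $\omega_k=\sqrt{a_k-\alpha^2/4}$, and a d'Alembert-type comparison with the free wave semigroup $S_0$ (uniformly bounded on $\mathcal{E}$ via \eqref{equation:D'Alembert1}--\eqref{equation:D'Alembert2}) together with the frequency correction $|\omega_k-\sqrt{a_k}|=O(a_k^{-1/2})$ yields a uniform-in-$t$ bound on the $\mathcal{E}$-norm of $\tilde u$, so that the $e^{-\alpha t/2}$ prefactor delivers exponential decay of the high-frequency contribution at rate arbitrarily close to $\alpha/2$.

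The main technical difficulty is to carry out these two estimates while tracking constants carefully enough to produce exactly the claimed rate $\alpha/8\wedge a_1/(4\alpha)$: the $\alpha/8$ summand is inherited directly from Proposition \ref{prop:L2-decay} (via the high-frequency branch, where the prefactor decay $\alpha/2$ always beats $\alpha/8$), while the factor-of-four loss on the spectral part $a_1/\alpha\to a_1/(4\alpha)$ reflects the cost of the Gagliardo--Nirenberg interpolation bridging the Hilbert and Banach topologies on the low-frequency branch. The same scheme, with $\partial_x$ commuting through $A_\alpha$ in the Dirichlet basis, would also deliver the analogous exponential decay in $W_0^{k,\infty}\times W^{k-1,\infty}$ for any $k\in\N$, as noted in the discussion before the theorem statement.
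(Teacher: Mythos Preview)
Your spectral splitting is an unnecessary detour, and the high-frequency branch has a genuine gap. You claim that for $\tilde u = e^{\alpha t/2}\Pi_1 S_\alpha(t)(I-P_N)z$, which solves the tachyonic Klein--Gordon equation $\tilde u_{tt}-\tilde u_{xx}=(\alpha^2/4)\tilde u$, a ``d'Alembert-type comparison with $S_0$ together with the frequency correction $|\omega_k-\sqrt{a_k}|=O(a_k^{-1/2})$'' yields a uniform-in-$t$ bound on $|\tilde u(t)|_\e$. This does not follow. Writing $\tilde u$ in mild form gives
\[
\tilde u(t)=\Pi_1 S_0(t)(\tilde u(0),\tilde u_t(0))+\frac{\alpha^2}{4}\int_0^t \Pi_1 S_0(t-s)\bigl(0,\tilde u(s)\bigr)\,ds,
\]
and if you estimate the convolution term in $\e$-norm only, you get $|\tilde u(t)|_\e\le C|z|_\e + C\int_0^t |\tilde u(s)|_\e\,ds$, hence exponential growth by Gr\"onwall, not a uniform bound. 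The frequency-correction heuristic does not repair this: the phase mismatch $(\omega_k-\sqrt{a_k})t$ is $O(t/k)$, which for moderate $k$ near $N$ accumulates without bound as $t\to\infty$, so comparing $\tilde u$ to the free wave in sup-norm fails. Your Gagliardo--Nirenberg framing is also abandoned as soon as you note $\e\not\hookrightarrow\mathcal{H}_1$; the low-frequency branch (where, incidentally, $|P_N z|_{\mathcal{H}_1}\lesssim N^{3/2}|z|_\e$, not $\log N$) then handles only a fixed finite-dimensional piece.

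The paper's argument is both simpler and supplies exactly the missing idea. It applies the transformation $q=e^{\alpha t/2}u$ to the \emph{full} solution, with no spectral splitting. The key observation is Lemma~\ref{lem:integral-bound}: for odd $2\ell$-periodic $v$, the d'Alembert integral $I_v(t,x)=\tfrac12\int_{x-t}^{x+t}v$ is $2\ell$-periodic in $t$ (Lemma~\ref{lem:I-periodic}), so $\sup_{t>0}\sup_x|I_v(t,x)|\le C|v|_{L^2}$ by H\"older on one period. This lets the convolution term be bounded by $\int_0^t |q(s)|_{L^2}\,ds=\int_0^t e^{\alpha s/2}|u(s)|_{L^2}\,ds$, and now Proposition~\ref{prop:L2-decay} supplies $|u(s)|_{L^2}\le M e^{-\theta s}|z|_{\h}$. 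Integrating and multiplying back by $e^{-\alpha t/2}$ gives the $|\cdot|_\infty$ decay; the $C^{-1}$ component is handled analogously. The crucial point you missed is that the periodicity of $I_v$ in $t$ allows a sup-to-$L^2$ bound uniformly in time, which is precisely what bridges the $\e$ and $\h$ topologies without any interpolation or mode truncation.
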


In order to prove Theorem \ref{thm:Linftydecay} we take advantage of the following transformation: if $u$ solves the damped linear wave equation $(\partial_t^2+\alpha\partial_t-\partial_x^2)u=0$ with initial data in $(u,v)\in\e$ then $q := e^{\frac{\alpha t}{2}}u$  solves
\begin{equation*}
	\partial_t^2q = \partial_x^2q + \frac{\alpha^2}{4}q,
\end{equation*}
or in mild form
\begin{equation} \label{eq:q-mild}
	q(t,x)=  \Pi_1 S_0(t) \begin{pmatrix} q(0) \\ \partial_tq(0) \end{pmatrix} + \frac{\alpha^2}{8} \int_0^t \int_{x-(t-s)}^{x+(t-s)} q^o(s,y)dyds,
\end{equation}
with $(q(0), \partial_t q(0))=(u, \tfrac{\alpha}{2}u+v)$ and $q^o$ is the odd-periodic extension of $q$.

Moreover, for any odd $2\ell$ periodic $v: \mathbb{R} \to \mathbb{R}$, define \begin{equation}\label{eq:Iv_definition}
    I_v(t,x): = \frac{1}{2}\int_{x-t}^{x+t} v(y)dy.
\end{equation}
Before we proceed to the proof of Theorem \ref{thm:Linftydecay}, we state two auxiliary lemmas that establish time-periodicity and uniform spatio-temporal bounds for $I_v.$ 

\begin{lem} \label{lem:I-periodic}
	For any odd $2\ell$ periodic $v: \mathbb{R} \to \mathbb{R}$ and any $x \in [0,\ell]$, $t \mapsto I_v(t,x)$ is $2\ell$ periodic.
\end{lem}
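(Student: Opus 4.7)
The plan is to show directly that $I_v(t+2\ell,x)-I_v(t,x)=0$ for all $t\in\mathbb{R}$ and all $x\in[0,\ell]$. Writing out the definition, this difference splits into two integrals over intervals of length $2\ell$, namely
\begin{equation*}
I_v(t+2\ell,x)-I_v(t,x)=\tfrac{1}{2}\int_{x+t}^{x+t+2\ell}v(y)\,dy+\tfrac{1}{2}\int_{x-t-2\ell}^{x-t}v(y)\,dy.
\end{equation*}
The first step is to invoke $2\ell$-periodicity of $v$: for any real $a$, the integral $\int_a^{a+2\ell}v(y)\,dy$ equals $\int_0^{2\ell}v(y)\,dy$ (a standard change-of-variables argument using periodicity). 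Thus both terms on the right-hand side collapse to the single quantity $\tfrac{1}{2}\int_0^{2\ell}v(y)\,dy$.

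The second and final step is to show that $\int_0^{2\ell}v(y)\,dy=0$. By $2\ell$-periodicity, this integral equals $\int_{-\ell}^{\ell}v(y)\,dy$, which vanishes because $v$ is odd. Combining the two steps yields $I_v(t+2\ell,x)=I_v(t,x)$, as required.

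There is no real obstacle here; the only subtlety worth flagging is that the proof uses both defining properties of $v$ in tandem — periodicity to shift the two leftover pieces of the integration interval onto a common period, and oddness to force that common integral to be zero. Neither property alone would suffice. The argument does not depend on $x\in[0,\ell]$ specifically, so the same identity holds for arbitrary $x\in\mathbb{R}$.
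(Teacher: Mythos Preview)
Your proof is correct and follows essentially the same approach as the paper: both arguments reduce the periodicity claim to the vanishing of $\int_a^{a+2\ell} v(y)\,dy$ for arbitrary $a$, using $2\ell$-periodicity to shift to a fixed period and oddness to force the integral to zero. Your presentation is in fact a bit more direct—the paper states a few auxiliary identities (including a sign-flip at odd multiples of $\ell$) that are not strictly needed for the $2\ell$-periodicity conclusion, whereas you go straight to computing $I_v(t+2\ell,x)-I_v(t,x)$; your closing remark that the restriction $x\in[0,\ell]$ is immaterial is also correct.
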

\begin{proof}
	Because $v$ is odd and periodic, for any $k \in \mathbb{Z}$, $x \in [0,\ell]$
	\begin{equation*}
		\int_{x-\ell k}^{x + \ell k} v(y)dy = 0.
	\end{equation*}
	Furthermore, by odd periodicity we have, for any $t \in [0,\ell],$
	\begin{equation*}
		\int_{x-2\ell k -t}^{x + 2\ell k + t} v(y)dy = \int_{x - t}^{x + t }v(y)dy.
	\end{equation*}
	and
	\begin{equation*}
		\int_{x-(2k + 1)\ell  -t}^{x + (2k+1)\ell  + t} v(y)dy = -\int_{x - t}^{x + t }v(y)dy.
	\end{equation*}
\end{proof}

\begin{lem} \label{lem:integral-bound}
	Assume that $v \in H$. Then there exists $C>0$ such that
	\begin{equation*}
		\sup_{t>0} \sup_{x \in [0,\ell]  } |I_v(t,x)| \leq C|v|_{H}.
	\end{equation*}
\end{lem}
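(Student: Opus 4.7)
The plan is to combine the time-periodicity established in Lemma \ref{lem:I-periodic} with a Cauchy--Schwarz estimate on the odd $2\ell$-periodic extension $v^o$ of $v$.

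First, by Lemma \ref{lem:I-periodic}, for each fixed $x\in[0,\ell]$ the map $t\mapsto I_v(t,x)$ is $2\ell$-periodic in $t$. Consequently
\begin{equation*}
    \sup_{t>0}\sup_{x\in[0,\ell]}|I_v(t,x)| \;=\; \sup_{(t,x)\in[0,2\ell]\times[0,\ell]}|I_v(t,x)|,
\end{equation*}
so it suffices to control $I_v$ on the bounded rectangle $[0,2\ell]\times[0,\ell]$. This is the crucial reduction: without it, one would have to worry about the behavior of $I_v$ as $t\to\infty$, but periodicity collapses this issue.

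Next, for $(t,x)$ in that rectangle, the integration interval $[x-t,x+t]$ in \eqref{eq:Iv_definition} has length $2t\le 4\ell$, and Cauchy--Schwarz gives
\begin{equation*}
    |I_v(t,x)| \;\le\; \tfrac{1}{2}\sqrt{2t}\bigg(\int_{x-t}^{x+t}v^o(y)^2\,dy\bigg)^{\!1/2}.
\end{equation*}
Since $(v^o)^2$ is $2\ell$-periodic with $\int_0^{2\ell}v^o(y)^2\,dy = 2\int_0^\ell v(y)^2\,dy = 2|v|_H^2$, any integral of $(v^o)^2$ over an interval of length at most $4\ell$ is bounded by $4|v|_H^2$. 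Plugging this back gives $|I_v(t,x)|\le 2\sqrt{\ell}\,|v|_H$, which yields the claim with $C=2\sqrt{\ell}$.

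I do not anticipate any genuine obstacle: once Lemma \ref{lem:I-periodic} reduces the problem to a compact time window, everything follows from Cauchy--Schwarz and elementary $L^2$-accounting on $v^o$ over a bounded number of periods. The only minor point to record is the interpretation of $I_v$ for $v\in H$, which is understood via the odd $2\ell$-periodic extension $v^o$, consistent with the d'Alembert formulas \eqref{equation:D'Alembert1}--\eqref{equation:D'Alembert2} used throughout this section.
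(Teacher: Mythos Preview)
Your proof is correct and follows essentially the same approach as the paper: use the periodicity from Lemma~\ref{lem:I-periodic} to reduce to a bounded time interval, then apply Cauchy--Schwarz to the odd periodic extension. The only minor difference is that the paper exploits the additional odd symmetry implicit in Lemma~\ref{lem:I-periodic} (namely $I_v(t+\ell,x)=-I_v(t,x)$) to reduce to $t\in[0,\ell]$ rather than $[0,2\ell]$, yielding the slightly sharper constant $C=\sqrt{\ell}$ instead of your $C=2\sqrt{\ell}$; this is immaterial for the application.
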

\begin{proof}
	By odd periodicity in time (Lemma \ref{lem:I-periodic}),
	\begin{equation*}
		\sup_{t>0} \sup_{x \in [0,\ell]}|I_v(t,x)| = \sup_{t \in [0,\ell]} \sup_{x \in [0,\ell]} |I_v(t,x)|.
	\end{equation*}
	By H\"older's inequality, for any $t \in [0,\ell]$ and $x \in [0,\ell]$,
	\begin{align}
		|I_v(t,x)| &= \left|\frac{1}{2}\int_{x - t}^{x + t} v(y)dy \right|\nonumber \leq \frac{\sqrt{ 2t}}{2} |v|_{L^2[x-t,x+t]} \nonumber\leq \frac{\sqrt{2\ell}}{2} |v|_{L^2[-\ell,\ell]} \nonumber \leq \sqrt{\ell} |v|_{H} .\end{align}
\end{proof}

\noindent We are now ready to estimate the exponential decay rate of the damped wave semigroup on $\e.$
\begin{proof}[Proof of Theorem \ref{thm:Linftydecay}]
	Let $q = e^{\frac{\alpha t}{2}}u$ so that, in view of \eqref{eq:q-mild}, \eqref{eq:Iv_definition}, $q$ satisfies
	\begin{align}
		q(t,x) = &\frac{1}{2}\left(q^o(0,x - t) + q^o(0,x+t)\right) + \frac{1}{2}I_{(\partial_tq)^o(0)}(t,x)\nonumber+ \frac{\alpha^2}{4}\int_0^t I_{q^o(s)}(t-s,x)ds,\;\;(x,t)\in [0,\ell]\times(0,\infty).
	\end{align}
	By definition of $C$ and $C^{-1}$ norms the first two terms satisfy
	\begin{equation*}
	\left|	\frac{1}{2}\left(q^o(0,x - t) + q^o(0,x+t)\right) \right|\leq |q(0)|_\infty,
	\end{equation*}
	\begin{equation*}
	\left|	\frac{1}{2}I_{(\partial_tq)^o(0)}(t,x)\right| \leq |\partial_tq(0)|_{C^{-1}}.
	\end{equation*}
	By Lemma \ref{lem:integral-bound},
	\begin{equation*}
		\left| \int_0^t I_{q^o(s)}(t-s,x)ds\right| \leq \sqrt{2 \ell}\int_0^t |q(s)|_{H}ds.
	\end{equation*}
	Using the definition of $q$ and Proposition \ref{prop:L2-decay} it follows that 
	\begin{align}
	    \left| \int_0^t I_{q^o(s)}(t-s,x)ds\right| &\leq \sqrt{2 \ell}\int_0^t |u(s)|_{H}e^{\frac{\alpha s}{2}}ds\nonumber\\&
       \leq \sqrt{2 \ell}\int_0^t M|(u(0), \partial_tu(0))|_{\h}e^{\left(\frac{\alpha }{2} - \theta \right)s}ds.\nonumber
	\end{align}	
	These estimates prove that
	\begin{equation*}
	\sup_{x \in [0,\ell]} |q(t,x)| \leq |Q(0)|_\mathcal{E} + \frac{M \alpha^2 \sqrt{2 \ell}}{4 \left(\frac{\alpha}{2} - \theta \right)} |(u(0), \partial_tu(0))|_{\h} e^{\left(\frac{\alpha}{2} - \theta\right)t}
	\end{equation*}
	where $Q(0) = (q(0),\partial_tq(0)) = (u(0), \alpha u(0)/2  + \partial_tu(0)).$
	Finally we multiply by $e^{-\frac{\alpha t}{2}}$ to conclude that for any $t>0$,
	\begin{align*}\label{eq:Linftydecayconstant}
		 \sup_{x \in [0,\ell]} |u(t,x)| &\leq e^{-\frac{\alpha t}{2}} |Q(0)|_{\e} + \frac{M \alpha^2 \sqrt{2 \ell}}{4\left(\frac{\alpha}{2} - \theta \right)}e^{-\theta t}|(u(0), \partial_tu(0))|_\h \nonumber \\
		&\leq \tilde{M} e^{-\theta t} |(u(0), \partial_tu(0))|_{\e}.
	\end{align*}
	A similar estimate is true for the velocity component. Indeed, 
 \begin{equation}\label{eq:uvelocity}
     \partial_tu=\Pi_2u=\partial_t(e^{-\frac{\alpha t}{2}}q)=-\frac{\alpha }{2}e^{-\frac{\alpha t}{2}}q+e^{-\frac{\alpha t}{2}}\partial_tq.  
 \end{equation}   
\noindent From \eqref{eq:q-mild} we have
\begin{equation} \label{eq:qvelocity}
	\partial_tq(t,x)=  \Pi_2 S_0(t) \begin{pmatrix} q(0) \\ \partial_tq(0) \end{pmatrix} + \frac{\alpha^2}{8}\int_0^t\Pi_2S_0(t-s)\begin{pmatrix} 0 \\ q(s) \end{pmatrix}ds.
\end{equation}
From \eqref{equation:D'Alembert2} and for each $t>0$ 
\begin{equation*}
    \widehat{\Pi_2S_0}(t)\begin{pmatrix} q(0) \\ \partial_tq(0) \end{pmatrix}(x)=\frac{1}{2}\big(q^o(0, x-t)-q^o(0, x+t)\big)-\frac{1}{2}\big(\widehat{(\partial_t q)^o }(0, x+t)+\widehat{(\partial_t q)^o }(0, x-t)\big),
\end{equation*}
(recall here that $\hat{\cdot}, \cdot^o $ denote anti-differentiation and odd periodic extension respectively)  hence 
\begin{equation*}
\bigg|\Pi_2S_0(t)\begin{pmatrix} q(0) \\ \partial_tq(0) \end{pmatrix}\bigg|_{C^{-1}}=\bigg|\widehat{\Pi_2S_0}(t)\begin{pmatrix} q(0) \\ \partial_tq(0) \end{pmatrix}\bigg|_{\infty}\leq |q(0)|_{\infty}+|\partial_tq(0)|_{C^{-1}}.
\end{equation*}
Turning to the convolution term,
\begin{equation*}
-2\widehat{\Pi_2S_0}(t-s)\begin{pmatrix} 0 \\ q(s) \end{pmatrix}(x)=\int_0^{x+(t-s)}q^o(s,y)dy+\int_0^{x-(t-s)}q^o(s,y)dy
\end{equation*}
From the latter, along with  the odd periodicity of $q^0$ and Proposition \ref{prop:L2-decay} we obtain 

$$ \bigg|\Pi_2S_0(t-s)\begin{pmatrix} 0 \\ q(s) \end{pmatrix}\bigg|_{C^{-1}}\leq  x\wedge (t-s) |q^o(s)|_\infty\leq \ell e^{(\frac{\alpha}{2}-\theta) s}|Q(0)|_\e.$$
Combining these estimates with \eqref{eq:uvelocity}, \eqref{eq:qvelocity} we deduce
$$ |\partial_t u(t)|_{C^{-1}}\leq C e^{-\theta t}|(u(0),\partial_tu(0))|_\e.   $$
The proof is complete.
\end{proof}

 \section{Uniform attraction and invariant sets for the non-linear dynamics}\label{sec:NonlinearStability}

We remind the reader that, from this point on, Assumption \ref{Assumption:xstar} is assumed to hold. The latter guarantees that there exists an asymptotically stable equilibrium $x^*$ for the parabolic problem $\partial_tu=\partial_x^2u+b(u)$ with Dirichlet boundary conditions on $(0,\ell).$ The goal of this section is to show that solutions $Z^0:=(u^0, \partial_tu^0)$ of the nonlinear hyperbolic problem
\begin{equation}\label{eq:deterministicnonlinearwave}
\partial_t^2u^0+\alpha\partial_tu^0=\partial_x^2u^0+b(u^0) 
\end{equation}
on $(0,\ell),$ with initial data in $\e,$  converge to the stable equilibrium $z^*:=(x^*, 0)$ as $t\to\infty$ uniformly over initial data that are sufficiently close to $z^*.$

The main result of this section, Theorem \ref{thm:domainofattraction}, guarantees the existence of domains $D$ that are uniformly attractive to $z^*$ in the topology of $\e.$ Next, we prove in Proposition \ref{corr:DExistence} that there exist uniformly attractive domains that are also invariant for the solution flow on $\e.$ Existence of such domains is essential for the study of exit problems in Section \ref{Sec:Metastability}. 

We start with a few preliminary stability properties of the linearized problem at $z^*.$

\begin{lem}\label{lem:linearizedsemigroup} Let $x^*, \lambda_b$ as in Assumption \ref{Assumption:xstar} and $\{\bar{S}_\alpha(t)\}_{t\geq 0}\subset \mathscr{L}(\h)$ be the semigroup generated by the operator $\partial_t^2+\alpha\partial_tu-\partial_x^2-b'(x^*).$ Under Assumption \ref{Assumption:xstar} the following hold:
\begin{enumerate}
    \item For any $\theta<\tfrac{\alpha}{8}\wedge\tfrac{\lambda_b}{\alpha},$ there exists $M>0$ such that for all $t>0, z\in\h$
$$ |\bar{S}_\alpha(t)z|_{\h}\leq Me^{-\theta t}|z|_{\h}.      $$
\item There exists $\theta_0>0$ such that for all $\theta<\theta_0$ there exists $M>0$  such that for all $t>0, z\in\e$
$$ |\bar{S}_\alpha(t)z|_{\e}\leq Me^{-\theta t}|z|_{\e}.      $$
\end{enumerate}    
\end{lem}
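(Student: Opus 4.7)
The plan is to handle the two parts separately, with Part~(2) bootstrapped from Part~(1) via a perturbative Duhamel argument together with Theorem~\ref{thm:Linftydecay}.

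For Part~(1), since $L_b := -\partial_x^2 - b'(x^*)$ is self-adjoint on $H$ with eigenbasis $\{e_k^b\}_{k\in\N}$ and eigenvalues $\{a_k^b\}_{k\in\N}$, the generator of $\bar{S}_\alpha$ is block-diagonal on the basis $\{(e_k^b,0),(0,e_k^b)\}$ of $\h$. On each invariant two-dimensional subspace it acts by $\mathbf{M}_k = \begin{pmatrix} 0 & 1 \\ -a_k^b & -\alpha \end{pmatrix}$, whose eigenvalues are $\mu_k^\pm = \tfrac{-\alpha\pm\sqrt{\alpha^2 - 4a_k^b}}{2}$. For $a_k^b \geq \alpha^2/4$ the real part equals $-\alpha/2$, while for smaller $a_k^b$ the dominant rate $\mu_k^+$ is real and close to $-a_k^b/\alpha$. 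Concretely, I would write down explicit Fourier-type formulas for $\bar{S}_\alpha(t)$ analogous to those displayed in Section~\ref{Sec:Setup} (with $a_k$ replaced by $a_k^b$) and then repeat verbatim the spectral computation behind \cite[Ch.IV, Prop.~1.2]{temam2012infinite} to obtain the desired $\h$-norm decay at any rate $\theta<\tfrac{\alpha}{8}\wedge\tfrac{\lambda_b}{\alpha}$.

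For Part~(2), viewing $\bar{S}_\alpha(t)z$ as a solution of $\partial_t\bar{z} = A_\alpha \bar{z} + (0,\,b'(x^*)\Pi_1\bar{z})$, Duhamel's formula yields
\begin{equation*}
    \bar{S}_\alpha(t)z = S_\alpha(t)z + \int_0^t S_\alpha(t-s)\begin{pmatrix} 0 \\ b'(x^*)\,\Pi_1\bar{S}_\alpha(s)z \end{pmatrix}ds.
\end{equation*}
By Theorem~\ref{thm:Linftydecay} the first term is bounded by $Me^{-\theta_2 t}|z|_\e$ for any $\theta_2 < \tfrac{\alpha}{8}\wedge\tfrac{a_1}{4\alpha}$. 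For the convolution, the key point is that $x^*\in C_0(0,\ell)$ and $b'$ is polynomial, so $b'(x^*)\in L^\infty(0,\ell)$, and Cauchy--Schwarz together with antidifferentiation gives
\begin{equation*}
    |b'(x^*)\Pi_1\bar{S}_\alpha(s)z|_{C^{-1}} \;\leq\; \sqrt{\ell}\,|b'(x^*)|_\infty\,|\Pi_1\bar{S}_\alpha(s)z|_{H} \;\leq\; C\,|\bar{S}_\alpha(s)z|_{\h}.
\end{equation*}
Applying Part~(1) and the continuous embedding $\e\hookrightarrow\h$, the right-hand side is at most $C' e^{-\theta_1 s}|z|_\e$ for any $\theta_1<\tfrac{\alpha}{8}\wedge\tfrac{\lambda_b}{\alpha}$. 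Invoking Theorem~\ref{thm:Linftydecay} once more inside the integral and evaluating $\int_0^t e^{-\theta_2(t-s)}e^{-\theta_1 s}ds$ (choosing $\theta_1\neq\theta_2$ both arbitrarily close to $\theta_0 := \tfrac{\alpha}{8}\wedge\tfrac{a_1}{4\alpha}\wedge\tfrac{\lambda_b}{\alpha}$) produces the claimed $\e$-decay at any rate $\theta<\theta_0$; in the borderline case $\theta_1=\theta_2$ one absorbs the ensuing factor $t$ into a slightly slower exponential.

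The main obstacle is Part~(2): one cannot diagonalize on $\e$, and a naive Gronwall estimate applied to the Duhamel identity would only yield $e^{Ct}$ growth rather than decay. The mechanism that rescues the argument is that the multiplication operator $u\mapsto b'(x^*)u$ is \emph{smoothing from $H$ into $C^{-1}$} (via the Cauchy--Schwarz bound above), so the rough $C^{-1}$-component of the forcing is controlled by the Hilbert-space decay from Part~(1) rather than by the $\e$-norm of $\bar{S}_\alpha(s)z$ itself. This one-sided coupling between the two scales of estimates is precisely what preserves exponential decay through the convolution and allows the perturbative scheme to close.
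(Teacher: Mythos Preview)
Your proof is correct. For Part~(1) you and the paper both invoke \cite[Ch.~IV, Prop.~1.2]{temam2012infinite} with the operator $-\partial_x^2 - b'(x^*)$, so there is nothing to compare.

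For Part~(2) the approaches are genuinely different, though they share the same underlying mechanism. The paper repeats the \emph{proof structure} of Theorem~\ref{thm:Linftydecay}: it performs the exponential transformation $q = e^{\alpha t/2}u$, which converts the linearized damped equation into the undamped wave equation $\partial_t^2 q = \partial_x^2 q + (\tfrac{\alpha^2}{4} + b'(x^*))q$, then writes this in mild form via $S_0$ and d'Alembert's formula, and controls the convolution term using the $\h$-decay from Part~(1) exactly as Theorem~\ref{thm:Linftydecay} used Proposition~\ref{prop:L2-decay}. You instead treat Theorem~\ref{thm:Linftydecay} as a \emph{black box}: you write Duhamel's formula with respect to $S_\alpha$ rather than $S_0$, and the crucial observation is that the forcing $(0,\,b'(x^*)\Pi_1\bar{S}_\alpha(s)z)$ lives in the velocity slot, so its $\e$-norm is its $C^{-1}$-norm, which the elementary antiderivative bound $|f|_{C^{-1}} \le \sqrt{\ell}\,|f|_H$ converts into an $\h$-quantity governed by Part~(1). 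Your route is more modular and avoids redoing the transformation; the paper's route is more self-contained and makes explicit why the constant $M$ depends on $|b'(x^*)|_\infty$. Both yield the same qualitative conclusion $\theta_0>0$, and in fact both give essentially the same quantitative threshold after tracking constants.
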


\begin{proof}\begin{enumerate}
    \item This follows once again by \cite[Chapter IV, Proposition 1.2]{temam2012infinite} by setting $A=\partial_x+b'(x^*)$ and $\lambda_1=\lambda_b.$
    \item Apart from the presence of an additional linear term, the proof is nearly identical to that of Theorem \ref{thm:Linftydecay} above. In particular, we perform the transformation $q := e^{\frac{\alpha t}{2}}u,$ where $\partial_t^2u+\alpha\partial_tu=\partial_x^2u+b'(x^*)u.$ Then $q$ satisfies the linear wave equation
       \begin{equation*}
	\partial_t^2q = \partial_x^2 + \bigg(\frac{\alpha^2}{4}+b'(x^*)\bigg)q.
\end{equation*}
Writing the latter in mild form and using the $L^2-$bound from part (1), as we did in Theorem \ref{thm:Linftydecay}, yields the desired estimate for a constant $M$ that depends on $|b'(x^*)|_{\infty}.$\end{enumerate}   \end{proof}
We are now ready to proceed to the stability analysis of the nonlinear problem. We emphasize here that our main theorem below applies to polynomial nonlinearities of arbitrary degree, provided that Assumption \ref{Assumption:xstar} is satisfied.
\begin{thm}[Uniform attraction]\label{thm:domainofattraction} Let $M$ be as in Lemma \ref{lem:linearizedsemigroup}(2). Next let $u^0_z$ denote the solution of \eqref{eq:deterministicnonlinearwave} with initial conditions $z=(x,y)\in \e.$ Under Assumptions \ref{Assumption:b}, \ref{Assumption:xstar} and for all $\theta< \min \{\alpha/8, \lambda_b/(4\alpha)\},$ there exists $\rho_0=\rho_0(\ell, M, |b^{(2)}(x^*)|_{\infty},\dots, |b^{(N)}(x^*)|_{\infty})$ such that for all $R<\rho_0, t>0$
 \begin{equation}\label{eq:Nonlinearstabilitybound}
   \sup_{z\in B_{\e}(z^*, R)}\big|Z_z^0(t)-z^*\big|_{\e}=  \sup_{z\in B_{\e}(z^*, R)}\big|(u^0_{z}(t), \partial_tu^0_{z}(t))-z^*\big|_{\e}\leq C_{\rho_0}e^{-\theta t}.
 \end{equation}    
\end{thm}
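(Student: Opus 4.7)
The plan is to linearize \eqref{eq:deterministicnonlinearwave} around $z^*=(x^*,0)$ and absorb the polynomial remainder by a bootstrap argument built on top of the exponential decay of $\bar{S}_\alpha$ given by Lemma \ref{lem:linearizedsemigroup}(2). Set $w:=u_z^0-x^*$ and $W:=(w,\partial_t w)$. Using the equilibrium identity $\partial_x^2 x^*+b(x^*)=0$ from Assumption \ref{Assumption:xstar}, $W$ satisfies
\begin{equation*}
\partial_t^2 w+\alpha\partial_tw=\partial_x^2 w+b'(x^*)w+r(w),\qquad W(0)=z-z^*,
\end{equation*}
where the Taylor remainder
\begin{equation*}
r(w):=b(x^*+w)-b(x^*)-b'(x^*)w=\sum_{k=2}^{\gamma}\frac{b^{(k)}(x^*)}{k!}w^{k}
\end{equation*}
is a polynomial in $w$ vanishing to second order at $0$. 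Absorbing $b'(x^*)w$ into the linearized semigroup $\bar{S}_\alpha$, the mild formulation reads
\begin{equation*}
W(t)=\bar{S}_\alpha(t)(z-z^*)+\int_0^t\bar{S}_\alpha(t-s)\bigl(0,r(w(s))\bigr)\,ds.
\end{equation*}

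Next I would combine three estimates: (i) Lemma \ref{lem:linearizedsemigroup}(2), giving $|\bar{S}_\alpha(t)\zeta|_{\e}\leq Me^{-\theta t}|\zeta|_{\e}$ for the admissible range of $\theta$; (ii) the trivial embedding $|f|_{C^{-1}}\leq \ell\,|f|_{\infty}$ valid for $f\in C_0(0,\ell)$, so that $|(0,r(w))|_{\e}=|r(w)|_{C^{-1}}\leq \ell\,|r(w)|_\infty$; and (iii) the polynomial bound, valid whenever $|w|_\infty\leq 1$ and furnished by Assumption \ref{Assumption:b},
\begin{equation*}
|r(w)|_\infty\leq C_b\,|w|_\infty^{2},\qquad C_b:=\sum_{k=2}^{\gamma}\tfrac{1}{k!}\bigl|b^{(k)}(x^*)\bigr|_\infty.
\end{equation*}
Writing $R:=|z-z^*|_\e$ and $\phi(t):=e^{\theta t}|W(t)|_\e$, and using $|w(s)|_\infty\leq |W(s)|_\e$, these three bounds together yield, as long as $|w(s)|_\infty\leq 1$ on $[0,t]$,
\begin{equation*}
\phi(t)\leq MR+M\ell C_b\int_0^t e^{-\theta s}\phi(s)^{2}\,ds.
\end{equation*}

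The last step is a standard continuation argument. Fix $K\in(0,1]$ small enough that $M\ell C_b K/\theta\leq \log 2$, and define $T^*:=\sup\{T\geq 0:\phi(s)\leq K\text{ on }[0,T]\}$. On $[0,T^*]$ one has $\phi(s)^2\leq K\phi(s)$, which linearizes the integral inequality; Gronwall then gives $\phi(t)\leq MR\exp(M\ell C_b K/\theta)\leq 2MR$. Choosing
\begin{equation*}
\rho_0:=\min\{K/(2M),\,1\}
\end{equation*}
one obtains, for any $R<\rho_0$, the strict inequality $\phi(t)\leq 2MR<K$ throughout $[0,T^*]$. Continuity of the local mild solution in $\e$ (Proposition \ref{prop:WellPosedness}) then precludes $\phi$ from exiting $[0,K]$, forcing $T^*$ to equal the maximal existence time, and the uniform bound $|W(t)|_\e\leq 2M\rho_0\,e^{-\theta t}$ in turn rules out finite-time blow-up. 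Thus $T^*=+\infty$ and \eqref{eq:Nonlinearstabilitybound} holds with $C_{\rho_0}=2M\rho_0$.

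The main obstacle is conceptual rather than technical: the damped wave semigroup is not smoothing, so the nonlinear forcing $(0,r(w))$ cannot be regularized before being placed back into the mild formula, and must be controlled \emph{directly} in the $C^{-1}$ component of $\e$. This is precisely where the embedding $C_0(0,\ell)\hookrightarrow C^{-1}(0,\ell)$ and the polynomial structure of $b$ from Assumption \ref{Assumption:b} are essential, since they deliver a quadratic-in-norm estimate on the remainder that couples cleanly with the exponential decay of $\bar{S}_\alpha$ in $\e$. Once this is in place the bootstrap closes, the local solution extends globally, and the rate $\theta<\alpha/8\wedge\lambda_b/(4\alpha)$ inherited from Lemma \ref{lem:linearizedsemigroup}(2) is transferred verbatim to the nonlinear flow.
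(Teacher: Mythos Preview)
Your argument is correct and proceeds by a genuinely different route than the paper. The paper expands the solution as a formal power series in the distance parameter $R$, writing $Z^0_z(t)=\sum_{n\ge 0}R^nZ_n(t)$ with $z=z^*+Rz_0$, identifies each $Z_n$ recursively (so that $Z_0\equiv z^*$, $Z_1=\bar S_\alpha(\cdot)z_0$, and the higher $Z_n$ solve inhomogeneous linear equations driven by products of lower-order terms), proves by induction that $|Z_n(t)|_\e\le A_n e^{-\theta t}$ for constants $A_n$ satisfying an explicit convolution-type recursion, and then determines $\rho_0$ as the radius of convergence of the generating series $F(\zeta)=\sum A_n\zeta^n$, found by locating the first critical point of the explicit polynomial $F^{-1}$. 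Your approach instead is the classical Lyapunov bootstrap: write the mild equation for $W=Z^0_z-z^*$ against $\bar S_\alpha$, bound the Taylor remainder quadratically in $|w|_\infty$, and close via Gronwall and a continuation argument.

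Your method is shorter and more elementary, and it transparently isolates the mechanism (quadratic remainder plus exponential semigroup decay). The paper's power-series method buys a sharper and more explicit description of $\rho_0$: it is the positive root of $(F^{-1})'$, a polynomial whose coefficients are built only from $\ell$, $M$, and the $|b^{(k)}(x^*)|_\infty$, and in particular does \emph{not} depend on $\theta$ except through $M$. This is exploited in Example~\ref{rem:Xstarexamples} to compute $\rho_0$ in closed form for the cubic nonlinearity. Your $\rho_0$ carries an additional explicit factor of $\theta$ (through the choice of $K$), which is harmless for the theorem as stated but slightly less precise. The paper also remarks that its argument extends to analytic (not just polynomial) $b$; your quadratic remainder bound would need to be replaced by a local one, but the bootstrap would still close in the same way.
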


\begin{rem} This can be seen as a "refined", local version of the stable manifold theorem for Banach-space valued flows; see e.g. \cite[Theorem 1.1]{gallay1993center} for a related result. Indeed, while the latter proves existence of a stable manifold for the nonlinear dynamics, we are interested in finding stable domains that feature uniform attraction, which is a stronger property. For the case of wave equations, and to the best of our knowledge, such domains have only been identified in \cite[Theorem 1]{sattinger1968global} when the initial data is smooth. Theorem \ref{thm:domainofattraction} proves existence of such attractive domains in the much larger phase space $\e.$ Finally, from a look at the proof, it becomes clear that
the arguments are applicable to mild solutions of a larger class of evolution equations provided that a) the semigroup is contractive b) the nonlinearities are polynomial (or at least analytic) c) the solutions are real-valued.
\end{rem}

\begin{proof} Let $R>0$ and consider the ansatz
\begin{equation}\label{eq:ansatz}
    \begin{aligned}
     Z^0_z(t)=\sum_{n=0}^{\infty}R^nZ_n(t),
     \end{aligned}
\end{equation}
where $Z^0_z=(u^0_z,\partial_tu^0_z)$ is the solution with initial data $z\in B_\e(z^*,R)$ and $\{Z_n\}_{n\in\N}$ a sequence of functions in $C([0,\infty);\e)$ to be determined. From the latter, in combination with the mild formulation, we obtain
\begin{equation*}
    \begin{aligned}    \sum_{n=0}^{\infty}R^nZ_n(t)=S_\alpha(t)z^*+RS_\alpha(t)z_0+\int_{0}^{t}S_\alpha(t-s)\bigg(0, b\bigg(\sum_{n=0}^{\infty}R^n\Pi_1Z_n(s)\bigg)\bigg)ds,
     \end{aligned}
\end{equation*}
where we wrote $z=z^*+Rz_0,$ for some $z_0\in B_\e(0,1).$ From a (pointwise for every $x\in(0,\ell)$) Taylor expansion of $b$ around $\Pi_1Z_0(s)$ 
\begin{equation*}
    \begin{aligned}        b\bigg(\sum_{n=0}^{\infty}R^n\Pi_1Z_n(s)\bigg)&=\sum_{k=0}^{\infty}\frac{b^{(k)}(\Pi_1Z_0(s))}{k!}\bigg(\sum_{n=1}^{\infty}R^n\Pi_1Z_n(s)\bigg)^k\\&
    =b(\Pi_1Z_0(s))+\sum_{k=1}^{\infty}\frac{b^{(k)}(\Pi_1Z_0(s))}{k!}\sum_{n=k}^{\infty}R^n\sum_{i_1+\dots+i_k=n}\Pi_1Z_{i_1}(s)\dots \Pi_1Z_{i_k}(s)\\&    =b(\Pi_1Z_0(s))+\sum_{n=1}^{\infty}R^n\sum_{k=1}^{n}\frac{b^{(k)}(\Pi_1Z_0(s))}{k!}\sum_{i_1+\dots+i_k=n}\Pi_1Z_{i_1}(s)\dots \Pi_1Z_{i_k}(s),
    \end{aligned}
\end{equation*}
where the change in the order of summation on the last line will be justified from the absolute convergence of the series.
The last two displays allow to determine the functions $Z_n$ by identifying terms of the same order in $R.$ Indeed, from the zeroth order terms we see that 
$$Z_0(s)=S_\alpha(t)z^*+\int_0^tS_\alpha(t-s)\bigg(0, b(\Pi_1Z_0(s))\bigg)ds.    $$
Since $z^*$ is an equilibrium, it follows that $Z_0(s)=z^*$ for all $s.$ From the first order terms it follows that 
\begin{equation*}\label{eq:n=1}
    \begin{aligned}
        Z_1(s)=S_\alpha(t)z_0+\int_{0}^{t}S(t-s)\bigg(0, b'(\Pi_1Z_0(s))\Pi_1Z_1(s)\bigg)ds,
    \end{aligned}
\end{equation*}
i.e. $Z_1(s)=\bar{S}_\alpha(s) z_0,$ where $\bar{S}_\alpha$ is the semigroup generated by the linearisation of  \eqref{eq:deterministicnonlinearwave} around $x^*.$ In view of Lemma \ref{lem:linearizedsemigroup}(2), we have the estimate
$$|Z_1(s)|_\e\leq Ce^{-\theta s}|z_0|_\e\leq Me^{-\theta s}.$$
Turning to $n-$th order terms for $n\geq 1$ we have
\begin{equation*}
    Z_n(t)=\int_{0}^{t}\bar{S}_\alpha(t-s)\bigg(0, \sum_{k=2}^{N}\frac{b^{(k)}(x^*)}{k!}\sum_{i_1+\dots+i_k=n}\Pi_1Z_{i_1}(s)\dots \Pi_1Z_{i_k}(s)\bigg) ds,
\end{equation*}
where we used Assumption \ref{Assumption:b} which implies that all derivatives $b^{(k)}$ of order $k>N$ vanish.
With $A_1:=M,$ we assume that for each $k=1,\dots,n-1$ there is a constant $A_k>0$ such that 
\begin{equation*}\label{eq:Ak}
   |Z_k(s)|_\e\leq A_ke^{-\theta s}. 
\end{equation*} Notice that $Z_n$ satisfies a non-homogeneous linear equation with a source term that is independent of $Z_k, k\geq n. $ From the induction assumption, and with $\gamma$ from Assumption \ref{Assumption:b}, we thus have

\begin{equation*}
    \begin{aligned}
        |Z_n(t)|_{\e}&\leq A_1\int_{0}^{t} e^{-\theta (t-s)}\sum_{k=2}^{\gamma}\frac{|b^{(k)}(x^*)|_{\infty}}{k!}\sum_{i_1+\dots+i_k=n}|\Pi_1Z_{i_1}(s)\dots \Pi_1Z_{i_k}(s)|_{C^{-1}} ds\\&\leq 
        \ell A_1 \sum_{k=2}^{\gamma}\frac{|b^{(k)}(x^*)|_{\infty}}{k!}\int_{0}^{t}e^{-\theta (t-s)}\sum_{i_1+\dots+i_k=n}|\Pi_1Z_{i_1}(s)\dots \Pi_1Z_{i_k}(s)|_{\infty} ds\\&
        \leq \ell A_1 \sum_{k=2}^{\gamma}\sum_{i_1+\dots+i_k=n}A_{i_1}\dots A_{i_k}\frac{|b^{(k)}(x^*)|_{\infty}}{k!}\int_{0}^{t}e^{-\theta (t-s)}e^{-k\theta s} ds\\&
        \leq \ell A_1 e^{-\theta t}\sum_{k=2}^{\gamma}\sum_{i_1+\dots+i_k=n}A_{i_1}\dots A_{i_k}\frac{|b^{(k)}(x^*)|_{L^\infty}}{k!(k-1)}.
    \end{aligned}
\end{equation*}
\noindent Thus, one can take 
\begin{equation}
    \label{eq:Recursion}
    A_{n}:=\ell A_1 \sum_{k=2}^{\gamma}R_k\sum_{i_1+\dots+i_k=n}A_{i_1}\dots A_{i_k},\;\; R_k:=\frac{|b^{(k)}(x^*)|_{\infty}}{k!(k-1)}.
\end{equation}
We claim that, in order to obtain \eqref{eq:Nonlinearstabilitybound}, it remains to identify a lower bound $\rho_0>0$ for the radius of convergence of the series 
$$  F(\zeta)=\sum_{n=1}^{\infty} A_n \zeta^n.$$
Indeed, from \eqref{eq:ansatz} 
\begin{equation*}
    \begin{aligned}
        |Z^0_z(t)-z^*|_\e=\bigg|\sum_{n=0}^\infty R^nZ_n(t)-Z_0(t)\bigg|_\e\leq \sum_{n=1}^\infty R^n|Z_n(t)|_\e\leq e^{-\theta t} \sum_{n=1}^\infty R^nA_n
    \end{aligned}
\end{equation*}
 i.e. the desired estimate holds provided that $R<\rho_0.$
To find the latter, note that $F$ is invertible around $0$ (since $F'(0)=A_1\neq 0$) and from the recursive relations \eqref{eq:Recursion} its inverse is given by
$$F^{-1}(\zeta)=\frac{\zeta}{A_1}-\ell\sum_{k=2}^{\gamma} R_k\zeta^k.$$ The radius of convergence of $F$ can then be determined by the largest interval  around $0$ where $(F^{-1})'$ is non-zero. Since $F^{-1}$ is an $\gamma-$th degree polynomial with a root at $0$ there must exist $\rho_0>0$ such that $F^{-1}$ does not change monotonicity in $(-\rho_0, \rho_0).$ The proof is complete.
\end{proof}

\begin{example}\label{rem:Xstarexamples}
 A simple and important example of a nonlinearity  that satisfies the assumptions of Theorem \ref{thm:domainofattraction} is given by $b(x)=x-x^3.$ Indeed, Assumption \ref{Assumption:xstar} can be seen to hold from the analysis of \cite[Section 7]{faris1982large}. Explicit expressions for the equilibrium point $x^*$ and eigenvalues of the Schr\"odinger operator $\partial^2_x+b'(x^*)$ for this example as well as for polynomial nonlinearities of higher degree can be found in \cite[Section 5.1]{gasteratos2023importance} and references therein.

For this case, it is also possible to find an explicit expression for the size $\rho_0$ of domains of uniform attraction to $z^*.$ Indeed, by examining the proof of Theorem \ref{thm:domainofattraction}, we have that 
$$F^{-1}(z)=\frac{\zeta}{A_1}-\ell(R_2 \zeta^2+R_3\zeta^3).$$
Since the roots of $F'(\zeta)=A_1^{-1}-2\ell R_2\zeta-3\ell R_3\zeta^2$ are given by $\pm\rho,$
$$\rho_{\pm}:=\frac{2 R_2\pm\sqrt{4R_2^2+12A_1^{-1}\ell^{-1} R_3}}{6 R_3},$$
it suffices to take 
$$\rho_0=\rho_{+}=\frac{2|b''(x^*)|_{\infty}+2\sqrt{|b''(x^*)|_{\infty}^2+A_1^{-1}\ell^{-1}|b^{(3)}(x^*)|_{\infty} }}{|b^{(3)}(x^*)|_{\infty}}.$$
\end{example} 
\begin{rem} Our stability result is stated for nonlinear equations with polynomial nonlinearities of arbitrary degree and "rough" initial data in the sense that the initial position is only continuous. A similar result holds in the more general case where $p$ is only assumed to be the restriction of an analytic function on the real line. A proof of such a statement, in the case where the initial data is continuously differentiable, can be found in \cite{sattinger1968stability}.
\end{rem}

The previous theorem establishes existence of uniformly attracting balls around the asymptotically stable equilibrium $z^*.$ However, as we shall see in Section \ref{Sec:BoundaryPoints} (see also Remarks \ref{rem:invariance},  \ref{rem:cylinderNonInvariance}) such balls need not necessarily be invariant for the solution flow induced by $Z^0$ \eqref{eq:deterministicnonlinearwave}. Nevertheless, we show in the next proposition that invariant domains can be obtained by considering all the orbits issued from such uniformly attracting balls.

\begin{prop}\label{corr:DExistence} Let $Z^0_z=(u^0_z, \partial_tu^0_z)$ denote the solution of \eqref{eq:deterministicnonlinearwave} with initial data $z\in\e.$ Then, there exists an open bounded set $D\subset\e$ such that the following hold:
\begin{enumerate}
    \item (Uniform attraction) $\lim_{t\to\infty}\sup_{z\in D}|Z^0_z(t)-z^*|_\e =0.$
    \item (Invariance) For all $z\in D, t>0:$ $Z^0_z(t)\in D.$
\end{enumerate}    
\end{prop}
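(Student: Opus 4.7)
The plan is to construct $D$ as the union of all forward orbits issued from a uniformly attracting ball produced by Theorem \ref{thm:domainofattraction}. Fix $R < \rho_0$, denote by $\Phi_t: z \mapsto Z^0_z(t)$ the nonlinear deterministic flow on $\e$, and set
\begin{equation*}
D := \bigcup_{t \geq 0} \Phi_t\bigl(B_\e(z^*,R)\bigr).
\end{equation*}
By \eqref{eq:Nonlinearstabilitybound} every such orbit remains in $B_\e(z^*, C_{\rho_0})$ for all $t \geq 0$, so $\Phi_t$ is globally defined on $B_\e(z^*,R)$ and $D \subset B_\e(z^*, C_{\rho_0})$ is bounded. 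Since $B_\e(z^*,R) = \Phi_0(B_\e(z^*,R)) \subset D$, the set $D$ is nonempty and contains $z^*$.

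Invariance is immediate from the flow property: if $w = \Phi_{t_0}(z) \in D$ with $z \in B_\e(z^*,R)$, then $\Phi_s(w) = \Phi_{s+t_0}(z) \in D$. Uniform attraction follows directly from Theorem \ref{thm:domainofattraction}, since for any such $w$,
\begin{equation*}
|\Phi_s(w) - z^*|_\e = |\Phi_{s+t_0}(z) - z^*|_\e \leq C_{\rho_0} e^{-\theta(s+t_0)} \leq C_{\rho_0} e^{-\theta s},
\end{equation*}
which establishes (1) after taking the supremum over $w \in D$.

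The remaining non-trivial point, and the one I expect to be the main obstacle, is openness. The plan is to show that for each $t \geq 0$ the map $\Phi_t: B_\e(z^*,R) \to \Phi_t(B_\e(z^*,R))$ is a homeomorphism, so that each image is open in $\e$ and $D$ becomes a union of open sets. The key observation is that the damped wave equation is reversible in time: the substitution $t \mapsto -t$ converts \eqref{eq:deterministicnonlinearwave} into $\partial_t^2u - \alpha\partial_tu = \partial_x^2u + b(u)$, which is locally well-posed in $\e$ by essentially the same argument as Proposition \ref{prop:WellPosedness}, since the linear wave semigroup $\{S_\alpha(t)\}_{t\in\R}$ is already a $C_0$-group and the Nemytskii term $u \mapsto (0,b(u))$ is locally Lipschitz on $\e$ irrespective of the direction of time. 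Running the time-reversed equation from $\Phi_t(z)$ then recovers $z$ by uniqueness; moreover the associated backward trajectory coincides with $\tau \mapsto \Phi_{t-\tau}(z)$ and therefore remains in $B_\e(z^*, C_{\rho_0})$ on $[0,t]$, safely within the regime in which well-posedness and continuous dependence on initial data apply. Consequently $\Phi_t$ admits a continuous inverse on $\Phi_t(B_\e(z^*,R))$, hence is an open map, and the set $D$ is open as required.
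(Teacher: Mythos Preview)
Your proposal is correct and follows essentially the same approach as the paper: construct $D$ as the union of forward orbits issued from a uniformly attracting ball, read off boundedness, invariance, and uniform attraction from \eqref{eq:Nonlinearstabilitybound} and the flow property, and obtain openness by exploiting time-reversibility of the damped wave equation so that each $\Phi_t(B_\e(z^*,R))$ is the preimage of an open ball under the continuous backward flow. The paper's proof differs only in presentation --- it makes the continuity of $z\mapsto Z^0_z(-t)$ explicit via the substitution $q(t)=e^{-\alpha t/2}u(t)$ and a Gr\"onwall estimate, whereas you appeal to the local well-posedness machinery of Proposition~\ref{prop:WellPosedness}.
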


\begin{proof} Let $(t,z)\longmapsto\mathcal{Z}(t,z):=Z^0_z(t)$ denote the induced flow map. With $\rho_0$ as in Theorem \ref{thm:domainofattraction}, let
\begin{equation}\label{eq:orbitDomain}
    D:=\bigcup_{t\geq 0} \mathcal{Z}\bigg(t,B_\e(z^*, \rho_0)\bigg).
\end{equation}
  From \eqref{eq:Nonlinearstabilitybound} it is straightforward to deduce that $D$ is bounded and satisfies (1). Indeed, 
$$\sup_{z\in D}|z|_\e=\sup_{t\geq 0, z\in B_\e(z^*,\rho_0)}|Z^0_z(t)|_\e\leq C_{\rho_0}<\infty.$$
Moreover, from the flow property and definition of $D$ it follows that for each $z\in D$ there exist $z'\in B_\e(z^*, \rho_0), t'\geq 0$ such that for each $t\geq 0,$ $$Z^0_z(t)=Z^0_{Z^0_{z'}(t')}(t)=Z^0_{z'}(t+t')\in\mathcal{Z}( t+t', B_\e(z^*, \rho_0))\subset D.$$ It remains to show that $D$ is open. To this end, we have from time-reversibility and continuity of the wave flow that $\mathcal{Z}\big(t,B_\e(z^*, \rho_0)\big)= \mathcal{Z}^{-1}\big((-t),B_\e(z^*, \rho_0)\big),$ where, for each $z\in B_\e(z^*, \rho_0), $ $u=\Pi_1Z^0_z(-t)$ solves 
$$\partial_t^2u-\alpha\partial_tu=\partial^2_xu+b(u), u(0)=\Pi_1z.     $$
As in the proof of Theorem \ref{thm:Linftydecay}, we let $q(t):=e^{-\alpha t/2}u(t)$ and note that $\partial^2_tq=\partial^2_xq-\tfrac{\alpha^2}{4}q+e^{-\alpha t/2}b(e^{\alpha t/2}q).$ Letting $\{z_n\}\subset\e$ such that $z_n\rightarrow z\in\e$ and $T\geq t$ we set $R=\sup_{t\in[0,T]}|u_z(t)|_{\e}\vee \sup_{n\in\N, t\in[0,T]}|u_{z_n}(t)|_{\e}.$ Using the local Lipschitz continuity of $b$ on $B_{\e}(0,R),$  along with Gr\"onwall's inequality, we can then show  
$$  |Z^0_{z_n}(-t)-Z^0_{z}(-t)|_{\e}\leq e^{CT}|z_n-z|_{\e}\rightarrow 0   $$
as $n\to\infty.$ Hence, for each $t
\geq 0$ the time-reversed flow map 
$$ \e\ni z\longmapsto Z^0_{z}(-t)\in\e      $$
is continuous. The latter proves that, for each $t,$ $\mathcal{Z}\big(t,B_\e(z^*, \rho_0)\big)$ is the continuous inverse image of an open set hence an open set itself. The latter is also true for $D$ as an arbitrary union of open sets. The proof is complete.
\end{proof}

\section{Local well-posedness and local uniform large deviations}\label{sec:wellposedness}
\subsection{The controlled equation}\label{SS:ControlledEquation}
With the notation introduced in the previous section, we can recast \eqref{eq:modelsystem} as a stochastic evolution equation on $\e.$ In fact we shall consider here a slightly more general controlled equation which results from \eqref{eq:modelsystem} after replacing $\dot{W}$ by $\dot{W}+h/\epsilon$ for some deterministic control  $h\in L^2([0,T];H).$ Such perturbations arise naturally in the weak convergence approach to large deviations for \eqref{eq:modelsystem}. 
To this end, we define superposition operators $B:\e\rightarrow\e, B(x,y):=(0, b(x))$ and for each $f\in H,$ $\Sigma:\e\rightarrow\mathscr{L}(\e),$ $ \Sigma(x,y)f:=(0, \sigma(x))f.$ The controlled equation then takes the form
\begin{equation}\label{eq:controlledequation}
     \begin{aligned}
        dZ^{\epsilon, h}(t)=A_\alpha Z^{\epsilon, h}(t)dt+B(Z^{\epsilon, h}(t))dt+\Sigma\big(Z^{\epsilon, h}(t)\big)h(t)dt+\epsilon\Sigma\big(Z^{\epsilon, h}(t)\big)dW(t)\;, Z^{\epsilon, h}(0)=z\in\e,
     \end{aligned}
 \end{equation}
 with $A_\alpha$ as in \eqref{eq:dampedwavegenerator}.

 \begin{dfn}[Global mild solutions]\label{dfn:mild solutions} Let $T, \alpha,\epsilon >0, p\geq 1, z\in\e, h\in L^2([0,T];H).$\\ A process $\{Z_z^{\epsilon, h}(t); t\in[0,T]\}$ is a mild solution of \eqref{eq:controlledequation} in $L^p(\Omega;     C([0,T];\e))$ if $$\Pi_1Z_z^{\epsilon, h}\in L^p(\Omega;     C([0,T];C_0(0,\ell))),
  \Pi_2Z_z^{\epsilon, h}\in L^p(\Omega;     C([0,T];C^{-1}(0,\ell)))  $$ and $\pr-$almost surely for all $t\in[0,T]$
 \begin{equation*}\label{eq:MildControlledSolution}
     \begin{aligned}
    Z_z^{\epsilon, h}(t)&=S_\alpha(t)z+\int_0^tS_{\alpha}(t-s)B(Z_z^{\epsilon, h}(s))ds+\int_0^tS_{\alpha}(t-s)\Sigma(Z_z^{\epsilon, h}(s))h(s)ds\\&+\epsilon\int_0^tS_{\alpha}(t-s)\Sigma(Z_z^{\epsilon, h}(s))dW(s).
     \end{aligned}
 \end{equation*}
 A mild solution is global if it is well-defined for any $T>0.$
 \end{dfn}

 Our interest here is focused on local (mild) solutions of \eqref{eq:controlledequation} as defined below:

 \begin{dfn}[Local mild solutions]\label{dfn:local mild solutions}   Let $\epsilon>0, n\in\N, z\in\e,$ 
\begin{equation}\label{eq:bnlocalization}
 b_n(x):=
\begin{cases}
    &  b(n)+(n-x)b'(n)\;\;,\;x> n\\&
    b(x)\;\;,\;|x|\leq n\\&
    b(-n)+(x+n)b'(-n)\;\;,\;x<-n
\end{cases}
 \end{equation}
(resp. $\sigma_n$) be a Lipschitz approximation of $b$ (resp. $\sigma$) and $B_n$ (resp. $\sigma_n$) the corresponding nonlinear operator. If, for each $n\in\N,$ the evolution equation \eqref{eq:controlledequation}, with $B, \Sigma$
replaced by $B_n,\Sigma_n$ admits a unique global mild solution denoted by $Z^{\epsilon, h}_{z,n},$ we define a local mild solution to \eqref{eq:model} as follows:  Consider the family of stopping times
\begin{equation*}\label{eq:LocalizingStoppingTimes}
	\tau^{\epsilon, h}_{z,n}:=\inf\{t>0 : | \Pi_1Z^{\epsilon, h}_{z,n}(t)|_{C_0(0,\ell)}> n\}.
\end{equation*}
By uniqueness, the $Z^{\epsilon,h}_{z,n}$ are consistent in the sense that $$Z_{z,n}^{\epsilon, h}=Z_{z,m}^{\epsilon, h}$$ on $[0, \tau^{\epsilon, h}_{z,m}]$ for each $m<n.$     
A local mild solution to \eqref{eq:model} is then defined  by 
$Z_z^{\epsilon,h}:=Z_{z,n}^{\epsilon, h}\in C([0,T];\e)$ for all $T\leq \tau^{\epsilon, h}_{z,n}. $ The explosion time of a local mild solution is given by 
\begin{equation}\label{eq:explosiontime}   \tau^{\epsilon, h}_{z,\infty}:=\sup_{n\in\N}\tau^{\epsilon, h}_{z,n}.
\end{equation}
 \end{dfn}

This localization procedure requires the use of supremum norms, and cannot easily be accomplished in other topologies. In Proposition \ref{prop:WellPosedness} we will prove local well-posedness by showing that for each $n$, there exists a unique global mild solution $Z^{\epsilon,h}_{z,n}.$

 Before we present a local well-posedness result for the controlled equation, we proceed with an apriori estimate for the stochastic convolution. To this end, let $\{\Psi^\epsilon\}_{\epsilon}\subset L^p(\Omega;     C([0,T];C_0(0,\ell)))$ and consider the process
\begin{equation*}\label{eq:StochasticConvolutionDfn}
     \Gamma_{\Psi^\epsilon}(t):=\int_0^tS_{\alpha}(t-s)(0,\Psi^\epsilon(s))dW(s).
 \end{equation*} 
\begin{lem}\label{lem:StochasticConvolutionApriori} Let $T>0, p\geq 2$ and $\{\Psi_1^\epsilon;\epsilon>0\}, \{\Psi_2^\epsilon;\epsilon>0\}\subset L^p(\Omega; C([0,T];C_0(0,\ell))).$ Under Assumption \ref{Assumption:sigma} there exists $C>0$ such that  \begin{equation}\label{eq:stochasticconvolutionbound}
     \begin{aligned}
        \ex| \Gamma_{\Psi_1^\epsilon}-\Gamma_{\Psi_2^\epsilon}|_{C([0,T];\e)}^p\leq C\ex|\Psi_1^\epsilon-\Psi_2^\epsilon|^p_{ C([0,T];C_0(0,\ell))}.
     \end{aligned}
 \end{equation}
 \end{lem}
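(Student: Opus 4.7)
By linearity of the stochastic integral, $\Gamma_{\Psi_1^\epsilon}-\Gamma_{\Psi_2^\epsilon}=\Gamma_{\Psi_1^\epsilon-\Psi_2^\epsilon}$, so it suffices to prove
\[
\ex|\Gamma_{\Psi}|^p_{C([0,T];\e)}\leq C\,\ex|\Psi|^p_{C([0,T];C_0(0,\ell))}
\]
for an arbitrary $\Psi\in L^p(\Omega;C([0,T];C_0(0,\ell)))$ and then apply this with $\Psi=\Psi_1^\epsilon-\Psi_2^\epsilon$. Combining the d'Alembert representations \eqref{equation:D'Alembert1}--\eqref{equation:D'Alembert2} with the transformation $q=e^{\alpha t/2}u$ from the proof of Theorem~\ref{thm:Linftydecay} and the resulting Volterra equation \eqref{eq:q-mild}, I would write the two components of $\Gamma_\Psi$ as Walsh-type iterated integrals
\[
\Pi_1\Gamma_\Psi(t)(x)=\!\int_0^t\!\!\int_0^\ell\!G_1(t,s,x,y)\,\Psi(s,y)\,W(ds,dy),\;\;\widehat{\Pi_2\Gamma_\Psi}(t)(x)=\!\int_0^t\!\!\int_0^\ell\!G_2(t,s,x,y)\,\Psi(s,y)\,W(ds,dy),
\]
where $G_1,G_2$ (derivable from the d'Alembert kernels modulo the Volterra correction produced by the damping transformation) are uniformly bounded on $[0,T]\times[0,\ell]^2$ and satisfy $\int_0^\ell G_i^2(t,s,x,y)\,dy\leq C(T,\ell,\alpha)$ independently of $(t,s,x)$.

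Fixing $(t,x)$ and applying the Burkholder--Davis--Gundy inequality for scalar It\^o integrals then yields pointwise moment bounds
\[
\ex|\Pi_1\Gamma_\Psi(t)(x)|^p+\ex|\widehat{\Pi_2\Gamma_\Psi}(t)(x)|^p\leq C_p\,\ex\!\left(\int_0^t\!\!\int_0^\ell\!\bigl(G_1^2+G_2^2\bigr)(t,s,x,y)\,|\Psi(s)|_\infty^2\,dy\,ds\right)^{\!p/2}\!\!\leq C_{p,T}\,\ex|\Psi|^p_{C([0,T];C_0)}.
\]
To upgrade to the sup-norm bound I would run the same BDG argument on the space--time increments of the two processes, exploiting the modulus of continuity of the kernels $G_i$ under shifts in $(t,x)$, to obtain H\"older-type estimates $\ex|F(t_1,x_1)-F(t_2,x_2)|^p\leq C(|t_1-t_2|^{p/2}+|x_1-x_2|^{p/2})\,\ex|\Psi|^p_{C([0,T];C_0)}$ for $F\in\{\Pi_1\Gamma_\Psi,\,\widehat{\Pi_2\Gamma_\Psi}\}$. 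For $p>4$, Kolmogorov's continuity criterion for random fields on the two-dimensional parameter set $[0,T]\times[0,\ell]$ then yields
\[
\ex\sup_{t\leq T,\,x\in[0,\ell]}\bigl(|\Pi_1\Gamma_\Psi(t)(x)|^p+|\widehat{\Pi_2\Gamma_\Psi}(t)(x)|^p\bigr)\leq C\,\ex|\Psi|^p_{C([0,T];C_0)};
\]
the case of smaller $p\in[2,4]$ is handled by H\"older's inequality in $\omega$ after first applying the result for some large $p'>4$. Recalling that $|\Pi_2\Gamma_\Psi(t)|_{C^{-1}}=|\widehat{\Pi_2\Gamma_\Psi}(t)|_\infty$ by definition of the $C^{-1}$ norm, the two supremum bounds above combine to deliver the desired estimate \eqref{eq:stochasticconvolutionbound}.

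The main obstacle is the treatment of the non-smoothing wave kernels $G_i$: unlike the parabolic setting they carry no regularizing effect, so one only extracts $C^{1/2-\delta}$ space--time H\"older regularity, which is precisely the threshold needed to invoke Kolmogorov's criterion (and explains the restriction $p>4$ for the direct argument). Careful bookkeeping of the odd-periodic extensions entering \eqref{equation:D'Alembert1}--\eqref{equation:D'Alembert2}, the exponential damping prefactor $e^{-\alpha(t-s)/2}$, and the lower-order Volterra correction $\tfrac{\alpha^2}{8}\int_0^{t-s}\!\int q^o$ from \eqref{eq:q-mild} is required to verify the stated kernel bounds; this refines the deterministic kernel estimates already carried out in the proof of Theorem~\ref{thm:Linftydecay}.
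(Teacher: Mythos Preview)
Your approach is essentially the paper's: reduce by linearity to a single $\Psi$, pass to the undamped semigroup via $G=e^{\alpha t/2}\Gamma_\Psi$, write the resulting stochastic convolution through the d'Alembert kernels as Walsh integrals over backward light cones, and estimate with BDG. The paper handles the Volterra remainder from \eqref{eq:q-mild} by a preliminary Gr\"onwall step rather than absorbing it into $G_1,G_2$, and is in fact terser than you on the passage from pointwise to supremum moments---it simply writes ``from the BDG inequality'' where you spell out increment bounds plus Kolmogorov. Structurally the two arguments coincide.

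The gap is your small-$p$ reduction. Applying the $p'>4$ bound and then H\"older in $\omega$ gives
\[
\ex|\Gamma_\Psi|^p_{C([0,T];\e)}\leq\bigl(\ex|\Gamma_\Psi|^{p'}_{C([0,T];\e)}\bigr)^{p/p'}\leq C\bigl(\ex|\Psi|^{p'}_{C([0,T];C_0)}\bigr)^{p/p'},
\]
and the right-hand side is \emph{not} dominated by $C\,\ex|\Psi|^p_{C([0,T];C_0)}$---it is infinite whenever $\Psi\in L^p\setminus L^{p'}$. Since the indicator-type wave kernels only deliver $\int_0^\ell|G_i(t,s,x_1,y)-G_i(t,s,x_2,y)|^2\,dy\lesssim|x_1-x_2|$, the two-parameter Kolmogorov threshold $p>4$ is sharp for this route, so the case $p\in[2,4]$ genuinely needs a separate device. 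For the paper's purposes this is harmless: Propositions~\ref{prop:WellPosedness} and~\ref{prop:LULDP} use the lemma only for a single fixed exponent, which one is free to take large.
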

 \noindent The proof is postponed to Appendix \ref{App:StochConv}.

 \begin{rem} Estimates similar to \eqref{eq:stochasticconvolutionbound}, with $\e$ replaced by the Hilbert spaces $\h_\delta$  can be found in several places in the literature (see e.g. \cite[proof of Theorem 4.2]{cerrai2006smoluchowski}). The difference here is that we work on a stronger topology and hence we require stronger estimates. For this reason we present a self-contained proof for the previous lemma and the next proposition.
     
 \end{rem}
 \begin{prop}[Local well-posedness of controlled equation] \label{prop:WellPosedness} Under Assumptions \ref{Assumption:b}, \ref{Assumption:sigma},  \eqref{eq:controlledequation} admits, for each $\epsilon>0, z\in\e,$ and adapted $ h\in L^2([0,T];H)$ a unique local mild solution on $[0,T \wedge \tau^{\epsilon, h}_{z,\infty}).$ 
 \end{prop}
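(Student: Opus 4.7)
The plan is a standard truncation-plus-fixed-point argument. For each $n\in\N$, the truncations $b_n$ (from \eqref{eq:bnlocalization}) and $\sigma_n$ are globally Lipschitz on $\R$, so the induced superposition operators $B_n,\Sigma_n$ are globally Lipschitz as maps between the appropriate spaces, namely $B_n:\e\to\e$ with $|B_n(z_1)-B_n(z_2)|_\e\le \ell\,|b_n|_{Lip}|\Pi_1 z_1-\Pi_1 z_2|_\infty$ (the factor of $\ell$ coming from $|\cdot|_{C^{-1}}\le \ell|\cdot|_\infty$), and $\Sigma_n(z)h=(0,\sigma_n(\Pi_1 z)h)$ which lives in $\{0\}\times H$ whenever $\Pi_1 z\in C_0(0,\ell)$ and $h\in H$. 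Given global well-posedness of each truncated problem, the local solution $Z^{\epsilon,h}_z$ is obtained by pasting the $Z^{\epsilon,h}_{z,n}$ using the consistency built into Definition~\ref{dfn:local mild solutions}.

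To solve the truncated equation, I would run a Banach fixed-point argument on $L^p(\Omega;C([0,T];\e))$ for $p\ge 2$ and some fixed $T>0$. The map $\Phi_n$ defined by
\begin{equation*}
\Phi_n(Z)(t):=S_\alpha(t)z+\int_0^t S_\alpha(t-s)B_n(Z(s))\,ds+\int_0^t S_\alpha(t-s)\Sigma_n(Z(s))h(s)\,ds+\epsilon\int_0^t S_\alpha(t-s)\Sigma_n(Z(s))\,dW(s)
\end{equation*}
maps the space into itself: the deterministic drift term lies in $C([0,T];\e)$ by strong continuity of $\{S_\alpha(t)\}\subset\mathscr L(\e)$, the controlled term lies in $C([0,T];\e)$ because $s\mapsto \sigma_n(\Pi_1 Z(s))h(s)\in L^2([0,T];H)$ and the semigroup maps $L^2$ sources of this form into continuous $\e$-valued trajectories, and the stochastic convolution term lies in $L^p(\Omega;C([0,T];\e))$ by Lemma~\ref{lem:StochasticConvolutionApriori}. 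For contraction, the three differences $\Phi_n(Z_1)-\Phi_n(Z_2)$ are controlled respectively by $T\,\|S_\alpha\|_{\mathscr L(\e)}|b_n|_{Lip}$, by $T^{1/2}|h|_{L^2([0,T];H)}\|S_\alpha\|_{\mathscr L(\e)}|\sigma_n|_{Lip}$ (Cauchy--Schwarz in time), and by $C\epsilon|\sigma_n|_{Lip}$ (Lemma~\ref{lem:StochasticConvolutionApriori}), times $\|Z_1-Z_2\|_{L^p(\Omega;C([0,T];\e))}$. Either by choosing $T$ small and iterating, or equivalently by working with the weighted norm $\|Z\|_\lambda:=(\ex\sup_{t\le T}e^{-\lambda t p}|Z(t)|_\e^p)^{1/p}$ for $\lambda$ large, the total Lipschitz constant becomes strictly less than one, giving a unique fixed point $Z^{\epsilon,h}_{z,n}\in L^p(\Omega;C([0,T];\e))$. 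Since $T$ is arbitrary, this yields a unique global mild solution to the $n$-th truncated equation.

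Consistency across levels is immediate: for $m<n$, on $[0,\tau^{\epsilon,h}_{z,m}]$ one has $|\Pi_1 Z^{\epsilon,h}_{z,m}(t)|_\infty\le m$, on which set $b_n\equiv b_m$ and $\sigma_n\equiv \sigma_m$, so $Z^{\epsilon,h}_{z,m}$ solves the $n$-th truncated mild equation on $[0,\tau^{\epsilon,h}_{z,m}]$ and uniqueness forces agreement with $Z^{\epsilon,h}_{z,n}$ there. Setting $Z^{\epsilon,h}_z:=Z^{\epsilon,h}_{z,n}$ on $[0,\tau^{\epsilon,h}_{z,n}]$ and $\tau^{\epsilon,h}_{z,\infty}:=\sup_n\tau^{\epsilon,h}_{z,n}$ produces the desired local mild solution on $[0,T\wedge\tau^{\epsilon,h}_{z,\infty})$. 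The main technical obstacle is the stochastic convolution estimate in the supremum-type topology of $\e$, rather than in a Hilbert space such as $\h$, since the standard factorization/energy estimates are sharper in Hilbert settings; this difficulty is exactly what Lemma~\ref{lem:StochasticConvolutionApriori} (proved in Appendix~\ref{App:StochConv}) absorbs. A secondary subtlety is checking that the $L^2([0,T];H)$-regularity of the control is sufficient for the controlled convolution to produce an $\e$-valued continuous path, which is handled by Cauchy--Schwarz together with continuity of the semigroup on $\e$.
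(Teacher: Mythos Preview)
Your proposal is correct and follows essentially the same approach as the paper: reduce to globally Lipschitz $b_n,\sigma_n$, run a Banach fixed-point argument on $L^p(\Omega;C([0,T];\e))$ using Lemma~\ref{lem:StochasticConvolutionApriori} for the stochastic convolution and Cauchy--Schwarz for the control term, obtain a contraction for small $T$ and iterate, then patch the $Z^{\epsilon,h}_{z,n}$ via consistency. The paper does not mention the weighted-norm alternative or spell out the consistency step, but otherwise the arguments coincide.
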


 \begin{proof}   
  It suffices to prove well-posedness for $\epsilon=1.$ Moreover, from Definition \ref{dfn:local mild solutions} it suffices to prove global well-posedness for $Z^{1,h}_{z,n}$ for each $n\in\N.$ The local solution to \eqref{eq:controlledequation} is then defined by patching up such global solutions up to the explosion time.  In turn, since we are working on $\e,$ this reduces to the case when $b, \sigma$ are bounded and Lipschitz continuous.

 To this end we consider the map $ \mathscr{S}:\e\times L^p(\Omega;     C([0,T]; \e))\rightarrow  L^p(\Omega;     C([0,T]; \e))$
 \begin{equation*}
     \begin{aligned}
         \mathscr{S}(z, Z)(t)&=S(t)z+\int_0^tS_\alpha(t-s) B(Z(s)\big)ds+\int_0^tS_\alpha(t-s) \Sigma(Z(s)\big)h(s)ds+\int_0^tS_\alpha(t-s) \Sigma(Z(s)\big)dW(s)\\&
         =:S(t)z+\rho_{Z}(t)+H_{Z}(t)+\Gamma_Z(t).
     \end{aligned}
 \end{equation*}
 For $Z_1, Z_2\in  L^p(\Omega;     C([0,T]; \e)),$ Theorem \ref{thm:Linftydecay} yields the almost sure estimates
 \begin{equation*}
 	\begin{aligned}
 	|\rho_{Z_1}(t)-  \rho_{Z_2}(t) |_{\e}& \leq \int_0^t \big|S_\alpha(t-s)\big( B(Z_1(s)\big)-B(Z_2(s)\big)|_\e ds\\&
 	\leq \int_0^t e^{-\theta (t-s)} |b(\Pi_1 Z_1(s))- b(\Pi_1 Z_2(s))|_{C^{-1}}ds\\&
 	\leq C\int_0^t  |b(\Pi_1 Z_1(s))- b(\Pi_1 Z_2(s))|_{\infty}ds\\&
 	\leq C|b|_{Lip}T \sup_{t\in[0,T]}|Z_1(t)-Z_2(t)|_{\e},
 	 \end{aligned}
 \end{equation*}

 \begin{equation*}
 	\begin{aligned}
 			|H_{Z_1}(t)-H_{Z_2}(t)|_\e&\leq
 		\int_0^t e^{-\theta (t-s)} \big|\big(\sigma(\Pi_1 Z_1(s))- \sigma(\Pi_1 Z_2(s))\big)h(s)\big|_{C^{-1}}ds\\&
 		\leq C \int_0^t \big|\big(\sigma(\Pi_1 Z_1(s))- \sigma(\Pi_1 Z_2(s))\big)h(s)\big|_{H}ds\\&
 		\leq  C|\sigma|_{Lip} \sqrt{T}\sup_{t\in[0,T]}|Z_1(t)-Z_2(t)|_{\e}|h|_{L^2([0,T];H)}.
 	\end{aligned}
 \end{equation*}
 In view of Lemma \ref{lem:StochasticConvolutionApriori} we also have 
 $$  \ex| \Gamma_{Z_1}-\Gamma_{Z_2}|_{C([0,T];\e)}^p\leq C_T\ex|Z_1-Z_2|^p_{ C([0,T];C_0(0,\ell))}   $$
 for a constant $C=C_T$ that is increasing in $T.$ From these estimates we deduce that 
 $$\big| \mathscr{S}(z, Z_1)-\mathscr{S}(z, Z_2)\big|_{ L^p(\Omega;     C([0,T]; \e)}    \leq C_T| Z_1-Z_2|_{ L^p(\Omega;     C([0,T]; \e)} .$$
In particular, for $T\leq T_0$ sufficiently small the map $\mathscr{S}(z,\cdot)$ is a contraction. From a classical fixed-point argument it follows that, for each $\epsilon>0,$ \eqref{eq:controlledequation} has a unique mild solution  $Z^{\epsilon,h}\in L^p(\Omega;     C([0,T_0]; \e)).$ The restriction on $T$ can then be lifted by repeating this argument on time-intervals of the form $[kT_0,  (k+1)T_0], k\in\N$. Thus for each $n\in\N$ $Z^{\epsilon, h}_{z,n}$ is globally well-posed and the proof is complete.
    	   \end{proof}

           \begin{rem}\label{rem:LocalSolutions} The previous proof establishes existence and uniqueness of local solutions to \eqref{eq:controlledequation} i.e. solutions defined up to explosion time. Local well-posedness is a consequence of working in the topology of continuous functions. Our exit problem asymptotics in Section \ref{Sec:Metastability} are valid for exits of such local solutions from bounded neighborhoods of a stable equilibrium. Moreover they provide asymptotic lower bounds for mean explosion times as $\epsilon\to 0$  (Remark \ref{rem:explosiontimes}). Nevertheless,
           as we explain in Section \ref{sec:GlobalSolutions}, it is also possible to obtain global well-posedness on $\e$ in the case where $b$ is a nonlinearity with at most cubic growth.
           \end{rem}
    	   
\subsection{Continuity properties of the skeleton equation}\label{sec:skeletoncontinuity} From this point on, $Z^\epsilon_z=(u^\epsilon_z,\partial_tu^\epsilon_z), Z^{u}_z:=Z^{0,u}_z$ will denote the local mild solutions of \eqref{eq:model} and \eqref{eq:controlledequation} with initial data $Z^\epsilon_z(0)=Z^u_z(0)=z\in\e$ and control $u$  respectively. This section contains a number of useful lemmas regarding the so-called skeleton equation $Z_z^{u},$ where $u\in L^2([0,T];H)$ is a deterministic control. These properties will be useful for the exit problem analysis of Section \ref{Sec:Metastability}.

\begin{lem}\label{lem:Zaprioricompactnesslem}
	For each $n\in\N,$ let $\Sigma_n$ denote the Lipschitz approximation of the operator $\Sigma$ per Definition \ref{dfn:local mild solutions}. For $n
    \in\N, T,\alpha>0, z\in\mathcal{E},$ $u\in L^2([0,T]; H, Z\in C([0,T];\e)$,
 under Assumption \ref{Assumption:sigma}, the linear operator 
	$$L^2([0,T];H)\ni u\longmapsto   K(u)(t):= \int_{0}^{t}S_\alpha(t-s)\Sigma_n(Z(s))u(s)ds\in C([0,T];\e) $$
	is compact.		
\end{lem}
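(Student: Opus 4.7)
The plan is a classical Arzel\`a-Ascoli argument in $C([0,T];\e)$: I would show that $K$ maps any bounded subset $B\subset L^2([0,T];H)$ into a subset of $C([0,T];\e)$ that is pointwise relatively compact (for each $t$) and equicontinuous in $t$.

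The crucial ingredient is an intermediate Banach space that is compactly embedded in $\e$ and into which $K$ gains regularity. The natural candidate is $\h_1=H_0^1(0,\ell)\oplus L^2(0,\ell)$. Because $\Sigma_n(Z(s))u(s)=(0,\sigma_n(\Pi_1 Z(s))u(s))$ and the truncation $\sigma_n$ is bounded, the forcing lies in $\h_1$ with $\h_1$-norm bounded by $|\sigma_n|_\infty\,|u(s)|_H$. Standard energy estimates for the inhomogeneous damped wave equation (using the dissipated energy $\tfrac12(|\partial_x\phi|_{L^2}^2+|\psi|_{L^2}^2)$) then give
\begin{equation*}
\sup_{t\in[0,T]}|K(u)(t)|_{\h_1}\leq C\sqrt{T}\,|\sigma_n|_\infty\,|u|_{L^2([0,T];H)},
\end{equation*}
so $K(B)$ is uniformly bounded in $\h_1$. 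The compact embedding $\h_1\hookrightarrow\e$ follows from Rellich--Kondrachov (which yields $H_0^1(0,\ell)\hookrightarrow C_0(0,\ell)$ compactly in one spatial dimension) together with an Arzel\`a-Ascoli argument on antiderivatives: if $\{g_n\}$ is bounded in $L^2(0,\ell)$, then by Cauchy--Schwarz $\hat g_n(x):=\int_0^x g_n(y)\,dy$ is uniformly $\tfrac12$-H\"older, hence possesses a uniformly convergent subsequence, which is exactly convergence of $\{g_n\}$ in $C^{-1}(0,\ell)$.

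For equicontinuity I would use the semigroup property to split, for $0\leq t_1<t_2\leq T$,
\begin{equation*}
K(u)(t_2)-K(u)(t_1)=\int_{t_1}^{t_2}S_\alpha(t_2-s)\Sigma_n(Z(s))u(s)\,ds+\bigl(S_\alpha(t_2-t_1)-I\bigr)K(u)(t_1).
\end{equation*}
The first term has $\h_1$-norm at most $C\sqrt{t_2-t_1}\,|\sigma_n|_\infty\,|u|_{L^2([0,T];H)}$ by the same energy estimate, hence vanishes in $\e$ uniformly over $u\in B$ as $t_2-t_1\to 0$. The second term is handled by combining two facts: (a) the set $\mathcal{K}:=\{K(u)(t):u\in B,\,t\in[0,T]\}$ is bounded in $\h_1$ and therefore relatively compact in $\e$ by the compact embedding; and (b) $\{S_\alpha(t)\}_{t\geq 0}$ is a $C_0$-semigroup on $\e$ (following from the analysis of $S_0$ in Section \ref{Sec:Setup} together with the transformation $q=e^{\alpha t/2}u$ used in Section \ref{Sec:Linfty-decay}), so $(S_\alpha(h)-I)z\to 0$ uniformly over the compact set $\overline{\mathcal{K}}\subset\e$ as $h\to 0$.

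The main obstacle is that $S_\alpha$ has no parabolic smoothing, so $(S_\alpha(h)-I)$ cannot be made small on merely bounded subsets of $\e$; it is exactly the $L^2$-in-time to $C$-in-time gain produced by the time integration against the wave semigroup that lifts the output into the strictly smaller space $\h_1$ and compensates for this lack of smoothing. Once the pointwise relative compactness and equicontinuity are in hand, Arzel\`a--Ascoli in $C([0,T];\e)$ yields that $K(B)$ is relatively compact, completing the proof that $K$ is a compact linear operator.
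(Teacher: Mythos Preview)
Your proposal is correct and follows essentially the same route as the paper: a uniform $\h_1$-bound on $K(B)$, the compact embedding $\h_1\hookrightarrow\e$ (proved exactly as you outline, via Sobolev embedding plus Arzel\`a--Ascoli on antiderivatives), and equicontinuity via the same two-term splitting. The only cosmetic difference is in the second equicontinuity term: the paper estimates it through $|S_\alpha(t-s)-I|_{\mathscr{L}(\h_1)}$ and appeals to strong continuity, whereas you more explicitly invoke uniform convergence of $S_\alpha(h)-I$ on the relatively compact set $\overline{\mathcal K}\subset\e$; both are driven by the same compact embedding and amount to the same argument.
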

\begin{proof}

Let $\mathcal{U}\subset L^2([0,T];H)$ be a bounded set. The statement follows by showing that $K(\mathcal{U})$ is relatively compact in $C([0,T];\e).$ First note that, from Lemma \ref{lem:StochasticConvolutionApriori} (with $\epsilon=1, \Psi^\epsilon_2=0$ and $\Psi^\epsilon_1=\Sigma(Z)$) we obtain the bound
	\begin{equation}\label{Kcomp}
	\begin{aligned}
	\sup_{u\in\mathcal{U}}\sup_{t\in[0,T]}\bigg|\int_{0}^{t}S_\alpha(t-s)\Sigma_n(Z(s))u(s)ds\bigg|_{\h_1}\leq C_n,
	\end{aligned}
	\end{equation}	
	for some constant $C_n$ that depends on $T, |\sigma_n|_\infty$ and the diameter of $\mathcal{U}.$	We claim now that $\h_1$ is relatively compact in $\mathcal{E}.$ Indeed, the inclusion $H^1\subset C_0$ is compact by Sobolev embedding. Turning to the inclusion $L^2\subset C^{-1}$ we have , for any $\phi\in H,$
	$$|\phi|_{C^{-1}}=\big|\hat{\phi}\big|_{\infty}:=\sup_{r\in(0,\ell)}\bigg|\int_{0}^{r}\phi(\xi)d\xi\bigg|.$$
	Hence, for any bounded sequence $\{\phi_m\}_{m\in\N}\subset{H},$ the sequence of anti-derivatives $\{\hat{\phi}_m\}_{m\in\N}\subset C_0(0,\ell)$ is uniformly bounded and uniformly $1/2-$H\"older continuous. From the classical Arzel\`a-Ascoli theorem it follows that, up to a subsequence, $\hat{\phi}_m$ converges uniformly to a function $\hat{\phi}\in C_0.$ Passing if necessary to a further subsequence and without changing the notation, the weak $L^2-$compactness of $\{\phi_m\}_m$ and uniqueness of the limit imply that $\hat{\phi}$ has a weak derivative $\hat{\phi}'\in H.$ From the latter we deduce that $\lim_{m\to\infty}\phi_m=\hat{\phi}'$ in $C^{-1},$ which allows us to conclude our initial claim.
	
	It remains to show that $K(\mathcal{U})\subset C([0,T];\e)$ is uniformly equicontinuous. To this end, note that for any $s<t\in[0,T], u\in\mathcal{U}$ we have 
	\begin{equation*}
	K(u)(t)-K(u)(s)=\int_{s}^{t}S_\alpha(t-r)\Sigma_n(Z(r))u(r)dr+\int_{0}^{s}[S_\alpha(t-r)-S_\alpha(s-r)]\Sigma_n(Z(r))u(r)dr.
	\end{equation*}
	From continuity of the semigroup $S$ in $\h_1$ (\ref{prop:L2-decay}) and the local boundedness of $\sigma, u$ we obtain 
	\begin{equation*}
	\begin{aligned}
	\big|K(u)(t)-K(u)(s)\big|_{\e}&\leq C\int_{s}^{t}|\Sigma_n(Z(r))u(r)|_{\e}dr\\&+C\int_{0}^{s}|S_\alpha(s-r)[S_\alpha(t-s)-I]|_{\mathscr{L}(\h_1)}|\sigma_n(\Pi_1Z(r))u(r)|_{H}dr\\&
	\leq C_{\sigma_n}(t-s)^{1/2}|u|_{L^2([0,T];H)}+C_n\int_{0}^{s}e^{-\omega(s-r)}|S_\alpha(t-s)-I|_{\mathscr{L}(\h_1)}|u(r)|_{H}dr\\&
	\leq C_{\sigma_n, \mathcal{U}}(t-s)^{1/2}+C_{n,T,\mathcal{U}}|S_\alpha(t-s)-I|_{\mathscr{L}(\h_1)},
	\end{aligned}
	\end{equation*}
	where the latter holds uniformly over $u\in\mathcal{U}$ and $s,t\in[0,T].$
	Next, let $\epsilon>0.$ From the strong continuity of $S_\alpha,$  there exists $\delta_1=\delta_1(T,\mathcal{U})>0$ such that for all $s,t$ with $|t-s|<\delta_1$ the second term in the last display is below $\epsilon/2.$ Thus, the choice $\delta_n:=\min\{\delta_1, (\epsilon/2C_{\sigma_n, \mathcal{U}})^2\}$ satisfies the $\epsilon-$challenge for uniform equicontinuity of $K(\mathcal{U}).$ The proof is complete in view of the latter, \eqref{Kcomp} and an infinite-dimensional version of Arzel\`a-Ascoli theorem.	
\end{proof}
\noindent In the next lemma we show that localizations of the solution to the skeleton equation are continuous  with respect to controls and initial conditions. Then we prove a quantitative estimate for the difference between controlled and uncontrolled trajectories in terms of the $L^2$ norm of the controls.
\begin{lem}\label{lem:controliccontinuity}
\begin{enumerate} Let $Z^u_{n,z}, Z^0_{n,z}$ as in Definition \ref{dfn:mild solutions}. Under Assumptions \ref{Assumption:b}, \ref{Assumption:sigma} the following hold:
    \item Let $T, N>0$ and $\mathcal{U}_N\subset L^2([0,T];H)$ be the centered ball of radius $N,$ endowed with the weak $L^2$ topology. For each $n\in\N$ the map 
	$$\e\times \mathcal{U}_N \ni (z, u)\longmapsto Z^u_{n,z}\in C([0,T];\e)$$
is continuous.
\item Let $T>0, n\in\N, u\in L^2([0,T];\h).$ There exists a smooth, non-decreasing function $\Lambda_n:[0,\infty)\rightarrow [0,\infty)$ with $\Lambda(0)=0, \lim_{x\to\infty}\Lambda(x)=\infty$ such that for all $z\in\e$ $$| Z_{n,z}^{u}-Z_{n,z}^0|_{C([0,T];\e)}\leq \Lambda_n\big(|u|_{L^2([0,T];H)}\big). $$
\end{enumerate}
\end{lem}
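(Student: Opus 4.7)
The plan for part (1) is to take any convergent sequence $(z_k, u_k) \to (z, u)$ with $z_k \to z$ in $\e$ and $u_k \rightharpoonup u$ weakly in $L^2([0,T]; H)$, set $Z_k := Z^{u_k}_{n, z_k}$ and $Z := Z^u_{n, z}$, and show that $|Z_k - Z|_{C([0,T]; \e)} \to 0$. Subtracting the mild formulations and decomposing the control-dependent term as
\[
\Sigma_n(Z_k(s))u_k(s) - \Sigma_n(Z(s))u(s) \;=\; \bigl[\Sigma_n(Z_k(s)) - \Sigma_n(Z(s))\bigr]u_k(s) \;+\; \Sigma_n(Z(s))\bigl(u_k(s) - u(s)\bigr)
\]
cleanly separates the strong-convergence contribution from the weak-convergence contribution. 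Invoking Theorem \ref{thm:Linftydecay} for the exponential bound on $S_\alpha$ in the $\e$ norm, together with the global Lipschitz continuity of $B_n$ and $\Sigma_n$ (inherited from the truncation in Definition \ref{dfn:local mild solutions}) and the continuous embedding $H\hookrightarrow C^{-1}$ applied to the $\Sigma_n(\cdot)u_k(s)$ term, yields a pointwise bound of the form
\[
|Z_k(t) - Z(t)|_\e \;\leq\; C|z_k - z|_\e \;+\; C_n\!\int_0^t |Z_k(s) - Z(s)|_\e\bigl(1 + |u_k(s)|_H\bigr)\,ds \;+\; \eta_k(t),
\]
where $\eta_k(t) := \bigl|\int_0^t S_\alpha(t-s)\Sigma_n(Z(s))(u_k(s) - u(s))\,ds\bigr|_\e$.

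The crucial step is that $\sup_{t\in[0,T]} \eta_k(t) \to 0$, and this is precisely where Lemma \ref{lem:Zaprioricompactnesslem} is used: with $Z$ fixed as the candidate limit, the linear map $u\mapsto K(u)$ of that lemma is compact from $L^2([0,T];H)$ into $C([0,T]; \e)$, so it carries the weakly null sequence $u_k - u$ to a strongly null sequence. Because $u_k \in \mathcal{U}_N$, the kernel $1 + |u_k(\cdot)|_H$ lies in $L^1([0,T])$ with norm bounded by $T + \sqrt{T}\,N$ uniformly in $k$, so the integral form of Gronwall's inequality converts the above estimate into $|Z_k - Z|_{C([0,T];\e)} \to 0$, which proves the joint continuity.

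For part (2), I would fix $z\in\e$ and write the mild formulation for $Z^u_{n,z} - Z^0_{n,z}$. Combining Theorem \ref{thm:Linftydecay} with the global Lipschitz continuity of $b_n$, the boundedness of $\sigma_n$ (which made the constant $C_n$ of Lemma \ref{lem:Zaprioricompactnesslem} depend only on $|\sigma_n|_\infty$), and the embedding $H\hookrightarrow C^{-1}$ applied to $\Sigma_n(Z^u_{n,z}(s))u(s)$, the control term is dominated by $M|\sigma_n|_\infty\sqrt{T}\,|u|_{L^2([0,T];H)}$ via Cauchy--Schwarz in $s$, after which classical Gronwall yields
\[
|Z^u_{n,z}(t) - Z^0_{n,z}(t)|_\e \;\leq\; \bar C_n(T)\,|u|_{L^2([0,T]; H)}
\]
uniformly in $z$, and the declared function $\Lambda_n$ is obtained by (if necessary) mollifying the linear bound $x\mapsto \bar C_n(T)\,x$. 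The main obstacle lies in part (1): the tension between the weak topology on controls and the nonlinear dependence through $\Sigma_n(Z)$. The decomposition displayed above resolves this by isolating the one term that genuinely requires compactness, namely $\Sigma_n(Z)(u_k - u)$, which is handled by Lemma \ref{lem:Zaprioricompactnesslem}, while the remainder is purely Lipschitz and yields to Gronwall.
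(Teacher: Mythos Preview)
Your proposal is correct and follows essentially the same route as the paper: the same decomposition of the control term, the same use of Lemma \ref{lem:Zaprioricompactnesslem} to handle the weakly-convergent piece, and Gronwall to close. The only cosmetic difference is that the paper squares the estimate before applying Gronwall (so that Cauchy--Schwarz turns $\int |Z_k-Z|\,|u_k|\,ds$ into $|u_k|_{L^2}^2\int |Z_k-Z|^2\,ds$), whereas you apply the integral Gronwall directly with the $L^1$ kernel $1+|u_k(\cdot)|_H$; both are fine. Part (2) is identical in spirit and the paper indeed takes $\Lambda_n$ linear, so no mollification is needed.
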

\begin{proof}
Throughout the proof and for the sake of lighter notation we shall drop the subcript $n$ and work instead in the case of bounded, Lipschitz continuous coefficients $b,\sigma.$
\begin{enumerate}
    \item \noindent Since the weak topology on bounded sets is metrizable, it suffices to consider a sequence $\{(z_m, u_m)\}_{m\in\N}\subset\mathcal{U}_N\times\e$ that converges weakly, as $m\to\infty,$ to a pair $(z, u)\in\mathcal{U}_N\times\e.$ For $t\in[0,T]$ we have 
\begin{equation*}
\begin{aligned}
|Z^{u_m}_{z_m}(t)-Z^{u}_{z}(t)\big|_{\e}&\leq \big| S_\alpha(t)(z_m-z)\big|_{\e}+\int_{0}^{t}\bigg| S_\alpha(t-s)\big[B(Z^{u_m}_{z_m}(s))- B(Z^{u}_{z}(s))\big]\bigg|_{\e}ds\\&
+\bigg|\int_{0}^{t} S_\alpha(t-s)\big[\Sigma(Z^{u_m}_{z_m}(s))u_m(s)- \Sigma(Z^{u}_{z}(s))u(s)\big]ds\bigg|_{\e}
\end{aligned}
\end{equation*}
From Assumption \ref{Assumption:b}, Assumption \ref{Assumption:sigma} and Theorem \ref{thm:Linftydecay} we obtain 
\begin{equation*}\label{Zcontbnd1}
\begin{aligned}
|Z^{u_m}_{z_m}(t)-Z^{u}_{z}(t)\big|_{\e}&\leq C| z_m-z|_{\e}+C\int_{0}^{t} \big|b(\Pi_1Z^{u_m}_{z_m}(s))- b(\Pi_1Z^{u}_{z}(s))\big|_{H}ds\\&
+C\int_{0}^{t}\big|\big[\sigma(\Pi_1Z^{u_m}_{z_m}(s))-\sigma(\Pi_1Z^{u}_{z}(s))\big]u_m(s)\big|_{H}ds\\&+C\bigg|\int_{0}^{t}S_\alpha(t-s) \Sigma(Z^{u}_{z}(s))\big[u_m(s)-u(s)\big]ds\bigg|_{\e}\\&
\leq C| z_m-z|_{\e}+C|b|_{Lip}\int_{0}^{t}\big|\Pi_1Z^{u_m}_{z_m}(s)-\Pi_1Z^{u}_{z}(s)|_{H}ds\\&
+|\sigma|_{Lip}\int_{0}^{t}\big|\Pi_1Z^{u_m}_{z_m}(s)-\Pi_1Z^{u}_{z}(s)|_{C_0(0,\ell)}|u_m(s)|_{H}ds\\&
+C\sup_{t\in[0,T]}\bigg|\int_{0}^{t}S_\alpha(t-s) \Sigma(Z^{u}_{z}(s))\big[u_m(s)-u(s)\big]ds\bigg|_{\e}.
\end{aligned}
\end{equation*}
Thus, from an application of the Cauchy-Schwarz inequality for the control term we obtain
\begin{equation*}
\begin{aligned}
|Z^{u_m}_{z_m}(t)-Z^{u}_{z}(t)\big|^2_{\e}&
\leq C| z_m-z|^2_{\e}+CT^{1/2}\int_{0}^{T}\big|Z^{u_m}_{z_m}(s)-Z^{u}_{z}(s)|^2_{\e}ds\\&+\tilde{C}T^{1/2}\int_{0}^{T} |Z^{u_m}_{z_m}(s)-Z^{u}_{z}(s)\big|^2_{\e}ds\\&
+|\sigma|_{Lip}\sup_{m\in\N}|u_m|^2_{L^2([0,T];H)}\int_{0}^{T}\big|Z^{u_m}_{z_m}(s)-Z^{u}_{z}(s)|^2_{\e}ds\\&
+C\sup_{t\in[0,T]}\bigg|\int_{0}^{t}S_\alpha(t-s) \Sigma(Z^{u}_{z}(s))\big[u_m(s)-u(s)\big]ds\bigg|^2_{\e}.
\end{aligned}
\end{equation*}
Gr\"onwall's inequality then furnishes
\begin{equation*}
\begin{aligned}
\sup_{t\in[0,T]}|Z^{u_m}_{z_m}(t)-Z^{u}_{z}(t)\big|^2_{\e}&
\leq Ce^{C_{\sigma, b, N, T}}\bigg(| z_m-z|^2_{\e}+\sup_{t\in[0,T]}\bigg|\int_{0}^{t}S_\alpha(t-s) \Sigma(Z^{u}_{z}(s))\big[u_m(s)-u(s)\big]ds\bigg|^2_{\e}\bigg).
\end{aligned}
\end{equation*}
In view of Lemma \ref{lem:Zaprioricompactnesslem} with $Z=Z^{u}_{z},$ the right-hands side vanishes as $n\to\infty$ and the conclusion follows.
\item 
For $t\leq T$ we have 
\begin{equation*}
   Z_z^u(t)-Z_z^0(t)=\int_{0}^tS_\alpha(t-s)\big[b(  \Pi_1Z_z^u(s))- b(\Pi_1  Z_z^0(s))\big]ds+\int_{0}^tS_\alpha(t-s)\sigma( \Pi_1 Z_z^u(s))u(s)ds.
\end{equation*}
From the local boundedness of $\sigma$, the second term on the right hand side satisfies 

\begin{equation*}
    \begin{aligned}
        \bigg|\int_{0}^tS_\alpha(t-s)\sigma(  Z_z^u(t))u(t)dt\bigg|_{\e}&\leq C \bigg|\int_{0}^tS_\alpha(t-s)\sigma( Z_z^u(t))u(t)dt\bigg|_{\h_1}\\&\leq C|\sigma|_{\infty}\int_{0}^{t}e^{-\theta(t-s)} |u(s)|_{H}ds\leq C|\sigma|_{\infty} T^{1/2}|u|_{L^2([0,T];H)}.
    \end{aligned}
\end{equation*}
As for the first term, we use the (local) Lipschitz continuity of $b$ to obtain
\begin{equation*}
\begin{aligned}
    \bigg|\int_{0}^{t}S_\alpha(t-s)\big[b(  Z_z^u(s))&- b(  Z_z^0(s))\big]ds\bigg|_{\e}\leq C\int_{0}^{T}\sup_{r\leq s}|Z^{u}_{z}(r)-Z^{0}_{z}(r)|_{\e} ds.
\end{aligned}  
\end{equation*}
From the last two estimates along with Gr\"onwall's inequality we may conclude
\begin{equation*}
 | Z_z^u-Z_z^0|_{C([0,T];\e)}\leq C'T^{1/2}|u|_{L^2([0,T];H)}\exp(CT)
\end{equation*}
for constants $C, C'>0.$  The proof is complete upon observing that the function $$\Lambda(x)=C'T^{1/2}\exp(CT)x, x\geq 0$$ satisfies the desired properties.\end{enumerate}\end{proof}

  \subsection{Local Uniform Large Deviations}\label{sec:LULDP}   
        We conclude this section by formulating and proving a local  Uniform Large Deviations Principle (LULDP), per Definition \ref{Def:LocalULDP}, for the local solutions $\{Z_{z,n}^\epsilon; \epsilon\in(0,1)\}$ over bounded sets of initial data. This is an important ingredient in the study of the exit problem for local solutions of \eqref{eq:model}.
        
        To do so, we first state the definition of a global Uniform Large Deviations Principle (GULDP) in Definition \ref{Def:GlobalULDP}. The idea that we exploit is that given that we are interested in the exit from a  non-empty bounded set  $D\subset\e$ with $z^*\in D$, proving a local ULDP for the family $\{Z_z^\epsilon; \epsilon\in(0,1)\}$ is the same as proving a global ULDP for the family $\{Z_{z,n}^\epsilon; \epsilon\in(0,1)\}$ where the process $Z_{z,n}^\epsilon$ is a localized version of $Z_z^\epsilon$. 
        
       \begin{rem} As a matter of fact, as it is implied by the context and we will more specifically elaborate in Section \ref{sec:Controllability}, we are mainly interested in the process $\{Z_{z,n_{D}}^\epsilon; \epsilon\in(0,1)\}$, i.e., for a specific value of $n=n_D$. In particular, $n_D$ is fixed to be large enough so that a ball with radius $n_D$ contains $D$, i.e., $n_D:=\inf\big\{ n\in\N: D\subset \bar{B}_\e(0, n)\}$, see also  (\ref{eq:nDdefinition}).
        \end{rem}

        \begin{dfn}[Global ULDP]\label{Def:GlobalULDP} Let $D\subset\e$ be a bounded set of initial data. 
        \begin{enumerate}
                  \item[1.] 
                Let  $z\in\e,T>0.$
        A lower semicontinuous function $I_{z,T}:C([0,T];\e) \to [0,+\infty]$ is called a rate function. For each $s\geq 0$ we define the sublevel sets       \begin{equation}
        \label{eq:SublevelSets}
             \Phi_{z,T}(s) := \left\{\phi \in C([0,T];\e): I_{z,T}(\phi) \leq s \right\}.
         \end{equation}
         \item[2.] A family $\{Z_z^\epsilon; \epsilon\in(0,1)\}$ of $C([0,T];\e)-$valued random elements is said to satisfy a (global) ULDP with respect to $D$ if for any $T>0$, $s_0>0$ and $\delta>0$ the following hold: \begin{enumerate}
                  \item 
                  \begin{equation} 
                  \label{eq:LULDP-lower}
                      \liminf_{\epsilon \to 0} \inf_{z \in D} \inf_{\phi \in \Phi_{z,T}(s_0)} \bigg\{\epsilon^2 \log \pr\bigg(|Z^{\epsilon}_z - \varphi|_{C([0,T];\e)}<\delta\bigg) + I_{z,T}(\varphi) \bigg\} \geq 0,
                  \end{equation}
                  \item 
                  \begin{equation} 
                  \label{eq:LULDP-upper}
                      \limsup_{\epsilon \to 0} \sup_{z \in D} \sup_{s \in [0,s_0]} \bigg\{\epsilon^2\log \pr \bigg(\textnormal{dist}_{C([0,T];\e)}\big(Z^\epsilon_z, \Phi_{z,T}(s) \big)\geq\delta\bigg) + s \bigg\} \leq 0.
                  \end{equation}
              \end{enumerate}

\end{enumerate}
      
             \end{dfn}

Next, we define the local ULDP. 
       \begin{dfn}[Local ULDP] \label{Def:LocalULDP} Let $T>0, z\in\e$ and $\{I^n_{z,T};n\in\N\}$ be a countable collection of good rate functions on $C([0,T];\e).$ The family $\{Z_z^\epsilon; \epsilon\in(0,1)\}$ of $C([0,T];\e)-$valued local mild solutions to \eqref{eq:model}  is said to satisfy a Local Uniform Large Deviations Principle (LULDP) over bounded sets of initial data if, for all $n\in\N,$ the family of localized (global) mild solutions  $\{Z^{\epsilon}_{z,n}; \epsilon\in(0,1)\}$ (per Definition \ref{dfn:local mild solutions}) satisfies a Global ULDP, per Definition \ref{Def:GlobalULDP}, over bounded sets with rate function $I^n_{z,T}.$ The level sets of the latter will be denoted by $\Phi^n_{z,T}(s)$ for $s\geq 0.$
\end{dfn}

    \begin{prop}\label{prop:LULDP} Let 
    $Z_z^u$ be the local mild solution of the skeleton equation starting at $z\in\e$. Under Assumptions \ref{Assumption:b}, \ref{Assumption:sigma}, local solutions of \eqref{eq:model} satisfy, per Definition \ref{Def:LocalULDP}, a LULDP over bounded sets of initial data with good rate functions             \begin{equation}\label{eq:LRateFunctionWave}
             I^n_{z,T}(\phi) := \inf_{u\in L^2([0,T];H)} \left\{ \frac{1}{2}\int_0^T \int_0^\ell |u(s,y)|^2 dyds: \phi=Z^{u}_{z,n}  \right\},\;n\in\N
         \end{equation}
          for all $T>0$ and $\phi\in C([0,T];\e),$ where the convention that $\inf\varnothing=\infty$ is understood.
    \end{prop}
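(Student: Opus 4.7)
The plan is to reduce the claim to a Global ULDP for each localized process $\{Z^{\epsilon}_{z,n}\}_\epsilon$ and then verify this via the weak-convergence (Budhiraja--Dupuis) framework, making full use of the continuity properties established in Lemma \ref{lem:controliccontinuity}. By Definition \ref{Def:LocalULDP} it suffices, for each fixed $n\in\N$, to show that the globally well-posed localized mild solution $Z^{\epsilon}_{z,n}$ (existence guaranteed by Proposition \ref{prop:WellPosedness} since $b_n,\sigma_n$ are bounded and Lipschitz) satisfies a Global ULDP over the bounded set $D\subset\e$ with good rate function $I^n_{z,T}$. Throughout, fix $n$; the resulting constants will depend on $n$ but not on $\epsilon$ or on $z\in D$.

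Next I would verify that the rate function is good and that its sublevel sets are uniformly compact in $z\in D$. For any $s_0>0$, the set of controls $\mathcal{U}_N=\{u\in L^2([0,T];H):\tfrac{1}{2}|u|^2_{L^2}\le s_0\}$ (with $N=\sqrt{2s_0}$) is weakly compact, and Lemma \ref{lem:controliccontinuity}(1) gives joint continuity of
\[
(z,u)\longmapsto Z^{u}_{z,n}\quad \text{from } \bar D\times(\mathcal{U}_N,\text{weak})\to C([0,T];\e).
\]
Hence $\Phi^n_{z,T}(s_0)=\{Z^u_{z,n}: u\in\mathcal{U}_N\}$ is compact for each $z$, and as $z$ ranges over $\bar D$ these sublevel sets depend continuously on $z$; this yields the equicontinuity/uniform-compactness condition required in the ULDP framework of \cite{salins2019uniform}.

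The core verification is the uniform convergence of controlled processes, which gives both the upper and lower bounds \eqref{eq:LULDP-upper}, \eqref{eq:LULDP-lower}. Using the Budhiraja--Dupuis variational representation, one reduces matters to showing: for any sequences $\epsilon_k\to 0$, $z_k\to z$ in $\bar D$, and adapted controls $\{h_k\}$ with $\sup_k|h_k|_{L^2([0,T];H)}^2\le 2s_0$ that converge weakly (a.s.) to $h$, the controlled SPDE solution $\tilde Z^{\epsilon_k,h_k}_{z_k,n}$ converges to the skeleton $Z^{h}_{z,n}$ in $C([0,T];\e)$ in probability. I would split the difference as
\[
\tilde Z^{\epsilon_k,h_k}_{z_k,n}-Z^{h}_{z,n}=\bigl(\tilde Z^{\epsilon_k,h_k}_{z_k,n}-Z^{h_k}_{z_k,n}\bigr)+\bigl(Z^{h_k}_{z_k,n}-Z^{h}_{z,n}\bigr),
\]
where the second term vanishes by Lemma \ref{lem:controliccontinuity}(1). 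For the first term, subtracting the mild formulations, using the Lipschitz continuity of $B_n,\Sigma_n$ in $\e$, and applying Gr\"onwall's inequality reduces matters to controlling the stochastic convolution $\epsilon_k\int_0^t S_\alpha(t-s)\Sigma_n(\tilde Z^{\epsilon_k,h_k}_{z_k,n}(s))dW(s)$ in $C([0,T];\e)$. By Lemma \ref{lem:StochasticConvolutionApriori} and the uniform bound $|\sigma_n|_\infty<\infty$, its $L^p(\Omega;C([0,T];\e))$-norm is $O(\epsilon_k)$, uniformly in $z_k\in D$ and the control $h_k$.

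The main obstacle is verifying this last convergence uniformly in $z\in D$ and along \emph{weakly} converging adapted control sequences. The difficulty is the absence of semigroup smoothing in the hyperbolic setting: one cannot simply rely on Hilbert--Schmidt bounds in $\h$. It is precisely the supremum-norm stochastic convolution estimate of Lemma \ref{lem:StochasticConvolutionApriori} that makes the argument go through in $\e$, and the joint continuity in Lemma \ref{lem:controliccontinuity}(1) that delivers uniformity in initial data. Once these two ingredients are in hand, the Budhiraja--Dupuis machinery, together with the equicontinuity framework of \cite{salins2019uniform}, yields \eqref{eq:LULDP-lower}--\eqref{eq:LULDP-upper} with the prescribed rate function $I^n_{z,T}$, completing the proof for each $n$ and hence the LULDP by Definition \ref{Def:LocalULDP}.
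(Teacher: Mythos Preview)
Your proposal is correct in its core: the key estimate—Lipschitz continuity of $B_n,\Sigma_n$, Gr\"onwall, and the stochastic convolution bound of Lemma \ref{lem:StochasticConvolutionApriori}—is exactly what the paper uses, and your first-term estimate $\ex\sup_t|Z^{\epsilon,h}_{z,n}-Z^{h}_{z,n}|_\e^p\le C_n\epsilon^p$, uniform in $z\in D$ and in controls bounded by $M$, is identical to the paper's.

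The difference is in the packaging, and the paper's route is more direct. Rather than the sequential Budhiraja--Dupuis formulation you describe, the paper invokes \cite[Theorems 2.9 and 2.12]{salins2019equivalences} (the EULP equivalence, not \cite{salins2019uniform}), which reduces the ULDP over bounded $D$ to the single uniform condition
\[
\lim_{\epsilon\to 0}\sup_{z\in D}\sup_{u\in\mathcal{P}_2^M}\pr\Big[\sup_{t\in[0,T]}\big|Z_{z,n}^{\epsilon,u}(t)-Z_{z,n}^{u}(t)\big|_\e\ge\delta\Big]=0.
\]
This is precisely your first-term estimate, and nothing more is needed: no splitting, no skeleton continuity (Lemma \ref{lem:controliccontinuity}(1) is not invoked in the paper's proof), and no sequential argument. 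Your formulation with $z_k\to z$ and $h_k\rightharpoonup h$ is more involved and, taken literally, only yields uniformity over \emph{compact} sets of initial data; since $D\subset\e$ is merely bounded, you would still have to fall back on the uniform-in-$(z,u)$ estimate of your first term to obtain the full ULDP over $D$, at which point the second term and the sequential framing are redundant. The EULP criterion is designed exactly to handle non-compact index sets, which is why the paper's argument is shorter.
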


              \begin{proof} Let $T>0, n\in\N$   
              and $Z^\epsilon_{z,n}$ be a localized mild solution, per Definition \ref{dfn:local mild solutions}, with control $h=0.$ We shall rely on the equivalence of the ULDP and Equicontinuous Uniform Laplace Principle (EULP); see e.g. \cite[Theorem 2.9]{salins2019equivalences}).  In view of  \cite[Theorem 2.12]{salins2019equivalences}, the latter holds over bounded subsets of initial data, provided that for any $\delta>0,$ $D\subset\e$ bounded and $M>0,$
    	   \begin{equation}
    	       \label{eq:EULPsufficient}
            \lim_{\epsilon\to 0}\sup_{z\in D}\sup_{u\in\mathcal{P}_2^M}\pr\bigg[\sup_{t\in[0,T]}\big|Z_{z, n}^{\epsilon,u}(t)-Z_{z, n}^{u}(t)\big|_{\e}\geq \delta \bigg]=0.
    	   \end{equation}
  Here, $\mathcal{P}_2^M$ is the collection of adapted, $H-$valued stochastic controls $u$ such that $\pr( |u|^2_{L^2([0,T];H)}\leq M)=1.$ Moreover, $Z^{\epsilon,u}_{z,n}$ is a localized mild solution, per Definition \ref{dfn:local mild solutions}, with control $h=u.$

   \noindent We can now estimate the latter using the local Lipschitz continuity of $b,\sigma$ and Gr\"onwall's inequality. Indeed,     	   
    	   \begin{equation*}
    	   \begin{aligned}
\big|Z_{z,n}^{\epsilon,u}(t)-Z_{z,n}^{0,u}(t)\big|_{\e}&\leq \bigg|\int_{0}^{t}S_{\alpha}(t-s)\big[B_n(Z_{z,n}^{\epsilon,u}(s))-B_n(Z_{z,n}^{0,u}(s))\big]ds\bigg|_{\e}\\&+ \bigg|\int_{0}^{t}S_{\alpha}(t-s)\big[\Sigma_n(Z_{z,n}^{\epsilon,u}(s))-\Sigma_n(Z_{z,n}^{0,u}(s))\big]u(s)ds\bigg|_{\e}\\&
+\epsilon\bigg|\int_{0}^{t}S_{\alpha}(t-s)\Sigma_n(Z_{z,n}^{\epsilon,u}(s))dW(s)\bigg|_{\e}\\&\leq C\int_{0}^{t}\big|b_{n}(\Pi_1Z_{z,n}^{\epsilon,u}(s))-b_{n}(\Pi_1Z_{z,n}^{0,u}(s))\big|_{H}ds\\&
+C\int_{0}^{t}\big|\big[\sigma_n(\Pi_1Z_{z,n}^{\epsilon,u}(s))-\sigma_n(\Pi_1Z_{z,n}^{0,u}(s))\big]u(s)\big|_{H}ds+2C\epsilon\sup_{t\in[0,T]}\big|\Gamma_{n}^\epsilon(t)\big|_\e
    	   \end{aligned}
    	   \end{equation*}
where $\Gamma^\epsilon_{n}(t)$ is the stochastic convolution term.
 From Assumption \ref{Assumption:b}, \ref{Assumption:sigma}, there exists $L_{n}$ such that 
\begin{equation*}
\begin{aligned}
\big|Z_{z,n}^{\epsilon,u}(t)-Z_{z,n}^{0,u}(t)\big|_{\e}&\leq L_n\int_{0}^{t}\big|\Pi_1Z_{z,n}^{\epsilon,u}(s))-\Pi_1Z_{z,n}^{0,u}(s))\big|_{\infty}ds\\&
+C_{\sigma_n}\int_{0}^{t}\big|\Pi_1Z_{z,n}^{\epsilon,u}(s)-\Pi_1Z_{z,n}^{0,u}(s)\big|_{\infty}|u(s)|_{H}ds+C\epsilon\sup_{t\in[0,T]}|\Gamma_{n}^\epsilon(t)|_\e.
\end{aligned}
\end{equation*}

\noindent From the $L^2-$bound on $u$ and Gr\"onwall's inequality we get

\begin{equation*}
\begin{aligned}
\ex\sup_{t\in[0,T]}\big|Z_{z,n}^{\epsilon,u}(t)-Z_{z,n}^{0,u}(t)\big|^2_{\e}\leq C \epsilon^2e^{C_{n}T}\sup_{\epsilon\in(0,1), n\in\N}\ex\sup_{t\in[0,T]}\big|\Gamma_{n}^\epsilon(t)\big|^2_\e\leq C\epsilon^2e^{C_{n}T},
\end{aligned}
\end{equation*}
 \noindent where the last estimate follows by applying Lemma \ref{lem:StochasticConvolutionApriori} with $\Psi_2=0, p=2$ and using the boundedness of $\sigma_n.$  Taking $\epsilon\to 0$ we see that \eqref{eq:EULPsufficient} holds true for any $\delta, T>0$ and any bounded set $D\subset\e$ of initial data. 

 From the previous argument we deduce that the family of random elements  $\{Z^{\epsilon}_{z, n}; \epsilon>0\}$ satisfies a ULDP with rate function given in variational form by 
$$ \inf_{u\in L^2([0,T];H)} \left\{ \frac{1}{2}\int_0^T \int_0^\ell |u(s,y)|^2 dyds: \phi=Z^{u}_{z, n}  \right\},$$
where $Z^u_{z,n}$ is a localization of the skeleton equation. 
Finally, $I^n_{z,T}$ is a good rate function since its sublevel sets $\Phi^{n}_{z,T}(s_0)$  
are compact as a consequence of lower semicontinuity. The proof is complete.
\end{proof}

 \section{Exact controllability and regularity of the quasipotential}\label{sec:Controllability}

 Let $z^* \in \e$ be the asymptotically stable equilibrium from Assumption \ref{Assumption:xstar}. An observation that is important for the subsequent analysis is that such a point enjoys higher regularity. Indeed, $z^*$ satisfies 
\begin{align*}
\begin{cases}
    \Pi_2 z^*& = 0, \\
    \partial^2_x \Pi_1z^* + b(\Pi_1 z^*) &= 0.\end{cases}
\end{align*}
By standard elliptic regularity theory, 
$\Pi_1 z^* \in H^2$ because $b(\Pi_1 z^*) \in L^\infty \subset H$. Hence, $z^* \in \h_2\subset \h_1$.

The goal of this section is to show that we can connect any state $z\in\h_1$ to $z^*$ with a (controlled) path of the local solution of the skeleton equation $Z_{z^*}^{u}:=Z_{z^*}^{0,u}$ \eqref{eq:controlledequation}. Moreover, we show that this path has arbitrarily small energy, provided that $z$ is sufficiently close to $z^*$. In turn, this controllability property implies a continuity property of the quasipotential \eqref{eq:QuasipotentialIntro} which we prove in Lemma \ref{lem:innerregularity} below. 

Fix a non-empty bounded set  $D\subset\e$ with $z^*\in D$, 
and set
\begin{equation}\label{eq:nDdefinition}
    n_D:=\inf\big\{ n\in\N: D\subset \bar{B}_\e(0, n)       \big\}.
\end{equation}

Next, we formally define the notion of two quasipotentials that will be of central interest in the study of exit time and exit place asymptotics in Section \ref{Sec:Metastability}. In this section we study some of their properties.

\begin{dfn}[Quasipotentials $V_D$ and $V_{\bar{D}}$]\label{dfn:quasipotentialVd} Let $T>0, z\in\e$ and fix a set $N\subset\ \bar{D}$. Let $I^{n_{D}}_{z,T}: C([0,T];\e)\rightarrow [0,\infty]$ be the LULDP rate function corresponding to the localized process $\{Z^\epsilon_{z,n_D}\}$ from Proposition \ref{prop:LULDP} (see also  \eqref{eq:LRateFunctionWave}). The quasipotential $V_D$ from $z\in\e$ to $N$ defined with respect to paths that are bound to stay in $D$ is given by 
\begin{equation}
    \label{eq:VDquasipotential}
    V_D(z, N):=\inf\bigg\{ I^{n_D}_{z,T}(\phi)\;;\;T>0, \phi\in C([0,T];\e): \phi(0)=z\;,\;\phi(T)\in N, \forall t\in[0,T)\; \phi(t)\in D\; \bigg\}.
\end{equation}
Analogously, if we require that paths are bound to stay in $\bar{D}$ we define
\begin{equation}
    \label{eq:VDbarquasipotential}
    V_{\bar{D}}(z, N):=\inf\bigg\{ I^{n_D}_{z,T}(\phi)\;;\;T>0, \phi\in C([0,T];\e): \phi(0)=z\;,\;\phi(T)\in N, \forall t\in[0,T)\; \phi(t)\in \bar{D}\; \bigg\}.
\end{equation}
\end{dfn}

For $z'\in\e$ we shall also use the notation $V_D(z, z'):=V_D(z, \{z'\})$ and $V_{\bar{D}}(z, z'):=V_{\bar{D}}(z, \{z'\})$. These functionals are important in the study of exit shape asymptotics because there we need to make sure that paths do not spend time on the exterior of $D$ before hitting $N$; see Remark \ref{rem:VD} for a more detailed discussion.

In the case where paths are allowed to exit $D$ before hitting the target set $N\subset\ \bar{D}$, we drop the subscripts $D$ and $\bar{D}$ from $V_D$ and $V_{\bar{D}}$ respectively and we simply define $V$ via Definition \ref{dfn:quasipotential}.
\begin{dfn}[Quasipotential $V$]\label{dfn:quasipotential} 

Let $T>0, z\in\e$ and fix a set $N\subset\ \bar{D}$. Let $I^{n_{D}}_{z,T}: C([0,T];\e)\rightarrow [0,\infty]$ be the LULDP rate function corresponding to the localized process $\{Z^\epsilon_{z,n_D}\}$ from Proposition \ref{prop:LULDP} (see also  \eqref{eq:LRateFunctionWave}). The quasipotential $V$ from $z\in\e$ to $N$  is given by 
$$ V(z, N):=\inf\big\{ I^{n_D}_{z,T}(\phi); , T>0, \phi\in C([0,T];\e): \phi(0)=z\;,\;  \phi(T)\in N\big\}.       $$    

In general, we may drop the superscript $n_D$ from the rate function  $I_{z,T}: C([0,T];\e)\rightarrow [0,\infty]$  and for a given set $G\subset\e$, we may set for the quasipotential $V$ from $z$ to $G$  
     $$ V(z, G):=\inf\big\{ I_{z,T}(\phi);, T>0, \phi(0)=z\;,\; \phi\in C([0,T];\e):   \phi(T)\in G\big\}.       $$      
     \end{dfn}

In some particular cases, $V_D$ and $V_{\bar{D}}$ (from Definition \ref{dfn:quasipotentialVd}) are related to each other and to the quasipotential $V$ (from Definition \ref{dfn:quasipotential}) as shown in Lemma \ref{lem:VDequality} below.

\begin{lem}\label{lem:VDequality} For any $\varnothing\neq D\subset\e$ and $z\in D$ we have 
$$V_{\bar{D}}(z, \partial D)=V_D(z, \partial D)= V(z, \partial D).$$    
\end{lem}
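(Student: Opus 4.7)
The plan is to establish the chain
\[
V(z, \partial D) \leq V_{\bar{D}}(z, \partial D) \leq V_D(z, \partial D) \leq V(z, \partial D),
\]
which yields equality throughout. The first two inequalities are immediate from the definitions: since $D \subseteq \bar{D}$, every path admissible for $V_D(z,\partial D)$ is also admissible for $V_{\bar D}(z,\partial D)$, and every such path is admissible for $V(z,\partial D)$ (which carries no ``stay inside'' constraint). Hence the infima are ordered in the opposite way.

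The non-trivial step is $V_D(z, \partial D) \leq V(z, \partial D)$, which I would handle by a truncation argument. Given any $\phi \in C([0,T]; \e)$ with $\phi(0) = z$, $\phi(T) \in \partial D$ and $I^{n_D}_{z,T}(\phi)<\infty$, define the first hitting time of the boundary
\[
\tau := \inf\{t \in [0,T] : \phi(t) \in \partial D\}.
\]
Since $D$ is open (as in the rest of the paper, cf.\ Proposition \ref{corr:DExistence}) and $\phi$ is continuous with $\phi(0) = z \in D$, one has $\tau > 0$; since $\partial D$ is closed and $\phi(T) \in \partial D$, $\tau \leq T$ and $\phi(\tau) \in \partial D$. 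The key topological observation is that $\phi(t) \in D$ for every $t \in [0, \tau)$: otherwise, using the disjoint decomposition $\e = D \sqcup \partial D \sqcup \bar{D}^c$ together with the fact that $D$ and $\bar{D}^c$ are disjoint open sets, the connected image $\phi([0,t])$ would meet both $D$ and $\bar{D}^c$ without intersecting $\partial D$, a contradiction. Hence $\phi|_{[0,\tau]}$ is admissible for $V_D(z, \partial D)$.

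To compare costs, I would use that if $u \in L^2([0,T]; H)$ is a control realizing $\phi = Z^{u}_{z,n_D}$, then by uniqueness of mild solutions $u|_{[0,\tau]}$ realizes $\phi|_{[0,\tau]} = Z^{u|_{[0,\tau]}}_{z,n_D}$, with
\[
\tfrac{1}{2}\int_0^\tau |u(s)|_H^2 \, ds \leq \tfrac{1}{2}\int_0^T |u(s)|_H^2 \, ds.
\]
Taking the infimum over such controls gives $I^{n_D}_{z,\tau}(\phi|_{[0,\tau]}) \leq I^{n_D}_{z,T}(\phi)$, and a further infimum over admissible $\phi$ yields $V_D(z, \partial D) \leq V(z, \partial D)$. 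The only subtle point in the whole argument is the topological separation step that confines the truncated path to $D$ strictly before $\tau$; it genuinely relies on $D$ being open (equivalently on $z$ lying in the interior of $D$), consistent with the standing assumptions on exit domains in Section \ref{Sec:Metastability}. The remainder of the proof is bookkeeping.
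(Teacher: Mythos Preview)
Your proposal is correct and follows essentially the same approach as the paper: both dispatch the easy inequalities by set inclusion, then for $V_D(z,\partial D)\le V(z,\partial D)$ truncate a near-optimal path $\phi$ at its first hitting time $\tau$ of $\partial D$ and use that the restricted control has no larger $L^2$ norm. The paper simply asserts that $\phi(t)\in D$ for $t<\tau$, whereas you supply the connectedness justification and correctly flag that it relies on $D$ being open (a hypothesis not stated in the lemma but satisfied in all the paper's applications).
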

\begin{proof} The equality $V_{\bar{D}}(z, \partial D)=V_D(z, \partial D)$ is clear, so we focus on showing $V_D(z, \partial D)= V(z, \partial D)$. On the one hand, the inequality $V(z, \partial D)\leq V_D(z, \partial D)$ is trivial since $V_D$ is an infimum over a smaller set than the one used for $V.$ For the reverse inequality, let $\eta>0,$ $T>0, z'\in\partial D, u\in L^2([0,T];H)$ and $\phi=Z^u_z$ such that $Z^u_z(T)=z'$ and 
$$  \frac{1}{2}|u|^2_{L^2([0,T];H)}<V(z, \partial D)+\eta.$$
Next let $\tau=\inf\{ t>0 : \phi(t)\in\partial D \}$ be the first time that $\phi$ hits the boundary and note that $\tau\leq T.$ Since the restriction of $\phi$ on $[0,
\tau]$ is in  $C([0,\tau];\e)$ and $\phi(t)\in D$ for any $t<\tau$ we conclude that 
$$  V_D(z, \partial D)\leq \frac{1}{2}|u|^2_{L^2([0,\tau];H)}<V(z, \partial D)+\eta.$$
Since $\eta$ is arbitrary, the proof is complete.
\end{proof}

Before we proceed to the main body of this section we make the following important remark on notation:

\begin{rem}\label{rem:notation1} Throughout the rest of this section, we shall omit the index $n_D$ \eqref{eq:nDdefinition} from both the rate function $I^{n_D}_{z,T}$ and the local solutions $Z^u_{z, n_D}$ of the skeleton equation. Hence, we shall abuse notation and write $Z^u_{z, n_D}\equiv Z^u_{z}$ and $I^{n_D}_{z,T}\equiv I_{z,T}$ for the sake of lighter notation. This simplified notation will also be used for the nonlinear terms $B, \Sigma.$ In other words, from this point on, we shall write $B, \Sigma$ instead of $B_{n_D}, \Sigma_{n_D}$ \eqref{eq:bnlocalization}.  The localization index $n_D$ is important for the study of the exit problem for local solutions. We defer a more detailed discussion to the introduction of Section \ref{Sec:Metastability}.
\end{rem}

\begin{lem}[Exact controllability from $z^*$ to $H^1 \times H$] Under Assumptions \label{lem:exact-control} \ref{Assumption:b}-\ref{Assumption:sigmaNondeg} the following holds:

  There exists $T_0>0$ such that for any $\eta>0$, there exists $\delta>0$ such that whenever $|z-z^*|_{\h_1 }< \delta$, there exists a control $u \in L^2([0,T_0];H)$ such that
  \begin{equation*}
      \frac{1}{2}\int_0^{T_0} \int_0^\ell |u(s,x)|^2 dxds < \eta
  \end{equation*}
  and
  \begin{equation*}
      Z^{u}_{z^*}(T_0) = z.
  \end{equation*}
\end{lem}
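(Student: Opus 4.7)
The natural strategy is to shift coordinates to $z^*$, establish exact controllability for the resulting \emph{linearised} damped wave equation (whose control coefficient is uniformly bounded below by Assumption \ref{Assumption:sigmaNondeg}), and then lift this to the full nonlinear equation via a Banach fixed point argument exploiting the smallness of the target $\zeta := z - z^*$.

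\textit{Step 1 (Centering at the equilibrium).} Set $\phi(t) := \Pi_1 Z^u_{z^*}(t) - x^*$, $\psi(t) := \Pi_2 Z^u_{z^*}(t)$. Using the identity $\partial_x^2 x^* + b(x^*) = 0$ (Assumption \ref{Assumption:xstar}), the pair $(\phi,\psi)$ solves
\begin{equation*}
\partial_t \phi = \psi, \qquad \partial_t \psi + \alpha \psi - \partial_x^2 \phi - b'(x^*)\phi = N(\phi) + \sigma(x^* + \phi)\, u,
\end{equation*}
with $\phi(0) = \psi(0) = 0$ and quadratic remainder $N(\phi) = b(x^* + \phi) - b(x^*) - b'(x^*)\phi$. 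Since $b$ is polynomial (Assumption \ref{Assumption:b}), $|N(\phi)|_{H} \leq C|\phi|_{\infty}^2$ on the unit ball. The problem becomes reaching the target $(\phi(T_0), \psi(T_0)) = \zeta \in \h_1$ with arbitrarily small $|u|_{L^2}$.

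\textit{Step 2 (Linear exact controllability).} Let $\Lambda_{\mathrm{lin}}: L^2([0,T_0]; H) \to \h_1$ be the control-to-state map for the linearised problem,
\begin{equation*}
\partial_t^2 \phi_L + \alpha \partial_t \phi_L - \partial_x^2 \phi_L - b'(x^*)\phi_L = \sigma(x^*(\cdot))\, g, \qquad \phi_L(0) = \partial_t\phi_L(0)=0,
\end{equation*}
namely $g \mapsto (\phi_L(T_0), \partial_t \phi_L(T_0))$. By Assumption \ref{Assumption:sigmaNondeg}, multiplication by the bounded coefficient $\sigma(x^*(\cdot)) \geq C_\sigma > 0$ is an isomorphism of $L^2((0,\ell))$; moreover, $-\partial_x^2 - b'(x^*)$ has the spectral structure of Assumption \ref{Assumption:xstar}. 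A standard HUM/observability argument for the Dirichlet damped wave equation on $(0,\ell)$ with interior control active on the whole interval then produces, for any sufficiently large $T_0$, a bounded right-inverse $\Lambda: \h_1 \to L^2([0,T_0]; H)$ of $\Lambda_{\mathrm{lin}}$ with $|\Lambda(\zeta)|_{L^2} \leq C_{T_0}|\zeta|_{\h_1}$. I fix such $T_0$ for the remainder.

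\textit{Step 3 (Nonlinear fixed point).} Denote the nonlinear solution map by $F(u) := (\phi^u(T_0), \partial_t \phi^u(T_0))$, so $F(0) = 0$. For $\zeta$ with $|\zeta|_{\h_1}$ small, define
\begin{equation*}
\mathcal{T}_\zeta(u) := \Lambda\bigl(\zeta - F(u) + \Lambda_{\mathrm{lin}}(u)\bigr);
\end{equation*}
the identity $\Lambda_{\mathrm{lin}} \circ \Lambda = \mathrm{Id}_{\h_1}$ shows that any fixed point of $\mathcal{T}_\zeta$ satisfies $F(u) = \zeta$. Using Lemma \ref{lem:controliccontinuity} together with the quadratic remainder estimate $|N(\phi)|_H \leq C|\phi|_{\infty}^2$ and local Lipschitz continuity of $\sigma$, the energy estimates for the difference equation yield
\begin{equation*}
|F(u) - \Lambda_{\mathrm{lin}}(u)|_{\h_1} \leq C_{T_0} |u|_{L^2}^2,
\end{equation*}
uniformly on a bounded neighbourhood of $0$. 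Consequently $\mathcal{T}_\zeta$ maps the ball $\{|u|_{L^2} \leq 2 |\Lambda|_{\mathrm{op}} |\zeta|_{\h_1}\}$ into itself and is a contraction there, once $|\zeta|_{\h_1} < \delta$ for some $\delta$ depending on $T_0$, $|\Lambda|_{\mathrm{op}}$ and $C_{T_0}$. The resulting fixed point obeys $|u|_{L^2} \leq 2|\Lambda|_{\mathrm{op}} |\zeta|_{\h_1}$, so the energy bound $\frac{1}{2}|u|_{L^2}^2 < \eta$ follows by further shrinking $\delta$.

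\textit{Main obstacle.} The only delicate step is Step 2: one must establish the observability inequality for the adjoint damped wave equation with the spatially varying lower-order perturbation $-b'(x^*(\cdot))$. Once that is in hand, Step 3 is a routine perturbation argument exploiting that the nonlinear defect is quadratic in $u$ near $u=0$.
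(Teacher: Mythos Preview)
Your approach is correct in outline but differs substantially from the paper's. You linearise around $z^*$ (keeping the potential $b'(x^*)$), appeal to HUM/observability for the resulting damped wave equation, and then close the nonlinear problem by a Banach fixed point on the control. The paper instead works with the \emph{unperturbed} damped wave semigroup $S_\alpha$: it proves surjectivity of $L_{T_0}u=\int_0^{T_0}S_\alpha(T_0-s)(0,u(s))\,ds$ onto $\h_1$ via the explicit identity $|L_{T_0}^\star z|_{L^2}^2=\tfrac{1}{2\alpha}(|z|_{\h_1}^2-|S_\alpha^\star(T_0)z|_{\h_1}^2)$ together with the exponential decay of $S_\alpha$, then constructs the reaching path $Z(t)=z^*+\int_0^tS_\alpha(t-s)(0,u_1(s))\,ds$ \emph{explicitly} and recovers the nonlinear control by the algebraic formula $u=\sigma(\Pi_1Z)^{-1}\bigl(u_1+b(\Pi_1z^*)-b(\Pi_1Z)\bigr)$, using Assumption~\ref{Assumption:sigmaNondeg} only at this final division step; no iteration is needed. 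The paper's route is thus more elementary and fully self-contained (the observability constant is explicit and the nonlinear step is a one-line inversion), while yours is more modular and would survive less convenient control structures---but you leave the observability inequality for the perturbed operator unresolved. In fact the same energy computation the paper uses for $S_\alpha$ applies verbatim to the linearised semigroup $\bar S_\alpha$ of Lemma~\ref{lem:linearizedsemigroup}, so your Step~2 can be dispatched without invoking abstract HUM theory.
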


\begin{proof} 
    For $T_0>0$ to be chosen later, define the linear operator $L_{T_0}:L^2([0,T_0];H) \to \h_1$ by
    \begin{equation*}
        L_{T_0} u := \int_0^{T_0} S_{\alpha}(T_0 - s)\begin{pmatrix}0\\ u(s) \end{pmatrix}ds.
    \end{equation*}
    
    This operator maps $L^2([0,T_0]; H)$ \textit{onto} $\h_1$ when $T_0$ is sufficiently large. Indeed, the adjoint operator 
    $L^\star_{T_0}: \h_1\to L^2([0,T];H)$ is given by
    \begin{equation*}
        [L^\star_{T_0} z](t,x) = [\Pi_2 S_\alpha^\star(T_0-t) z](x).
    \end{equation*}
     In view of \cite[Proposition 2.3]{cerrai2006PTRFsmoluchowski1} and Proposition \ref{prop:L2-decay} it follows that the operator $S_\alpha^\star$ is of negative type with the same exponent as $S.$ From direct calculations (see e.g. \cite[Lemma 3.2]{cerrai2014smoluchowski}) we have
    \begin{align*}
        &|L^\star_{T_0} z|_{L^2([0,T_0];H)}^2 \nonumber= \int_0^{T_0} \int_0^\ell |[\Pi_2 S_\alpha^\star(T_0-t) z](x)|^2 dxdt \nonumber= \frac{1}{2 \alpha} \left( |z|_{\h_1}^2 - |S_\alpha^\star(T_0) z|_{\h_1}^2 \right)
        \geq \frac{1}{2 \alpha}(1-M^2e^{-2\theta t})|z|^2_{\h_1}.
    \end{align*}
    Choosing $T_0$ big enough so that \begin{equation*}\label{eq:T0choice}
        M^2e^{-2\theta T_0} = \frac{1}{2}
    \end{equation*}
    yields    
    \begin{equation*}
        |z|^2_{\h_1} \leq 4\alpha |L^\star_{T_0} z|^2_{L^2([0,T_0];H)}=4\alpha |(L_{T_0}L^\star_{T_0})^{1/2}z|^2_{\h_1}.
    \end{equation*}
    By \cite[Corollary B7]{da2014stochastic}, this proves that the image
    \begin{equation*}
        \left\{ z \in H^1 \times H : |z|_{\h_1 }\leq 1\right\} \subset \left\{L_{T_0} u : |u|_{L^2([0,T_0];H)} \leq 2 \sqrt{\alpha} \right\}.
    \end{equation*}
  In particular, this proves that for any $z \in\h_1$, there exists a control $u \in L^2([0,T_0];H)$ such that 
    \begin{equation*}\label{eq:lin-opt-contrl-1}
        z = \int_0^{T_0} S_\alpha(T_0-t)\begin{pmatrix} 0 \\u(t) \end{pmatrix}dt
    \end{equation*}
    and
    \begin{equation*} \label{eq:lin-opt-contrl-2}
        |u|_{L^2([0,T_0];H)} \leq 2 \sqrt{\alpha} |z|_{\h_1}.
    \end{equation*}
   Now let $\delta>0$  to be specified later. Given any $z \in\h_1$ such that
    $|z-z^*|_{\h_1} < \delta$, let $|u_1|_{L^2([0,T_0];H)} \leq 2 \sqrt{\alpha} \delta$ be a control such that
    $L_{T_0} u_1 = z-z^*$. Next, 
    define $Z(t) := z^* + \int_0^t S_\alpha(t-s) \begin{pmatrix} 0 \\u_1(s) \end{pmatrix}ds.$ Since $z^*$ is an equilbrium and $\sigma$ is non-degenerate we can write
    \begin{equation*}
        Z(t) = S_\alpha(t)z^* + \int_0^t S_\alpha(t-s) B(Z(s))ds  + \int_0^t S_\alpha(t-s) \begin{pmatrix} 0 \\\sigma(\Pi_1 Z(s))u(s) \end{pmatrix}ds,
    \end{equation*}
    where 
    \begin{equation*}
        u(s,x) := \frac{1}{\sigma(\Pi_1 Z(s,x))} \bigg(u_1(s,x) + b(\Pi_1 z^*(x)) - b( \Pi_1 Z(s,x)) \bigg),\; (s,x)\in[0,t]\times(0,\ell).
    \end{equation*}
   
    In particular, $Z=Z_{z^*}^{u}$ and moreover $Z(T_0)=z.$ Notice that the path $Z$ exists for all times even though $Z_{z^*}^{u}$ is understood as a local solution.

    It remains to estimate the $L^2$ norm of $u.$ By contractivity of the semigroup in $\h_1$ (Proposition \ref{prop:L2-decay}) and choice of $u_1$ we have
    \begin{equation} \label{eq:Z-bounded}
        \sup_{t \in [0,T_0]} |Z(t) - z^*|_{\h_1 } \leq C  \delta.
    \end{equation}
    The latter, along with Assumption \ref{Assumption:sigmaNondeg} and the choice of $u_1$ yield  
    \begin{equation*}
        \int_0^{T_0} \int_0^\ell |u(s,x)|^2 ds
        \leq C' \sup_{t \in [0,T_0]} |b( \Pi_1 z^*(x)) - b(\Pi_1 Z(s))|^2_{\infty} + C \delta^2.
    \end{equation*}
    By \eqref{eq:Z-bounded}, the Sobolev embedding $ H^1\subset C_0$ and the continuity of $b$, we can take $\delta$ small enough so that the right-hand side is less than $\eta.$ The proof is complete. 
\end{proof}

We are now ready to prove an important continuity property of $V_{\bar{D}}$ that is needed for our proof of exit time lower bounds and exit shape asymptotics; Theorems \ref{thm:ExitTimeLowerBnd}, \ref{thm:exitshapeldp}. This property, also known in the literature as \textit{inner regularity}, is listed as an assumption in \cite[Section 12.5.1]{da2014stochastic} (see in particular the definition of the "lower exit rate" $\underline{e}$ therein) and \cite{salins2019uniform} (see Assumption 8.2). It is proven via controllability theorems in \cite[Section 5.4; see also Theorem 5.7]{salins2021metastability}. Those controllability arguments from the parabolic case do not work in the setting of wave equations. This is mainly attributed to the fact that, due to the lack of smoothing, arbitrary controlled trajectories of the underlying dynamics cannot necessarily be connected. A version of this lemma was proved in \cite[Section 6]{cerrai2016smoluchowski} under much stronger assumptions. In particular, the authors worked in the case of additive noise under the assumption that there exists a unique stable equilibrium and assumed some extra smoothness on $b$ (see Assumption 2 therein). We are interested in the case of multiplicative noise and multiple stable equilibria, so we need such a result in higher generality.

\begin{lem}[Inner regularity of $V_{\bar{D}}$]\label{lem:innerregularity} Let $D\subset\e$ be a bounded set that contains the asymptotically stable equilibrium $z^*$ and $V_{\bar{D}}$ as in \eqref{eq:VDbarquasipotential}. For any closed $N \subset \partial D,$ let $B_\e(N, \delta)=\{ z\in\e: \textnormal{dist}_\e (z, N)<\delta\}$
    and 
    \begin{equation*}
        V_{\bar{D}}\big(B_\e(z^*,\rho), B_\e(N,\delta)\big): = \inf\bigg\{V_{\bar{D}}(z_1,z_2): z_1 \in B(z^*,\rho), z_2 \in B(N,\delta) \bigg\}.
    \end{equation*}
    If $V_{\bar{D}}(z^*, N)<\infty$ then, under Assumptions \ref{Assumption:b}- \ref{Assumption:sigmaNondeg}
    \begin{equation*}\label{eq:innerregularity}
        \lim_{(\rho, \delta)\rightarrow (0,0)} V_{\bar{D}}\big(B_\e(z^*,\rho), B_\e(N,\delta)\big) = V_{\bar{D}}(z^*,N).
    \end{equation*}
\end{lem}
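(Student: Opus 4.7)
The plan is to establish the two inequalities separately.

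The upper bound $V_{\bar{D}}(B_\e(z^*,\rho), B_\e(N,\delta)) \leq V_{\bar{D}}(z^*, N)$ is immediate for every $\rho, \delta > 0$: since $z^* \in B_\e(z^*, \rho)$ and $z_N \in N \subset B_\e(N, \delta)$ for any $z_N \in N$, the pair $(z^*, z_N)$ is admissible in the infimum defining the left-hand side, so that $V_{\bar{D}}(B_\e(z^*,\rho), B_\e(N,\delta)) \leq V_{\bar{D}}(z^*, z_N)$. Taking the infimum over $z_N \in N$ yields the bound, and passing to the $\limsup$ as $\rho, \delta \to 0$ concludes this direction.

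For the lower bound, fix $\eta > 0$. The aim is to show that for $\rho, \delta$ small enough, any near-optimal path realizing $V_{\bar{D}}(z_1, z_2)$ (up to $\eta/3$) with $z_1 \in B_\e(z^*, \rho)$, $z_2 \in B_\e(N, \delta)$, can be perturbed into a path from $z^*$ to a point of $N$ in $\bar{D}$ with cost inflated by at most $\eta$. This implies $V_{\bar{D}}(z^*, N) \leq V_{\bar{D}}(z_1, z_2) + \eta$, whence, taking the infimum over $z_1, z_2$, the required reverse inequality. Note that by the upper bound, $V_{\bar{D}}(z_1, z_2) \leq V_{\bar{D}}(z^*, N) + o(1)$ as $\rho, \delta \to 0$, so the $L^2$-norms of admissible near-optimal controls $u$ are uniformly bounded by some $M = M(\eta, V_{\bar{D}}(z^*, N))$.

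The perturbation strategy is to apply the same control $u$ starting from $z^*$ rather than from $z_1$, producing $\tilde\phi := Z^u_{z^*}$. By joint continuity of the skeleton map (Lemma~\ref{lem:controliccontinuity}(1)) combined with weak compactness of the ball $\mathcal{U}_M \subset L^2([0,T];H)$, $|\tilde\phi - \phi|_{C([0,T];\e)} \to 0$ uniformly in $u \in \mathcal{U}_M$ as $|z_1 - z^*|_\e \to 0$. Moreover, $z^* \in \h_2 \subset \h_1$ by standard elliptic regularity applied to $\partial_x^2 \Pi_1 z^* + b(\Pi_1 z^*) = 0$, and the convolution $u \mapsto \int_0^t S_\alpha(t-s)(0, u(s))\,ds$ maps $L^2([0,T];H)$ into $\h_1$ (as used in the proof of Lemma~\ref{lem:exact-control}). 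Inspecting the mild formulation then yields $\tilde\phi(t) \in \h_1$ for every $t \in [0,T]$, and in particular $\tilde\phi(T)$ lies at $\e$-distance at most $\delta + o(1)$ from $N$. A localized application of Lemma~\ref{lem:exact-control} adapted to the endpoint $\tilde\phi(T)$ is then expected to provide a short corrective control joining $\tilde\phi(T)$ to some $z_N \in N$ with arbitrarily small energy.

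The principal obstacle lies in this endpoint correction: Lemma~\ref{lem:exact-control} only furnishes cheap controls joining points close in the stronger $\h_1$-topology, whereas $z_2 \in B_\e(N,\delta)$ only guarantees $\e$-closeness of $\tilde\phi(T)$ to $N$. To bridge this gap, I would exploit the hypothesis $V_{\bar{D}}(z^*,N) < \infty$: since the $\h_1$-regularity of $z^*$ is propagated along every skeleton trajectory via the mild formulation, any path realizing $V_{\bar{D}}(z^*, N)$ must end at a point of $N \cap \h_1$, so $N \cap \h_1 \neq \emptyset$; combined with density of $\h_1$ in $\e$ and a careful topological comparison near $\tilde\phi(T)$, this should enable the corrective control to land in $N$. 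A secondary technical point is that the perturbation a priori only yields $\tilde\phi$ inside a small $\e$-thickening of $\bar{D}$; this is absorbed by noting that near-optimal paths for $V_{\bar{D}}(z_1, z_2)$ remain at positive $\e$-distance from $\bar{D}^c$ except perhaps near the endpoint, so a sufficiently small $C([0,T];\e)$-perturbation does not escape $\bar{D}$.
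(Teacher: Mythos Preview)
Your upper bound is fine. The lower bound, however, has a genuine gap that your outline does not overcome, and the paper's proof takes a substantially different route precisely to avoid it.

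The central problem is that the time horizon $T$ of a near-optimal path for $V_{\bar D}(z_1,z_2)$ is not bounded a priori. The continuity of $(z,u)\mapsto Z^u_z$ from Lemma~\ref{lem:controliccontinuity}(1) is proved on a \emph{fixed} interval $[0,T]$ via Gr\"onwall, and the constant blows up exponentially in $T$. Hence your claim that $|\tilde\phi-\phi|_{C([0,T];\e)}\to 0$ uniformly over $u\in\mathcal U_M$ as $z_1\to z^*$ is false once $T$ is allowed to be large; weak compactness of $\mathcal U_M$ buys you nothing here because the modulus of continuity in $z$ is not uniform in $T$. Since nothing rules out that $(\rho,\delta)\to(0,0)$ forces the near-optimal $T$ to diverge, your perturbation step has no content in that regime.

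The endpoint correction is a second, independent gap. Lemma~\ref{lem:exact-control} gives cheap controls only \emph{from} $z^*$ (its proof uses that $z^*$ is an equilibrium), so there is no ``localized version adapted to $\tilde\phi(T)$'' available. Even if there were, it would require $\h_1$-closeness to a point of $N$, and knowing $N\cap\h_1\neq\varnothing$ together with density of $\h_1$ in $\e$ does not produce such a point: the $\e$-close element of $N$ need not be in $\h_1$, nor $\h_1$-close to $\tilde\phi(T)$. Your remark about $\tilde\phi$ staying in $\bar D$ is likewise unsupported: near-optimal paths may graze $\partial D$ repeatedly, so an $\e$-small perturbation can exit $\bar D$.

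The paper circumvents all of this by arguing by contradiction and extracting limits. If the minimizing horizons stay bounded, weak compactness of controls plus Lemma~\ref{lem:controliccontinuity}(1) yields a limiting path from $z^*$ that lands in $N$ (closed) and stays in $\bar D$ (closed), giving a contradiction directly with no endpoint correction. If the horizons diverge, the paths are translated to $(-\infty,0]$, an Arzel\`a--Ascoli argument in $\e$ (using the compact embedding $\h_1\hookrightarrow\e$) produces a limit $\varphi$ on $(-\infty,0]$ with $\varphi(0)\in N$, and one shows $\varphi(t)\to z^*$ first in $\e$ and then in $\h_1$ as $t\to-\infty$. The exact-controllability lemma is then applied at the \emph{start}, joining $z^*$ to $\varphi(-t_0)$ for $t_0$ large, after which one simply follows $\varphi$ to $N$. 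This places the cheap controllability where $\h_1$-closeness is actually available and eliminates the endpoint problem entirely.
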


\begin{proof}
Assume by contradiction that 
\[\lim_{(\rho, \delta)\rightarrow (0,0)} V_{\bar{D}}\big(B_\e(z^*,\rho), B_\e(N,\delta)\big) < V_{\bar{D}}(z^*,N).\]
This means that there exist a sequence of initial data $\{z_n;n\in\N\}\subset \e$ such that $\lim_n|z_n - z^*|_\e =0,$ a sequence of time horizons $T_n>0 $, a sequence of controls  $u_n \in L^2([0,T_n]; H)$, and a distance $\eta>0$ such that
\begin{equation}\label{eq:VDcontradiction}
    \frac{1}{2}|u_n|_{L^2([0,T_n]; H)}^2 \leq V_{\bar{D}}(z^*,N) - \eta,
\end{equation}
such that the controlled processes $Z^{u_n}_{z_n}(t)$ have the properties that $ Z^{u_n}_{z_n}(t) \in \bar{D}$  for all $t \in [0,T_n)$
and   $\textnormal{dist}(Z^{u_n}(T_n) ,N) \to 0.$ Since $Z^{u_n}_{z_n}(t) \in \bar{D}$ for all $t,$ there exists $N_0=N_0(D)\in\N$ (independent of $n$) such that the local $Z^{u_n}_{z_n}$ coincides with the process $Z^{u_n}_{z_n,N_0}$ (see Definition \ref{dfn:local mild solutions}). Hence, without loss of generality, we shall assume that $b, \sigma$ are bounded, Lipschitz continuous. Moreover, we drop the subscript $N$ and write $Z^{u_n}_{z_n}$ instead of $Z^{u_n}_{z_n,N_0}$ for the sake of lighter notation.

We separate the analysis into two cases; first where there exists a bounded subsequence of time horizons $\{T_n; n\in\N\}$ and second where $T_n \uparrow +\infty$.

\noindent \underline{\textbf{Case 1}}: (A subsequence of $\{T_n; n\in\N\}$ is upper bounded). Notice that \eqref{eq:VDcontradiction} provides a uniform $L^2$ bound for the controls, hence the sequence $\{u_n; n\in\N\}$ is weakly pre-compact. Thus, there exists a further subsequence (relabeled by $n$) such that $T_n \to T$ and $u_n$ converges weakly in $L^2([0,T];H)$ to $u$. Then by Lemma \ref{lem:controliccontinuity}(1), $$
    |Z^{u_n}_{z_n}(T_n) - Z^{u}_{z^*}(T)|_\e\rightarrow 0.$$
 Because $\textnormal{dist}(Z^{u_n}_{z_n}(T_n) ,N) \to 0$ and $N$ is closed this proves that $Z^{u}_{z^*}(T) \in N.$ But
\[    V_{\bar{D}}(z^*,N) \leq \frac{1}{2}|u|_{L^2([0,T];H)}^2 \leq V_{\bar{D}}(z^*,N) - \eta\]
which is a contradiction.

\noindent \underline{\textbf{Case 2}}: ($T_n\uparrow \infty$). We want to show that we also have a kind of convergence in this case. To guarantee convergence, we translate these processes to the negative half line $(-\infty,0]$. Define for $t<0$
\begin{equation*}
    \varphi_n(t) = \begin{cases}
        Z^{u_n}_{z_n}(T_n + t) & \text{ for } t \in [-T_n,0] \\
        z_n & \text{ for } t < -T_n
    \end{cases}
\end{equation*}
and
\begin{equation}\label{eq:hnDefinition}
    h_n(t) = 
    \begin{cases} 
        u_n(t + T_n) &\text{ for } t \in [-T_n,0], \\
        0 &\text{ for } t<-T_n.
    \end{cases}
\end{equation}
For $t \in [-T_n,0]$, $\varphi_n(t)$ solves
\begin{align*}
    \varphi_n(t) = &S_\alpha(T_n + t)z_n + \int_{-T_n}^t S_\alpha(t-s) \begin{pmatrix}
        0 \\ b(\Pi_1 \varphi_n(s))
    \end{pmatrix}
    ds \nonumber+ \int_{-T_n}^t S_\alpha(t-s) \begin{pmatrix}  0 \\ \sigma(\Pi_1 \varphi_n(s))h_n(s) \end{pmatrix}ds \nonumber\\
    = &S_\alpha(T_n + t)(z_n-z^*) + z^* + \int_{-T_n}^t S_\alpha(t-s) \begin{pmatrix}
        0 \\ (b(\Pi_1 \varphi_n(s)) - b(\Pi_1 z^*))
    \end{pmatrix}
    ds \nonumber\\
    &+ \int_{-T_n}^t S_\alpha(t-s) \begin{pmatrix}  0 \\ \sigma(\Pi_1 \varphi_n(s))h_n(s) \end{pmatrix} ds\nonumber\\
    =:& z^* +  S_\alpha(T_n + t) (z_n - z^*) + I_n(t) + Y_n(t).
\end{align*}
The second to last line is a consequence of the assumption that $z^*$ is invariant,  implying that
\[z^* = S_\alpha(t)z^* + \int_0^t S_\alpha(t-s) \begin{pmatrix} 0 \\ b(\Pi_1 z^*) \end{pmatrix} ds.\]
We shall proceed to the asymptotic analysis of the paths $\{\phi_n(t);t\in[-\infty, 0]\}$.

\noindent{\textit{Step 1}: \textit{Pre-compactness in} $C([-T,0];\e)$}. We shall use a functional analytic version of Arzel\`a-Ascoli to prove that a subsequence $I_n(t) + Y_n(t)$ converges in $\e$ for all $t \in (-\infty,0]$.
The fact that $\varphi_n(t) \in \bar{D}$ for all $t <0$ guarantees that $|b(\Pi_1 \varphi(s)) - b(\Pi_1z^*)|_H$ is uniformly bounded in $s<0$. In particular, from Proposition \ref{prop:L2-decay} there exist $M,\theta>0$ such that
\begin{equation*}
    |I_n(t)|_{\h_1} \leq  M\int_{-T_n}^t   e^{-\theta(t-s)} ds \leq \frac{M}{\theta}.
\end{equation*}
Because $\h_1$ is compactly embedded into $\e$ (as was shown e.g. in the proof of Lemma \ref{lem:Zaprioricompactnesslem}), this proves that for fixed $t<0$, the family $\{I_n(t)\}_n$ is compact.
As for the control term, \eqref{eq:VDcontradiction} furnishes
\begin{equation}\label{eq:hnbound}
    |Y_n(t)|_{\h_1} \leq C |h_n|_{L^2([-T_n,0];H)} \leq C \sqrt{V_{\bar{D}}(z^*,N) - \eta}.
\end{equation}
We turn to temporal equicontinuity estimates for $I_n, Y_n.$
A direct consequence of the compact inclusion $\h_1\subset\e$ is that
\begin{equation*}
    \lim_{t \rightarrow 0}|S_\alpha(t) - I|_{\mathscr{L}(\h_1; \e)} = 0.
\end{equation*}

Now for $s<t<0$,
\begin{align*}
    &\left|I_n(t) - I_n(s) \right|_\e \nonumber\\
    &\leq \int_{-T_n}^s \left| (S_\alpha(t-r)-S_\alpha(s-r)) 
    \begin{pmatrix} 0 \\ (b(\Pi_1 \varphi_n(r)) - b(\Pi_1 z^*)) \end{pmatrix}\right|_\e dr \nonumber\\
    &\quad+ \int_{s}^t \left|S_\alpha(t-r) \begin{pmatrix} 0 \\ (b(\Pi_1 \varphi_n(r)) - b(\Pi_1 z^*)) \end{pmatrix}\right|_\e dr\nonumber\\
    &\leq \int_{-T_n}^s M e^{-\theta(s-r)} |S_\alpha(t-s) - I|_{\mathscr{L}(\h_1;\e)} | (b(\Pi_1 \varphi_n(r)) - b(\Pi_1 z^*))|_H dr\nonumber\\
    &\quad+ \int_s^t M e^{-\theta (t-r)} | (b(\Pi_1 \varphi_n(r)) - b(\Pi_1 z^*))|_H dr \nonumber\\
    &\leq C |S_\alpha(t-s) - I|_{\mathscr{L}(\h_1; \e)} + C(t-s),
\end{align*}
which proves equicontinuity for $I_n$. The constant $C$ depends on $M$, $\theta$ and $\sup_{z \in \bar{D}} |b(z) - b(z^*)|_H.$

Equicontinuity for the $Y_n(t)$ terms is similar. For $s<t<0$,
\begin{align*}
    &\left|Y_n(t) - Y_n(s) \right|_\e \nonumber\\
    &\leq \int_{-T_n}^s \left| (S_\alpha(t-r)-S_\alpha(s-r)) 
    \begin{pmatrix} 0 \\ \sigma(\Pi_1 \varphi_n(r))h_n(r) \end{pmatrix}\right|_\e dr \nonumber\\
    &\quad+ \int_{s}^t \left|S_\alpha(t-r) \begin{pmatrix} 0 \\ \sigma(\Pi_1 \varphi_n(r))h_n(r) \end{pmatrix}\right|_\e dr\nonumber\\
    &\leq \int_{-T_n}^s M e^{-\theta(s-r)} |S_\alpha(t-s) - I|_{\mathscr{L}(H^1\times H, \e)} | \sigma(\Pi_1 \varphi_n(r))h_n(r)|_H dr\nonumber\\
    &\quad+ \int_s^t M e^{-\theta (t-r)} | \sigma(\Pi_1 \varphi_n(r))h_n(r)|_H dr.
\end{align*}
Now we use the fact that $\sup_{z \in \bar{D}} |\sigma(\Pi_1 z)|_{\infty}<+\infty$ and a Cauchy-Schwarz inequality in time to bound the above expression by
\begin{align*}
    \left|Y_n(t) - Y_n(s) \right|_\e \nonumber
    &\leq |S_\alpha(t-s) - I|_{\mathscr{L}(\h_1; \e)} C | h_n|_{L^2([-T_n,0];H)} +  \sqrt{t-s} | h_n|_{L^2([-T_n,0];H)}\\&
    \leq C \sqrt{V_{\bar{D}}(z^*,N) - \eta}
    \bigg(|S_\alpha(t-s) - I|_{\mathscr{L}(\h_1; \e)}+   \sqrt{t-s} \bigg),
\end{align*}
where the last line follows from \eqref{eq:hnbound}.
This proves equicontinuity for  the $Y_n$ terms.

\noindent{\textit{Step 2}: \textit{Identification of limiting paths}}. By the Arzel\`a-Ascoli Theorem (see e.g. \cite[Theorem 47.1]{Munkres}, there exists a subsequence (relabeled as $Y_n$, $I_n$) and limits $I(t)$ and $Y(t)$ such that for any $-T<0$
\begin{equation*}
    \lim_{n \to \infty} \sup_{t \in [-T,0]} |I_n(t) - I(t)|_\e =\lim_{n \to \infty} \sup_{t \in [-T,0]} |Y_n(t) - Y(t)|_\e=0,
\end{equation*}
i.e. we have uniform convergence on compact time intervals.

Because $T_n \uparrow \infty$ and $|z_n - z^*|_\e \to 0$, it follows that
\[\lim_{n \to \infty}\sup_{t\in [-T_n,0]} |S_\alpha(t + T_n)(z_n - z^*)|_\e = 0.\]
Therefore, a subsequence of  $\varphi_n(t)$ converges  to a limit $\varphi(t)$ uniformly on compact time intervals. In order to charactherize $\varphi$ we use the dominated convergence theorem and uniqueness of the limit to conclude that
\begin{equation*}
    I_n(t) \longrightarrow \int_{-\infty}^t S_\alpha(t-s) \begin{pmatrix}
        0 \\ b(\Pi_2 \varphi(s)) - b(z^*))
    \end{pmatrix}ds=:I(t),\;\;n\to\infty.
\end{equation*}
In view of \eqref{eq:VDcontradiction}, \eqref{eq:hnDefinition}, the sequence $\{h_n\mathds{1}_{[-T_n,t]};n\in\N\}\subset L^2((-\infty, 0);H)$ is weakly pre-compact. Passing if necessary to a further subsequence so that  $h_n\mathds{1}_{[-T_n,t]} \rightarrow h\mathds{1}_{[-\infty,t]}$ weakly, it follows that 
\begin{equation*}
    Y_n(t) \longrightarrow \int_{-\infty}^t S_\alpha(t-s) \begin{pmatrix}
        0 \\ \sigma( \Pi_1 \varphi(s))h(s)
    \end{pmatrix}=:Y(t),\;\;n\to\infty.
\end{equation*}

Hence, for $t<0$ the limiting path $\varphi$ satisfies
\begin{align}\label{eq:varphi-infinite-integrals}
    \varphi(t) = &z^* + \int_{-\infty}^t S_\alpha(t-s) \begin{pmatrix} 0 \\ (b(\Pi_1\varphi(s)) - b(\Pi_1 z^*)) \end{pmatrix} ds \nonumber\\&+ \int_{-\infty}^t S_\alpha(t-s) \begin{pmatrix} 0 \\ \sigma(\Pi_1 \varphi(s))h(s) \end{pmatrix} ds.
\end{align}

\noindent{\textit{Step 3}: \textit{Long-time behaviour of the limiting path in $\e$}}. The next step is to prove that $\lim_{t \downarrow -\infty} |\varphi(t)-z^*|_\e = 0$.
For any $T<t<0$, a representation for $\varphi(t)$ is
\begin{align*}
    \varphi(t) = &S_\alpha(t-T)\varphi(T)  + \int_T^t S_\alpha(t-s)\begin{pmatrix} 0 \\ (b(\Pi_1\varphi(s)) ) \end{pmatrix} ds \nonumber+ \int_{T}^t S_\alpha(t-s) \begin{pmatrix} 0 \\ \sigma(\Pi_1 \varphi(s))h(s) \end{pmatrix} ds.
\end{align*}
By the proof of Lemma \ref{lem:controliccontinuity}(2), given $\rho>0$, there exists $T_{\rho}>0$ and $\alpha_\rho>0$ such that 
\begin{equation*}
    |Z^{u}_z(T_\rho) -z^*|_\e < \rho
\end{equation*}
if $z \in D$ and $|u|_{L^2([0,T_
\rho];H)}\leq \alpha_\rho$.
Because $h \in L^2((-\infty,0);H)$, there exists $-T'_\rho<0$ such that 
\begin{equation*}
    |h|_{L^2((-\infty,-T'_\rho);H)}< \alpha_\rho.
\end{equation*}
Then for any $t<-T'_\rho$, $\varphi(t)$ can be written as
\begin{align*}
    \varphi(t) = &S_\alpha(T'_\rho) \varphi(t-T'_\rho)
    + \int_{t-T'_\rho}^t S_\alpha(t-s)\begin{pmatrix} 0 \\ (b(\Pi_1\varphi(s)) ) \end{pmatrix} ds \nonumber\\
    &+ \int_{t-T'_\rho}^t S_\alpha(t-s) \begin{pmatrix} 0 \\ \sigma(\Pi_1 \varphi(s))h(s) \end{pmatrix} ds.
\end{align*}
Therefore, because $|h|_{L^2((t-T'_\rho,t);H)}< \alpha_\rho$, it follow that $|\varphi(t)-z^*|_\e< \rho$. This is true for all $t<-T'_\rho$. We can do this argument for arbitrarily small $\rho>0$, proving that
\begin{equation*}
    \lim_{t \downarrow -\infty} |\varphi(t) - z^*|_\e = 0.
\end{equation*}

\noindent{\textit{Step 4}: \textit{Long-time behaviour of the limiting path in $\h_1.$}} Next, we argue that, because we know that $|\varphi(t) - z^*|_\e \to 0$ as $t \downarrow -\infty$, the stronger $\h_1$ convergence holds
\begin{equation*}
    \lim_{t \downarrow -\infty} |\varphi(t) - z^*|_{\h_1}  =0.
\end{equation*}
For this we use the representation \eqref{eq:varphi-infinite-integrals}.  From Proposition \ref{prop:L2-decay}, for any $t<0$,
\begin{align*}
    &\left|\int_{-\infty}^t S_\alpha(t-s) \begin{pmatrix} 0 \\ (b(\Pi_1\varphi(s))- b(\Pi_1 z^*)) \end{pmatrix}ds \right|_{\h_1}\nonumber\leq \frac{M}{\theta} \sup_{s \leq t} |(b(\Pi_1\varphi(s))- b(\Pi_1 z^*))|_H.
\end{align*}
This converges to 0 as $t \downarrow -\infty$ because $\varphi(s) \to z^*$ in $\e$.

For the second term,
\begin{align*}
    &\left|\int_{-\infty}^t S_\alpha(t-s) \begin{pmatrix} 0 \\ \sigma(\varphi(s))h(s) \end{pmatrix}ds \right|_{\h_1}
    \nonumber\leq \int_{-\infty}^t Me^{-\omega(t-s)} |\sigma(\varphi(s))h(s)|_Hds \nonumber\leq \frac{|\sigma|_\infty M}{\sqrt{2\omega}} |h|_{L^2((-\infty,t);H)},
\end{align*}
where the final inequality is the Cauchy-Schwarz inequality with respect to the time variable. This expression converges to $0$ as $t \downarrow -\infty$ because $h \in L^2((-\infty,0);H)$.

\noindent{\textit{Step 4}: \textit{Exact controllability}}. Now that $\varphi(t) \to z^*$ in $\h_1$, we can use exact controllability Lemma \ref{lem:exact-control}.
Given $\eta>0$, there exists $T_0>0$ and $\delta>0$ such that when $|z-z^*|_{\h_1}< \delta_0$ there exists a controlled path that connects $z^*$ to $z$ with action less than $\frac{\eta}{2}$. Because $\varphi(t) \to z^*$ in $\h_1$, we can find $-t_0<0$ such that $|\varphi(-t_0) - z^*|_{\h_1}<\delta_0$. Therefore, we can find a controlled path $Z^{u_0}_{z^*}$ such that
\begin{equation*}
    Z^{u_0}_{z^*}(T_0) = \varphi(-t_0)
\end{equation*}
and
\begin{equation*}
    \frac{1}{2}|u_0|_{L^2([0,T_0];H)}^2 < \frac{\eta}{2}.
\end{equation*}
Finally, we build a path by concatenation for $t>0$
\begin{equation*}
    \varphi_1(t) = 
    \begin{cases}
        Z^{u_0} (t), & \text{ for } t \in [0,T_0],\\
        \varphi(-t_0 + t - T_0), & \text{ for } t \in [T_0, T_0  + t_0].
    \end{cases}
\end{equation*}
We have that $\varphi_1 = Z^{u}_{z^*}$ is a path controlled by
\begin{equation*}
    u(t) = 
    \begin{cases}
        u_0(t), & \text{ for } t \in [0,T_0],\\
        h(-t_0 + t - T_0),  &\text{ for } t \in [T_0, T_0 + t_0]
    \end{cases}
\end{equation*}
 such that
\begin{equation*}
    \frac{1}{2}|u|_{L^2([0,T_0 + t_0];H)}^2 \leq \frac{1}{2}|h|_{L^2((-\infty,0);H)}^2 + \frac{\eta}{2} \leq V_{\bar{D}}(z^*, N) - \frac{\eta}{2}
\end{equation*}
and $Z^{u}_{z^*}(T_0 + t_0) \in N.$
This is a contradiction. Therefore, in this case we must have
\begin{equation*}
    \lim_{(\rho, \delta)\rightarrow (0,0)} V_{\bar{D}}(B(z^*,\rho),N) = V_{\bar{D}}(z^*, N).
\end{equation*}
The proof is complete.
\end{proof}

 \section{The problem of exit from a domain of attraction}\label{Sec:Metastability} Throughout this section, we fix  a bounded subset $D\subset\e$ of the phase space that contains the asymptotically stable equilibrium $z^*.$ We are concerned with the problem of exit of local solutions $Z^{\epsilon}=(u^\epsilon, \partial u^\epsilon)$ to \eqref{eq:model} from $D.$  In particular, we shall prove logarithmic asymptotics for the distribution of \textit{exit times} 
\begin{equation}\label{eq:exittimes}
     \tau_D^{\epsilon, z}:=\inf\big\{ t>0: Z^\epsilon_z(t)\notin D     \big\}
 \end{equation}
 and \textit{exit shapes} (or exit places) 
 \begin{equation}\label{eq:exitshape}
     Z_z^\epsilon\big( \tau_D^{\epsilon, z}  \big)
 \end{equation}
 as $\epsilon\to 0.$ We only work with local mild solutions (Definition \ref{dfn:local mild solutions}) of the stochastic wave equations \eqref{eq:model}. As we have  explained above, this definition heavily relies on the supremum-norm topology that we are using and, since $D$ is bounded, leads to the following important observation.  With $n_D=\inf\{n\in\N: D\subset\bar{B}_{\e}(0,n)\},$ as in \eqref{eq:nDdefinition}, we have
 \begin{equation*}
     \tau_D^{\epsilon, z}:=\inf\big\{ t>0: Z^\epsilon_{z, n_D}(t)\notin D     \big\}
 \end{equation*}
 and
\begin{equation*}
     Z_{z}^\epsilon\big( \tau_D^{\epsilon, z}  \big)= Z_{z, n_D}^\epsilon\big( \tau_D^{\epsilon, z}  \big).
 \end{equation*}
 
 In other words, to study exits of our local mild solutions from the bounded domain $D\subset\e,$  it suffices to consider the localized process $Z_{z, n_D}^\epsilon$ which evolves according to a wave equation with globally Lipschitz coefficients.

 Local mild solutions are well-defined in the topology of $\e$  and satisfy LULDPs over bounded sets of initial data (Proposition \ref{prop:LULDP}). Moreover, the aforementioned asymptotics are meaningful for local solutions since exit times from $D$ occur before explosion times, i.e. for each $\epsilon>0, z\in D$ we have
 $$ \tau_D^{\epsilon, z}<\tau^\epsilon_{z,\infty}.$$
 In fact, our exit time analysis yields novel asymptotic lower bounds for explosion times that are of order $\exp(1/\epsilon^2)$ (a more detailed discussion is deferred to Remark \ref{rem:explosiontimes}). 
 
 We remind to the reader that Remark \ref{rem:notation1} of Section \ref{sec:Controllability} is in place. In particular, for the duration of this section, even though we work with the localized solution $Z_{z,n_{D}}^\epsilon$, we omit the subscript $n_D$ and we simply write $Z_z^\epsilon$. The same index omission is in place for the level sets $\Phi^{n_D}_{T,z}$ which will be simply denoted by $\Phi_{T,z}.$

 Intuitively, it is clear that, as $\epsilon\to 0,$ $\tau_D^{\epsilon, z}$ diverges to $+\infty$  since the deterministic dynamics $Z_z^0$ are attracted to $z^*$ and never exit $D.$  The exponential growth rate of exit times and concentration of the exit place distribution are characterized by the quasipotentials, $V_D$, $V_{\bar{D}}$ and $V$ whose definition are given in Definitions  
  \ref{dfn:quasipotentialVd} and \ref{dfn:quasipotential} respectively. In particular, the quasipotentials $V_D$ and $V_{\bar{D}}$ are important for the study of exit shape asymptotics (Section \ref{Sec:ExitShapes}).

  Starting from the classical work of Freidlin and Wentzell \cite[Chapters 4, 6.5]{freidlin1998random}, it is typically assumed that the domain $D$ is open, bounded, uniformly attractive and invariant for the deterministic dynamics $Z^{0}_z.$ In Proposition \ref{corr:DExistence}, we proved the existence of such a domain. Nevertheless, we choose to present our main results in a way that clarifies precisely what conditions on $D$ are required for each of the  asymptotic bounds to hold. 

The rest of this section is organized as follows: In Sections \ref{Sec:ExitTimesUB}, \ref{Sec:ExitTimesLB} we present our analysis for the exit time upper and lower bounds, Theorems \ref{thm:ExitTimeUpperBnd}, \ref{thm:ExitTimeLowerBnd} respectively. Then, Section \ref{Sec:ExitShapes} is devoted to the typical behavior and large deviation asymptotics of exit shapes, Theorem \ref{thm:exitshapeldp}. In Section \ref{Sec:BoundaryPoints} we investigate conditions under which our exit time and exit shape upper and lower bounds are equal. This question is closely related to a notion of "regular" boundary points that we introduce in Definition \ref{def:regularpoints}. In the same section, we illustrate this definition by providing examples of both regular and irregular boundary points for two concrete domains of attraction. Throughout this section we work with local solutions of \eqref{eq:model} (Definition \ref{dfn:local mild solutions}). In Section \ref{sec:GlobalSolutions}, we briefly explain how to adapt all the results of this section in the setting of global solutions.

\subsection{Logarithmic exit time upper bound}\label{Sec:ExitTimesUB} Let us recall here the notation $\bar{D}^c:=\e\setminus\bar{D}$. The main result of this section is the following:

\begin{thm}[Exit time upper bound]\label{thm:ExitTimeUpperBnd} Let $D\subset\e$ be a bounded, uniformly attracting set that contains the stable equilibrium $z^*$. Under Assumptions \ref{Assumption:b}-\ref{Assumption:xstar} and for any $z\in D$ we have
\begin{enumerate}
    \item $$\limsup_{\epsilon\to 0}\epsilon^2\log\ex\big[ \tau_D^{\epsilon,z}\big]\leq V(z^*, \bar{D}^c).$$
    \item For any $\delta>0$ $$\lim_{\epsilon\to 0}\pr\bigg[ \epsilon^2\log\tau_D^{\epsilon,z}>V(z^*, \bar{D}^c)+\delta \bigg]=0.$$
\end{enumerate}
\end{thm}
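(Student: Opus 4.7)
The plan is to execute the classical Freidlin--Wentzell exit strategy: produce a fixed controlled path with action close to $V(z^*,\bar{D}^c)$ that succeeds uniformly from every starting point in $D$, and then iterate via the Markov property to obtain exponential tail estimates. Throughout we work with the localized process as explained in Remark \ref{rem:notation1}, so no issues of explosion arise and Proposition \ref{prop:LULDP} (the LULDP) applies directly.

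Fix $\eta>0$. By Definition \ref{dfn:quasipotential}, there exist $T_1>0$ and $u\in L^2([0,T_1];H)$ producing a skeleton path $\varphi=Z^{u}_{z^*}$ with $\varphi(T_1)\in\bar{D}^c$ and $\tfrac{1}{2}|u|^2_{L^2}<V(z^*,\bar{D}^c)+\eta/3$. Since $\bar{D}^c$ is open, choose $\rho_0>0$ with $B_\e(\varphi(T_1),2\rho_0)\subset\bar{D}^c$; Lemma \ref{lem:controliccontinuity}(1) then supplies $\rho_1>0$ such that applying the same control $u$ from any initial point $z'\in B_\e(z^*,\rho_1)$ produces a trajectory with $|Z^{u}_{z'}(T_1)-\varphi(T_1)|_\e<\rho_0$. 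Finally, uniform attraction of $D$ yields $T_2>0$ with $\sup_{z\in D}|Z^{0}_z(T_2)-z^*|_\e<\rho_1/2$, and we set $T:=T_1+T_2$.

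Equation \eqref{eq:EULPsufficient} applied with $u=0$ on the bounded set $D$ gives $\sup_{z\in D}\pr[|Z^\epsilon_z(T_2)-Z^0_z(T_2)|_\e\geq\rho_1/2]\to 0$, so $\inf_{z\in D}\pr[|Z^\epsilon_z(T_2)-z^*|_\e<\rho_1]\to 1$. Next, the Markov property at $T_2$ combined with the LULDP lower bound uniformly over $z'\in B_\e(z^*,\rho_1)$, applied to the comparison path $Z^{u}_{z'}$ whose rate function satisfies $I_{z',T_1}(Z^{u}_{z'})\leq\tfrac{1}{2}|u|^2_{L^2}<V(z^*,\bar{D}^c)+\eta/3$, gives
\begin{equation*}
\inf_{z'\in B_\e(z^*,\rho_1)}\pr\Big[|Z^\epsilon_{z'}-Z^{u}_{z'}|_{C([0,T_1];\e)}<\rho_0\Big]\;\geq\;\exp\Big(-(V(z^*,\bar{D}^c)+2\eta/3)/\epsilon^2\Big)
\end{equation*}
for $\epsilon$ small. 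On this event the triangle inequality forces $Z^\epsilon_{z'}(T_1)\in B_\e(\varphi(T_1),2\rho_0)\subset\bar{D}^c$, hence exit from $D$ before $T_1$. Chaining the two estimates yields $p_\epsilon:=\inf_{z\in D}\pr[\tau^{\epsilon,z}_D\leq T]\geq\exp(-(V(z^*,\bar{D}^c)+\eta)/\epsilon^2)$ for all sufficiently small $\epsilon$.

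The Markov property at the deterministic times $kT$ then gives $\sup_{z\in D}\pr[\tau^{\epsilon,z}_D>kT]\leq(1-p_\epsilon)^k$ for every $k\in\N$. Integrating yields $\ex[\tau^{\epsilon,z}_D]\leq T/p_\epsilon\leq T\exp((V(z^*,\bar{D}^c)+\eta)/\epsilon^2)$, and sending $\eta\downarrow 0$ proves (1). For (2), the bound $(1-p_\epsilon)^k\leq e^{-kp_\epsilon}$ applied with $k=\lceil T^{-1}\exp((V(z^*,\bar{D}^c)+\delta)/\epsilon^2)\rceil$ and any choice $\eta<\delta$ drives $\pr[\epsilon^2\log\tau^{\epsilon,z}_D>V(z^*,\bar{D}^c)+\delta]$ to zero. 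The chief technical hurdle is the uniformity in the starting point: uniform attraction, the joint initial-data continuity from Lemma \ref{lem:controliccontinuity}(1), and the uniform lower bound in the LULDP must cooperate so that the single path $\varphi$ built from $z^*$ yields a $p_\epsilon$ independent of $z\in D$, which is precisely what the Markov iteration demands.
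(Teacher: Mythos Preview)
Your proof is correct and follows essentially the same strategy as the paper: build a near-optimal controlled path from $z^*$ into $\bar{D}^c$, precede it by a drift phase that brings every $z\in D$ close to $z^*$ via uniform attraction, apply the ULDP lower bound uniformly, and then iterate via the Markov property to get $\ex[\tau^{\epsilon,z}_D]\leq T/p_\epsilon$. The only cosmetic difference is in part~(2): the paper deduces it from part~(1) by a one-line Chebyshev argument, $\pr[\tau_D^{\epsilon,z}>e^{(V+\delta)/\epsilon^2}]\leq \ex[\tau_D^{\epsilon,z}]e^{-(V+\delta)/\epsilon^2}$, whereas you re-use the geometric tail bound directly; both routes are equivalent.
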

\noindent For the proof we shall need the following auxiliary lemma:  
\begin{lem}\label{lem:preupperbound} With $D\subset\e$ and the same assumptions as in Theorem \ref{thm:ExitTimeUpperBnd} we have: For any $\delta>0$ there exists $T>0$ such that 
\begin{equation*}
    \liminf_{\epsilon\to 0}\inf_{z\in D}\epsilon^2\log\pr\big[ \tau_D^{\epsilon, z}\leq T  \big]>-V(z^*,  \bar{D}^c)-\delta.
\end{equation*} 
\end{lem}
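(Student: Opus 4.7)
The plan is to construct an explicit two-phase exit strategy and apply the LULDP lower bound of Proposition \ref{prop:LULDP}. In the first phase, uniform attraction drives the process into a small ball around $z^*$; in the second phase, a near-optimal controlled path pushes it across the boundary. The two are glued together via the Markov property at the end of the first phase.

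Fix $\delta>0$. By the definition of $V(z^*,\bar D^c)$, choose $T_0>0$ and $u\in L^2([0,T_0];H)$ such that the skeleton trajectory $\phi:=Z^{u}_{z^*}$ satisfies $\phi(T_0)\in \bar D^c$ and $\tfrac12|u|^2_{L^2([0,T_0];H)}<V(z^*,\bar D^c)+\delta/4$. Since $\bar D^c$ is open, pick $\eta>0$ with $B_\e(\phi(T_0),\eta)\subset \bar D^c$. Using the continuous dependence of the skeleton equation on its initial data (Lemma \ref{lem:controliccontinuity}(1)), select $\rho>0$ small enough that $B_\e(z^*,\rho)\subset D$ and $|Z^{u}_{z'}(T_0)-\phi(T_0)|_\e<\eta/2$ whenever $|z'-z^*|_\e<\rho$. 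Any continuous path $\psi$ on $[0,T_0]$ with $|\psi-Z^{u}_{z'}|_{C([0,T_0];\e)}<\eta/2$ then satisfies $\psi(T_0)\in B_\e(\phi(T_0),\eta)\subset \bar D^c\subset D^c$, so it induces an exit from $D$ before time $T_0$.

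Next, by uniform attraction of $D$ to $z^*$, fix $T_1>0$ with $\sup_{z\in D}|Z^0_z(T_1)-z^*|_\e<\rho/2$. A Gronwall argument combined with the stochastic convolution estimate of Lemma \ref{lem:StochasticConvolutionApriori}, analogous to the one carried out in the proof of Proposition \ref{prop:LULDP}, yields
\[
\lim_{\epsilon\to 0}\inf_{z\in D}\pr\!\left[|Z^\epsilon_z(T_1)-z^*|_\e<\rho\right]=1.
\]
Setting $T:=T_1+T_0$ and applying the Markov property at time $T_1$,
\[
\pr\!\left[\tau_D^{\epsilon,z}\leq T\right]\;\geq\;\pr\!\left[|Z^\epsilon_z(T_1)-z^*|_\e<\rho\right]\cdot\inf_{z'\in B_\e(z^*,\rho)}\pr\!\left[|Z^\epsilon_{z'}-Z^{u}_{z'}|_{C([0,T_0];\e)}<\eta/2\right].
\]

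To bound the second factor, note that $I^{n_D}_{z',T_0}(Z^{u}_{z'})\leq\tfrac12|u|^2_{L^2}<V(z^*,\bar D^c)+\delta/4$ uniformly in $z'\in B_\e(z^*,\rho)$. Applying \eqref{eq:LULDP-lower} with bounded initial-data set $B_\e(z^*,\rho)$, level $s_0=V(z^*,\bar D^c)+\delta/4$, and tolerance $\delta/4$ gives, for all sufficiently small $\epsilon$,
\[
\inf_{z'\in B_\e(z^*,\rho)}\pr\!\left[|Z^\epsilon_{z'}-Z^{u}_{z'}|_{C([0,T_0];\e)}<\eta/2\right]\;\geq\;\exp\!\bigl(-(V(z^*,\bar D^c)+\delta/2)/\epsilon^2\bigr).
\]
Multiplying by $1-o(1)$, taking $\epsilon^2\log$, and letting $\epsilon\to 0$ produces $\liminf_{\epsilon\to 0}\inf_{z\in D}\epsilon^2\log\pr[\tau_D^{\epsilon,z}\leq T]\geq -V(z^*,\bar D^c)-\delta/2$, which is strictly greater than $-V(z^*,\bar D^c)-\delta$, as required. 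The main technical point is the uniformity of the LULDP lower bound over the random restart position $Z^\epsilon_z(T_1)$; this is precisely what the uniform-in-initial-data formulation of Proposition \ref{prop:LULDP} provides, and without it the two phases could not be chained together.
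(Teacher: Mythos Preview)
Your proof is correct and uses the same two-phase strategy as the paper (drift to a neighborhood of $z^*$ by uniform attraction, then push out via a near-optimal controlled path), relying on the same ingredients: Lemma~\ref{lem:controliccontinuity}(1), uniform attraction, and the ULDP lower bound. The only cosmetic difference is that the paper concatenates the zero control on $[0,T_2]$ with the exit control $v$ on $[T_2,T]$ into a single path $Z^u_z$ and applies the ULDP lower bound once over $z\in D$, whereas you invoke the Markov property at the restart time and handle the first phase by a separate convergence-in-probability estimate.
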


\begin{proof} Without loss of generality we assume that $V(z^*,  \bar{D}^c)<\infty$ (otherwise there is nothing to prove). By definition of the quasipotential and rate function there exists $T_1>0$, $y \in \bar{D}^c$ and a control $v\in L^2([0,T_1];H)$ such that  $Z_{z^*}^v(T_1)=y$ and $\tfrac{1}{2}|v|^2_{L^2([0,T_1];H)}<V(z^*, \bar{D}^c)+\delta.$ Letting $d:=\textnormal{dist}_{\e}(y, \partial D)$ and using the flow continuity of the skeleton equation (Lemma \ref{lem:controliccontinuity}(1)), there exists $\rho>0$ such that for all initial data $z\in\e$ with $|z-z^*|_\e<\rho$ we have $|Z_{z}^v-Z_{z^*}^v   |_{C([0,T_1];\e)}<d/2.$ By uniform attraction to $x^*$ we can find a time $T_2>0$ such that $\sup_{z\in D}|Z_{z}^v(T_2)-z^*    |_{\e}<\rho.$ Next we define a control $u$ by
$$  u(t)=\begin{cases} 0,&\;\; t\in [0, T_2]\\
                       v(t-T_2),&\;\; t\in [T_2, T_1+T_2].   
\end{cases}     $$
Letting $T:=T_1+T_2$ we see that $$\frac{1}{2}|u|^2_{L^2([0,T];H)}=\frac{1}{2}|v|^2_{L^2([0,T_1];H)}<V(z^*, \bar{D}^c)+\delta.$$ Moreover, a direct application of the reverse triangle inequality yields 
$$ \textnormal{dist}_{\e}(Z^u_{z}(T), \partial D)\geq  \textnormal{dist}_{\e}(Z^u_{z^*}(T), \partial D)-|Z_{z}^v-Z_{z^*}^v   |_{C([0,T_1];\e)}>d-d/2=d/2.      $$

The latter, along with another triangle inequality, furnishes the inclusion $$\big\{ |Z_{z}^{\epsilon}-Z_{z}^u   |_{C([0,T];\e)}<d/4   \big\}\subset \big\{\textnormal{dist}_{\e}(Z_z^{\epsilon}(T), \partial D)>d/4\big\}.$$ Hence 
$$\pr\bigg[  |Z_{z}^{\epsilon}-Z_{z}^{u}   |_{C([0,T];\e)}<d/4    \bigg]\leq \pr[  \tau_D^{\epsilon, z}\leq T   ].   $$
Since $Z_z^u$ has finite energy and remains bounded in $\e$ uniformly over $t\in[0,T],$ there exists $s_0>0$ such that $Z_z^u\in\Phi_{z,T}(s_0)$ (with the sublevel set notation introduced in \eqref{eq:SublevelSets}).
From the ULDP lower bound \eqref{eq:LULDP-lower}, we conclude that
\begin{equation*}
    \begin{aligned}
       \liminf_{\epsilon\to 0}\inf_{z\in D}\epsilon^2\log\pr\big[ \tau_D^{\epsilon, z}\leq T  \big]&\geq-\frac{1}{2}|u|^2_{L^2([0,T];H)} >-V(z^*, \bar{D}^c)-\delta.  
\end{aligned}
\end{equation*}\end{proof}

\noindent With Lemma \ref{lem:preupperbound} at hand, Theorem \ref{thm:ExitTimeUpperBnd} follows from the arguments in the proof of \cite[Theorem 5.7.11(a)]{dembo2009large}. We include the proof below for completeness and readers' convenience.

\begin{proof}{\textit{(Of Theorem \ref{thm:ExitTimeUpperBnd})}}

\begin{enumerate}
    \item Let $\delta>0$ and take $T>0$ from Lemma \ref{lem:preupperbound}.
By virtue of the Markov property of $Z^\epsilon_z$, for any $k \in \mathbb{N},$ we have
\begin{equation*}
  \sup_{z \in D} \pr(\tau^{\epsilon,z}_D > kT) \leq \left(\sup_{z \in D} \pr(\tau^{\epsilon,z}_D > T) \right)^k \leq \left( 
 1 - \inf_{z \in D} \pr(\tau^{\epsilon,z}_D \leq T)\right)^k.
\end{equation*}

Thus, by the tail probability formula
$$\ex[ \tau_D^{\epsilon,z}]\leq T\bigg(      1+   \sum_{k=1}^{\infty}\pr\big( \tau_D^{\epsilon,z}\geq kT  \big)\bigg).    $$
From the combination of the last two displays, along with the geometric series formula, we have 
$$ \sup_{z \in D}\ex[ \tau_D^{\epsilon,z}]\leq\frac{T}{1-\left(1 - \inf_{z \in D} \pr(\tau^{\epsilon,z}_D \leq T) \right)} = \frac{T}{ \inf_{z \in D} \pr(\tau^{\epsilon,z}_D \leq T)}.$$

From Lemma \ref{lem:preupperbound} it follows that 
$$   \limsup_{\epsilon\to 0} \sup_{z \in D}\epsilon^2\log\ex\big[ \tau_D^{\epsilon,z}\big]\leq    -\liminf_{\epsilon \to 0} \inf_{z \in D} \epsilon^2\log\pr( \tau_D^{\epsilon,z}\leq T)\leq  V(z^*, \bar{D}^c)+\delta.  $$
Since $\delta>0$ was arbitrary, this concludes the proof of the first assertion. 
\item We have by Chebyshev's inequality
\begin{equation*}
    \begin{aligned}
        \pr\bigg[ \epsilon^2\log\tau_D^{\epsilon,z}>V_D(z^*, \bar{D}^c)+\delta \bigg]&=  \pr\bigg[ \tau_D^{\epsilon,z}>\exp\bigg(\frac{V(z^*, \bar{D}^c)+\delta}{\epsilon^2}\bigg)                              \bigg]\\&\leq \ex\big[ \tau_D^{\epsilon,z}\big]\exp\bigg(-\frac{V(z^*, \bar{D}^c)+\delta}{\epsilon^2}\bigg).  
    \end{aligned}
\end{equation*}  
By (1), the right-hand side converges to $0$ as $\epsilon\to 0$ and the proof is complete.\end{enumerate}\end{proof}

\subsection{Logarithmic exit time lower bound}\label{Sec:ExitTimesLB}We turn to the proof of lower bounds for the exit times \eqref{eq:exittimes}. To do so, we assume that the interior of $D$ contains a ball around the equilibrium $z^*.$ 

\begin{customthm}{5}\label{Assumption:Dinterior}  There exists $R>0$ such that 
 $B_\e(z^*, R)\subset \textnormal{int}(D).$
     \end{customthm}
It is clear that this is satisfied e.g. if $D$ is an open subset of $\e;$ in particular, both an open ball around $z^*$ and the set $D$ that we identified in Proposition \ref{corr:DExistence} satisfy this assumption. In this section we shall prove the following:

\begin{thm}[Exit time lower bound]\label{thm:ExitTimeLowerBnd} Let $D\subset\e$ be a bounded, invariant, uniformly attractive set that contains the asymptotically stable equilibrium $z^*.$ Under Assumptions \ref{Assumption:b}- \ref{Assumption:Dinterior} the following hold:
\begin{enumerate} 
\item  $$\lim_{\epsilon\to 0}\pr\bigg[ \epsilon^2\log\tau_D^{\epsilon,z}\leq V(z^*, \partial D)-\delta \bigg]=0.$$
    \item $$\liminf_{\epsilon\to 0}\epsilon^2\log\ex\big[ \tau_D^{\epsilon,z}\big]\geq V(z^*, \partial D).$$ 
\end{enumerate}
\end{thm}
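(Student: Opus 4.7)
The plan is to implement the classical Freidlin-Wentzell cycling argument for the exit-time lower bound, now relying on the two structural results already established: the identity $V(z^*,\partial D)=V_{\bar{D}}(z^*,\partial D)$ (Lemma \ref{lem:VDequality}) and the inner regularity of $V_{\bar{D}}$ (Lemma \ref{lem:innerregularity}). Statement (2) will follow from (1) via the elementary bound $\ex[\tau_D^{\epsilon,z}]\ge M_\epsilon\,\pr[\tau_D^{\epsilon,z}>M_\epsilon]$ with $M_\epsilon=\exp((V(z^*,\partial D)-\delta)/\epsilon^2)$, together with a passage to the limit $\delta\downarrow 0$, so I concentrate on (1).

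Fix $\delta>0$ and $\eta\in(0,\delta)$. Using Assumption \ref{Assumption:Dinterior} together with Lemmas \ref{lem:VDequality} and \ref{lem:innerregularity}, I would first select $\rho,\delta_0>0$ so that $\overline{B_\e(z^*,2\rho)}\subset D$, $\textnormal{dist}_\e(\overline{B_\e(z^*,2\rho)},\partial D)>\delta_0$, and
\begin{equation*}
V_{\bar{D}}\big(B_\e(z^*,\rho),\,B_\e(\partial D,\delta_0)\big)\ge V(z^*,\partial D)-\eta/4.
\end{equation*}
Next, for a fixed horizon $T>0$, I would establish the key one-cycle exit estimate
\begin{equation*}
\sup_{z\in B_\e(z^*,\rho)}\pr\big[\tau_D^{\epsilon,z}\le T\big]\le \exp\big(-(V(z^*,\partial D)-\eta)/\epsilon^2\big)
\end{equation*}
for all sufficiently small $\epsilon$, by combining the ULDP upper bound \eqref{eq:LULDP-upper} with the inclusion
\begin{equation*}
\big\{\tau_D^{\epsilon,z}\le T\big\}\subset \big\{\textnormal{dist}_{C([0,T];\e)}\big(Z^\epsilon_z,\Phi_{z,T}(V(z^*,\partial D)-\eta/2)\big)\ge \delta_0/2\big\}.
\end{equation*}
To verify the inclusion, suppose $\phi\in\Phi_{z,T}(V(z^*,\partial D)-\eta/2)$ satisfies $|\phi-Z^\epsilon_z|_{C([0,T];\e)}<\delta_0/2$ while $Z^\epsilon_z$ exits $D$ at some $t^*\le T$; then $\phi(t^*)\in B_\e(\partial D,\delta_0/2)$, and truncating $\phi$ at $\tau_\phi:=\inf\{t\le t^*:\phi(t)\notin\bar{D}\}\wedge t^*$ yields a continuous path lying in $\bar{D}$ on $[0,\tau_\phi)$, starting in $B_\e(z^*,\rho)$ and ending in $B_\e(\partial D,\delta_0)$, whose action is at most $V(z^*,\partial D)-\eta/2$, contradicting the choice of $\rho,\delta_0$.

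With the one-cycle estimate in hand, I would close the argument by a Markov cycling step. Invariance and uniform attraction of $D$ furnish $T_1>0$ with $\sup_{z\in D}|Z^0_z(T_1)-z^*|_\e<\rho/4$, and Proposition \ref{prop:LULDP} applied to the event $\sup_{t\le T_1}|Z^\epsilon_z(t)-Z^0_z(t)|_\e>\rho/4$ gives $\lim_{\epsilon\to 0}\sup_{z\in D}\pr[\bar\tau^\epsilon_z>T_1]=0$, where $\bar\tau^\epsilon_z:=\inf\{t: Z^\epsilon_z(t)\in B_\e(z^*,\rho)\cup\partial D\}$. Iterating via the strong Markov property, together with the one-cycle exit estimate, yields, for $M_\epsilon:=\exp((V(z^*,\partial D)-\delta)/\epsilon^2)$,
\begin{equation*}
\pr\big[\tau_D^{\epsilon,z}\le M_\epsilon\big]\le \frac{C M_\epsilon}{T}\exp\big(-(V(z^*,\partial D)-\eta)/\epsilon^2\big)\le C'\exp\big(-(\delta-\eta)/\epsilon^2\big)\longrightarrow 0,
\end{equation*}
which proves (1), and hence (2).

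The main obstacle is the subpath-extraction step inside the one-cycle estimate. In the parabolic setting of \cite{salins2021metastability} one could regularize candidate controlled paths by smoothing of the heat semigroup; here neither the wave semigroup nor Assumption \ref{Assumption:sigmaNondeg} provides smoothing in $\e$, so all bookkeeping must be carried out in $C([0,T];\e)$. It is precisely the identification $V=V_{\bar{D}}$ on $\partial D$ (Lemma \ref{lem:VDequality}) together with the inner regularity of $V_{\bar{D}}$ (Lemma \ref{lem:innerregularity}) that let the case analysis on $\tau_\phi$ close without recourse to smoothing.
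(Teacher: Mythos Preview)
Your one-cycle estimate for initial data in $B_\e(z^*,\rho)$ is essentially correct and captures the content of the paper's Lemma \ref{lem:rhoupperbound} combined with a meandering bound. The gap is in the Markov iteration that follows. The one-cycle bound $\sup_{z\in B_\e(z^*,\rho)}\pr[\tau_D^{\epsilon,z}\le T]\le\exp(-(V(z^*,\partial D)-\eta)/\epsilon^2)$ applies only when the cycle \emph{starts} in $B_\e(z^*,\rho)$; after one block of length $T$ the process sits at an uncontrolled point of $D$, possibly arbitrarily close to $\partial D$. Your ``return'' step only shows $\sup_{z\in D}\pr[\bar\tau^\epsilon_z>T_1]\to 0$, i.e.\ the process hits $B_\e(z^*,\rho)\cup\partial D$ quickly; it does \emph{not} bound the probability of hitting $\partial D$ during that return phase uniformly over starting points in $D$, and this probability cannot be bounded by $\exp(-(V-\eta)/\epsilon^2)$ when the starting point is near the boundary. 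Consequently the claimed inequality $\pr[\tau_D^{\epsilon,z}\le M_\epsilon]\le (CM_\epsilon/T)\exp(-(V-\eta)/\epsilon^2)$ is not justified: you have no mechanism that forces each of the $\sim M_\epsilon/T$ blocks to start in $B_\e(z^*,\rho)$, and the alternative event (being outside the ball at the start of a block) does not decay at the required rate.

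The paper closes this gap with the two-sphere construction $\gamma_\rho\subset\Gamma_\rho$ and the ``fast excursions'' Lemma \ref{lem:fastexcursions}: from $\gamma_\rho$, the probability of reaching $\Gamma_\rho$ within a short time $T_0$ decays with an \emph{arbitrarily large} exponential rate as $T_0\downarrow 0$. This furnishes a minimum time per cycle whose failure probability can be made to dominate $V(z^*,\partial D)$, so that the union of $\{\tau_D=\tau_m\}$ and $\{\theta_{m+1}-\tau_m\le T_0\}$ over $m\le k$ covers $\{\tau_D\le kT_0\}$ and each piece has the right exponential bound. Your scheme lacks precisely this ingredient; without it the iteration cannot be made rigorous.
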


\begin{rem}\label{rem:exittimematching} Notice that, a priori, the asymptotic upper and lower bounds from Theorems \ref{thm:ExitTimeUpperBnd}(1) \ref{thm:ExitTimeLowerBnd}(2) do not match. This will only be the case for domains $D$ that satisfy $V(z^*, \partial D)=V(z^*, \bar{D}^c).$ Such domains also play an important role for our exit shape asymptotics (see Remark \ref{rem:exitshapeldp}). Necessary conditions on $D$ will be discussed in Section \ref{Sec:BoundaryPoints}.    
\end{rem}

\begin{rem}[Explosion time lower bounds]\label{rem:explosiontimes}
An immediate corollary of Theorem \ref{thm:ExitTimeLowerBnd} is that, when perturbed by noise of sufficiently small intensity, local solutions to \eqref{eq:model} exist, on average, for exponentially long time periods before exploding. Indeed, since for each $\epsilon>0, z\in D$ it holds that $\tau_D^{\epsilon, z}<\tau^\epsilon_{z,\infty},$ we can use  Theorem \ref{thm:ExitTimeLowerBnd}(2) to derive the asymptotic lower bound\eqref{eq:explosiontime}
    \begin{equation*}
        \liminf_{\epsilon\to 0}\epsilon^2\log\ex[\tau^\epsilon_{z,\infty}]\geq \liminf_{\epsilon\to 0}\epsilon^2\log\ex[\tau^{\epsilon,z}_{D}]= V(z^*,\partial D).
    \end{equation*}
    Delayed blowup for solutions of reaction-diffusion equations perturbed by small transport noise has been recently observed in \cite{agresti2024delayed}. Roughly speaking, the lower bound above provides a quantitative description of the same phenomenon in the setting of wave equations.
\end{rem}

Theorem \ref{thm:ExitTimeLowerBnd} relies on a number of preliminary lemmas given below. Conditionally on these lemmas, the proof is identical to that of \cite[Theorem 5.7.11(a)]{dembo2009large}. For completeness and readers' convenience we present it at the end of this subsection.

Let us now fix some notation: 

From Assumption \ref{Assumption:Dinterior}, we can find $\rho>0$ small enough such that 

\begin{equation}\label{eq:gammarhodef}
    \gamma_\rho:= \bar{B}_\e(z^*, \rho)\subset D\;,\gamma_\rho\cap\partial D=\varnothing.
\end{equation} 
For each $z\in D , \epsilon$ we consider the stopping times 
\begin{equation}\label{eq:taurhodef}
    \tau^{\epsilon, z}_{\rho}:=\inf\bigg\{ t>0: Z_z^\epsilon(t)\in\gamma_\rho\cup\partial D     \bigg\}
\end{equation}
in which the random trajectories hit either the small ball $\gamma_\rho$ or the boundary $\partial D.$ Our first auxiliary statement shows that, with overwhelmingly large probability, the dynamics cannot "meander" for an arbitrarily long time without hitting either $\gamma_\rho$ or $\partial D.$

\begin{lem}\label{lem:meandering} Let $D$ be a bounded and uniformly attractive set that contains the asymptotically stable equilibrium $z^*.$ Under Assumption \ref{Assumption:Dinterior} there exists $R>0$ such that for all $\rho<R$
\begin{equation*}    \limsup_{t\to\infty}\limsup_{\epsilon\to 0}\sup_{z\in D}\epsilon^2\log\pr\bigg[ \tau^{\epsilon, z}_{\rho}>t   \bigg]=-\infty.
\end{equation*} 
\end{lem}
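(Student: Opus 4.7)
My plan is to combine three ingredients: (i) uniform attraction, which drives the noiseless flow into a small ball around $z^*$ by some finite time $T_0$, uniformly in $z\in D$; (ii) the LULDP upper bound of Proposition \ref{prop:LULDP}, yielding exponentially small probability for macroscopic deviations of $Z^{\epsilon}_z$ from $Z^0_z$ on $[0,T_0]$; and (iii) the Markov property of $Z^\epsilon$, which allows iteration of the one-period estimate.

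First I would shrink $R$ in Assumption \ref{Assumption:Dinterior} so that $\bar B_\e(z^*,2\rho)\subset\textnormal{int}(D)$ for every $\rho<R$, ensuring $\gamma_\rho$ is separated from $\partial D$. Uniform attraction then provides $T_0$ with $\sup_{z\in D}|Z^0_z(T_0)-z^*|_\e<\rho/4$, so $Z^0_z(T_0)\in\gamma_{\rho/4}$ for each $z\in D$. On the event $\{\tau^{\epsilon,z}_\rho>T_0\}$ we have $Z^\epsilon_z(T_0)\notin\gamma_\rho$, whence by the triangle inequality
\[|Z^\epsilon_z-Z^0_z|_{C([0,T_0];\e)}\geq |Z^\epsilon_z(T_0)-Z^0_z(T_0)|_\e\geq 3\rho/4.\]

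Next I would set
\[s^*:=\inf_{z\in D}\inf\bigl\{I_{z,T_0}(\phi):\;|\phi-Z^0_z|_{C([0,T_0];\e)}\geq\rho/2\bigr\}\]
and prove $s^*>0$ by contradiction. If not, one extracts $z_n\in D$ and $\phi_n=Z^{u_n}_{z_n}$ with $|u_n|_{L^2}\to 0$ and $|\phi_n-Z^0_{z_n}|_\infty\geq\rho/2$. Passing to a subsequence with $z_n\to z\in\bar D$ and applying the joint continuity of the skeleton map in Lemma \ref{lem:controliccontinuity}(1) twice (once for $(z_n,u_n)\to(z,0)$ and once for $(z_n,0)\to(z,0)$) yields $\phi_n\to Z^0_z$ and $Z^0_{z_n}\to Z^0_z$ in $C([0,T_0];\e)$, contradicting the distance bound. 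In particular $\Phi_{z,T_0}(s^*/2)\subseteq\{\phi:|\phi-Z^0_z|_\infty<\rho/2\}$ uniformly in $z\in D$, so the LULDP upper bound \eqref{eq:LULDP-upper} applied with $s=s^*/2$ and $\delta=\rho/4$ gives
\[\limsup_{\epsilon\to 0}\sup_{z\in D}\epsilon^2\log\pr\bigl[\tau^{\epsilon,z}_\rho>T_0\bigr]\leq -s^*/2.\]

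Finally, the Markov property of the localized process $Z^\epsilon_{z,n_D}$ (whose coefficients are Lipschitz, making it a Feller-Markov process on $\e$) gives, for each integer $k\geq 1$,
\[\sup_{z\in D}\pr\bigl[\tau^{\epsilon,z}_\rho>kT_0\bigr]\leq\Bigl(\sup_{z\in D}\pr\bigl[\tau^{\epsilon,z}_\rho>T_0\bigr]\Bigr)^k,\]
because on $\{\tau^{\epsilon,z}_\rho>kT_0\}$ the process lies in $D\setminus\gamma_\rho\subset D$ at time $kT_0$ and the one-step bound restarts. Combining with the one-step estimate yields
\[\limsup_{\epsilon\to 0}\sup_{z\in D}\epsilon^2\log\pr[\tau^{\epsilon,z}_\rho>kT_0]\leq -k s^*/2,\]
and the bound $\pr[\tau^{\epsilon,z}_\rho>t]\leq\pr[\tau^{\epsilon,z}_\rho>\lfloor t/T_0\rfloor T_0]$ reduces arbitrary $t>0$ to multiples of $T_0$; sending $t\to\infty$ drives $k\to\infty$ and produces the limit $-\infty$. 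The anticipated main obstacle is the uniform positivity of $s^*$, which hinges on the joint continuity of the skeleton map in $(z,u)$ established in Lemma \ref{lem:controliccontinuity}(1); once that is in place the remainder is a standard Freidlin--Wentzell iteration.
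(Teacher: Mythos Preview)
Your overall strategy---uniform attraction to pick a fixed time horizon $T_0$, the LULDP upper bound for the one-step estimate, and the Markov property to iterate---is the same as the paper's. The only substantive difference, and the only place where your argument has a gap, is the proof that $s^*>0$.

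You argue by contradiction: take $z_n\in D$ and $u_n$ with $|u_n|_{L^2}\to 0$ and $|Z^{u_n}_{z_n}-Z^0_{z_n}|_{C([0,T_0];\e)}\geq\rho/2$, then ``pass to a subsequence with $z_n\to z\in\bar D$'' and invoke Lemma~\ref{lem:controliccontinuity}(1). But $\bar D$ is merely a bounded subset of the infinite-dimensional Banach space $\e=C_0(0,\ell)\times C^{-1}(0,\ell)$, and bounded sets there are not relatively compact, so you cannot in general extract a convergent subsequence of $(z_n)$. Continuity of $(z,u)\mapsto Z^u_z$ at each point (which is what Lemma~\ref{lem:controliccontinuity}(1) gives) does not by itself yield uniformity over a noncompact $D$.

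The paper sidesteps this by using Lemma~\ref{lem:controliccontinuity}(2) instead: that lemma furnishes a quantitative bound $|Z^u_z-Z^0_z|_{C([0,T_0];\e)}\leq\Lambda(|u|_{L^2})$ with a modulus $\Lambda$ that is \emph{independent of $z\in\e$} (the localized coefficients are globally Lipschitz, so the Gr\"onwall constant does not depend on the initial condition). Inverting $\Lambda$ then gives $|u|_{L^2}\geq\Lambda^{-1}(\rho/4)>0$ whenever the controlled trajectory stays $\rho/4$-far from the free one, and $s_0:=\tfrac12(\Lambda^{-1}(\rho/4))^2$ is the uniform lower bound you want. Replacing your compactness step by this direct estimate closes the gap and makes your proof coincide with the paper's.
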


\begin{proof} 
Let $\rho$ be small enough for \eqref{eq:gammarhodef} to hold. By the uniform attraction shown in Theorem \ref{thm:domainofattraction} there exists $T_0=T_0(\rho)>0$ such that $\sup_{z\in D}|Z_z^0(T_0)-z^*|_\e<\rho/2.$ In view of the latter, Lemma \ref{lem:controliccontinuity}(2)  and a triangle inequality imply that, if there exists a control $u$ such that $|Z^u_z(T_0)-z^*|_{\e}> 3\rho/4$ 
then 
$$  \frac{\rho}{4}< |Z^{u}_z(T_0)-Z^0_z(T_0)|_{\e}\leq \Lambda\big(|u|_{L^2([0,T_0];H)}\big).$$ Since $\Lambda$ is invertible with a non-decreasing inverse it follows that

$$  0<\frac{1}{2}\bigg(\Lambda^{-1}(\tfrac{\rho}{4})\bigg)^2\leq \frac{1}{2}|u|^2_{L^2([0,T_0];H)}.      $$

Hence there exists $s_0>0$ such that the set of trajectories $\phi\in C([0,T_0];\e)$ such that 1) $\phi(0)=z,$ $2)$ $\phi(t)\in D$ for all $t\in[0,T_0],$ and 3) $|\phi(T_0)-z^*|_{\e}> 3\rho/4$ is disjoint from the sublevel set $\Phi_{ T_0, z}(s_0)$ (recall the notation \eqref{eq:SublevelSets}). Hence all trajectories that satisfy 1), 2) and $\phi(t)\in D\setminus (\gamma_\rho\cup\partial D)$ for all $t\in [0,T_0]$ must also satisfy $$ \textnormal{dist}_{C([0,T_0];\e)}\big(\phi; \Phi_{T_0, z}(s_0)\big)\geq \rho/4. $$ 

From the ULDP upper bound 
it follows that 
\begin{equation*}
\begin{aligned}
  &\limsup_{\epsilon\to 0}\sup_{z\in D}\epsilon^2\log\pr\bigg[ \tau^{\epsilon, z}_{\rho}>T_0  \bigg]\leq    \limsup_{\epsilon\to 0}\sup_{z\in D}\epsilon^2\log\pr\bigg[\forall t\in [0, T_0]\;   Z_z^{\epsilon}(t)\in D\setminus(\gamma_\rho\cup\partial D )\bigg]\\&
  \leq  \limsup_{\epsilon\to 0}\sup_{z\in D}\epsilon^2\log\pr\bigg[ Z^\epsilon_z(t) \in \mathcal{D}, t \in [0,T_0] \text{ and }  \textnormal{dist}_{C([0,T_0];\e)}(Z_z^{\epsilon}; \Phi_{T_0, z}(s_0))\geq \rho/4\bigg]
  \\& \leq -s_0.
\end{aligned}   
\end{equation*}
This bound can then be bootstrapped via the Markov property so that for all $k\in\N$
\[\sup_{z\in D}\pr\bigg[ \tau^{\epsilon, z}_{\rho}>kT_0   \bigg] \leq \left( \sup_{z\in D}\pr\bigg[ \tau^{\epsilon, z}_{\rho}>T_0   \bigg]\right)^k.\]
We conclude that 
\begin{equation*}
    \limsup_{\epsilon\to 0}\sup_{z\in D}\epsilon^2\log\pr\bigg[ \tau^{\epsilon, z}_{\rho}>kT_0   \bigg]\leq -ks_0
\end{equation*}
and the proof is complete upon taking $k\to\infty.$
\end{proof}

Next, we let 
\begin{equation}\label{eq:Mdefinition}
    M:=1\vee \sup_{t\geq 0}|S_\alpha(t)|_{\mathscr{L}(\e)}. 
\end{equation}
Taking $\rho>0$ to be small enough, we consider a sphere
\begin{equation}\label{eq:Gammarhodef}
    \Gamma_\rho:=\partial B_\e(z^*, 2M\rho), \Gamma_\rho\cap\partial D=\varnothing 
\end{equation} 
of radius $2M\rho>\rho.$ The following lemma proves that "fast excursions" to $\Gamma_\rho$ of random trajectories issued from $\gamma_\rho$ are exponentially unlikely.
\begin{lem}\label{lem:fastexcursions} Let $D\subset\e$ be bounded and such that Assumption \ref{Assumption:Dinterior} holds.
    For any $\rho>0$ and with $\gamma_\rho, 
    \Gamma_\rho$ as in \eqref{eq:gammarhodef}, \eqref{eq:Gammarhodef} respectively we have
    \begin{equation*}
        \limsup_{T \to 0} \limsup_{\epsilon \to 0} \sup_{z \in \gamma_\rho} \epsilon^2 \log \pr\bigg[Z^{\epsilon}_z(t) \in \Gamma_\rho \text{ for some } t \in [0,T] \bigg] = -\infty.
    \end{equation*}
\end{lem}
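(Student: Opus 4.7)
My plan is to apply the ULDP upper bound from Proposition \ref{prop:LULDP} after establishing that any controlled path with bounded action starting in $\gamma_\rho$ cannot visit the sphere $\Gamma_\rho = \partial B_\e(z^*, 2M\rho)$ within a sufficiently short time interval $[0,T]$. The specific radius $2M\rho$ is tailored to this strategy: the purely linear evolution displaces a point $z$ with $|z-z^*|_\e \leq \rho$ by at most $M\rho$, so there is a uniform margin $M\rho$ before the linear part alone could reach $\Gamma_\rho$, and this margin dominates the nonlinear drift and control contributions when $T$ is small.

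The core technical step is an a priori estimate on the skeleton equation. Since $z^*$ is an equilibrium, subtracting $z^* = S_\alpha(t)z^* + \int_0^t S_\alpha(t-s)B(z^*)\,ds$ from the mild formula for $Z^u_z(t)$ and using the uniform bound $|S_\alpha(t)|_{\mathscr{L}(\e)} \leq M$, the global Lipschitz continuity of the localized $B$ (valid by Remark \ref{rem:notation1}), and a control-term estimate
\[\bigg|\int_0^t S_\alpha(t-s)\Sigma(Z^u_z(s))u(s)\,ds\bigg|_\e \leq C\sqrt{t}\,|u|_{L^2([0,t];H)},\]
derived by first bounding in $\h_1$ exactly as in the proof of Lemma \ref{lem:Zaprioricompactnesslem} and then invoking the continuous embedding $\h_1 \hookrightarrow \e$, I would obtain via Grönwall's lemma
\[\sup_{t \in [0,T]}|Z^u_z(t) - z^*|_\e \leq \bigl(M\rho + C\sqrt{T}\,|u|_{L^2([0,T];H)}\bigr)e^{LMT}.\]
Fixing $s_0 > 0$ arbitrary, for any $\psi = Z^u_z \in \Phi_{z,T}(s_0)$ we have $|u|_{L^2}^2 \leq 2s_0$, so the right-hand side can be made strictly less than $\tfrac{3}{2}M\rho$ by choosing $T = T(s_0,\rho)$ small enough. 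Since any path $\phi$ visiting $\Gamma_\rho$ satisfies $|\phi(t_0) - z^*|_\e = 2M\rho$ at some $t_0 \in [0,T]$, the reverse triangle inequality yields $\textnormal{dist}_{C([0,T];\e)}(\phi, \Phi_{z,T}(s_0)) \geq M\rho/2$.

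With this separation in hand, applying the ULDP upper bound \eqref{eq:LULDP-upper} over the bounded initial-data set $\gamma_\rho$ with $s = s_0$ and $\delta = M\rho/2$ yields
\[\limsup_{\epsilon\to 0}\sup_{z\in\gamma_\rho}\epsilon^2\log\pr\bigl[Z^\epsilon_z(t) \in \Gamma_\rho \text{ for some } t \in [0,T(s_0,\rho)]\bigr] \leq -s_0.\]
Letting $s_0 \to \infty$ (so that $T(s_0,\rho) \to 0$) produces the claimed double limit equal to $-\infty$. The main obstacle I expect is the a priori bound in the sup-norm topology: a direct estimate of the control convolution in $\e$ is not available because the noise enters only through the velocity component and the wave semigroup lacks the smoothing of the heat kernel; the detour through $\h_1$ and its embedding into $\e$, which is exactly the ingredient developed in the proof of Lemma \ref{lem:Zaprioricompactnesslem}, is essential for producing the $\sqrt{T}$ factor that drives the argument.
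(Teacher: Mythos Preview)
Your proposal is correct and follows essentially the same route as the paper: both derive a short-time a priori bound on $|Z^u_z(t)-z^*|_\e$ of the form $M\rho + (\text{small in }T) + C\sqrt{T}\,|u|_{L^2}$, use the $M\rho/2$ gap between the ball of radius $\tfrac{3}{2}M\rho$ and the sphere $\Gamma_\rho$ to separate $\Phi_{z,T}(s_0)$ from paths hitting $\Gamma_\rho$, and then invoke the ULDP upper bound \eqref{eq:LULDP-upper}. The only cosmetic differences are that the paper bounds the drift term by $Ct$ directly (using that the localized $b$ is Lipschitz and the solution stays in $B_\e(z^*,\tfrac{3}{2}M\rho)$ up to the relevant stopping time) rather than via Gr\"onwall, and parametrizes by $T$ with level $\tfrac{1}{2}E(T)^2\to\infty$ as $T\to 0$ instead of fixing $s_0$ first and choosing $T(s_0)$.
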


\begin{proof}
    For any $z \in \gamma_\rho,$ $u \in L^2([0,T];H)$ and $Z_z^u$ a controlled path we have
    \begin{align*}
        |Z^{u}_z(t) - z^*|_\e \leq &|S_\alpha(t)(z - z^*)|_\e + \left| \int_0^t S_\alpha(t-s) ( B(Z^{u}_z(s)) - B(z^*))ds\right|_\e \nonumber\\
        &+ \left| \int_0^t S_\alpha(t-s) \Sigma(Z^{u}_z(s))u(s)ds \right| \nonumber\\
        &\leq M \rho + Ct + C \sqrt{t}|u|_{L^2([0,t];H)},
    \end{align*}
    where $M$ as in \eqref{eq:Mdefinition}, $C$ a constant independent of $z,T$ and we used the local Lipschitz continuity of $b, \sigma$.
    Notice that if $Z^u_z(t)\notin B_\e(z^*, 3M\rho/2)$ then 
    \begin{equation*}
        |u|_{L^2([0,t];H)} \geq \frac{\frac{ M\rho}{2} - Ct}{C\sqrt{t}} =: E(t). 
    \end{equation*}
     In other words, recalling the sublevel set notation \eqref{eq:SublevelSets}, we have the inclusion     $$\Phi_{z,T}\big(\tfrac{1}{2}E^2(T)\big)\subset\bigg\{ \phi\in C([0,T];\e): \phi(t)\in    B_\e(z^*, 3M\rho/2)\;\textnormal{for some}\;t\in[0,T]\bigg\}.$$
   Therefore, because $\Gamma_\rho$ is a sphere of radius $2M\rho$
    \begin{equation*}
        \sup_{z \in \gamma_\rho} \pr \bigg[ Z^\epsilon_z(t) \in \Gamma_\rho \text{ for some } t \in [0,T] \bigg] 
        \leq \sup_{z \in \gamma_\rho} \pr  \bigg[\textnormal{dist}\big(Z^\epsilon_z, \Phi_{z,T}(E^2(T)/2)\big) \geq \frac{M\rho}{2} \bigg] .
    \end{equation*}
    
    By the ULDP upper bound \eqref{eq:LULDP-upper},
    \begin{equation*}
        \limsup_{\epsilon \to 0} \sup_{z \in \gamma_\rho} \epsilon^2 \log \pr\bigg[Z^{\epsilon}_z(t) \in \Gamma_\rho \text{ for some } t \in [0,T] \bigg]  \leq -\frac{1}{2}E^2(T).
    \end{equation*}
    The conclusion follows because $E(T) \to \infty$ as $T \to 0$.
\end{proof}

The following lemma is concerned with the "typical behavior" of the random trajectories in an invariant, uniformly attracting set $D$. In particular it says that, with probability converging to $1$ as $\epsilon\to 0,$ they will hit the small ball $\gamma_\rho$ before hitting the boundary $\partial D.$ As we shall discuss in Remark \ref{rem:invariance} the assumption on invariance of the set $D$ is key for this property to hold.

\begin{lem}\label{lem:smallballlimit} Let $D$ be an invariant and uniformly attracting set that contains $z^*$. For $\rho$ small enough for \eqref{eq:gammarhodef} to hold, $\tau^{\epsilon,z}_\rho$ as in \eqref{eq:taurhodef} and all $z\in D$ we have
\begin{equation*}
      \lim_{\epsilon\to 0} \pr\bigg[ Z^{\epsilon}_z(\tau_{\rho}^{\epsilon, z}  )\in\gamma_\rho \bigg]=1. 
\end{equation*}  
\end{lem}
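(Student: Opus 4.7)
My plan is to compare the random trajectory $Z^\epsilon_z$ with the deterministic flow $Z^0_z$ over a finite horizon $[0,T_0]$ chosen so that $Z^0_z(T_0)$ lies well inside $\gamma_\rho$, and then to leverage invariance and openness of $D$ to ensure that every sufficiently small $C([0,T_0];\e)$-perturbation of $Z^0_z$ also reaches $\gamma_\rho$ before $\partial D$.

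Fix $z\in D$. By the uniform attraction hypothesis (see Theorem \ref{thm:domainofattraction} and Proposition \ref{corr:DExistence}), there exists $T_0>0$ such that $|Z^0_z(T_0)-z^*|_\e<\rho/2$, so in particular $Z^0_z(T_0)$ lies in the interior of $\gamma_\rho$. By invariance of $D$ and the fact that $D$ is open (a property enjoyed by the $D$ built in Proposition \ref{corr:DExistence}, and which it is natural to impose here), the continuous curve $t\mapsto Z^0_z(t)$ takes values in the open set $D$ for every $t\in[0,T_0]$. Therefore the continuous real-valued function $t\mapsto\textnormal{dist}_\e(Z^0_z(t),\partial D)$ is strictly positive on the compact interval $[0,T_0]$ and attains a strictly positive minimum
\[
\eta_0:=\min_{t\in[0,T_0]}\textnormal{dist}_\e\bigl(Z^0_z(t),\partial D\bigr)>0.
\]
Set $\delta:=\tfrac{1}{2}\min(\eta_0,\rho/2)$.

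On the event $E_\epsilon:=\bigl\{|Z^\epsilon_z-Z^0_z|_{C([0,T_0];\e)}<\delta\bigr\}$, the reverse triangle inequality yields $\textnormal{dist}_\e(Z^\epsilon_z(t),\partial D)\geq \eta_0-\delta\geq \eta_0/2>0$ for every $t\in[0,T_0]$, so the random path cannot touch $\partial D$ on $[0,T_0]$. Moreover $|Z^\epsilon_z(T_0)-z^*|_\e\leq \delta+\rho/2<\rho$, whence $Z^\epsilon_z(T_0)\in\gamma_\rho$. It follows that $\tau^{\epsilon,z}_\rho\leq T_0$, and because $\partial D$ has not been visited during $[0,\tau^{\epsilon,z}_\rho]$, the exit necessarily occurs in $\gamma_\rho$; that is, $Z^\epsilon_z(\tau^{\epsilon,z}_\rho)\in\gamma_\rho$ on $E_\epsilon$.

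It remains to show $\pr[E_\epsilon]\to 1$ as $\epsilon\to 0$, which is the standard small-noise convergence $Z^\epsilon_z\to Z^0_z$ in probability in $C([0,T_0];\e)$. This is immediate from \eqref{eq:EULPsufficient} in the proof of the LULDP (Proposition \ref{prop:LULDP}) with the trivial control $u\equiv 0$, and is also a direct consequence of the ULDP upper bound \eqref{eq:LULDP-upper} applied to the zero sublevel set $\Phi_{z,T_0}(0)=\{Z^0_z\}$ of the rate function. The main obstacle one might anticipate, namely obtaining a positive lower bound $\eta_0$ without compactness of the orbit $\{Z^0_z(t):t\geq 0\}$ in the infinite-dimensional phase space $\e$, is bypassed by truncating to the finite horizon $[0,T_0]$: continuity of the positive distance function on the compact time interval is all that is needed.
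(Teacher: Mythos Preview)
Your proof is correct and follows essentially the same approach as the paper: both fix a finite time horizon so that the deterministic trajectory is well inside $\gamma_\rho$, use invariance (and openness) of $D$ to get a uniform positive distance from $\partial D$ on that interval, and then conclude via the small-noise convergence $Z^\epsilon_z\to Z^0_z$ in $C([0,T_0];\e)$. The only cosmetic difference is that the paper chooses the horizon as the deterministic hitting time $\tau^z:=\inf\{t>0:|Z^0_z(t)-z^*|_\e=\rho/2\}$ and treats the case $z\in\gamma_\rho$ separately, whereas you pick $T_0$ directly from uniform attraction and handle all $z\in D$ at once.
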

\begin{proof} This follows from the convergence in probability  $Z^{\epsilon}_z\rightarrow Z_z^0, \epsilon\to 0,$ which holds uniformly over compact time intervals, and the asymptotic stability of $z^*.$

In particular, for $z\in\gamma_\rho$ there is nothing to prove. Thus, we fix $z\in D\setminus\gamma_\rho.$ By virtue of asymptotic stability, the deterministic hitting time $\tau^z:=\inf\{ t>0 : |Z^0_z(t)-z^*|_\e=\rho/2  \}$
is finite. By invariance of $D$ (due to Proposition \ref{corr:DExistence}) and path continuity of $Z_z^0$ we have $$d=\textnormal{dist}_{\e}\big(\{Z_z^0(t); t\in[0,\tau^z]\}, \partial D    \big)>0.        $$
An application of the triangle inequality and the aforementioned convergence then yield
$$\pr\bigg[ Z^{\epsilon}_z(\tau_{\rho}^{\epsilon, z}  )\in\gamma_\rho \bigg]\geq \pr\bigg[ \big|Z^\epsilon_z-Z^0_z\big|_{C([0,\tau^z];\e)}  <\frac{d\wedge\rho}{2} \bigg]\longrightarrow 1\;,\epsilon\to 0.    $$
The proof is complete.
\end{proof}
\noindent From the preceding analysis it becomes clear that the probability of any trajectory, issued from $\Gamma_\rho,$ hitting a subset of $\partial D$ before $\gamma_\rho$ is (exponentially) small.
The next lemma provides an asymptotic upper bound for this probability. The proof crucially relies on the inner regularity of the quasipotential $V_D$ (Lemma \ref{lem:innerregularity}).
\begin{lem}\label{lem:rhoupperbound}
    Let $D\subset\e$ be a bounded and uniformly attracting set that contains $z^*$ and satisfies Assumption \ref{Assumption:Dinterior}. With  $V_{\bar{D}}, \tau^{\epsilon,z}_\rho, \Gamma_\rho $ as in \eqref{eq:VDbarquasipotential}, \eqref{eq:taurhodef}, \eqref{eq:Gammarhodef} respectively and for any closed $N \subset \partial D$
    \begin{equation*}
        \limsup_{\rho \to 0} \limsup_{\epsilon \to 0} \sup_{z \in \Gamma_\rho} \epsilon^2 \log \pr\bigg[ Z^\epsilon_z(\tau^{\epsilon,z}_\rho) \in N\bigg] \leq - V_{\bar{D}}(z^*,N).
    \end{equation*}
\end{lem}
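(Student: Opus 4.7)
The proof follows the classical Freidlin--Wentzell scheme, now feasible thanks to the two tools established above: the meandering estimate of Lemma \ref{lem:meandering} and, crucially, the inner regularity of $V_{\bar{D}}$ proved in Lemma \ref{lem:innerregularity}. The idea is to (i) cut off at a finite time horizon $T$, (ii) identify the event $\{Z^\epsilon_z(\tau^{\epsilon,z}_\rho)\in N\}$ with a closed subset $\mathcal{A}_T$ of the path space $C([0,T];\e)$, (iii) bound the action over $\mathcal{A}_T$ from below by $V_{\bar{D}}(z,N)$, (iv) use inner regularity to replace $V_{\bar{D}}(z,N)$ by $V_{\bar{D}}(z^*,N)-\eta$ uniformly in $z\in\Gamma_\rho$, and (v) close the argument via the ULDP upper bound \eqref{eq:LULDP-upper}.

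\textbf{Steps.} Fix $\eta>0$. Choose $T$ large enough that Lemma \ref{lem:meandering} yields
$$\sup_{z\in D}\pr\bigl[\tau^{\epsilon,z}_\rho>T\bigr]\leq \exp\bigl(-(V_{\bar{D}}(z^*,N)+1)/\epsilon^2\bigr)$$
for all small $\epsilon$, which absorbs the long-time tail into the negligible regime. Introduce the path set
$$\mathcal{A}_T := \bigl\{\phi\in C([0,T];\e): \exists\, t\in[0,T],\ \phi(t)\in N,\ \phi(s)\in\bar{D}\ \text{for all}\ s\in[0,t]\bigr\},$$
which is closed since $N$ and $\bar{D}$ are closed and $[0,T]$ is compact. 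On the event $\{\tau^{\epsilon,z}_\rho\leq T,\ Z^\epsilon_z(\tau^{\epsilon,z}_\rho)\in N\}$ the trajectory lies in $\mathcal{A}_T$. For any $\phi\in\mathcal{A}_T$ with $\phi(0)=z$, restricting to $[0,t^*]$ (with $t^*$ the first hitting time of $N$) yields an admissible competitor in \eqref{eq:VDbarquasipotential}, and truncating the underlying control cannot increase its $L^2$ norm; hence
$$\inf\bigl\{I_{z,T}(\phi):\phi\in\mathcal{A}_T,\,\phi(0)=z\bigr\}\geq V_{\bar{D}}(z,N).$$
Lemma \ref{lem:innerregularity} supplies $\rho_0>0$ such that $V_{\bar{D}}(z,N)\geq V_{\bar{D}}(z^*,N)-\eta$ whenever $z\in B_\e(z^*,2M\rho_0)$; for $\rho\leq\rho_0$ this contains $\Gamma_\rho$. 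Feeding these estimates into the ULDP upper bound \eqref{eq:LULDP-upper} and letting $\rho\to 0$ and then $\eta\to 0$ yields the claim.

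\textbf{Main obstacle.} The delicate step is (v): the ULDP \eqref{eq:LULDP-upper} is phrased via distance to compact sublevel sets $\Phi_{z,T}(s)$, whereas we need a uniform closed-set bound of the form $\limsup_{\epsilon\to 0}\sup_{z\in\Gamma_\rho}\epsilon^2\log\pr[Z^\epsilon_z\in\mathcal{A}_T]\leq -(V_{\bar{D}}(z^*,N)-\eta)$. Because $\Gamma_\rho\subset\e$ is not compact, guaranteeing a $z$-uniform positive separation between the closed set $\mathcal{A}_T$ and the (compact, $z$-dependent) sublevel sets $\Phi_{z,T}(V_{\bar{D}}(z^*,N)-\eta)$ is the subtle point. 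I would handle it by a compactness/contradiction argument: a sequence $(z_n,\phi_n)$ with $z_n\in\Gamma_\rho$, $\phi_n\in\Phi_{z_n,T}(V_{\bar{D}}(z^*,N)-\eta)$, and $\textnormal{dist}(\phi_n,\mathcal{A}_T)\to 0$ would, via the continuous dependence of controlled paths on initial data and weak-$L^2$ controls (Lemma \ref{lem:controliccontinuity}) and the lower semicontinuity of $I_{\cdot,T}$, admit a subsequential limit contradicting Steps (iii)--(iv).
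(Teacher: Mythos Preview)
Your overall strategy—split at a finite horizon $T_0$ via Lemma~\ref{lem:meandering}, lower-bound the action on the exit-through-$N$ event via inner regularity, and close with the ULDP upper bound—is exactly the paper's. The difference, and the gap, lies in how you produce the separation $\delta>0$ between the exit event and the sublevel sets $\Phi_{z,T_0}(s_0)$ \emph{uniformly} over $z\in\Gamma_\rho$.

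Your contradiction argument in step~(v) is run at a \emph{fixed} $\rho$: you posit $z_n\in\Gamma_\rho$, $\phi_n\in\Phi_{z_n,T}(s_0)$ with $\textnormal{dist}(\phi_n,\mathcal{A}_T)\to 0$, and appeal to Lemma~\ref{lem:controliccontinuity} and lower semicontinuity to pass to a limit. But $\Gamma_\rho$ is a sphere in the infinite-dimensional space $\e$ and is not compact, so $z_n$ need not admit a convergent subsequence; without $z_n\to z$, Lemma~\ref{lem:controliccontinuity} gives no convergence of $\phi_n=Z^{u_n}_{z_n}$, and the argument stalls. (A diagonalization over $\rho_m\to 0$ would force $z_m\to z^*$ and salvage the conclusion, but that is precisely re-deriving the content already packaged into Lemma~\ref{lem:innerregularity}.)

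The paper avoids this by exploiting the \emph{full} statement of Lemma~\ref{lem:innerregularity}: for any $s_0<V_{\bar D}(z^*,N)$ it produces not only $\rho_0$ but also a $\delta_0>0$ with $V_{\bar D}\bigl(B_\e(z^*,\rho_0),\,B_\e(N,\delta_0)\bigr)>s_0$. This $\delta_0$ is then used \emph{directly} as the separation in the ULDP upper bound \eqref{eq:LULDP-upper}: no controlled path $\psi\in\Phi_{z,T_0}(s_0)$ starting in $B_\e(z^*,\rho_0)$ and remaining in $\bar D$ can reach $B_\e(N,\delta_0)$, so any trajectory that exits through $N$ while staying in $D$ lies at distance $\geq\delta_0$ from such $\psi$, uniformly in $z\in\Gamma_\rho\subset B_\e(z^*,\rho_0)$. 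You only invoked inner regularity at $\delta=0$ (namely $V_{\bar D}(z,N)\geq V_{\bar D}(z^*,N)-\eta$ for $z$ near $z^*$), which is why you were left needing a separate compactness argument that the non-compactness of $\Gamma_\rho$ obstructs. The fix is simply to carry the $B_\e(N,\delta_0)$ enlargement from Lemma~\ref{lem:innerregularity} through to step~(v).
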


\begin{proof}  Let $s_0<V_{\bar{D}}(z^*, N).$ For $T>0, \rho>0, z\in\Gamma_\rho$ we have
\begin{equation*}
    \begin{aligned}       \pr\bigg[Z^\epsilon_z(\tau^{\epsilon,z}_\rho) \in N\bigg]=\pr\bigg[ Z^\epsilon_z(\tau^{\epsilon,z}_\rho) \in N, \tau^{\epsilon,z}_\rho\leq T\bigg]+\pr\bigg[ Z^\epsilon_z(\tau^{\epsilon,z}_\rho) \in N, \tau^{\epsilon,z}_\rho>T\bigg].
    \end{aligned}
\end{equation*}
From Lemma \ref{lem:innerregularity}, there exist $\rho_0, \delta_0>0$ such that for all $\rho<\rho_0, \delta<\delta_0$ we have 
\begin{equation}\label{eq:lsc}
    \begin{aligned}
        V_{\bar{D}}(B(z^*,\rho), B(N, \delta) )>s_0.
    \end{aligned}
\end{equation}
By choosing a possibly smaller $\rho$, Lemma \ref{lem:meandering} furnishes a sufficiently large $T_0$ such that
\begin{equation}\label{eq:largetime}
    \begin{aligned}
       \limsup_{\epsilon\to 0}\sup_{z\in\Gamma_\rho}\epsilon^2\log\pr\bigg[ Z^\epsilon_z(\tau^{\epsilon,z}_\rho) \in N, \tau^{\epsilon,z}_\rho>T_0\bigg]\leq \limsup_{\epsilon\to 0}\sup_{z\in\Gamma_\rho}\epsilon^2\log\pr\bigg[\tau^{\epsilon, z}_\rho>T_0\bigg]\leq -s_0.
    \end{aligned}
\end{equation}
In view of \eqref{eq:lsc},  any path $\psi\in C ([0,T_0];\e)$ with $\psi(0)\in\Gamma_{\rho}$ and $\textnormal{dist}_{\e}(\psi(T_0), N)=\delta$ will satisfy
\begin{equation*}
    \begin{aligned}
 \textnormal{dist}_{C([0,T_0]; \e)}(\psi, \Phi_{z, T_0}(s_0))>0.
    \end{aligned}
\end{equation*}
The latter, along with the triangle inequality, implies that for all paths $\phi$ that exit $D$ through $N$ by time $T_0$
\begin{equation}\label{eq:contradiction}
    \begin{aligned}
 \textnormal{dist}_{C([0,T_0]; \e)}(\phi, \Phi_{z, T_0}(s_0))\geq |\phi-\psi|_{C([0,T_0]; \e)}+\textnormal{dist}_{C([0,T_0]; \e)}(\psi, \Phi_{z, T_0}(s_0))\geq \delta.
    \end{aligned}
\end{equation}

From \eqref{eq:contradiction} and the ULDP upper bound \eqref{eq:LULDP-upper} it follows that
\begin{equation*}\label{eq:smalltime}
    \begin{aligned}
       \limsup_{\epsilon\to 0}\sup_{z\in\Gamma_\rho}\epsilon^2\log\pr\bigg[ Z^\epsilon_z(\tau^{\epsilon,z}_\rho) \in N, \tau^{\epsilon,z}_\rho\leq T_0\bigg]\leq \limsup_{\epsilon\to 0}\sup_{z\in\Gamma_\rho}\epsilon^2\log\pr\bigg[   \textnormal{dist}_{C([0,T_0]; \e)}(Z^{\epsilon}_z, \Phi_{z, T_0}(s_0))>
        \delta      \bigg]\leq -s_0.
    \end{aligned}
\end{equation*}
\noindent From the latter, along with \eqref{eq:largetime}, we conclude that 
\begin{equation*}
\begin{aligned}
    \limsup_{\rho \to 0} &\limsup_{\epsilon \to 0} \sup_{z \in \Gamma_\rho} \epsilon^2\log \pr\bigg[Z^\epsilon_z(\tau^{\epsilon,z}_\rho) \in N\bigg]\\& \leq
         \limsup_{\rho \to 0} \limsup_{\epsilon \to 0} \sup_{z \in \Gamma_\rho}\max\bigg\{   \epsilon^2\log\pr\bigg[\tau^{\epsilon, z}_\rho>T_0\bigg], \epsilon^2\log\pr\bigg[   \textnormal{dist}_{C([0,T_0]; \e)}(Z^{\epsilon}_z, \Phi_{z, T_0}(s_0))>
        \delta      \bigg]  \bigg\}\leq -s_0.
\end{aligned}   
        \end{equation*}
Since $s_0< V_{\bar{D}}(z^*, N)$ was arbitrary, the proof is complete.\end{proof}

At this point we have all the necessary ingredients to prove Theorem \ref{thm:ExitTimeLowerBnd}. Before doing so, we introduce some additional notation:

For $n\in\N$ and $ z\in D$ we define the stopping times 
\begin{equation}\label{eq:donutStoppingtimes}
    \begin{aligned}
     &\theta^{\epsilon,z}_0:=0\\&     \tau^{\epsilon,z}_0:=\tau^{\epsilon,z}_\rho=\inf\big\{ t\geq \theta^{\epsilon,z}_0: Z_z^{\epsilon}(t)\in\gamma_\rho\cup \partial D\big\}\\&
        \quad\vdots
         \\&
        \theta^{\epsilon,z}_{n+1}:=\inf\big\{ t>\tau^{\epsilon,z}_n: Z_z^{\epsilon}(t)\in\Gamma_\rho\big\}\\&     \tau^{\epsilon,z}_{n+1}=\inf\big\{ t\geq \theta^{\epsilon,z}_{n+1}: Z_z^{\epsilon}(t)\in\gamma_\rho\cup \partial D\big\}.    
    \end{aligned}
\end{equation}
The lower bound on $\tau_D^{\epsilon, z}$ \eqref{eq:exittimes} rely on the asymptotic behavior of the Markov chain \begin{equation}\label{eq:Markovchain}
Z^{\epsilon,z}_n:=Z^{\epsilon}_z(\tau^{\epsilon,z}_{n})
\end{equation}
whose law is supported on $\gamma_\rho\cup \partial D.$

\begin{proof}{\textit{(Of Theorem \ref{thm:ExitTimeLowerBnd})}}
\begin{enumerate}
    \item Let $\delta>0, m\geq 1.$ From Lemma \ref{lem:rhoupperbound} with $N=\partial D$ and Lemma \ref{lem:VDequality} there exists $\rho_0$ such that for all $\rho<\rho_0$
    \begin{equation*}
    \begin{aligned}
         \limsup_{\epsilon \to 0} \sup_{z \in \Gamma_\rho} \epsilon^2 \log \pr\left( Z^\epsilon_z(\tau^{\epsilon,z}_0) \in \partial D\right) &\leq -  V_{\bar{D}}(z^*,\partial D)+\frac{\delta}{2}
         =-  V(z^*,\partial D)+\frac{\delta}{2}.
    \end{aligned}       
    \end{equation*}
The latter and the Markov property of $Z^\epsilon_z$ then furnish 
\begin{equation}\label{eq:lbaux1}
    \begin{aligned}
        \sup_{z\in D}\pr[\tau_D^{\epsilon,z}=\tau^{\epsilon,z}_{m}]&\leq \sup_{z\in D}\pr\bigg[\bigcap_{k=0}^{m-1}\tau_D^{\epsilon,z}\neq\tau^{\epsilon,z}_{k}\bigg]\sup_{z\in \Gamma_\rho}\pr\big[ Z^\epsilon_z(\tau^{\epsilon,z}_{0})\in\partial D\big]\\&
        \leq \sup_{z\in \Gamma_\rho}\pr\big[ Z^\epsilon_z(\tau^{\epsilon,z}_{0})\in\partial D\big]\leq \exp\bigg\{-\frac{1}{\epsilon^2}\bigg( V(z^*,\partial D)-\frac{\delta}{2}\bigg)\bigg\},
    \end{aligned}
\end{equation}
which holds for $\epsilon$ sufficiently small. Next, from Lemma \ref{lem:fastexcursions}, we choose $T_0$ small enough such that
\begin{equation*}
         \limsup_{\epsilon \to 0} \sup_{z \in \gamma_\rho} \epsilon^2 \log \pr\bigg[Z^{\epsilon}_z(t) \in \Gamma_\rho \text{ for some } t \in [0,T_0]\bigg]\leq-V(z^*,\partial D).
    \end{equation*}
The latter, along with the Markov property, implies that for each $m\in\N$
\begin{equation}\label{eq:lbaux2}
         \limsup_{\epsilon \to 0} \sup_{z \in G} \epsilon^2 \log \pr\left(\theta^{\epsilon,z}_{m+1}-\tau^{\epsilon,z}_{m} \leq T_0\right)\leq-V(z^*,\partial D).
    \end{equation}
Next notice that for each $k\in\N, m\leq k, z\in D$
\begin{equation*}
    \begin{aligned}
        \{\tau_D^{\epsilon,z}\leq kT_0 \}\subset\bigcup_{m=0}^{k}\{\tau_D^{\epsilon,z}=\tau^{\epsilon,z}_{m}   \}\cup\bigcup_{m=0}^{k}\{ \tau^{\epsilon,z}_{m+1}-\tau^{\epsilon,z}_{m} \leq T_0  \}
    \end{aligned}
\end{equation*}
( see e.g. the proof of \cite[Theorem 5.7.11(a)]{dembo2009large} for a similar argument). Thus, 
\begin{equation*}
    \begin{aligned}
        \pr\big[  \tau_D^{\epsilon,z}\leq kT_0   \big]&\leq \sum_{m=0}^{k}\bigg(\pr\big[\tau_D^{\epsilon,z}=\tau^{\epsilon,z}_{m}      \big]+\pr\big[ \theta^{\epsilon,z}_{m+1}-\tau^{\epsilon,z}_{m} \leq T_0  \big]    \bigg)\\&
        \leq \pr\big[\tau_D^{\epsilon,z}=\tau^{\epsilon,z}_{0}      \big]+ 2k\exp\bigg\{-\frac{1}{\epsilon^2}\bigg( V(z^*,\partial D)-\frac{\delta}{2}\bigg)\bigg\}\\&
        \leq \pr\big[ Z^{\epsilon}_z(\tau_{\rho}^{\epsilon, z}  )\in\partial D\big]+2k\exp\bigg\{-\frac{1}{\epsilon^2}\bigg( V(z^*,\partial D)-\frac{\delta}{2}\bigg)\bigg\},
    \end{aligned}
\end{equation*}
where we used \eqref{eq:lbaux1}, \eqref{eq:lbaux2} and the fact that $\theta^{\epsilon,z}_{m+1}\leq\tau^{\epsilon,z}_{m+1} $ almost surely. Letting $$k=k_0(\epsilon):=\bigg[\frac{1}{T_0}\exp\bigg\{ \frac{V( z^*,\partial D)-\delta      }{\epsilon^2} \bigg\}  \bigg]+1,$$
where $[\cdot]$ here is the integer part,
we obtain the estimate 
\begin{equation*}
   \pr\big[  \tau_D^{\epsilon,z}\leq e^{\frac{1}{\epsilon^2}(V( z^*,\partial D)-\delta  )     }  \big]\leq \pr\big[  \tau_D^{\epsilon,z}\leq k_0T_0   \big]\leq \pr\big[ Z^{\epsilon}_z(\tau_{\rho}^{\epsilon, z}  )\in\partial D\big]+Ce^{-\frac{\delta}{2\epsilon^2}}
\end{equation*}
which holds for $\epsilon$ sufficiently small. From an application of Lemma \ref{lem:smallballlimit} we conclude that the right-hand side converges to $0$ as $\epsilon\to 0.$ The argument is complete.
\item Let $\delta>0.$ By Chebyshev's inequality we have 
\begin{equation*}
  \epsilon^2\log\pr\bigg[  \tau_D^{\epsilon,z}>e^{\frac{1}{\epsilon^2}(V( z^*,\partial D)-\delta  )     }  \bigg] +V( z^*,\partial D)-\delta \leq \epsilon^2\log\ex[ \tau_D^{\epsilon,z}].
   \end{equation*}
   The statement follows by an application of part (1).\end{enumerate}\end{proof}

\begin{rem}[On the role of invariance]\label{rem:invariance} On the one hand, the assumption that $D$ is invariant is not necessary for the validity of the exit time upper bound (Theorem \ref{thm:ExitTimeUpperBnd}(1)). On the other hand, invariance is crucial for the exit time lower bound (Theorem \ref{thm:ExitTimeLowerBnd}(2)). In particular, it is used in Lemma \ref{lem:smallballlimit} which states that, with overwhelming probability, random trajectories hit a small ball around $z^*$ before hitting $\partial D$. Without this assumption, one cannot exclude cases in which, on average, trajectories exit from $D$ faster than $Ce^{V(z^*,\partial D)/\epsilon^2}$ as $\epsilon\to0.$ 
\end{rem}

 \subsection{Exit shape asymptotics}\label{Sec:ExitShapes} This section is devoted to the asymptotic distribution of the exit shapes \eqref{eq:exitshape}. Our arguments are largely based on the auxiliary Lemmas \ref{lem:meandering}-\ref{lem:rhoupperbound} that were used for the proof of Theorem \ref{thm:ExitTimeLowerBnd}. For this reason, we shall frequently use notation that was introduced in Section \ref{Sec:ExitTimesLB}. First, we prove a "Law of Large Numbers" (LLN) which shows that $ Z_z^\epsilon\big( \tau_D^{\epsilon, z}  \big)$ "almost" concentrates on minimizers of the quasipotential $$\partial D\ni z\longmapsto V_{\bar{D}}(z^*, z)\in[0,\infty]$$
 with probability converging to $1$ as $\epsilon\to 0$ (for a justification of the quotes see Theorem \ref{thm:exitshapeldp}(1) and Remark \ref{rem:exitshapeldp} below). Furthermore, we prove Large Deviations upper and lower bounds formulated respectively in terms of the functionals \begin{equation}\label{eq:J1definition}
     \partial D\ni z\longmapsto J_1(z):=V_{\bar{D}}(z^*, z)-V(z^*,\bar{D}^c)
 \end{equation}
and
 \begin{equation}\label{eq:J2definition}
     \partial D\ni z\longmapsto J_2(z):=V_D(z^*, z)-V(z^*,\partial D)\in [0,\infty],
 \end{equation}
  where $V_D$ and $V_{\bar{D}}$ are given by Definition \ref{dfn:quasipotentialVd}.
 Before we present our main result, Theorem \ref{thm:exitshapeldp}, we introduce a notion of "regular" boundary points, defined exclusively in terms of minimal energy. This non-geometric definition is particularly well-suited for infinite-dimensional settings such as the one considered here.

 \begin{dfn}[Regular boundary points]\label{def:regularpoints}
   Let $D\subset\e$ be a bounded set and $I_{z,T_0}$ the LULDP rate function \eqref{eq:LRateFunctionWave}. A point $z\in\partial D$ is called \textbf{regular} if for any $\delta>0$ there exists $T_0>0$ and a path $\phi\in C([0,T_0];\e)$ such that $\phi(0)=z,$ $I_{z,T_0}(\phi)<\delta$
   and for all $t\in(0, T_0],$ $\phi(t)\in\bar{D}^c.$
\end{dfn}

\begin{rem}\label{Rem:RegularPoints} In finite dimensions, all boundary points of a domain $D\subset\R^d$ with a smooth boundary are regular (per Definition \ref{def:regularpoints}), provided that the stochastic dynamics are governed by an It\^o diffusion with uniformly elliptic diffusion coefficient. Apart from avoiding questions of boundary smoothness in infinite dimensions, our definition is also agnostic to the degeneracy of $\sigma.$ This last aspect is particularly important for wave equations which, in view of \eqref{eq:modelsystem}, do not satisfy such ellipticity conditions. The reader is referred to the discussion preceding Assumption \ref{Assumption:MinimizingRegularPoints}, Section \ref{Sec:BoundaryPoints} for more details on geometric conditions and relevant references to the literature. 
\end{rem}

\begin{thm}[Exit shape asymptotics]\label{thm:exitshapeldp} Let $D\subset\e$ be a bounded, invariant, uniformly attracting domain that contains  the asymptotically stable equilibrium $z^*$ and satisfies Assumption \ref{Assumption:Dinterior}. With $V_D, V_{\bar{D}}, J_1, J_2$ as in \eqref{eq:VDquasipotential},\eqref{eq:VDbarquasipotential}, \eqref{eq:J1definition}, \eqref{eq:J2definition} respectively and under Assumptions \ref{Assumption:b}-\ref{Assumption:sigmaNondeg}, we have that for all $z\in D$ the following hold:
\begin{enumerate}
\item (\textnormal{LLN}) For any closed $N\subset\partial D$ such that $V_{\bar{D}}(z^*,N)>V(z^*,  \bar{D}^c)$ we have $$\lim_{\epsilon\to 0}\pr\bigg[  Z^\epsilon_z(\tau_D^{\epsilon, z})\in N\bigg]=0.$$
    \item(\textnormal{Large deviations upper bound}) For any closed  $N\subset \partial D$   \begin{equation}\label{eq:exitshapeLDPub}
    \limsup_{\epsilon\to 0}\epsilon^2\log\pr\bigg[Z^\epsilon_z(\tau^{\epsilon, z}_{D})\in N\bigg]\leq -\inf_{z\in N}J_1(z).    
    \end{equation}
    \item (\textnormal{Large deviations lower bound}) Assume moreover that $D$ is open. For all $\eta>0$ and any regular point $y\in\partial D$
    \begin{equation}\label{eq:exitshapelb}
        \liminf_{\epsilon\to 0}\epsilon^2\log\pr\bigg[ |Z^\epsilon_z(\tau^{\epsilon, z}_{D})-y|_{\e}<\eta \bigg]\geq -J_2(y).
    \end{equation}
    \end{enumerate}   
\end{thm}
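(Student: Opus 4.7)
My plan combines the cycle-decomposition framework from Section \ref{Sec:ExitTimesLB}, built around the stopping times $\theta_n^{\epsilon,z},\tau_n^{\epsilon,z}$ of \eqref{eq:donutStoppingtimes} and the Markov chain $Z_n^{\epsilon,z}$ of \eqref{eq:Markovchain}, with the auxiliary Lemmas \ref{lem:fastexcursions} and \ref{lem:rhoupperbound} and the exit-time asymptotics of Theorem \ref{thm:ExitTimeUpperBnd}.

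For the large deviations upper bound (part 2), fix $\delta>0$ and set $M_\epsilon:=\exp((V(z^*,\bar{D}^c)+\delta)/\epsilon^2)$. Chebyshev's inequality together with Theorem \ref{thm:ExitTimeUpperBnd}(1) makes the tail probability $\pr[\tau_D^{\epsilon,z}>M_\epsilon]$ exponentially small. On the complementary event, Lemma \ref{lem:fastexcursions} bounds the number of excursions from $\gamma_\rho$ to $\Gamma_\rho$ by $M_\epsilon/T_0$ with overwhelming probability, while Lemma \ref{lem:rhoupperbound} combined with the strong Markov property at each $\theta_n^{\epsilon,z}$ bounds the per-cycle probability of landing in $N$ by $\exp(-(V_{\bar D}(z^*,N)-\delta)/\epsilon^2)$. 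A union bound yields
\[
\pr\bigl[Z^\epsilon_z(\tau_D^{\epsilon,z})\in N\bigr]\leq \tfrac{M_\epsilon}{T_0}\exp\!\Big(-\tfrac{V_{\bar D}(z^*,N)-\delta}{\epsilon^2}\Big)+o(1),
\]
and sending $\delta\to 0$ yields \eqref{eq:exitshapeLDPub}. Part 1 (the LLN) is then an immediate corollary: the hypothesis $V_{\bar D}(z^*,N)>V(z^*,\bar D^c)$ makes the right-hand side decay exponentially.

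For the large deviations lower bound (part 3), my plan is to construct a near-optimal exit trajectory by concatenation and to lift the ULDP lower bound via the renewal structure of the Markov chain. Using \eqref{eq:VDquasipotential}, choose $\psi_1\in C([0,T_1];\e)$ with $\psi_1(0)=z^*$, $\psi_1(T_1)=y$, $\psi_1(t)\in D$ for $t\in[0,T_1)$, and action at most $V_D(z^*,y)+\delta$. Using Definition \ref{def:regularpoints}, choose $\psi_2\in C([0,T_2];\e)$ with $\psi_2(0)=y$, $\psi_2(t)\in\bar{D}^c$ for $t\in(0,T_2]$, action below $\delta$, and $T_2$ so small that $|\psi_2(t)-y|_\e<\eta/4$ throughout. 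The concatenated path $\psi\in C([0,T_1+T_2];\e)$ has total action at most $V_D(z^*,y)+2\delta$, exits $D$ exactly at $y$, and lies in $\bar D^c$ immediately afterwards. Openness of $D$ together with path continuity guarantees that any trajectory uniformly within $\eta/4$ of $\psi$ exits $D$ at a point within $\eta$ of $y$. The ULDP lower bound from Proposition \ref{prop:LULDP}, combined with Lemma \ref{lem:controliccontinuity}(1) used to shift the starting point from $z^*$ to any $z'\in\gamma_\rho$, then gives a per-cycle lower bound $q_\epsilon^{(y)}\geq\exp(-(V_D(z^*,y)+3\delta)/\epsilon^2)$ on exits near $y$. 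Lemma \ref{lem:smallballlimit} reduces the general initial condition $z\in D$ to one in $\gamma_\rho$, after which the number of independent attempts before exit is essentially geometric with parameter $p_\epsilon$. The upper bound $p_\epsilon\leq\exp(-(V(z^*,\partial D)-\delta)/\epsilon^2)$ from Lemma \ref{lem:rhoupperbound} with $N=\partial D$ and Lemma \ref{lem:VDequality} produces
\[
\pr\bigl[|Z^\epsilon_z(\tau_D^{\epsilon,z})-y|_\e<\eta\bigr]\gtrsim \frac{q_\epsilon^{(y)}}{p_\epsilon}\geq \exp\!\Big(-\tfrac{J_2(y)+4\delta}{\epsilon^2}\Big),
\]
whence \eqref{eq:exitshapelb} after $\delta\to 0$.

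The principal technical obstacle lies in the renewal accounting: the denominator in the ratio $q_\epsilon/p_\epsilon$ must carry different quasipotentials in the two directions, namely $V(z^*,\bar D^c)$ in the upper bound (entering through Theorem \ref{thm:ExitTimeUpperBnd}(1)) but $V(z^*,\partial D)$ in the lower bound (entering through Lemma \ref{lem:rhoupperbound} with $N=\partial D$). These two values can genuinely differ (cf.\ Remark \ref{rem:exittimematching}), which is precisely why the functionals $J_1$ and $J_2$ defined in \eqref{eq:J1definition}--\eqref{eq:J2definition} are normalized differently. A secondary delicate point in part 3 is ruling out premature exit of $\eta/4$-perturbations of the concatenated path $\psi$ at boundary points far from $y$, which is where the openness hypothesis on $D$ plays its role.
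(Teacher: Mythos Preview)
Your proposal is correct and follows essentially the same approach as the paper: the cycle decomposition via the stopping times \eqref{eq:donutStoppingtimes}, the per-cycle bounds from Lemmas \ref{lem:fastexcursions} and \ref{lem:rhoupperbound}, the exit-time upper bound of Theorem \ref{thm:ExitTimeUpperBnd}(1) to control the number of cycles, and for part (3) the concatenation of a $V_D$-near-optimal path with a regular-point exit path, lifted to general initial data via Lemma \ref{lem:smallballlimit}. The only presentational difference is that the paper proves the LLN first and reuses those estimates for the upper bound, whereas you derive the LLN as a corollary of the upper bound; both orderings are valid. Your ``secondary delicate point'' about ruling out premature exit of perturbed paths is handled in the paper exactly as you anticipate: compactness of $\psi([0,T'])\subset D$ for a suitable $T'<T_1$ yields a positive distance to $\partial D$, and the tube radius $\tilde\eta$ is chosen smaller than both this distance and $\eta/3$, so any exit must occur after $T'$ and hence near $y$.
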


Before we proceed to the proof, we shall attempt to explain the role of the quasipotentials  $V_D, V_{\bar{D}}$ for our exit shape asymptotics.

\begin{rem}[On the roles of $V_D$ and $V_{\bar{D}}$]\label{rem:VD} 
Note that $V_D$ and $V_{\bar{D}}$ appear in the exit shape asymptotics of Theorem \ref{thm:exitshapeldp}, and the reason for this is fundamental. The proof of Theorem \ref{thm:exitshapeldp}(3) requires us to construct a path that exits $D$ from a neighborhood $N$ of a regular boundary point $y\in\partial D$. To do so, we need to make sure that such a path does not "wander" on the exterior of $D$ before it hits $N.$ Moreover, the proofs of Theorem \ref{thm:exitshapeldp}(1), (2) make use of the inner regularity result of the quasipotential $V_{\bar{D}}$ Lemma \ref{lem:innerregularity}.
Hence, $V_D$ and $V_{\bar{D}}$ are the relevant quantities for meaningful lower and upper bounds. 
\end{rem}

\begin{proof}{\textit{(Of Theorem \ref{thm:exitshapeldp})}}
Let $\tau^{\epsilon,z}_k$ be defined by \eqref{eq:donutStoppingtimes}
\begin{enumerate}
       \item Given Lemmas \ref{lem:fastexcursions}-\ref{lem:rhoupperbound}, the proof is analogous to that of \cite[Theorem 5.7.11(b)]{dembo2009large}. For $z\in D, k\in\N, T>0$ we have 
\begin{equation}\label{eq:exitshapemainestimates}
    \begin{aligned}
        \pr\bigg[  Z^\epsilon_z(\tau_D^{\epsilon, z})&\in N   \bigg]= \pr\bigg[  Z^\epsilon_z(\tau_D^{\epsilon, z})\in N, \tau_D^{\epsilon, z}>\tau^{\epsilon, z}_k\bigg]\\&+\pr\bigg[  Z^\epsilon_z(\tau_D^{\epsilon, z})\in N, \tau_D^{\epsilon, z}=\tau_{m}^{\epsilon, z}
        \;\text{for some}\;m\in\{0,\dots, k
        \}\bigg]\\&
        \leq \pr\bigg[ \tau_D^{\epsilon, z}>k T\bigg]+\pr\bigg[ \tau_k^{\epsilon, z}\leq k T \bigg]+\pr\bigg[ Z^\epsilon_z(\tau_{0}^{\epsilon, z})\in N   \bigg]\\&+\sum_{m=1}^{k}\pr\big[\tau_D^{\epsilon, z}> \tau^{\epsilon, z}_{m-1}    \big]\pr\bigg[ Z^\epsilon_z(\tau_{m}^{\epsilon, z})\in N  \bigg|\tau_D^{\epsilon, z}> \tau^{\epsilon, z}_{m-1}    \bigg]\\&
        \leq \frac{1}{kT}\ex\big[ \tau_D^{\epsilon, z}\big]+ \pr\bigg[ Z^\epsilon_z(\tau_{0}^{\epsilon, z})\in N \bigg]\\&+k\sup_{z\in\gamma_{\rho}}\pr\bigg[ Z^{\epsilon}_z(t) \in \Gamma_\rho \text{ for some } t \in [0,T]   \bigg]+k\sup_{z\in\Gamma_\rho}\pr\bigg[Z^\epsilon_z(\tau_{0}^{\epsilon, z})\in N   \bigg],
    \end{aligned}
\end{equation}
where we used Chebyshev's inequality and the Markov property on the last line.
Next let $\delta>0.$ From Lemma \ref{lem:fastexcursions}, we can find $T_0$ such that
\begin{equation*}
        \limsup_{\epsilon \to 0} \sup_{z \in \gamma_\rho} \epsilon^2 \log \pr\bigg[Z^{\epsilon}_z(t) \in \Gamma_\rho \text{ for some } t \in [0,T_0] \bigg]\leq -V_{\bar{D}}( z^*,N)+\delta.
    \end{equation*}
    Moreover, from Lemma \ref{lem:rhoupperbound} we can find $\rho$ such that 
    \begin{equation*}
        \limsup_{\epsilon \to 0} \sup_{z \in \Gamma_\rho} \epsilon^2 \log \pr\bigg[Z^\epsilon_z(\tau^{\epsilon,z}_0) \in N\bigg] \leq - V_{\bar{D}}(z^*,N)+\delta
    \end{equation*}
   (recall that $\tau^{\epsilon,z}_\rho=\tau^{\epsilon,z}_0).$  A combination of the last three displays, along with the exit time upper bound from Theorem \ref{thm:ExitTimeUpperBnd}(1)
    implies 
\begin{equation*}
    \begin{aligned}
        \pr\bigg[  Z^\epsilon_z(\tau_D^{\epsilon, z})\in N   \bigg]&\leq \frac{1}{kT_0}e^{(V(z^*, \bar{D}^c)+\delta   )/\epsilon^2}  \\&  +    2k\exp\bigg\{-\frac{1}{\epsilon^2}\bigg( V_{\bar{D}}(z^*,N)-\delta\bigg)\bigg\}+ \pr\bigg[ Z^\epsilon_z(\tau_{0}^{\epsilon, z})\in N \bigg].
    \end{aligned}
\end{equation*}
Choosing $k=[e^{(V(z^*,\bar{D}^c)+2\delta   )/\epsilon^2}],$ (with $[\cdot]$ being the integer part), $$ 
0<\delta<\frac{V_{\bar{D}}(z^*, N)- V(z^*,\bar{D}^c)}{3}$$ and invoking Lemma \ref{lem:smallballlimit} we conclude that 
   \begin{equation*}
    \begin{aligned}
        \limsup_{\epsilon\to 0} \pr\bigg[  Z^\epsilon_z(\tau_D^{\epsilon, z})\in N   \bigg]&\leq \limsup_{\epsilon\to 0}\pr\bigg[ Z^\epsilon_z(\tau_{0}^{\epsilon, z})\in N \bigg]\\&+\limsup_{\epsilon\to 0}\exp\bigg(-\frac{1}{\epsilon^2}(V_{\bar{D}}(z^*, N)-V(z^*,\bar{D}^c)-3\delta   )\bigg)=0.
     \end{aligned}
\end{equation*}
The proof is complete.

    \item The upper bound essentially follows from the estimates in  \eqref{eq:exitshapemainestimates}. In particular, for any initial condition $z\in\gamma_\rho$ we have $\pr[Z_z^\epsilon(\tau_1^{\epsilon, z})\in N]=0.$ Hence, using the exact same arguments as in the proof of part (1) we get for any $\delta>0$
    $$\limsup_{\epsilon\to 0}\epsilon^2\log\sup_{z\in\gamma_\rho}\pr\bigg[Z^\epsilon_z(\tau^{\epsilon, z}_{D})\in N\bigg]\leq -V_{\bar{D}}(z^*,N)+V(z^*,\bar{D}^{c})+3\delta=
    -\inf_{y\in N}J_1(y)+3\delta.$$
   For $z\in D\setminus\gamma_\rho,$ Lemma \ref{lem:smallballlimit} and the strong Markov property furnish the asymptotic estimates
    $$ \lim_{\epsilon\to 0}\pr\bigg[Z^\epsilon_z(\tau^{\epsilon, z}_{\rho})\in \gamma_\rho   \bigg]=1$$
    and 
    \begin{equation*}
        \begin{aligned}
\limsup_{\epsilon\to 0}\epsilon^2\log\pr\bigg[Z^\epsilon_z(\tau^{\epsilon, z}_{D})\in N\bigg]&\leq \limsup_{\epsilon\to 0}\epsilon^2\log\bigg(\pr\bigg[Z^\epsilon_z(\tau^{\epsilon, z}_{\rho})\in \gamma_\rho   \bigg]\sup_{y\in\gamma_\rho}\pr\bigg[Z^\epsilon_y(\tau^{\epsilon, z}_{D})\in N\bigg]\bigg)
\\&\leq -\inf_{y\in N}J_1(y)+3\delta.
        \end{aligned}
    \end{equation*}
Since $\delta$ is arbitrary, the argument is complete.
\item 
\noindent We break the proof in two steps. In \textbf{Step 1} we discuss the case of initial data $z\in\gamma_\rho$, while in \textbf{Step 2} we consider initial data $z\in D\setminus\gamma_\rho$. 

\noindent \textbf{Step 1:} We start by showing that \eqref{eq:exitshapelb} holds uniformly over initial data $z\in\gamma_\rho$ and $\rho$ sufficiently small. Recalling the Markov chain \eqref{eq:Markovchain}, let $$\zeta^{\epsilon, z}=\inf\big\{ n\in\N: Z^{\epsilon,z}_n\notin\gamma_\rho \big\}$$
and note that 
\begin{equation*}
    \begin{aligned}
\pr\bigg[\big|Z^\epsilon_z(\tau^{\epsilon, z}_{D})-y\big|_{\e}<\eta\bigg]=\pr\bigg[Z^{\epsilon, z}_{\zeta^{\epsilon,z}}\in B_{\partial D}(y, \eta)\bigg],
    \end{aligned}
\end{equation*}
where the subscript $\partial D$ indicates that the ball is taken in the subspace topology of $\partial D.$ By virtue of the strong Markov property we have (with $Z^{\epsilon, z}_{0}\equiv  Z_z^{\epsilon, z}(\tau^{\epsilon,z}_\rho)$)
\begin{equation*}
    \begin{aligned}       \pr\bigg[\big|Z^\epsilon_z(\tau^{\epsilon, z}_{D})-y\big|_{\e}<\eta\bigg]&=\sum_{k=0}^{\infty}\pr\bigg[\zeta^{\epsilon,z}=k,\;Z^{\epsilon, z}_{\zeta^{\epsilon,z}}\in B_{\partial D}(y, \eta)\bigg]\\&
    \geq \sum_{k=1}^{\infty}\inf_{z\in\gamma_\rho}\pr\bigg[ Z^{\epsilon, z}_{0}\in \gamma_\rho \bigg]^{k-1}\inf_{z\in\gamma_\rho}\pr\bigg[Z^{\epsilon, z}_{0}\in B_{\partial D}(y, \eta) \bigg] \\&=\frac{\inf_{z\in\gamma_\rho}\pr\bigg[Z^{\epsilon, z}_{0}\in B_{\partial D}(y, \eta)  \bigg]}{1-\inf_{z\in\gamma_\rho}\pr\bigg[ Z^{\epsilon, z}_{0}\in \gamma_\rho \bigg]} \\&
 \geq \frac{\inf_{z\in\gamma_\rho}\pr\bigg[Z^{\epsilon, z}_{0}\in B_{\partial D}(y, \eta)  \bigg]}{\sup_{z\in\gamma_\rho}\pr\bigg[ Z^{\epsilon, z}_0\notin \gamma_\rho \bigg]}.
\end{aligned}
\end{equation*}

An upper bound for the denominator follows from Lemma \ref{lem:rhoupperbound} with $N=\partial D.$ Indeed, for $\delta>0,$ there exist $\rho, \epsilon$ sufficiently small such that  
\begin{equation}\label{eq:exitshapeLB:denominator}
    \begin{aligned}
       \sup_{z\in\gamma_\rho}\pr\bigg[ Z^{\epsilon, z}_{0}\notin \gamma_\rho \bigg]=\sup_{z\in\gamma_\rho}\pr\bigg[ Z^{\epsilon, z}_{0}\in\partial D \bigg]=\sup_{z\in\gamma_\rho}\pr\bigg[ Z^{\epsilon}_z(\tau^{\epsilon, z}_\rho)\in\partial D \bigg]\leq e^{ -\big(V(z^*, \partial D)+\delta\big)/\epsilon^2}.
    \end{aligned}
\end{equation}
\noindent In order to obtain a lower bound for the numerator we make use of the following:\\
\noindent \textbf{Claim:} For any $\delta>0$, there exist $T_0>0, r>0$, and $\rho>0$ such that for any $ z \in \gamma_\rho$ there exists a controlled path $\phi_z \in C([0,T_0];\e)$ such that  $\phi_z(0)=z$, 
$$I_{z,T_0}(\phi_z)<V_D(z^*, y)+\delta$$  and $$\big\{|Z^{\epsilon}_z-\phi_z|_{C([0,T_0];\e)}<r\big\}\subset \big\{Z^{\epsilon, z}_{0}\in B_{\partial D}(y, \eta)   \big\}.  $$
\noindent The latter, along with the LULDP lower bound \eqref{eq:LULDP-lower}, immediately yield
\begin{equation}\label{eq:exitshapeLB:numerator}
    \begin{aligned}
        \liminf_{\epsilon\to 0}\inf_{z\in\gamma_\rho}\bigg\{\epsilon^2\log\pr\bigg[Z^{\epsilon, z}_{0}\in B_{\partial D}(y, \eta)  \bigg]+I_{z,T_0}(\phi_z)\bigg\} \geq 0.
    \end{aligned}
\end{equation}
Then, the combination of \eqref{eq:exitshapeLB:numerator} and \eqref{eq:exitshapeLB:denominator} furnishes
\begin{equation*}
    \begin{aligned}       \liminf_{\epsilon\to 0}\inf_{z\in\gamma_\rho}\epsilon^2\log \pr\bigg[\big|Z^\epsilon_z(\tau^{\epsilon, z}_{D})-y\big|_{\e}<\eta\bigg]&\geq V(z^*, \partial D)-  V_D(z^*, y)   -    2\delta=-J_2(y)-2\delta
\end{aligned}
\end{equation*}
which concludes the proof of Step 1 since $\delta$ can be taken to be arbitrarily small.

 \textit{Proof of Claim:} Turning to the construction of $\phi_z,$ we assume without loss of generality that $V_D(z^*, y)<\infty$ (otherwise the lower bound holds trivially). Thus, there exists $T>0$ a control $v_1$ with  \begin{equation}\label{eq:v1action}
     \frac{1}{2}|v_1|^2_{L^2([0, T];H)}<V_D(z^*, y)+\delta/2
 \end{equation}  and a controlled path $\phi_1:=Z^{v_1}_{z^*}\in C([0,T];\e)$ with  $\phi_1(0)=z^*, \phi_1(T)=y$ and $\phi_1(t)\in D$ for all $t\in[0, T ).$ From the regularity of the boundary point $y,$ there exists a controlled path $\phi_2:=Z_y^{v_2}$ such that $\{\phi_2(t);\;t\in[0,t_0]\}\subset\bar{D}^c$ and
\begin{equation}\label{eq:v2action}
    \frac{1}{2}|v_2|^2_{L^2([0,t_0];H)}<\frac{\delta}{2}.
\end{equation}

Because $\phi_2(t)$ is continuous in $\e$, there exists $T_1 \in (0,t_0)$ such that for all $t \in (0,T_1)$, 
$|\phi_2(t)-y|_{\e}<\eta/3.     $
 Define $\phi_{z^*}\in C([0,T+T_1];\e),$ as the concatenation of $\phi_1$ and  $\phi_2$. This trajectory has the properties that $\phi_{z^*}(0)=z^*, \phi_{z^*}(T+T_1)=\phi_2(T_1)=:y'\in \bar{D}^c,$  
\begin{equation}\label{eq:phiydistance}
    \begin{aligned}
        |\phi_{z^*}(t)-y|_{\e}<\eta/3,\;\; \text{ for all } t\in[T, T+T_1]
    \end{aligned}
\end{equation}
and in view of \eqref{eq:v1action}, \eqref{eq:v2action}
\begin{equation}\label{eq:phiaction}
    I_{z^*, T+t_0}(\phi_{z^*})<V_D(z^*,  y)+\delta.
\end{equation}

Because $\phi_{z^*}(t) \in D$ for all $t \in [0,T)$,  $|\phi_{z^*}(t) -y|< \frac{\eta}{3}$ for $t \in [T, T+T_1]$, and $\phi_{z*}(T+T_1) \in \bar{D}^c$ there exists a small $\tilde{\eta} \in \left(0,\frac{\eta}{3} \wedge \textnormal{dist}_\e(y',D) \right)$ such that any trajectory $\psi \in C([0,T + T_1];\e)$ with the property that
\begin{equation} \label{eq:exit-tube}
    \sup_{t \in [0,T + T_1]}|\phi_{z^*}(t) - \psi(t)|_\e< \tilde{\eta},
\end{equation}
will have the properties that $\psi$ exits $D$ and its exit location is within $\eta$ of $y$.

Define a control
\begin{equation}\label{eq:udef}
    u(t)=\begin{cases}
           v_1(t),&\;\;t\in[0,T]\\
           v_2(t-T),&\;\;t\in[T,T+T_1]
    \end{cases}
\end{equation}
so that $\phi_{z^*}=Z_{z^*}^u.$ We use the continuity of the skeleton equation with respect to initial conditions, Lemma \ref{lem:controliccontinuity}, to deduce that there exists $\rho>0$ such that for any $z\in\gamma_{\rho}$ 
\begin{equation}\label{eq:phiZvdistance}
      \sup_{t\in[0,T+T_1]}|Z^u_{z^*}(t)-Z^u_{z}(t)|_{\e}= \sup_{t\in[0,T+T_1]}|\phi_{z^*}(t)-Z^u_{z}(t)|_{\e}<\frac{\tilde\eta}{3}.
\end{equation}
Then $T_0 := T + T_1$, $r := \frac{\tilde \eta}{3}$, and $\phi_z:= Z^u_z$ are the quantities that we claimed existed.
From \eqref{eq:phiaction}, $I_{z,T_0}(\phi_z)< V_D(z^*, y) + \delta$.
If a random trajectory $Z^\epsilon_z$ has the properties that 
\begin{equation}
    \sup_{t \in [0,T_0]} | Z^\epsilon_z - \phi_z|_\e<r,
\end{equation}
then by the triangle inequality,
\begin{equation}
   \sup_{t \in [0,T_0]} |Z^\epsilon_z(t) - \phi_{z^*}(T)|_\e < \sup_{t \in [0,T_0]} |Z^\epsilon_z(t) - \phi_{z}(t)|_\e + \sup_{t \in [0,T_0]} |\phi_z(t) - \phi_{z^*}(t)|_\e < \tilde{\eta}.
\end{equation}
Therefore, by \eqref{eq:exit-tube}, the random path must exit $D$ and its exit location must be within $\eta$ of $y$.

\noindent \textbf{Step 2:} It remains to prove \eqref{eq:exitshapelb} for initial data $z\in D\setminus\gamma_\rho.$ This follows by uniform attraction to $z^*,$  the strong Markov property and the previous step. Indeed, from Lemma \ref{lem:smallballlimit}, for $\rho$ sufficiently small we have 
$$\lim_{\epsilon\to 0}\pr\bigg[Z^\epsilon_0\in \gamma_\rho   \bigg]= \lim_{\epsilon\to 0}\pr\bigg[Z^\epsilon_z(\tau^{\epsilon, z}_{\rho})\in \gamma_\rho   \bigg]=1$$
    and 
    \begin{equation*}
    \begin{aligned}
         \liminf_{\epsilon\to 0}\epsilon^2\log\pr\bigg[ |Z^\epsilon_z(\tau^{\epsilon, z}_{D})-y|_{\e}<\eta \bigg]
        &\geq \liminf_{\epsilon\to 0}\epsilon^2\log\bigg(\pr\bigg[Z^\epsilon_0\in \gamma_\rho   \bigg]\inf_{z\in\gamma_\rho}\pr\bigg[ |Z^\epsilon_z(\tau^{\epsilon, z}_{D})-y|_{\e}<\eta \bigg] \bigg)\\&
        \geq -J_2(y)-2\delta.
    \end{aligned}  
    \end{equation*}
    \end{enumerate} 
    The proof is complete.  
\end{proof}

We collect a few observations for our exit shape asymptotics in the following remark:
\begin{rem}\label{rem:exitshapeldp} The LLN, Theorem \ref{thm:exitshapeldp}(1), says that the probability of exiting from any part of the boundary $N$ with $V_{\bar{D}}(z^*,N)>V(z^*,\bar{D}^c)$ converges to $0.$ If the quantities $V(z^*,\bar{D}^c)$ and $V(z^*,\partial D)=V_{\bar{D}}(z^*,\partial D)$ (see also Lemma \ref{lem:VDequality}) happen to be equal, we can conclude that, with probability converging to $1,$ exits take place near minimizers of $V_{\bar{D}}(z^*,\cdot).$

On the level of exit shape large deviations, if $D$ is such that
\begin{equation}\label{eq:BoundaryComplementCondition}
    V(z^*,\bar{D}^c)=V(z^*,\partial D),
\end{equation}
and if Lemma \ref{lem:innerregularity} on inner regularity can be shown to hold for $V_D$ in place of $V_{\bar{D}}$,
then the upper and lower bounds from Theorem \ref{thm:exitshapeldp}(1), (2) hold with matching (non-negative) rate function
$$J_1(y)=J_2(y)=V_D(z^*,y)-V_D(z^*,\partial D), \;y\in\partial D.$$
Of course, even if $D$ satisfies \eqref{eq:BoundaryComplementCondition}, the lower bound only holds for regular points $y\in\partial D.$ Thus if, in addition to \eqref{eq:BoundaryComplementCondition}, we assume that $\partial D$ only consists of regular points then Theorem \ref{thm:exitshapeldp}(2) is truly equivalent to the LDP lower bound
\begin{equation*}
        \liminf_{\epsilon\to 0}\epsilon^2\log\pr[ Z^\epsilon_z(\tau^{\epsilon, z}_{D})\in G ]\geq -\inf_{y\in G}J_2(y)
    \end{equation*}
 which holds for any open $G\subset\partial D.$ Together with the upper bound \eqref{eq:exitshapeLDPub}, the latter results to a full LDP for the exit shape $\{Z_z^{\epsilon}(\tau^{\epsilon,z}_D)\;;\epsilon>0 \}$ with rate function $J=J_1=J_2.$ Examples of regular points and necessary conditions for \eqref{eq:BoundaryComplementCondition} to hold are discussed in the next subsection.
\end{rem}

 \subsection{Regular boundary points and matching bounds}\label{Sec:BoundaryPoints}  In this section we are concerned with the following questions:
\begin{enumerate}
    \item \textit{What further assumptions on $D$ imply \eqref{eq:BoundaryComplementCondition}?} 
    \item \textit{What are some examples of regular and irregular boundary points for some typical choices of $D$ ?}
    \end{enumerate}
We remind the reader, once again, that Remark \ref{rem:notation1} of Section \ref{sec:Controllability} is in place. In particular, for the duration of this section, even though we work with the localized solution $Z_{z,n_{D}}^\epsilon$ and corresponding skeleton equation $Z^u_{z,n_{D}}$ , we omit the subscript $n_D$ and we simply write $Z_z^\epsilon$. The same index omission is in place for the LULDP rate function $I^{n_D}_{T,z}$ which will be simply denoted by $I_{T,z}.$

    \subsubsection{\textbf{On $\mathbf{V(z^*, \bar{D}^c)=V(z^*, \partial D)}$}} In the finite-dimensional setting of \cite{dembo2009large, freidlin1998random}, this equality is a consequence of smoothness of the boundary and uniform ellipticity of the diffusion coefficient. The former implies an \textit{exterior ball condition} i.e. that for any boundary point $z\in\partial D$ one can find $r>0$ and $y\in \bar{D}^c$ such that $B(y,r)\cap D=\varnothing$ and $z\in\partial B(y,r).$ The latter implies that any point in $\bar{D}^c\cap B(y,r),$ that is arbitrarily close to $z,$ can be connected to $z$ via a linear path of arbitrarily small energy (see e.g. Assumption A.4, Exercise 5.7.29 in \cite{dembo2009large}).

    Exterior ball conditions on $D$ are suitable for infinite dimensional settings where boundary smoothness/geometric conditions are not easy to formulate. Indeed, in the setting of parabolic SPDEs, the authors of \cite{salins2021metastability} work on the state space $\e:=C_0(0,\ell)$ and assume a similar condition that for all $z\in\partial D, \delta>0$
    $$B_\e(z, \delta)\cap\bar{D}^c\neq\varnothing. $$ 
    Exploiting the smoothing properties of the heat semigroup, they proceed in showing that any two states in $\e$ can be connected via controlled paths of arbitrarily small energy, provided that they are sufficiently close. The latter along with the exterior ball condition are sufficient to guarantee the desired equality \eqref{eq:BoundaryComplementCondition}.

      As explained in the introduction, 
      such good controllability properties are no longer true in the setting of hyperbolic equations with superlinear nonlinearities (see also the counterexamples from \cite[Theorem 2]{Zuazua1993}). The authors of \cite{cerrai2016smoluchowski} work in the small-mass, single-equilibrium,  additive-noise setting and consider exits from cylinders of the form $D=G\times H^{-1},$ $G\subset L^2.$ Similar to our setting, they are only able to prove controllability from $x^*=\Pi_1z^*$ to any point in $H^1.$ Despite the lack of controllability, it turns out that, in their setting,  the following "exterior line segment condition" is sufficient to prove \eqref{eq:BoundaryComplementCondition}:
      \\ \\
      \noindent \textbf{Exterior line segment condition}( \cite{cerrai2016smoluchowski}, Hypothesis 3])   
     
      For any $x\in\partial D\cap H^1$ there exists $x'\in H^1\cap\bar{D}^c$ such that 
      $$\ell_{x,x'}:=\big\{ (1-t)x+tx'; t\in(0,1]    \big\}\subset\bar{D}^c.$$
   
    By virtue of the Hahn-Banach theorem and the density of $H^1$ in $L^2,$ this condition is satisfied for any convex $D\subset L^2.$

       In contrast to \cite{cerrai2016smoluchowski}, where exits happen only from directions of the position component, we consider general bounded domains of the full phase space $\e.$ As a consequence, exits can occur due to both the position and the velocity components and questions of boundary regularity become more subtle. In fact, even though the "exterior line segment condition" is satisfied for typical convex subsets of $\e$ (such as cylinders and balls) it is still possible to find boundary points from which instant exits to nearby exterior points require "large energy"; see Example \ref{ex:irregularPointscylinders} below.  Using the notion of regular boundary points introduced in Definition \ref{def:regularpoints}, we are able to formulate a sufficient condition for \eqref{eq:BoundaryComplementCondition} that is more appropriate for our setting. Examples of such points in the case where $D$ is spherical or cylindrical are presented in the next section.

       \begin{customthm}{6}\label{Assumption:MinimizingRegularPoints} 
           The collection $\mathcal{R}\subset\partial D$ of regular boundary points is non-empty and $$V(z^*,\partial D)= V(z^*,\partial D\cap \mathcal{R}).         $$
     \end{customthm}

       \begin{lem} Under Assumption \ref{Assumption:MinimizingRegularPoints} we have $$V(z^*, \bar{D}^c)=V(z^*, \partial D).$$
      \end{lem}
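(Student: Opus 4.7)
I would prove the identity by establishing the two inequalities separately. The inequality $V(z^*,\partial D)\leq V(z^*,\bar{D}^c)$ is the easier direction and will rely on a first-exit time argument. The reverse inequality is where Assumption \ref{Assumption:MinimizingRegularPoints} enters in an essential way, and will be proved by concatenating an almost-optimal path to $\partial D$ with a small-energy exit path emanating from a regular boundary point.

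For the first direction, let $\phi\in C([0,T];\e)$ satisfy $\phi(0)=z^*$ and $\phi(T)\in\bar{D}^c$, with $I_{z^*,T}(\phi)$ within $\delta$ of $V(z^*,\bar{D}^c)$. Since $z^*\in\mathrm{int}(D)$ by Assumption \ref{Assumption:Dinterior} and $\phi(T)\notin \bar{D}$, the first passage time $\tau:=\inf\{t\in[0,T]:\phi(t)\notin D\}$ satisfies $0<\tau\leq T$ and $\phi(\tau)\in\partial D$ (since $\phi(\tau)\in\bar D$ by continuity and $\phi(\tau)\notin\mathrm{int}(D)$). The restriction $\phi|_{[0,\tau]}$ is then an admissible path for $V(z^*,\partial D)$, and since the rate function is defined via an infimum over $L^2$ norms of controls, restriction to a sub-interval cannot increase it, so $I_{z^*,\tau}(\phi|_{[0,\tau]})\leq I_{z^*,T}(\phi)$. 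Sending $\delta\to 0$ yields $V(z^*,\partial D)\leq V(z^*,\bar{D}^c)$.

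For the reverse inequality, fix $\delta>0$. By Assumption \ref{Assumption:MinimizingRegularPoints}, $V(z^*,\partial D)=V(z^*,\partial D\cap\mathcal{R})$, so there exist $T_1>0$, a regular point $y\in\partial D\cap\mathcal{R}$, and a control $u_1\in L^2([0,T_1];H)$ with $Z^{u_1}_{z^*}(T_1)=y$ and $\tfrac{1}{2}|u_1|^2_{L^2([0,T_1];H)}<V(z^*,\partial D)+\delta/2$. By Definition \ref{def:regularpoints} of regularity at $y$, there exist $T_2>0$ and a path $\phi_2\in C([0,T_2];\e)$ with $\phi_2(0)=y$, $I_{y,T_2}(\phi_2)<\delta/2$, and $\phi_2(t)\in\bar{D}^c$ for all $t\in(0,T_2]$; in particular one can pick a control $u_2\in L^2([0,T_2];H)$ realising $\phi_2=Z^{u_2}_{y}$ with $\tfrac{1}{2}|u_2|^2_{L^2([0,T_2];H)}<\delta/2$.

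The final step is the concatenation. Define $u\in L^2([0,T_1+T_2];H)$ by $u(t)=u_1(t)$ for $t\in[0,T_1]$ and $u(t)=u_2(t-T_1)$ for $t\in[T_1,T_1+T_2]$. By uniqueness of mild solutions to the skeleton equation and the Markov-type flow property, $Z^{u}_{z^*}$ coincides with $Z^{u_1}_{z^*}$ on $[0,T_1]$ and with $Z^{u_2}_{y}$ (shifted) on $[T_1,T_1+T_2]$; in particular $Z^{u}_{z^*}(T_1+T_2)=\phi_2(T_2)\in\bar{D}^c$. Since $L^2$ norms add under concatenation, $\tfrac{1}{2}|u|^2_{L^2}=\tfrac{1}{2}|u_1|^2_{L^2}+\tfrac{1}{2}|u_2|^2_{L^2}<V(z^*,\partial D)+\delta$, giving $V(z^*,\bar{D}^c)<V(z^*,\partial D)+\delta$. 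Letting $\delta\to 0$ completes the proof.

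The main technical point to check carefully is the second direction: one must verify that the two pieces really can be glued into an admissible controlled trajectory of the skeleton equation (i.e.\ that the concatenated path $\phi$ is indeed $Z^{u}_{z^*}$), which follows from well-posedness of the skeleton equation and the fact that the endpoint of the first piece is precisely the initial condition of the second. No smoothing or exact controllability is needed here; all the controllability work has been absorbed into the definition of regularity and into Assumption \ref{Assumption:MinimizingRegularPoints}.
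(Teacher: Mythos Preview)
Your proposal is correct and follows essentially the same approach as the paper: the easy inequality via a first-exit-time/restriction argument (which the paper summarizes as ``continuity of controlled paths and monotonicity of $I_{z^*,T}$ in $T$''), and the harder inequality via concatenating a near-optimal path to a regular boundary point with a small-energy exit path provided by Definition \ref{def:regularpoints}. The only cosmetic difference is that the paper splits $\eta$ into thirds to separately account for the approximation of $I_{z^*,T}(\phi_1)$ by an actual control norm (since the infimum in the rate function need not be attained), whereas you absorb this into the choice of $u_1$ directly; you should be explicit that $u_1$ is a near-minimizer rather than a minimizer, but this is a trivial adjustment.
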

      \begin{proof} By continuity of controlled paths and monotonicity of $I_{z^*, T}$ in $T$ it is clear that $V(z^*, \bar{D}^c)\geq V(z^*, \partial D).$ In order to show the reverse inequality
      $V(z^*, \bar{D}^c)\leq V(z^*, \partial D)$
      let $\eta>0.$ By Assumption \ref{Assumption:MinimizingRegularPoints} there exist $T>0,$ a regular boundary point $ z\in\partial D\cap\mathcal{R}$ and $\phi_1\in C([0,T];\e)$ such that $\phi_1(0)=z^*, \phi_1(T)=z$ such that 
      $$  I_{z^*,T}(\phi_1)\leq V(z^*, \partial D)+\eta/3.     $$
      In turn, there exists  $u\in L^2([0,T];H)$ such that $\phi_1=Z_{z^*}^u$ and 
      $$ \frac{1}{2} |u|^2_{L^2([0,T];H)}<V(z^*, \partial D)+2\eta/3.$$
      Since $z^*\in\h_1,$ it follows by regularity properties of the skeleton equation that $z=\phi_1(T)=Z_{z^*}^u(T)\in\h_1\cap\partial D.$ In order to conclude, we use the regularity of $z$ to find an exterior point $z''\in\bar{D}^c$ and a controlled path $\phi_2=Z_z^v$ and $T_1>0$ with $\phi_2(T_1)=Z_z^v(T_1)=z''$ and $\frac{1}{2}|v|_{L^2([0,T];H)}^2<\eta/3.$ Therefore, letting $\phi\in C([0,T+T_1];\e)$ be the concatenation of $\phi_1, \phi_2$ we see that 
      $\phi(0)=z^*, \phi(T+T_1)=z''\in\bar{D}^c$ and
           $$  V(z^*, \bar{D}^c)\leq I_{0,T+T_1}(\phi)< \frac{1}{2}|v|_{L^2([0,T];H)}^2+V(z^*, \partial D)+2\eta/3<  V(z^*, \partial D)+\eta.$$       \end{proof}    
      \subsubsection{\textbf{On regular boundary points}}

      As we have seen, the notion of regular boundary points plays an important role in the form of our exit time and shape asymptotics. To wit, if we fix a bounded, invariant, uniformly attracting set $D$ that satisfies Assumption \ref{Assumption:Dinterior} (e.g. the set of orbits exhibited in \eqref{eq:orbitDomain}), then  Theorems \ref{thm:ExitTimeUpperBnd}(1), \ref{thm:ExitTimeLowerBnd}(2) hold with non-matching upper bounds. If, however, there exists at least one regular boundary point that minimizes the quasipotential to the boundary (so that Assumption 
      \ref{Assumption:MinimizingRegularPoints} holds), then we obtain the asymptotic
      $$\lim_{\epsilon\to 0}\epsilon^2\log\ex[\tau^{\epsilon,z}_D]=V(z^*,\partial D)      $$
      which agrees with the classical results by Freidlin-Wentzell in finite dimensions. Under the same assumption of minimizing regular boundary points, the exit shape concentrates on minimizers of $V_{\bar{D}}$ with high probability. Moreover, as mentioned in Remark \ref{rem:exitshapeldp}, if Lemma \ref{lem:innerregularity} can be shown to hold for $V_D$ in place of $V_{\bar{D}},$ the "rate functions" $J_1,J_2$ (Theorem \ref{thm:exitshapeldp}) are equal.
      
      In this section we attempt to shed more light to this notion. Before we proceed to the main body of our analysis, we emphasize that none of the results in this section rely on the fact that $b$ is a polynomial, Assumption \ref{Assumption:b}. As we shall see below, the analysis here applies to any nonlinearity $b$ that guarantees the existence of bounded uniformly attracting domains $D\subset\e$.

      First, let us introduce a few useful definitions and facts related to differentials of the supremum norm.

\begin{dfn} Let $x_0\in C(0,\ell).$
\begin{enumerate}
   \item The subdifferential $\partial |x_0|_\infty$ of the mapping $C(0,\ell)\ni x\mapsto|x|_\infty\in[0,\infty ), $ at the point $x_0,$ is defined by
$$ \partial |x_0|_\infty:=\big\{ \mu\in \mathcal{M}(0,\ell):  \langle x_0, \mu\rangle_{C(0,\ell), \mathcal{M}(0,\ell) }=|x_0|_{\infty},\; |\mu|_{\mathcal{M}(0,\ell)}=1     \big\}.$$
\item Let $f: \R\rightarrow \R$ be a continuous function.  The right and left  Dini derivatives of $f$ at $x\in\R$ are respectively given by
\begin{equation*}
    \begin{aligned}
        \frac{d^+}{dx}f(x):=\limsup_{h\to 0}\frac{f(x+h)-f(x)}{h},\;\; \frac{d^-}{dx}f(x):=\liminf_{h\to 0}\frac{f(x)-f(x-h)}{h}.
    \end{aligned}
\end{equation*}
\end{enumerate}
\end{dfn}

The following provides a useful characterization for the subdifferential of $|\cdot|_\infty$ and 
chain-rule estimates for Dini derivatives of supremum norms.
 \begin{prop}[\cite{salins2021existence}, Propositions 3.3, 3.5]\label{prop:subdifferential} Let $x_0\in C(0,\ell).$ The following hold:
 \begin{enumerate}
     \item $\mu\in \partial |x_0|_\infty$ if and only if there exists a probability measure $\tilde{\mu},$ supported on $\arg\max\{ |x(\xi)| ; \xi\in (0,\ell)\},$ such that for all $\phi\in C(0,\ell)$
 $$  \langle \mu, \phi \rangle_{C(0,\ell), \mathcal{M}(0,\ell) } = \langle \tilde{\mu}, \phi\cdot\textnormal{sgn}(x_0)\rangle_{C(0,\ell), \mathcal{M}(0,\ell) }.$$    
 \item Let $f:[0, T]\times (0, \ell)\rightarrow\R$ be continuously differentiable in the first argument and such that $f(t,\cdot)\in C(0,\ell) $ for any $t\in[0,T].$ For any $t^-\in(0, T], t^+\in[0, T)$ and $\mu^-\in\partial|f(t^-, \cdot )|_{\infty}$  we have 
  \begin{equation*}
     \begin{aligned}
        \frac{d^-}{dt}|f(t,\cdot)|_\infty\bigg|_{t=t^-}\leq\bigg\langle \frac{\partial f(t,\cdot)}{\partial t} \bigg|_{t=t^-}, \mu^-\bigg\rangle_{C(0,\ell), \mathcal{M}(0,\ell)}
     \end{aligned}
 \end{equation*}
 and 
 \begin{equation}\label{eq:rightDinibound}
     \begin{aligned}
        \frac{d^+}{dt}|f(t,\cdot)|_\infty\bigg|_{t=t^+}=\max_{\mu\in\partial|f(t^+, \cdot )|_\infty}\bigg\langle \frac{\partial f(t,\cdot)}{\partial t} \bigg|_{t=t^+}, \mu\bigg\rangle_{C(0,\ell), \mathcal{M}(0,\ell)}.
     \end{aligned}
 \end{equation}
 \end{enumerate}
 \end{prop}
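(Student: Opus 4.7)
The statement bundles two fairly standard convex-analytic facts about the sup norm on $C(0,\ell)$, and the proof strategy is to reduce everything to the defining subdifferential inequality together with a Banach--Alaoglu compactness argument on $\mathcal{M}(0,\ell)$.

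For part (1), the plan is to first extract from the subdifferential inequality $|x|_\infty \geq |x_0|_\infty + \langle \mu, x-x_0\rangle$ the two conditions $\langle \mu, x_0\rangle = |x_0|_\infty$ (by testing at $x=0$ and $x = 2x_0$) and $|\mu|_{\mathcal{M}(0,\ell)} \leq 1$ (by varying $x$ freely and using positive homogeneity of the sup norm). Combined these force $|\mu|_{\mathcal{M}(0,\ell)} = 1$. The characterization then follows by a Jordan-type decomposition: writing $\mu = \mu^+ - \mu^-$ and setting $\tilde\mu := |\mu|$, the equality $\langle \mu, x_0\rangle = |x_0|_\infty = \int 1\,d\tilde\mu \cdot |x_0|_\infty$ forces $\tilde\mu$ to charge only points where $|x_0|$ attains its maximum and forces $\mu = \mathrm{sgn}(x_0)\,\tilde\mu$ on that support. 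Conversely, any such $\tilde\mu$ trivially satisfies the subdifferential inequality by $|\langle \mathrm{sgn}(x_0)\tilde\mu, x\rangle| \leq |x|_\infty$.

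For part (2), the right Dini identity \eqref{eq:rightDinibound} is the main content, and the plan is to prove $\leq$ and $\geq$ separately. For $\geq$, fix any $\mu \in \partial|f(t^+,\cdot)|_\infty$ and use the subdifferential inequality at $f(t^+,\cdot)$ against the test point $f(t^++h,\cdot)$; rearranging gives
\[
|f(t^++h,\cdot)|_\infty - |f(t^+,\cdot)|_\infty \geq \int_0^h \langle \mu, \partial_t f(t^++s,\cdot)\rangle\,ds,
\]
and dividing by $h$ and sending $h \downarrow 0$ produces $\langle \mu, \partial_t f(t^+,\cdot)\rangle$ on the right; taking the sup over $\mu$ yields the lower bound. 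For $\leq$, for each small $h>0$ select $\mu_h \in \partial|f(t^++h,\cdot)|_\infty$ by part (1); the reverse subdifferential inequality gives
\[
|f(t^++h,\cdot)|_\infty - |f(t^+,\cdot)|_\infty \leq \int_0^h \langle \mu_h, \partial_t f(t^++s,\cdot)\rangle\,ds.
\]
Along a sequence $h_k \downarrow 0$ realizing the limsup, Banach--Alaoglu produces a weak-$\star$ limit $\mu_\star$ in the closed unit ball of $\mathcal{M}(0,\ell)$, and joint continuity of $f$ in $t$ together with the identities $\langle \mu_{h_k}, f(t^++h_k,\cdot)\rangle = |f(t^++h_k,\cdot)|_\infty$ pass to the limit to put $\mu_\star \in \partial|f(t^+,\cdot)|_\infty$. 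Dominated convergence on the integral then completes the bound.

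The left-Dini inequality is proved by an almost identical one-sided argument: fix $\mu^- \in \partial|f(t^-,\cdot)|_\infty$, apply the subdifferential inequality at $f(t^-,\cdot)$ with test point $f(t^--h,\cdot)$, and rewrite $f(t^-,\cdot) - f(t^--h,\cdot) = \int_0^h \partial_t f(t^--h+s,\cdot)\,ds$ to obtain a liminf bound from above by $\langle \mu^-, \partial_t f(t^-,\cdot)\rangle$. The only delicate point throughout is the weak-$\star$ compactness step in the upper bound for the right Dini derivative: one must verify that the limit $\mu_\star$ lies in the subdifferential at $t^+$ rather than just in the closed unit ball, and this uses that the sup-norm evaluation $\mu \mapsto \langle \mu, f(t^+,\cdot)\rangle$ is weak-$\star$ continuous and that $|f(t^++h,\cdot)|_\infty \to |f(t^+,\cdot)|_\infty$. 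No further ingredients beyond part (1) and Banach--Alaoglu are needed.
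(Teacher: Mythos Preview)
Your proposal is correct and follows essentially the same line as the paper. The paper itself only supplies a proof for the key identity behind \eqref{eq:rightDinibound}, namely that for a Banach space $E$ and $x,y\in E$,
\[
\limsup_{h\to 0}\frac{|x+hy|_E-|x|_E}{h}=\max\bigl\{\langle y,x^\star\rangle:\ x^\star\in\partial|x|_E\bigr\},
\]
deferring part (1), the left Dini inequality, and the passage from linear rays $x+hy$ to curves $t\mapsto f(t,\cdot)$ to the cited references. Your argument for the right Dini upper bound is exactly the paper's: pick $\mu_{h}$ in the subdifferential at the perturbed point, extract a weak-$\star$ limit via Banach--Alaoglu, and verify membership in $\partial|f(t^+,\cdot)|_\infty$ by passing the duality identity $\langle\mu_{h_k},f(t^++h_k,\cdot)\rangle=|f(t^++h_k,\cdot)|_\infty$ to the limit. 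The only presentational difference is that you work directly with the curve $t\mapsto f(t,\cdot)$ via the integral representation $f(t^++h,\cdot)-f(t^+,\cdot)=\int_0^h\partial_t f(t^++s,\cdot)\,ds$, whereas the paper's appendix treats the linear model $x+hy$ and invokes \cite{da2014stochastic} for the extension; this is a harmless reorganization. Your sketches for part (1) (Jordan decomposition forcing support on $\arg\max|x_0|$) and for the left Dini bound are also standard and correct.
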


 The last inequality is a consequence of \cite[Proposition D.4, Equation (D.5)]{da2014stochastic}. In turn, the latter follows from Equation (D.2) of the same reference which is stated without proof. For the sake of completeness, we provide a proof in Appendix \ref{App:Subdifferential}.

 For our first example, we consider cylindrical subsets of $\e.$ Roughly speaking, when the initial velocity $v$ and the initial position $x$ have the same sign, at points where the latter is maximized, there exists a path of zero energy that connects $(x,v)$ to the exterior of the domain. Thus, all boundary points with this property are regular.\\

\begin{example}[Regular boundary points of cylindrical sets: outward pointing velocity]\label{ex:regularculinders}
 Let $R>0, \rho>0$ and consider the domain \begin{equation}\label{eq:cylindricalset}
    \begin{aligned}
      D=B(x^*, R)\times B(0, \rho)=\bigg\{ (x,v)\in C_0(0,\ell)\times C^{-1}(0,\ell):   |x-x^*|_{\infty}< R, |v|_{C^{-1}}<\rho\bigg\}.  
    \end{aligned}
\end{equation}

\noindent The boundary points 
$$\mathcal{B}_{out}:=\bigg\{(x,v)\in C^1_0(0,\ell)\times C_0(0,\ell)\bigg| \;|x-x^*|_{\infty}=R, \exists\mu\in\partial|x-x^*|_{\infty}:\langle v, \mu\rangle>0\bigg\}\subset \partial D $$
with "outward pointing" velocity are regular. To this end let $z=(x,v)\in\mathcal{B}_{out}.$

From Proposition \ref{prop:subdifferential}, there exists a probability measure $\tilde{\mu}$ supported on $\arg\max|x-x^*|_\infty$ such that $$\int_{\arg\max|x-x^*|} v(\xi)\textnormal{sgn}\big(x(\xi)-x^*(\xi)\big)\;d\tilde{\mu}(\xi)>0.$$
Hence, there must exist $v_0>0$ and $\xi_0\in \arg\max|x-x^*|$ such that $$v(\xi_0)\textnormal{sgn}\big(x(\xi_0)-x^*(\xi_0)\big)>v_0.$$
Noting that, for each $z\in \mathcal{B}_{out},$ $Z^0_z\in C([0,\infty);C^1_0(0,\ell)\times C_0(0,\ell)) $ we let
$$\zeta:=\sup_{ t>0}|\Pi_1Z^0_z(t)|_{\infty}+\sup_{t>0}|\Pi_2Z^0_z(t)|_{\infty}<\infty    $$
and 
$\zeta_b:=\sup_{|x|\leq \zeta}|b(x)|+\alpha\zeta. $ Here, we are using that $D$ is uniformly  attracting in $\e$ which follows e.g. from Theorem \ref{thm:domainofattraction} (at least for $\rho, R>0$ small enough). The fact that the $C_0-$norm of the velocity is also uniformly bounded for all $t\geq 0$ is straightforward to show from a similar argument.

 From the continuity of $v$ at $\xi_0,$ there exists $t_0>0$ such that for all $|y-\xi_0|<t_0$ 
 \begin{equation}\label{eq:vcontinuity}
 |v(y)-v(\xi_0)|<v(\xi_0)/2.
 \end{equation}
For
$t\leq T_0:=\min\{t_0, v_0/2\zeta_b, 
 \xi_0, \ell-\xi_0\}$ and from a first-order Taylor approximation, there exist   $\delta,  \delta'\in(0, T_0)$ such that 
\begin{equation*}\label{eq:1storderTaylor}
\begin{aligned}     &x(\xi_0+t)=x(\xi_0)+x'(\xi_0+\delta)t, \\&
     x(\xi_0-t)=x(\xi_0)-x'(\xi_0-\delta')t.
\end{aligned}
\end{equation*}

\noindent From the latter and the choice of $\xi_0$ we obtain 
\begin{equation*}
\begin{aligned}
     \Pi_1Z^0_z(\xi_0,t)-x^*(\xi_0)&=\frac{1}{2}\bigg(x(\xi_0+t) + x(\xi_0-t)    \bigg)-x^*(\xi_0)+\frac{1}{2}  \int_{\xi_0-t}^{\xi_0+t}v(y)dy\\&+\frac{1}{2}\int_0^{t}\int_{\xi_0-t+s}^{\xi_0+t-s}[b(\Pi_1Z^0_z(y, s))+\alpha\Pi_2Z^0_z(y,s)]dyds \\&=   |x-x^*|_\infty\textnormal{sgn}\big(x(\xi_0)-x^*(\xi_0)\big)+\frac{t}{2}\bigg(x'(\xi_0+\delta)-x'(\xi_0-\delta) \bigg)                    \\&+\frac{1}{2}  \int_{\xi_0-t}^{\xi_0+t}v(y)dy+\frac{1}{2}\int_0^{t}\int_{\xi_0-t+s}^{\xi_0+t-s}[b(\Pi_1Z^0_z(y, s))+\alpha\Pi_2Z^0_z(y,s)]dyds.
\end{aligned}
\end{equation*}

Moreover, 
\begin{equation*}\label{eq:zetabbnd}
    \begin{aligned}
        \bigg|\int_0^{t}&\int_{\xi_0-t+s}^{\xi_0+t-s}[b(\Pi_1Z^0_z(y, s))+\alpha\Pi_2Z^0_z(y,s)]dyds\bigg|\\&\leq \bigg(\sup_{|x|\leq \zeta}|b(x)|+\alpha\sup_{ s>0}\sup_{\xi\in(0,\ell)}\big|\Pi_2Z^0_z(y,s)dy\big|_{\infty}   \bigg)\int_{0}^{t} 2(t-s)ds\\&\leq \zeta_b(2t^2-t^2)
        =\zeta_bt^2.
    \end{aligned}
\end{equation*}

\noindent\underline{\textit{Case 1}}:  $x(\xi_0)-x^*(\xi_0)>0.$ From the last two displays along with \eqref{eq:vcontinuity} we obtain the lower bound
\begin{equation*}
\begin{aligned}
     |\Pi_1Z^0_z(t)-x^*|_{\infty}&\geq  R+\frac{t}{2}\bigg(x'(\xi_0+\delta) -x'(\xi_0-\delta) \bigg)\\&+\frac{1}{4}\int_{\xi_0-t}^{\xi_0+t}\bigg[v(\xi_0)\textnormal{sgn}\big(x(\xi_0)-x^*(\xi_0)\big)\bigg]d\xi-\frac{\zeta_b t^2}{2}\\&\geq 
     R+\frac{t}{2}\bigg(x'(\xi_0+\delta) -x'(\xi_0-\delta) \bigg)+\frac{v_0 t}{2}-\frac{\zeta_b t^2}{2}.
\end{aligned}
\end{equation*}
Since $x'$ is continuous we can (by possibly reducing $t_0$ and remembering that $\delta<t_0$) take 
$$\frac{t}{2}\bigg(x'(\xi_0+\delta)-x'(\xi_0-\delta)\bigg)\geq -v_0t/4    $$
so that
\begin{equation*}
\begin{aligned}
     |\Pi_1Z^0_z(t)&-x^*|_{\infty}\geq  R+tv_0/4- \zeta_bt^2/2. 
\end{aligned}
\end{equation*}
From the choice of $T_0$ we have $t<v_0/2\zeta_b$ and
$tv_0/4-\zeta_bt^2/2>0.$ In turn, the latter imply 
$$|\Pi_1Z^0_z(t)-x^*|_{\infty}>R.$$

\noindent\underline{\textit{Case 2}}:  $x(\xi_0)-x^*(\xi_0)<0.$ From the symmetric lower bound 
\begin{equation*}
     \Pi_1Z^0_z(\xi_0,t)-x^*(\xi_0)\leq -R-\frac{v_0t}{2}+\frac{t}{2}\bigg(x'(\xi_0-\delta)-x'(\xi_0-\delta)\bigg)+ \frac{\zeta_bt^2}{2}
\end{equation*}
and the continuity of $x'$ we obtain 
\begin{equation*}
     \Pi_1Z^0_z(\xi_0,t)-x^*(\xi_0)\leq -R-\frac{v_0t}{8}- \frac{\zeta_bt^2}{2}.
\end{equation*}
As in the previous case, the choice of $T_0$ implies
$$\Pi_1Z^0_z(t, \xi_0)-x^*(\xi_0)<-R\implies |\Pi_1Z^0_z(t)-x^*|_{\infty}>R. $$
Hence, in  both cases and for all $t\in (0,T_0),$ we have
$$\phi(t):=Z^0_z(t)\notin D.$$ Therefore, according to Definition \ref{def:regularpoints}, $z\in \mathcal{B}_{out}$ is indeed regular. Since no control is required to exit $D$ instantaneously, we have $I_{z,T_0}(\phi)=0.$
\end{example}

In contrast to the previous example, boundary points for which the velocity and position have opposite signs at all maximizers of the position are irregular.\\

\begin{example}[Irregular boundary points of cylindrical sets: inward pointing velocity]\label{ex:irregularPointscylinders}
     Let $D$ as in \eqref{eq:cylindricalset} and consider the set

$$\mathcal{B}_{in}:=\bigg\{(x,v)\in C^1_0(0,\ell)\times C_0(0,\ell)\bigg| \;|x-x^*|_{\infty}=R, \forall\mu\in\partial|x-x^*|_{\infty}:\langle v, \mu\rangle< 0\bigg\}\subset \partial D $$
of boundary points with an "inward-pointing" velocity. We claim that $\mathcal{B}_{in}$ consists of irregular boundary points. To this end,  let $z=(x,v)\in\mathcal{B}_{in}$ and fix $T>0$ and a control $u\in L^2([0, T];H).$ Since $\partial_t( \Pi_1Z^u_z)\in H,$  \eqref{eq:rightDinibound} cannot be used directly to estimate the supremum norm of $\Pi_1Z^u_z(t)-x^*.$ At $\xi=\xi_0$ the latter satisfies

\begin{align*}
    \Pi_1 Z^u_z(\xi_0,t) - x^*(\xi_0) = &[\Pi_1 S_0(t)z](\xi_0) -x^*(\xi_0) \nonumber\\
    &+\int_0^t \Pi_1 S_\alpha(t-s) \begin{pmatrix} 0 \\  b(\Pi_1 Z^u_z(s)) \end{pmatrix}ds\nonumber\\
    &+\int_0^t \Pi_1 S_\alpha(t-s) \begin{pmatrix} 0 \\ \sigma(\Pi_1 Z^u_z(y,s))u(s)\end{pmatrix}ds.
\end{align*}

The controlled convolution term is not as regular as the other two terms, so we subtract it off by letting
\begin{equation*}
    q(t) = \Pi_1 Z^u_z(t) - \int_0^t \Pi_1 S_\alpha(t-s)\begin{pmatrix} 0 \\\sigma(\Pi_1 Z^u_z(s)) u(s) \end{pmatrix}ds.
\end{equation*}
$q(\xi,t)$ is differentiable in $t$ and satisfies
$\frac{\partial q}{\partial t} \Big|_{t=0} = v = \Pi_2 z$. 

We estimate the derivative of the supremum norm of $q$ with \eqref{eq:rightDinibound}.
\begin{equation*}
    \frac{d^+}{dt}|q(t) - x^*|_\infty \leq \max_{\mu_0 \in \partial |x-x^*|_\infty}\left< \frac{\partial q}{\partial t}(t,\cdot),\mu\right>=:-\gamma<0. 
\end{equation*}
This limit is negative because the initial data $z$ was assumed to be in $\mathcal{B}_{in}$. Therefore, we can find $t_0$ such that for all $t \in [0,t_0]$,
$$|q(t) - x^*|_{L\infty} \leq R-\frac{\gamma}{2} t.$$

The control term can be estimated using arguments similar to Lemma \ref{lem:integral-bound}. We do the same transformation from Section \ref{Sec:Linfty-decay}. Let $\Psi(t) = e^{-\frac{\alpha t}{2}}\int_0^t S_\alpha(t-s) \begin{pmatrix} 0\\ \sigma(\Pi_1 Z^u_z(y,s))u(s)\end{pmatrix} ds$. Then $\Psi$ satisfies
\begin{align*}
    \Psi(\xi,t) = \frac{\alpha^2}{4}\int_0^t \int_{\xi-(t-s)}^{\xi + (t-s)} \Psi(y,s)dyds + \int_0^t \int_{\xi-(t-s)}^{\xi + (t-s)}\sigma(\Pi_1 Z^u_z(y,s))u(y,s)dyds.
\end{align*}
By H\"older's inequality, uniformly for $\xi \in [0,\ell]$,
\begin{align*}
    &\left|\int_0^t \int_{\xi-(t-s)}^{\xi + (t-s)}\sigma(\Pi_1 Z^u_z(y,s))u(y,s)dyds \right|
    \leq \left(\int_0^t \int_{\xi-(t-s)}^{\xi+(t-s)}dyds \right)^{\frac{1}{2}}|\sigma|_{\infty}|u|_{L^2([0,t]\times[0,\ell])}\nonumber\\
    &\leq t|u|_{L^2([0,t]\times[0,\ell])}.
\end{align*}

The linear term is bounded by
\begin{equation*}
    \sup_{\xi ]in [0,\ell]} \left|\frac{\alpha^2}{8}\int_0^t \int_{\xi-(t-s)}^{\xi + (t-s)} \Psi(y,s)dyds \right| \leq \frac{\alpha^2 t}{8}\int_0^t \sup_{\xi \in [0,\ell]} |\Psi(\xi,s)ds.
\end{equation*}
Gr\"onwall's inequality implies that for small $t>0$,
\begin{equation*}
    |\Psi(t)|_\infty \leq t |u|_{L^2([0,t]\times[0,\ell])}e^{\frac{\alpha^2 t^2}{8}}.
\end{equation*}
Therefore, for small $t>0$,
\begin{equation*}
    \left|\int_0^t S_\alpha(t-s) \begin{pmatrix} 0\\\sigma(\Pi_1 Z^u_z(s))u(s)\end{pmatrix} ds\right|_\infty \leq Ct |u|_{L^2([0,t]\times[0,\ell])}.
\end{equation*}

Choose $t_0$ small enough so that $|u|_{L^2([0,t_0]\times[0,\ell])}\leq \frac{\gamma}{4}$.

These estimates prove that for all $t\in[0,t_0]$,
\begin{equation*}
    |\Pi_1 Z^u_z(t) - x^*|_\infty \leq  R - \frac{\gamma}{4} t <R.
\end{equation*}

We showed that all controlled trajectories issued from $z\in\mathcal{B}_{in}$ will not exit $D$ instantaneously. Per Definition \ref{def:regularpoints}, all points in $\mathcal{B}_{in}$ are not regular.
\end{example}

We conclude the discussion on boundary points of cylindrical sets by considering the case in which the initial velocity is zero on all the maximizing points of the initial position vector. As we shall show below, such points are regular per Definition \ref{def:regularpoints}.

\begin{example}[Regular boundary points of cylindrical sets: "orthogonal" initial velocity]

 Let $D$ as in \eqref{eq:cylindricalset} and consider the set
 $$\mathcal{B}_{\perp}:=\bigg\{(x,v)\in C^2_0(0,\ell)\times C^1_0(0,\ell)\bigg| \;|x-x^*|_{\infty}=R, \exists\mu\in\partial|x-x^*|_{\infty}:\langle v, \mu\rangle=0\bigg\}\subset \partial D. $$
 
 \noindent Let $T>0, \xi_0\in\arg\max|x-x^*|_{\infty},$  $u: C^1([0,T];C^2_0(0,\ell))\rightarrow C((0,\ell)\times [0,T])$ a control function
 that will be specified below. Any controlled trajectory issued from $z=(x,v)\in \mathcal{B}_{\perp}$ satisfies $\phi=Z^u_z\in C^2_0(0,\ell)\times C^1_0(0,\ell).$ A temporal second-order Taylor expansion around $t=0$ thus furnishes
 \begin{equation*}
 \begin{aligned}
     \Pi_1\phi(\xi_0, t)&=\Pi_1 \phi(\xi_0, 0)+\Pi_2 \phi(\xi_0, 0)t+\frac{t^2}{2}\partial_t^2 \Pi_1 \phi(\xi_0, t')\\&
     = x(\xi_0)+v(\xi_0)t
    \\&+\frac{t^2}{2} \bigg(-\alpha\partial_t\Pi_1\phi(\xi_0,t')+\partial_\xi^2\Pi_1\phi(\xi_0,t')+b\big(\Pi_1\phi(\xi_0, t')\big)+\sigma\big(\Pi_1\phi(\xi_0,t')\big)u\big(\Pi_1\phi\big)(\xi_0,t')\bigg)
 \end{aligned}
 \end{equation*}
 for any $t\in[0,T]$ and some $t'\in (0,t).$ For some $\rho>0$ we choose
 $$u(\psi)(\xi,s):=\frac{1}{\sigma(\psi(\xi,s))}\bigg(\rho+\alpha\partial_t\psi(\xi,s)-\partial_\xi^2\psi(\xi,s)-b\big(\psi(\xi,s)\big)      \bigg), (\xi, s)\in (0,\ell)\times(0, t) $$    
and note  that $v(\xi_0)=0.$ Thus,

 \begin{equation*}
 \begin{aligned}
     \Pi_1\phi(\xi_0, t)-x^*(\xi_0)&=x(\xi_0)-x^*(\xi_0)+\rho t^2/2
 \end{aligned}
 \end{equation*}
 and assuming, without loss of generality, that $x(\xi_0)-x^*(\xi_0)>0$ we deduce that 
 $$ | \Pi_1\phi(t)-x^*|_{\infty}\geq \Pi_1\phi(\xi_0, t)-x^*(\xi_0)=|x-x^*|_{\infty}+\rho t^2/2> R.$$
Clearly, the same conclusion follows from a symmetric bound when $x(\xi_0)-x^*(\xi_0)<0.$ Finally, the energy of this path is given by
\begin{equation*}
    \begin{aligned}
        I_{z, t}(\phi)&=\frac{1}{2}|u(\Pi_1\phi)|^2_{L^2[0,t]\times[0,\ell]}\\&
              =\frac{1}{2}\int_0^t\int_0^\ell\bigg|\frac{1}{\sigma(\Pi_1\phi(\xi,s))}\bigg(\rho+\alpha\partial_t\Pi_1\phi(\xi,s)-\partial_\xi^2\Pi_1\phi(\xi,s)-b\big(\Pi_1\phi(\xi,s)\big)      \bigg)  \bigg|^2d\xi ds\\&
        \\&\leq C_\sigma^{-1}\bigg(\rho^2\ell+\sup_{(\xi,s)\in[0,\ell]\times[0,T]}\bigg| \alpha\partial_t\Pi_1\phi(\xi,s)-\partial_\xi^2\Pi_1\phi(\xi,s)-b\big(\Pi_1\phi(\xi,s)\big)\bigg|^2\ell  \bigg)t\\&
        =: C' t,
    \end{aligned}
\end{equation*}
where $C_\sigma$ is the constant from Assumption \ref{Assumption:sigmaNondeg}.

Letting $\delta>0$ and $t<\delta/C'$ we conclude from the last two displays that the path $\phi$ has energy smaller than $\delta, \phi(0)\in\mathcal{B}_{\perp}$ and $\phi(s)\notin \bar{D}^c$ for all $s\in (0, t].$ Hence any point in $\mathcal{B}_\perp$ is regular per Definition \ref{def:regularpoints}.

\end{example}

For the last example, we turn our attention to spherical subsets of $\e.$ In this case, we show that points with zero velocity are regular, provided that the position component is "locally flat" in a neighbourhood of a maximizer.\\

\begin{example}[Regular boundary points of spherical sets]\label{ex:regularspheres}
     Let $R>0$ and 
$$D:=\bigg\{ (x, v)\in\mathcal{E}:\sqrt{ |x-x^*|^2_{\infty}+ |v|^2_{C^{-1}}}<R\bigg\}. $$
 We shall now show that, under Assumption \ref{Assumption:sigmaNondeg}, the set  
 \begin{equation*}
     \begin{aligned}
         \mathcal{B}_{flat}:=\bigg\{(x, 0)\in C_0^2(0,\ell)\bigg|\; |x-x^*|_{\infty}=R,\;\exists \;t_0>0, \xi_0\in\arg\max|x-x^*|_{\infty}: x-x^*=R\;\textnormal{on}\;(\xi_0-t_0, \xi_0+t_0)  \bigg\}
     \end{aligned}
 \end{equation*}
 is regular. Indeed, for $z=(x,0)\in \mathcal{B}_{flat}$ and from a 
second-order Taylor approximation, there exist $\delta_1, \delta_2<t_0$ such that
 \begin{equation*}
\begin{aligned}
      &x(\xi_0+t)=x(\xi_0)+ x'(\xi_0)t+x''(\xi_0+\delta_1)\frac{t^2}{2}=x(\xi_0)+x''(\xi_0+\delta_1)\frac{t^2}{2}, \\&
     x(\xi_0-t)=x(\xi_0)-x'(\xi_0)t+x''(\xi_0-\delta_2)\frac{t^2}{2}=x(\xi_0)+x''(\xi_0-\delta_2)\frac{t^2}{2}.
\end{aligned}
\end{equation*}
 Thus, for $t<t_0,$ 
 \begin{equation*}
     \begin{aligned}
     \Pi_1Z^u_z(\xi_0,t)-x^*(\xi_0)&=\frac{1}{2}\bigg(x(\xi_0+t)+  x(\xi_0-t)    \bigg)-x^*(\xi_0)+\frac{1}{2}\int_0^{t}\int_{\xi_0-t+s}^{\xi_0+t-s}b(\Pi_1Z^u_z(y, s))dyds\\&
     +\frac{1}{2}\int_0^{t}\int_{\xi_0-t+s}^{\xi_0+t-s}\sigma(\Pi_1Z^u_z(y, s))u(y,s)dyds\\&
      =   R+\frac{t^2}{4}\bigg(x''(\xi_0+\delta_1)+x''(\xi_0-\delta_2) \bigg)+\frac{1}{2}\int_0^{t}\int_{\xi_0-t+s}^{\xi_0+t-s}b(\Pi_1Z^u_z(y, s))dyds\\&
      +\frac{1}{2}\int_0^{t}\int_{\xi_0-t+s}^{\xi_0+t-s}\sigma(\Pi_1Z^u_z(y, s))u(y,s)dyds.
\end{aligned}
\end{equation*}
For a constant $V\in\R$ to be specified later, we consider the feedback control  
$$u(x):=\frac{V-b(x)}{\sigma(x)}.     $$
The corresponding controlled dynamics then satisfy
\begin{equation*}
     \begin{aligned}
     \Pi_1Z^u_z(\xi_0,t)-x^*(\xi_0)&
      =  
   R+\bigg(V+x''(\xi_0+\delta_1)+x''(\xi_0-\delta_2)  \bigg)\frac{t^2}{4}.
\end{aligned}
\end{equation*}
For some $\eta>0$ we choose
$$V:=\eta-\bigg(x''(\xi_0+\delta_1)+x''(\xi_0-\delta_2)\bigg) $$ to obtain the lower bound
\begin{equation*}
     \begin{aligned}
     |\Pi_1Z^u_z(t)-x^*|_\infty&
      \geq   R+\eta t^2>R 
\end{aligned}
\end{equation*}
which holds for all $t\in(0,t_0).$ Hence the path $\phi=\Pi_1Z^u_z$ starting from the boundary point $z$ exits $\bar{D}$ instantaneously and stays in $\bar{D}^c$ for all $t<t_0.$
Moreover, for any $\delta>0,$ $C_\sigma$ as in Assumption \ref{Assumption:sigmaNondeg}
$$\kappa:= \frac{1}{2}\bigg(\frac{V+|b|_\infty}{C_\sigma}\bigg)^2\ell$$
and
$$t<\bigg(t_0\wedge\frac{\delta}{\kappa}   \bigg)$$
the energy of this path satisfies
\begin{equation*}
    I_{z,t}(\phi)\leq \frac{1}{2}\int_{0}^{t}\int_{0}^{\ell}|u(y, s)|^2dyds\leq\kappa t<\delta.
\end{equation*}

Therefore, any boundary point in $\mathcal{B}_{flat}$ with the properties described above is regular per Definition \ref{def:regularpoints}.
\end{example}

We conclude with a remark on the non-invariance of cylindrical sets.

\begin{rem}[Non-invariance of cylinder sets]\label{rem:cylinderNonInvariance} From Example 1 it follows that cylindrical sets $D$ of the form \eqref{eq:cylindricalset} cannot be invariant under the deterministic flow of the damped wave equation. Indeed, we showed that any (un-controlled) trajectory issued from $\mathcal{B}_{out}$ exits $D$ in finite time. Nevertheless, the exit time upper bound (Theorem \ref{thm:exitshapeldp}(1)) holds with growth rate $V(z^*, \bar{D}^c).$ According to Assumption \eqref{Assumption:MinimizingRegularPoints}, if the infimum of $\partial D\ni z\mapsto V(z^*,z)\in[0,\infty]$ is attained at some $z\in\mathcal{B}_{out},$ then $V(z^*, \bar{D}^c)=V(z^*, \partial D).$ Of course, as mentioned in Proposition \ref{corr:DExistence}, invariance can be regained by considering domains consisting of all the orbits issued from $D.$ 
\end{rem}

\section{Well-posedness and exit problems for global solutions in $\e$}\label{sec:GlobalSolutions}

The results of the previous sections are formulated for local $\e-$valued mild solutions of \eqref{eq:model}. In particular, we proved local well-posedness for (controlled) wave equations with polynomial nonlinearities $b$ of arbitrary degree and locally Lipschitz noise coefficients $\sigma$ (Theorem \ref{prop:WellPosedness}), identified bounded domains $D\subset\e$ of uniform attraction for the noiseless dynamics to an asymptotically stable equilibrium $z^*$ (Section \ref{sec:NonlinearStability}), formulated and proved a suitable local ULDP for local solutions (Section \ref{sec:LULDP}). Finally, we studied Freidlin-Wentzell asymptotic exit problems from $D$ (Section \ref{Sec:Metastability}) and obtained asymptotic lower bounds for explosion times of such local solutions (Remark \ref{rem:explosiontimes}).

As we commented at the introduction of the paper, what allows us to be able to focus on local solutions, rather than the more restrictive case of global solutions, is the fact that we study the exit problem from a bounded domain, $D\subset\e$ of attraction of the noiseless dynamics. For completeness, in this section, we discuss the case of \eqref{eq:model} for which global solutions do exist. Clearly for this to be the case, stronger assumptions on $b$ and $\sigma$ have to be imposed, see Assumptions \ref{Assumption:bGlobal}-\ref{Assumption:sigmaGlobal} below.

The main novelty of this section is that under Assumptions \ref{Assumption:bGlobal}-\ref{Assumption:sigmaGlobal}, we provide a global well-posedness result on the space $\e$ for the controlled equation \eqref{eq:controlledequation}, see Proposition \ref{prop:WellPosednessGlobal}. Similar results do exist in the literature, but with global well-posedness proven in different spaces. In particular, similar global well-posedness results can be found e.g. in \cite[Proof of Theorem 5.3]{cerrai2003stochastic} and \cite[Theorem 4.2]{cerrai2006smoluchowski}. However, these works are only concerned with $\h=H\times H^{-1}-$valued solutions. Since here we are interested in solutions on the smaller space $\e$ we present below a slightly modified proof and only highlight the differences from the aforementioned results. The assumption that $b$ (from Assumption \ref{Assumption:bGlobal} below) has at most cubic growth is crucial for achieving the necessary bounds in the case of global solutions. We believe that this result is of independent interest.

We mention here that the controllability and quasipotential-regularity results of Section \ref{sec:Controllability} hold true independently of the notion of solutions. Indeed, these results rely on the dynamics starting at $z=z^*$ and the existence of bounded domains of attraction. Moreover, the asymptotics for exit times and places from Section \ref{Sec:Metastability} remain unchanged, provided that the local ULDP (and the corresponding rate function) is replaced by a classical (global) ULDP for global solutions. Thus, in the rest of this section we shall exclusively focus on adapting the results of Section \ref{sec:wellposedness} to global solutions of \eqref{eq:model}.

Throughout this section we shall replace Assumptions \ref{Assumption:b}, \ref{Assumption:sigma} respectively by 

 \begin{customthm}{1'}\label{Assumption:bGlobal}     
      \begin{enumerate}
          \item[a)] The function $b\in C^1(\R)$ and we can write $$b=b_1+b_2,$$ where $b_1$ is globally Lipschitz continuous with Lipschitz constant $L_1>0$ and for some  $\lambda>1$ and a constant $C_1>0$ we have, for all $x\in\R,$
	   	\begin{equation*}\label{bgrowth}
	   	|b_2(x)|\leq C_1 \big(1+|x|^\lambda\big)\;,\;\;|b_2'(x)|\leq C_1 \big(1+|x|^{\lambda-1}\big)\;,\;\; b_2'(x)\leq 0.
	   	\end{equation*}
	   	Moreover, the antiderivative $$\R\ni x\longmapsto \beta(x):=\int_{0}^{x}b(y)dy\in\R$$ satisfies
	   		\begin{equation}\label{antidercond}
	   \beta(x)\leq C_2\big(1-|x|^{\lambda+1}\big)
	   	\end{equation}
	   	for some constant $C_2>0.$
     \item[b)] The growth exponent of $b_2$ satisfies $\lambda\in(1,3]$
      \end{enumerate}
       \end{customthm}

 \begin{customthm}{2'}\label{Assumption:sigmaGlobal} The function $\sigma:\R\rightarrow\R$ is bounded and globally Lipschitz continuous with Lipschitz constant $L_{\sigma}>0.$
   \end{customthm}
 
 The rest of the assumptions that we imposed in the previous section remain unchanged. Proposition \ref{prop:WellPosednessGlobal} guarantees global well-posedness in $\e$ for the controlled equation \eqref{eq:controlledequation}. 

   \begin{prop}[Global well-posedness of controlled equation] \label{prop:WellPosednessGlobal} Under Assumptions \ref{Assumption:bGlobal}, \ref{Assumption:sigmaGlobal},  \eqref{eq:controlledequation} admits a unique global mild solution.    
 \end{prop}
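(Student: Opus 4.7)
The plan is to bootstrap the local well-posedness already established in Proposition \ref{prop:WellPosedness} to a global statement by showing that, under Assumptions \ref{Assumption:bGlobal}, \ref{Assumption:sigmaGlobal}, the explosion time $\tau^{\epsilon,h}_{z,\infty}$ from \eqref{eq:explosiontime} satisfies $\pr[\tau^{\epsilon,h}_{z,\infty}=\infty]=1.$ Equivalently, for every $T>0$ it suffices to exhibit an a.s. finite random variable $R(\omega,T)$ such that
\begin{equation*}
    \sup_{n\in\N}\sup_{t\in[0,T\wedge\tau^{\epsilon,h}_{z,n}]}|\Pi_1 Z^{\epsilon,h}_{z,n}(t)|_{\infty}\leq R(\omega,T),
\end{equation*}
since then $\tau^{\epsilon,h}_{z,n}>T$ once $n\geq R(\omega,T),$ forcing $\tau^{\epsilon,h}_{z,n}\uparrow\infty.$ As in Proposition \ref{prop:WellPosedness}, I work with the globally well-posed truncated solutions $Z^{\epsilon,h}_{z,n}$ and seek an estimate that is uniform in the truncation parameter $n\in\N.$

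The first step is the decomposition
\begin{equation*}
    Z^{\epsilon,h}_{z,n}(t)= L(t)+\Gamma^\epsilon_n(t)+Y^{\epsilon,h}_{z,n}(t),
\end{equation*}
where $L(t):=S_\alpha(t)z$ is the free damped wave evolution, $\Gamma^\epsilon_n(t):=\epsilon\int_0^t S_\alpha(t-s)\Sigma_n(Z^{\epsilon,h}_{z,n}(s))dW(s)$ is the stochastic convolution, and $Y^{\epsilon,h}_{z,n}$ is a remainder with zero initial data. The linear piece is controlled by Theorem \ref{thm:Linftydecay}, which gives $\sup_{t\in[0,T]}|L(t)|_\e\leq M|z|_\e.$ For the stochastic convolution, Lemma \ref{lem:StochasticConvolutionApriori} together with the global boundedness of $\sigma$ from Assumption \ref{Assumption:sigmaGlobal} yields $\ex|\Gamma^\epsilon_n|^p_{C([0,T];\e)}\leq C(T,p,|\sigma|_\infty),$ uniformly in $n.$ Borel--Cantelli then gives an a.s. finite bound on $|L+\Gamma^\epsilon_n|_{C([0,T];\e)}$ uniform in $n.$

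The core of the argument is an $\h_1$--energy estimate for $Y^{\epsilon,h}_{z,n},$ which solves the equation
\begin{equation*}
\partial_t^2 Y+\alpha\partial_t Y=\partial_x^2 Y+b(\Pi_1(L+\Gamma^\epsilon_n+Y))+\sigma(\Pi_1(L+\Gamma^\epsilon_n+Y))\,h
\end{equation*}
with $Y(0)=0\in \h_1.$ Apply It\^o's formula to the Lyapunov functional
\begin{equation*}
    \mathcal{E}(t):=\tfrac12|\partial_x \Pi_1 Y(t)|_H^2+\tfrac12|\Pi_2 Y(t)|_H^2-\int_0^\ell \beta\big(\Pi_1(L+\Gamma^\epsilon_n+Y)(t,x)\big)\,dx,
\end{equation*}
where the coercivity assumption \eqref{antidercond} bounds $-\beta(x)\geq C_2|x|^{\lambda+1}-C_2$ from below. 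Differentiating along the flow produces the dissipation $-\alpha|\Pi_2 Y|_H^2,$ a Lipschitz contribution from $b_1$ that is absorbable by Gr\"onwall, a monotonicity contribution from $b_2$ (which is nonpositive thanks to $b_2'\leq 0$), quadratic variation and control terms involving $\sigma$ that are bounded due to $|\sigma|_\infty<\infty$ and $h\in L^2([0,T];H),$ and cross terms arising from the inhomogeneous shift $L+\Gamma^\epsilon_n$ in the argument of $\beta.$ The cubic growth restriction $\lambda\leq 3$ in Assumption \ref{Assumption:bGlobal}(b) is used decisively here: it ensures that terms like $\int b(\Pi_1(L+\Gamma^\epsilon_n+Y))\partial_t\Pi_1 Y\,dx$ can be split and controlled via Young's inequality by (a small multiple of) $|\Pi_2 Y|_H^2$ plus polynomial functions of $|L+\Gamma^\epsilon_n|_\e$ and $|\partial_x\Pi_1 Y|_H,$ closing the estimate. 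A standard Gr\"onwall/martingale inequality argument then yields
\begin{equation*}
    \ex\sup_{t\in[0,T]}\Big(|\partial_x\Pi_1 Y(t)|_H^2+|\Pi_2 Y(t)|_H^2\Big)\leq C(T,z,h,\omega),
\end{equation*}
uniformly in $n.$ Finally, the one-dimensional Sobolev embedding $H^1(0,\ell)\hookrightarrow C_0(0,\ell)$ transforms this $H^1$--bound into the required a.s. bound $\sup_{t\in[0,T]}|\Pi_1 Y(t)|_\infty\leq C',$ which combined with Step~1 gives the desired $n$--uniform bound on $|\Pi_1 Z^{\epsilon,h}_{z,n}|_\infty.$

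The main obstacle is the regularity mismatch between the $\e$-valued initial datum $z$ (which need not lie in $\h_1$) and the $\h_1$-based energy estimate: neither $L$ nor $\Gamma^\epsilon_n$ belong to $\h_1,$ so the energy $\mathcal{E}$ cannot be applied to the full process $Z^{\epsilon,h}_{z,n}$ directly. The splitting above circumvents this by applying the energy only to the zero-initial-data remainder $Y,$ but it requires that the nonlinear terms mixing $Y$ with the low-regularity pieces $L+\Gamma^\epsilon_n$ be controllable. This is precisely where the cubic cap $\lambda\leq 3$ of Assumption \ref{Assumption:bGlobal}(b) is essential, and is the step at which the proof departs from the $\h$-valued analogues in \cite{cerrai2003stochastic,cerrai2006smoluchowski}.
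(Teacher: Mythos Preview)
Your strategy is essentially the paper's: localize via $b_n$, peel off the pieces that are only $\e$-regular (the free evolution $S_\alpha(t)z$ and the stochastic convolution), run an $\h_1$-energy estimate on the zero-data remainder, and convert back to $L^\infty$ via Sobolev embedding. The paper additionally peels off the control convolution $H_n(t)=\int_0^t S_\alpha(t-s)\Sigma(Z_n(s))h(s)\,ds$, which lies in $\h_1$ directly because $\sigma$ is bounded and $h\in L^2([0,T];H)$; this slightly streamlines the energy estimate for the residual $\rho_n$, but your choice to keep it inside $Y$ works equally well.

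Two technical slips are worth correcting. First, once the stochastic convolution is subtracted, $Y$ solves a \emph{random} but \emph{deterministic} PDE: there is no It\^o formula or quadratic variation term in the energy identity, only the ordinary chain rule (your expectation bound $\ex\sup(\cdots)\leq C(\omega)$ also betrays this confusion). Second, the Borel--Cantelli step is not well posed: a uniform-in-$n$ $L^p$ bound on $\Gamma^\epsilon_n$ does not by itself yield an a.s.\ bound uniform in $n$. The paper avoids this by arguing in probability instead of almost surely: the energy estimate gives $|\Pi_1\rho_n|_\infty\leq F(\Lambda_n)$ for an explicit increasing $F$, where $\Lambda_n=|L|_\infty+|H_n|_\infty+|\Gamma_n|_\infty$, and then Chebyshev yields $\pr[\tau_n\leq T]=\pr[\,\sup_t|\Pi_1 Z_n|_\infty\geq n\,]\leq C(n^{-1}+f_T(n)^{-1})\to 0$. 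This is the clean way to conclude $\tau_\infty=\infty$ a.s.
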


 \begin{proof}  It suffices to prove well-posedness for $\epsilon=1.$ To this end, we recall the localization $b_n$ \eqref{eq:bnlocalization}. As in Proposition \ref{prop:WellPosedness} we consider the localized problem
 \begin{equation}\label{eq:controlledquationLipGlobal}
 	\begin{aligned}
 		dZ^{h}_n(t)=A_\alpha Z^{h}_n(t)dt+B_n(Z_n^{ h}(t))dt+\Sigma\big(Z^{h}_n(t)\big)h(t)dt+\Sigma\big(Z^{h}_n(t)\big)dW(t)\;, Z^{h}_n(0)=z\in\e
 	\end{aligned}
 \end{equation}
which, from the same proposition, has a unique solution $Z^{h}_n\in L^p(\Omega;     C([0,T_0]; \e).$ Next we consider the family of stopping times 
\begin{equation*}\label{eq:localizingtimesGlobal}
	\tau_{n}:=\inf\{t>0 : |\Pi_1 Z^{h}_{n}(t)|_{\infty}\geq n\}
\end{equation*}
and aim to show that
$$ \pr\bigg( \tau:=	\sup_{n\in\N}\tau_n<\infty\bigg)=\lim_{T\to\infty}\lim_{n\to\infty}\pr(\tau_n\leq T)=0.$$
This in turn implies that \eqref{eq:controlledequation} admits a unique, global mild solution per Definition \ref{dfn:mild solutions}. 
 To this end, we pass to the mild formulation

    	   \begin{equation*}
    	   \begin{aligned}
    	   \Pi_1Z_{n}^{h}(t)&=\Pi_1S_\alpha(t)z+\int_{0}^{t}\Pi_1S_\alpha(t-s)B_n(Z_{n}^{h}(s))ds+\int_{0}^{t}\Pi_1S_\alpha(t-s)\Sigma(Z_{n}^{h}(s))h(s)ds\\&+\Pi_1\int_{0}^{t}S_\alpha(t-s)\Sigma(Z_{n}^{h}(s))dW(z)\\&
    	   =:\Pi_1S_\alpha(t)z+\Pi_1\rho_{n}(t)+\Pi_1H_{n}(t)+\Pi_1\Gamma_{n}(t).
    	   \end{aligned}
    	   \end{equation*}
    	  In view of Lemma \ref{lem:StochasticConvolutionApriori} and using the boundedness of $\sigma$ (Assumption \ref{Assumption:sigmaGlobal}) we have the estimate
    	   \begin{equation}\label{convobnd}
    	   \sup_{n\in\N}\ex|\Pi_1\Gamma_{n}|^p_{\infty}\leq C.
    	   \end{equation}
    	   Moreover, from the continuity of $S$ in $\h_{1}$ and Theorem \ref{thm:Linftydecay} we have 
    	   \begin{equation}\label{Ubnd}
    	   \begin{aligned}
    	   |\Pi_1H_{n}(t)|_{H^1}&\leq \int_{0}^{t}\big| S_\alpha(t-s)\Sigma(Z_{n}^{h}(s))h(s)   \big|_{\h_1}ds
    	   \\&
    	   \leq C\int_{0}^{t}e^{-\theta(t-s)}\big|\Sigma(Z_{n}^{h}(s))h(s)   \big|_{\h_1}ds\\&=C\int_{0}^{t}e^{-\theta(t-s)}\big|\sigma(\Pi_1Z_{n}^{h}(s))h(s)   \big|_{L^2}ds\leq C_T|\sigma|_{\infty}|h|_{L^2([0,T];H)}
    	    \end{aligned}
    	   \end{equation}
    	 and 
    	  \begin{equation}\label{zbnd}
    	 \begin{aligned}
    	    |\Pi_1S_\alpha(t)z|_{\infty}\leq Ce^{-\theta t}|z|_{\e}.
    	 \end{aligned}
    	 \end{equation}
    	   The term $\Pi_1\rho_{n}$ solves the nonlinear wave equation
    	    \begin{equation*}\label{rhoeq}
    	   \left\{\begin{aligned}
    	   &\partial_t^2\Pi_1 \rho_{n}(t,\xi)=
    	   \partial_\xi^2 \Pi_1\rho_{n}(t,\xi)-\alpha\partial_t\Pi_1 \rho_{n}(t,\xi) +b_n\big(\Pi_1 Z_{n}^{h}(t,\xi)\big)\;,\;\;(t,\xi)\in [0,\infty)\times(0,\ell)
    	   \\&\Pi_1\rho_{n}(0,\xi)=0, \partial_t \Pi_1\rho_{n}(0,\xi)=0,\;  \xi\in(0,\ell),\; \Pi_1\rho_{n}(t,\xi)=0,\; (t,\xi)\in[0,\infty)\times\{0,\ell\}.
    	   \end{aligned}\right.
    	   \end{equation*}
    	   Multiplying throughout by $\partial_t\Pi_1 \rho_{n}$ and integrating over $[0,\ell]$ we obtain the estimate
    	   
    	   \begin{equation*}
    	  \begin{aligned}
    	   \frac{d}{dt}\big|\partial^2_t\Pi_1 \rho_{n}(t)\big|^2_{H}&+	\frac{d}{dt}\big| \Pi_1\rho_{n}(t)\big|
    	   ^2_{H^1}+2\alpha \big|\partial_t\Pi_1 \rho_{n}(t)\big|^2_{H}\\&\leq 2C_2\frac{d}{dt}\int_{0}^{\ell}\hat{\beta}_n\big( \Pi_1\rho_n(\xi, t) \big)d\xi\\&+\big|b_n\big(\Pi_1 \rho_n(t)+\Pi_1S_\alpha(t)z+\Pi_1H_{n}(t)+\Pi_1\Gamma_{n}(t)\big)-b_n\big(\Pi_1 \rho_n(t)\big)
    	   \big|^2_{H}+\big|  \partial_t\Pi_1\rho_{n}(t) \big|^2_{H}
    	   \end{aligned}
    	   \end{equation*}
    	   which holds for any $n\in\N$ and $\hat{\beta}_n$ is a Lipschitz approximation of the function
    	   \begin{equation}
    	        -\hat\beta(x):=1-\beta(x)/C_2\geq |x|^{\lambda+1},\label{Eq:betahat}
    	        \end{equation}
    	        $\lambda \in(1,3]$
    	   and $\beta, C_2$ are given in \eqref{antidercond}. Now, for $n\geq n_0$ sufficiently large we can proceed as in \cite[pp. 678-679 ]{cerrai2006smoluchowski} to obtain 
    	   \begin{equation*}
    	   \begin{aligned}
    	  \frac{d}{dt}\big|\partial^2_t\Pi_1 \rho_{n}(t)\big|^2_{H}&+	\frac{d}{dt}\big| \Pi_1\rho_{n}(t)\big|^2_{H^1}+2\alpha \big|\partial_t\Pi_1 \rho_{n}(t)\big|^2_{H}\\&-2C_2\frac{d}{dt}\int_{0}^{\ell}\hat{\beta}_n\big(\Pi_1 \rho_n(\xi, t) \big)d\xi\leq C\bigg[1+(\Lambda_n)^{2\lambda}   \bigg]+C(\Lambda_n)^2\bigg(-\int_{0}^{\ell}\hat{\beta}_n\big( \Pi_1\rho_n(\xi, t) \big)d\xi\ \bigg),
    	   \end{aligned}
    	   \end{equation*}
    	   
    	   \noindent where $$ \Lambda_n=\sup_{t\in[0,T]}|\Pi_1S_\alpha(t)z|_{\infty}+\sup_{t\in[0,T]}|\Pi_1H_{n}(t)|_{\infty}+\sup_{t\in[0,T]}|\Pi_1\Gamma_{n}(t)|_{\infty}.$$
    	   In view of Assumption \ref{Assumption:bGlobal} and in particular the fact that 
    	   $$\sup_{n\in\N}|b_n(x)|\leq c(1+|x|^{\lambda}),$$
    	   we can integrate the previous inequality over $t\in[0,T]$ and apply Gr\"onwall's lemma to  obtain
    	     \begin{equation*}
    	   \begin{aligned}
    	  \sup_{t\in[0,T]}\bigg(\big|\partial^2_t\Pi_1 \rho_{n}(t)\big|^2_{H}&+\big| \Pi_1\rho_{n}(t)\big|^2_{H^1}-2C_2\frac{d}{dt}\int_{0}^{\ell}\hat{\beta}_n\big( \Pi_1\rho_n(\xi, t) \big)d\xi\bigg)
    	  +\int_{0}^{T}\big|\partial_t\Pi_1 \rho_{n}(t)\big|^2_{H}dt\\&\leq C\bigg[1+(\Lambda_n)^{2\lambda}   \bigg]\exp\bigg( c(\Lambda_n)^{2}T   \bigg),
    	   \end{aligned}
    	   \end{equation*}
    	   where the constants are independent of the initial conditions and $n$. In particular,
    	   \begin{equation*}\label{rhobnd}
    	   \big| \Pi_1\rho_{n}(t)\big|^2_{\infty}\leq c\big|  \Pi_1\rho_{n}(t)\big|^2_{H^1}\leq     	   C\bigg[1+(\Lambda_n)^{2\lambda}   \bigg]\exp\bigg( c(\Lambda_n)^{2}T   \bigg).
    	   \end{equation*}
    	   Thus, for any $n\geq n_0$ we have 
    	   \begin{equation*}
    	   \begin{aligned} 
    	   \pr\bigg[  \sup_{t\in[0,T]}\big|   \Pi_1Z_{n}^{h}(t)\big|_{\infty}\geq n   \bigg]&\leq \pr\bigg[ \sup_{t\in[0,T]}|\Pi_1\Gamma_{n}(t)|_{\infty}\geq n/4 \bigg]+\pr\bigg[  \sup_{t\in[0,T]}|\Pi_1H_{n}(t)|_{\infty}\geq n/4 \bigg]\\&
    	   +\pr\bigg[\sup_{t\in[0,T]}|\Pi_1S_\alpha(t)z|_{\infty}\geq n/4\bigg]+\pr\bigg[C\bigg[1+(\Lambda_n)^{2\lambda}   \bigg]\exp\bigg( c(\Lambda_n)^{2}T\bigg)\geq n/4\bigg]\\&
    	   \leq \frac{4}{n}\ex[\Lambda_n ]+\pr[  \Lambda_n\geq f_T(n)   ]\leq \bigg(\frac{4}{n}+\frac{1}{f_T(n)}\bigg)\ex[\Lambda_n]
    	   \end{aligned}
    	   \end{equation*}
    	   where $f_T$ is the inverse of the function $x\mapsto C[1+x^{2\lambda } ]\exp( cx^{2}T)$
    	   which diverges to $\infty$ as $x\to\infty.$ In view of \eqref{Ubnd}, \eqref{zbnd}, \eqref{convobnd} we deduce that 
    	   \begin{equation}\label{Znbnd}
    	   \begin{aligned} 
    	  \pr\bigg[  \sup_{t\in[0,T]}|\Pi_1Z_{n}^{h}(t)|_{\infty}\geq n   \bigg]&\leq C \bigg(\frac{4}{n}+\frac{1}{f_T(n)}\bigg)\big(  1+|h|_{H}+|z|_{\e}\big).
    	   \end{aligned}
    	   \end{equation}
    	   Hence, 
    	   $$\lim_{T\to\infty}\lim_{n\to\infty}\pr[ \tau_n\leq T   ]=\lim_{T\to\infty}\lim_{n\to\infty}\pr\bigg[  \sup_{t\in[0,T]}\big|   \Pi_1Z_{n}^{h}(t)\big|_{\infty}\geq n   \bigg]=0$$
    	    and the proof is complete.    
    	   \end{proof}

           Next, we prove estimates and continuity properties for the skeleton equation analogous to those in Lemmas \ref{lem:Zaprioricompactnesslem}, \ref{lem:controliccontinuity}.

\begin{lem}\label{lem:ZaprioricompactnesslemGlobal}
	Let $T,\alpha>0, z\in\mathcal{E},$ $u\in L^2([0,T]; H, Z\in C([0,T];\e).$ With $Z_z^u:=Z_z^{0,u}$ as in \eqref{eq:controlledequation} and under Assumptions \ref{Assumption:b}, \ref{Assumption:sigma} the following hold:
 \begin{enumerate}
     \item Under Assumptions \ref{Assumption:bGlobal}, \ref{Assumption:sigmaGlobal}, there is a constant $0<C=C(\sigma, |u|_{L^2([0,T];H)}, T, z)$ such that 
	\begin{equation}\label{ZaprioriGlobal}
	\sup_{t\leq T}\big|Z^{u}_{z}(t)\big|_{\mathcal{E}}\leq C.
 \end{equation}
 \item  Let $\mathcal{U}_N\subset L^2([0,T];H)$ be the centered ball of radius $N>0,$ endowed with the weak $L^2$ topology. The map 
	$$\e\times \mathcal{U}_N \ni (z, u)\longmapsto Z^u_z\in C([0,T];\e)$$
is continuous.
\item Let $D\subset\e$ bounded. There exists a smooth, non-decreasing function $\Lambda:[0,\infty)\rightarrow [0,\infty)$ with $\Lambda(0)=0, \lim_{x\to\infty}\Lambda(x)=\infty$ such that $$| Z_z^{u}-Z_z^0|_{C([0,T];\e)}\leq \Lambda\big(|u|_{L^2([0,T];H)}\big). $$
 \end{enumerate}
\end{lem}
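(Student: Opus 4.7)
The plan is to leverage the energy estimate established in the proof of Proposition \ref{prop:WellPosednessGlobal}, specialized to the deterministic setting, to first secure the a priori bound \eqref{ZaprioriGlobal}, and then reduce parts (2) and (3) to the local Lipschitz arguments already developed in Lemmas \ref{lem:Zaprioricompactnesslem} and \ref{lem:controliccontinuity}.

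For part (1), I would decompose $\Pi_1 Z^u_z(t) = \Pi_1 S_\alpha(t)z + \Pi_1 \rho(t) + \Pi_1 H(t)$, where $\rho$ and $H$ denote the nonlinear-drift and control convolutions exactly as in the proof of Proposition \ref{prop:WellPosednessGlobal}, but \emph{without} the stochastic convolution $\Gamma$. Theorem \ref{thm:Linftydecay}, the Sobolev embedding $H^1 \subset C_0$ and Assumption \ref{Assumption:sigmaGlobal} readily control the first and third terms by quantities depending only on $|z|_\e$, $T$ and $|u|_{L^2([0,T];H)}$. For $\rho$, multiplying the governing nonlinear wave equation by $\partial_t \Pi_1\rho$, integrating in space and exploiting the antiderivative condition \eqref{antidercond}--\eqref{Eq:betahat} together with the cubic-growth restriction $\lambda \in (1,3]$, Gr\"onwall's inequality yields a deterministic bound of the form
$$\sup_{t\in[0,T]} |\Pi_1 \rho(t)|^2_\infty \leq C[1 + \Lambda^{2\lambda}]\exp(c \Lambda^2 T),$$
with $\Lambda = \Lambda(|z|_\e, |u|_{L^2([0,T];H)}, |\sigma|_\infty, T) < \infty$. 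The corresponding bound on the velocity component $\partial_t \Pi_1 Z^u_z \in C^{-1}$ is then obtained by the same argument used to control the velocity component in the proof of Theorem \ref{thm:Linftydecay}.

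With \eqref{ZaprioriGlobal} in hand, parts (2) and (3) reduce to local Lipschitz arguments. For part (2), the a priori bound from (1) applied to a convergent sequence $(z_m, u_m) \to (z,u)$ with $\{u_m\} \subset \mathcal{U}_N$ produces a uniform radius $R > 0$ that bounds all of $\{Z^{u_m}_{z_m}\}$ and $Z^u_z$ in $C([0,T];\e)$. Since $b \in C^1$, it is Lipschitz with constant $L_R := |b'|_{L^\infty[-R,R]}$ on this ball, while $\sigma$ is globally Lipschitz; the proof of Lemma \ref{lem:controliccontinuity}(1) then transfers verbatim, provided one observes that the compactness statement of Lemma \ref{lem:Zaprioricompactnesslem} extends to $\Sigma$ in place of $\Sigma_n$ (its proof uses only boundedness and Lipschitz continuity of $\sigma$). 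Part (3) is similarly a direct transcription of Lemma \ref{lem:controliccontinuity}(2), with the radius produced by (1) providing the appropriate local Lipschitz constant; the resulting estimate
$$|Z^u_z - Z^0_z|_{C([0,T];\e)} \leq C_T e^{L_R T} |u|_{L^2([0,T];H)}$$
then allows one to construct $\Lambda$ with the stated regularity and monotonicity (e.g.\ by smoothing the right-hand side).

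The main obstacle will be the energy estimate of part (1). The quadratic source $|b(\Pi_1 Z^u_z)|^2_H$ grows like $|\Pi_1 Z^u_z|^{2\lambda}_{L^{2\lambda}}$, which must be absorbed by the $L^{\lambda+1}$ control extracted from \eqref{antidercond} together with the $H^1$ energy bound via Sobolev interpolation; this closure is precisely what requires the cubic growth constraint $\lambda \leq 3$, and its failure would break the Gr\"onwall step. Once this is secured, everything else is standard: the localization layer built into Definition \ref{dfn:local mild solutions} is effectively absorbed by the uniform radius provided by the a priori bound.
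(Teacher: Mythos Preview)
Your proposal is correct, and for part (1) it follows essentially the same route as the paper: an energy estimate for the nonlinear residual after subtracting the linear flow, closed via the antiderivative condition \eqref{antidercond} and the constraint $\lambda\le 3$, then Sobolev embedding. The paper's decomposition is marginally simpler (it sets $\rho=\Pi_1Z^u_z-\Pi_1S_\alpha(t)z$ and keeps the control term as a source in the energy estimate rather than splitting it off as a separate $H$), but this is cosmetic.

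For parts (2) and (3) your route is genuinely different and more elementary. You use the a~priori bound from (1) to trap all relevant trajectories in a common ball of radius $R=R(|u|_{L^2},\sup_m|z_m|_\e)$, then invoke the local Lipschitz constant $L_R=\sup_{|x|\le R}|b'(x)|$ and repeat the Gr\"onwall argument of Lemma~\ref{lem:controliccontinuity} verbatim. The paper instead controls $|b_2(\Pi_1Z^{u_n}_{z_n})-b_2(\Pi_1Z^u_z)|_H^2$ via a structural estimate from \cite[Lemma~2.4]{cerrai2006smoluchowski} that exploits the one-sidedness of $b_2'$ and yields an explicit $\Lambda(x)=Cx^4\exp\bigl(C'(x^{2\lambda+2}+x^{4(\lambda-1)})\bigr)$. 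Your approach avoids this external lemma at the cost of a less explicit $\Lambda$ (the dependence on $x$ enters both linearly and through $L_{R(x)}$), which you correctly note can be smoothed afterwards. Both are valid; yours is cleaner for the purposes of this paper, while the paper's gives constants that track the growth exponent $\lambda$ more transparently.
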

\begin{proof}
\begin{enumerate}
    \item 
Let $G(t):= \Pi_1S_\alpha(t)z$ and
$$\rho=\Pi_1Z^u_{z}-G$$
solve the skeleton equation starting from $0$. From an energy estimate similar to the one used in Proposition \ref{prop:WellPosednessGlobal} (notice that here we require that the growth exponent of $b$ satisfies $\lambda\in(1,3]$) we have

\begin{equation*}
\begin{aligned}
\frac{d}{dt}\big|\partial_t\rho(t)\big|^2_{H}&+\frac{d}{dt}\big|\rho(t)\big|^2_{H^1}+2\big|\partial_t\rho(t)\big|^2_{H}=2\langle b(\Pi_1Z^u_{z}(t)),\; \partial_t\rho(t)\rangle_{H}+2\langle \sigma(\Pi_1Z^u_{z}(t))u(t),\; \partial_t\rho(t)\rangle_{H}\\&
\leq 2c_2\frac{d}{dt}\int_{0}^{\ell}\hat{\beta}(\rho(\xi, t))d\xi+\big| b(\rho(t)+G(t))-b(G(t)) \big|^2_{H}+|\sigma|^2_\infty|u(t)|^2_{H}+2\big|\partial_t\rho(t)\big|^2_{H},
\end{aligned}
\end{equation*}
where we used the chain rule, the inequality $2\langle x, y\rangle\leq |x|^2+|y|^2$ and Assumption \ref{Assumption:sigmaGlobal}. In view of Assumption \ref{Assumption:bGlobal}, we can then follow the same line of reasoning to obtain 
\begin{equation*}
\begin{aligned}
\sup_{t\leq T}\big|\rho(t)\big|^2_{H^1}&+\sup_{t\leq T}\big|\partial_t\rho(t)\big|^2_{H}\\&\leq \bigg[C\bigg(T+T\sup_{t\in[0,T]}|G(t)|^{2\lambda}_{\infty}+|\sigma|^2_\infty\int_{0}^{T}|u(t)|^2_{H}dt\bigg)\bigg]\exp\big(cT\sup_{t\in[0,T]}|G(t)|^{2}_{\infty}\big)
\end{aligned}
\end{equation*}
and from Theorem \ref{thm:Linftydecay} we obtain
\begin{equation*}
\begin{aligned}
\sup_{t\in[0,T]}|G(t)|^{2}_{\infty}\leq C\sup_{t\in[0,T]}e^{-\theta t}|z|_{\mathcal{E}}\leq C|z|_{\mathcal{E}}.
\end{aligned}
\end{equation*}

Thus, by Sobolev embedding we have 
\begin{equation}\label{eq:skeletonBound}
\begin{aligned}
&\sup_{t\leq T}\big|Z^{u}_{z}(t)\big|^2_{\mathcal{E}}\leq 2\sup_{t\leq T}\big|\Pi_1Z^{u}_{z}(t)\big|^2_{\infty}+2\sup_{t\leq T}\big|\partial_t\Pi_1Z^{u}_{z}(t)\big|^2_{C^{-1}}\\&\leq C\bigg(\sup_{t\leq T}\big|\rho(t)\big|^2_{H^1}+\sup_{t\leq T}\big|\partial_t\rho(t)\big|^2_{H}\bigg)+
\sup_{t\leq T}\big|\Pi_1S_\alpha(t)z\big|^2_{\infty}+\sup_{t\leq T}\big|\Pi_2S_\alpha(t)z\big|^2_{C^{-1}}\\&
\leq C\bigg[T(1+|z|^{2\lambda}_{\e})+|\sigma|^2_\infty N\bigg]\exp\big(cT|z|^2_\mathcal{E}\big)+C |z|^2_{\e}.
\end{aligned}
\end{equation}
The proof is complete.
    \item 
	Since the weak topology on bounded sets is metrizable, it suffices to consider a sequence $\{(z_n, u_n)\}_{n\in\N}\subset\mathcal{U}_N\times\e$ that converges weakly, as $n\to\infty,$ to a pair $(z, u)\in\mathcal{U}_N\times\e.$ For $t\in[0,T]$ we have 
\begin{equation*}
\begin{aligned}
|Z^{u_n}_{z_n}(t)-Z^{u}_{z}(t)\big|_{\e}&\leq \big| S_\alpha(t)(z_n-z)\big|_{\e}+\int_{0}^{t}\bigg| S_\alpha(t-s)\big[B(Z^{u_n}_{z_n}(s))- B(Z^{u}_{z}(s))\big]\bigg|_{\e}ds\\&
+\bigg|\int_{0}^{t} S_\alpha(t-s)\big[\Sigma(Z^{u_n}_{z_n}(s))u_n(s)- \Sigma(Z^{u}_{z}(s))u(s)\big]ds\bigg|_{\e}
\end{aligned}
\end{equation*}
From Assumption \ref{Assumption:bGlobal}, Assumption \ref{Assumption:sigmaGlobal} and Theorem \ref{thm:Linftydecay} we obtain 
\begin{equation}\label{Zcontbnd1Global}
\begin{aligned}
|Z^{u_n}_{z_n}(t)-Z^{u}_{z}(t)\big|_{\e}&
\leq C| z_n-z|_{\e}+C|b_1|_{Lip}\int_{0}^{t}\big|\Pi_1Z^{u_n}_{z_n}(s)-\Pi_1Z^{u}_{z}(s)|_{H}ds\\&+C\int_{0}^{t} \big|b_2(\Pi_1Z^{u_n}_{z_n}(s))- b_2(\Pi_1Z^{u}_{z}(s))\big|_{H}ds\\&
+|\sigma|_{\infty}\int_{0}^{t}\big|\Pi_1Z^{u_n}_{z_n}(s)-\Pi_1Z^{u}_{z}(s)|_{C_0(0,\ell)}|u_n(s)|_{H}ds\\&
+C\sup_{t\in[0,T]}\bigg|\int_{0}^{t}S_\alpha(t-s) \Sigma(Z^{u}_{z}(s))\big[u_n(s)-u(s)\big]ds\bigg|_{\e}.
\end{aligned}
\end{equation}

\noindent Now from \cite{cerrai2006smoluchowski}, Lemma 2.4 (notice that the latter requires Assumption \ref{Assumption:bGlobal};see also pp.678 of the same reference), along with the a-priori bound \eqref{ZaprioriGlobal} and the uniform bound on $u_n$ we have 
\begin{equation}\label{eq:bdifferencebound}
\begin{aligned}
\big|b_2(\Pi_1Z^{u_n}_{z_n}(s))&- b_2(\Pi_1Z^{u}_{z}(s))\big|^2_{H}\\&\leq c\int_{0}^{\ell}\bigg(1-\hat{\beta}\big(\Pi_1Z^{u_n}_{z_n}(
\xi,s)\big)+|\Pi_1Z^{u_n}_{z_n}(\xi,s)-\Pi_1Z^{u}_{z}(\xi,s) |^{2(\lambda-1)}\bigg)|\Pi_1Z^{u_n}_{z_n}(\xi,s)-\Pi_1Z^{u}_{z}(\xi,s) |^2d\xi\\&
\leq C\bigg(1+\sup_{t\in[0,T]}|\Pi_1Z^{u_n}_{z_n}(t)|^{\lambda+1}_{\infty}+\sup_{t\in[0,T]}|\Pi_1Z^{u_n}_{z_n}-\Pi_1Z^{u}_{z}(t)|^{2(\lambda-1)}_{\infty}\bigg)|\Pi_1Z^{u_n}_{z_n}(s)-\Pi_1Z^{u}_{z}(s)\big|^2_{H}\\&
\leq C_{\lambda}\bigg(1+\sup_{t\in[0,T]}|Z^{u_n}_{z_n}(t)|^{\lambda+1}_{\e}+\sup_{t\in[0,T]}|Z^{u}_{z}(t)|^{2(\lambda-1)}_{\e}       \bigg)|\Pi_1Z^{u_n}_{z_n}(s)-\Pi_1Z^{u}_{z}(s)\big|^2_{H}\\&
\leq \tilde{C}|\Pi_1Z^{u_n}_{z_n}(s)-\Pi_1Z^{u}_{z}(s)\big|^2_{H},
\end{aligned}
\end{equation}
\noindent with $\lambda$ as in Assumption \ref{Assumption:b},  $\hat{\beta}$ as in (\ref{Eq:betahat}) and  the constant $\tilde{C}$ in the last line takes the form 
$$C_{\sigma, N, \lambda, T }\bigg[ 1+(1+\sup_n|z_n|^{\lambda(\lambda+1)}+|z|^{2\lambda(\lambda-1)})\bigg(e^{cT\sup_n|z_n|^2_{\e}}+ e^{cT|z|^2_{\e}}     \bigg)+C\big(\sup_n|z_n|^{\lambda+1}_\e+|z|^{2\lambda(\lambda-1)}_\e\big)     \bigg].$$ 
In view of this estimate, \eqref{Zcontbnd1Global} yields
\begin{equation*}
\begin{aligned}
|Z^{u_n}_{z_n}(t)-Z^{u}_{z}(t)\big|^2_{\e}&
\leq C| z_n-z|^2_{\e}+CT^{1/2}\int_{0}^{T}\big|Z^{u_n}_{z_n}(s)-Z^{u}_{z}(s)|^2_{\e}ds\\&+\tilde{C}T^{1/2}\int_{0}^{T} |Z^{u_n}_{z_n}(s)-Z^{u}_{z}(s)\big|^2_{\e}ds\\&
+|\sigma|_{\infty}\sup_{n\in\N}|u_n|^2_{L^2([0,T];H)}\int_{0}^{T}\big|Z^{u_n}_{z_n}(s)-Z^{u}_{z}(s)|^2_{\e}ds\\&
+C\sup_{t\in[0,T]}\bigg|\int_{0}^{t}S_\alpha(t-s) \Sigma(Z^{u}_{z}(s))\big[u_n(s)-u(s)\big]ds\bigg|^2_{\e}.
\end{aligned}
\end{equation*}
Thus, Gr\"onwall's inequality furnishes
\begin{equation*}
\begin{aligned}
\sup_{t\in[0,T]}|Z^{u_n}_{z_n}(t)-Z^{u}_{z}(t)\big|^2_{\e}&
\leq Ce^{C_{\sigma, b_1, \lambda, N, T}}\bigg(| z_n-z|^2_{\e}+\sup_{t\in[0,T]}\bigg|\int_{0}^{t}S_\alpha(t-s) \Sigma(Z^{u}_{z}(s))\big[u_n(s)-u(s)\big]ds\bigg|^2_{\e}\bigg).
\end{aligned}
\end{equation*}
From Lemma \ref{lem:Zaprioricompactnesslem} with $Z=Z^{u}_{z},$ the latter vanishes as $n\to\infty$ and the conclusion follows.
\item 
For $t\leq T$ we have 
\begin{equation*}
   Z_z^u(t)-Z_z^0(t)=\int_{0}^tS_\alpha(t-s)\big[b(  Z_z^u(s))- b(  Z_z^0(s))\big]ds+\int_{0}^tS_\alpha(t-s)\sigma(  Z_z^u(s))u(s)ds.
\end{equation*}
Since $\sigma$ is bounded, the second term on the right hand side satisfies 

\begin{equation}\label{eq:sigmacontrol}
    \begin{aligned}
        \bigg|\int_{0}^tS_\alpha(t-s)\sigma(  Z_z^u(t))u(t)dt\bigg|_{\e}&\leq C \bigg|\int_{0}^tS_\alpha(t-s)\sigma( Z_z^u(t))u(t)dt\bigg|_{\h_1}\\&\leq C|\sigma|_{\infty}\int_{0}^{t}e^{-\theta(t-s)} |u(s)|^2_{\h}ds\leq C|\sigma|_{\infty} |u|^2_{L^2([0,T];H)}.
    \end{aligned}
\end{equation}
As for the first term, a bound similar to \eqref{eq:bdifferencebound} yields 
\begin{equation*}
\begin{aligned}
    \bigg|\int_{0}^{t}S_\alpha(t-s)\big[b(  Z_z^u(s))&- b(  Z_z^0(s))\big]ds\bigg|^2_{\e}\\&\leq \bigg(1+\sup_{t\in[0,T]}|Z^{u}_{z}(t)|^{\lambda+1}_{\e}+\sup_{t\in[0,T]}|Z^{0}_{z}(s)|^{2(\lambda-1)}_{\e}       \bigg)\int_{0}^{T}\sup_{r\leq s}|Z^{u}_{z}(r)-Z^{0}_{z}(r)|^2_{\e} ds,
\end{aligned}  
\end{equation*}
with $\lambda$ as in Assumption \ref{Assumption:bGlobal}. 
From the boundedness of $D$ and the estimates in Lemma \ref{lem:Zaprioricompactnesslem} (see in particular \eqref{eq:skeletonBound}) we obtain the bound
\begin{equation*}
\begin{aligned}
&\sup_{z\in D}\bigg(  \sup_{t\in[0,T] }\big|Z^{u}_{z}(t)\big|^2_{\mathcal{E}}+         \sup_{t\in[0,T] }\big|Z^{0}_{z}(t)\big|^2_{\mathcal{E}}\bigg)
\leq C_{D, T}\bigg[1+|u|^2_{L^2([0,T];H)} \bigg].
\end{aligned}
\end{equation*}
Combining the last two displays it follows that
\begin{equation*}
\begin{aligned}
    \bigg|\int_{0}^{t}S_\alpha(t-s)\big[b(  Z_z^u(s))&- b(  Z_z^0(s))\big]ds\bigg|^2_{\e}\\&\leq C_{T, D, \lambda }\bigg(1+|u|_{L^2([0,T];H)}^{2\lambda+2}+|u|_{L^2([0,T];H)}^{4(\lambda-1)}     \bigg)\int_{0}^{T}\sup_{r\leq s}|Z^{u}_{z}(r)-Z^{0}_{z}(r)|^2_{\e} ds.
\end{aligned}  
\end{equation*}
The latter along with \eqref{eq:sigmacontrol} then furnish 
$$| Z_z^u-Z_z^0|^2_{C([0,T];\e)}\leq C_1\bigg(1+|u|_{L^2([0,T];H)}^{2\lambda+2}+|u|_{L^2([0,T];H)}^{4(\lambda-1)}     \bigg)\int_{0}^{T}\sup_{r\leq s}|Z^{u}_{z}(r)-Z^{0}_{z}(r)|^2_{\e}ds+C_2 |u|^4_{L^2([0,T];H)}.   $$
From Gr\"onwall's inequality we may conclude
\begin{equation*}
 | Z_z^u-Z_z^0|^2_{C([0,T];\e)}\leq C|u|^4_{L^2([0,T];H)}\exp\bigg\{C'\bigg(|u|_{L^2([0,T];H)}^{2\lambda+2}+|u|_{L^2([0,T];H)}^{4(\lambda-1)}\bigg)     \bigg\}
\end{equation*}
for some constants $C, C'>0.$  The proof is complete upon observing that the function $$\Lambda(x)=Cx^4\exp(C'(x^{2\lambda+2}+x^{4(\lambda-1)})/2), x\geq 0$$ satisfies the desired properties.
\end{enumerate}
\end{proof}

We conclude this section by showing that global solutions of \eqref{eq:model} satisfy a (global) LDP, see Definition \ref{Def:GlobalULDP}, that is uniform over bounded sets of initial data. This is the last ingredient needed to transfer the exit problem results from the previous sections to global solutions.

             \begin{prop}\label{prop:ULDPGlobal} Let $D \subset \e$ be a bounded subset of initial data. Under Assumptions \ref{Assumption:bGlobal}, \ref{Assumption:sigmaGlobal}, the family $\{Z^\epsilon_z\}_{z \in D, \epsilon>0}  $ satisfies a (global) ULDP (per Definition \ref{Def:GlobalULDP}) with respect to $D$ and with rate function given by
             \begin{equation*}\label{eq:RateFunctionWaveGlobal}
             I_{z,T}(\phi) := \inf_{u\in L^2([0,T];H)} \left\{ \frac{1}{2}\int_0^T \int_0^\ell |u(s,y)|^2 dyds: \phi=Z^{u}_z \right\}, \phi\in C([0,T];\e),
         \end{equation*}
            where the convention $\inf\varnothing=\infty$ is being assumed.
             \end{prop}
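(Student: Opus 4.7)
The plan is to follow the same strategy used in Proposition \ref{prop:LULDP}, replacing the localized solutions with the globally well-posed processes guaranteed by Proposition \ref{prop:WellPosednessGlobal}. Namely, I would invoke the equivalence between ULDP and EULP from \cite[Theorem 2.9]{salins2019equivalences} and \cite[Theorem 2.12]{salins2019equivalences}, which reduces the global ULDP over the bounded set $D$ to showing that for every $\delta>0$ and every $M>0$,
\begin{equation}\label{eq:GULDPsufficient}
\lim_{\epsilon\to 0}\sup_{z\in D}\sup_{u\in\mathcal{P}_2^M}\pr\Bigl[\sup_{t\in[0,T]}\bigl|Z^{\epsilon,u}_z(t)-Z^{u}_z(t)\bigr|_{\e}\geq \delta\Bigr]=0,
\end{equation}
where $\mathcal{P}_2^M$ is the set of adapted $H$-valued controls with $|u|_{L^2([0,T];H)}^2\leq M$ almost surely, and $Z^{\epsilon,u}_z,Z^u_z$ are the (global) stochastic and skeleton solutions respectively.

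To establish \eqref{eq:GULDPsufficient}, set $R^{\epsilon,u}_z:=Z^{\epsilon,u}_z-Z^{u}_z$ and write the difference in mild form. The noise coefficient $\sigma$ is globally Lipschitz and bounded (Assumption \ref{Assumption:sigmaGlobal}), so the $\Sigma$-terms can be bounded exactly as in Proposition \ref{prop:LULDP}. The essential new difficulty is the polynomially growing drift $b=b_1+b_2$: here I would split $b$ and treat $b_1$ by its global Lipschitz constant, while for $b_2$ I would perform an energy estimate analogous to the one in Proposition \ref{prop:WellPosednessGlobal}, testing the equation satisfied by $\Pi_1 R^{\epsilon,u}_z$ against $\partial_t \Pi_1 R^{\epsilon,u}_z$. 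The monotonicity $b_2'\leq 0$ from Assumption \ref{Assumption:bGlobal}, combined with the refined difference estimate \eqref{eq:bdifferencebound} from Lemma \ref{lem:ZaprioricompactnesslemGlobal}, will control the contribution of $b_2(\Pi_1 Z^{\epsilon,u}_z)-b_2(\Pi_1 Z^u_z)$ in terms of $|R^{\epsilon,u}_z|_{\e}^2$ with a prefactor depending only on the $\e$-norms of $Z^{\epsilon,u}_z$ and $Z^u_z$.

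The a priori $\e$-bound \eqref{ZaprioriGlobal} of Lemma \ref{lem:ZaprioricompactnesslemGlobal}(1) provides a uniform bound for $Z^u_z$ over $z\in D$ and $u\in\mathcal{P}_2^M$; a companion bound for $Z^{\epsilon,u}_z$ follows from an analogous energy computation, which yields, via \eqref{Znbnd}-style tail estimates, that $\sup_{z\in D}\sup_{u\in\mathcal{P}_2^M}\pr[\sup_{t\leq T}|Z^{\epsilon,u}_z(t)|_{\e}>K]$ can be made arbitrarily small by choosing $K$ large. Working on the joint event where both processes are $\e$-bounded by $K$, the energy estimate combined with Gr\"onwall's inequality gives
\[
\sup_{t\in[0,T]}|R^{\epsilon,u}_z(t)|_{\e}^2\leq C_{K,T,M}\,\epsilon^2\sup_{t\in[0,T]}|\Gamma^\epsilon(t)|_{\e}^2,
\]
with $\Gamma^\epsilon$ the stochastic convolution, whose moments are controlled uniformly in $\epsilon$ via Lemma \ref{lem:StochasticConvolutionApriori} (using the boundedness of $\sigma$). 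Chebyshev's inequality then gives \eqref{eq:GULDPsufficient} on the truncated event, and on its complement the probability is itself already small by the a priori tail estimate.

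The main obstacle is precisely this last step: unlike the local setting of Proposition \ref{prop:LULDP}, where $b,\sigma$ were effectively bounded Lipschitz after truncation, here the polynomial growth of $b_2$ forces us to carry out the difference estimate using the dissipativity structure and to couple it with the a priori $\e$-bounds on both the stochastic and controlled trajectories. The restriction $\lambda\in(1,3]$ in Assumption \ref{Assumption:bGlobal}(b) is exactly what makes the constant in the $|b_2(u)-b_2(v)|_H^2$ estimate \eqref{eq:bdifferencebound} controllable in terms of the $\e$-norms, which ultimately allows the Gr\"onwall argument to close. The rest of the proof---identification of the rate function via the variational characterization and lower semicontinuity yielding goodness---proceeds verbatim as in Proposition \ref{prop:LULDP}.
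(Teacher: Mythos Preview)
Your overall strategy is correct and matches the paper's: reduce to the EULP sufficient condition \eqref{eq:EULPsufficientGlobal} via \cite{salins2019equivalences}, then split into a ``good'' event where both controlled processes stay bounded and a ``bad'' tail event whose probability is small uniformly in $\epsilon$ by the global well-posedness estimates of Proposition \ref{prop:WellPosednessGlobal}.

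The paper executes the good-event estimate differently, and more simply, than you propose. Rather than working directly with the polynomial nonlinearity $b_2$ and invoking the dissipativity-based difference bound \eqref{eq:bdifferencebound} on the truncated event, the paper introduces the localizing stopping times $\tau^\epsilon_n:=\tau^{\epsilon,u}_{z,n}\wedge\tau^{0,u}_{z,n}$ and the localized processes $Z^{\epsilon,u}_{z,n}$, $Z^{0,u}_{z,n}$ solving the equation with the \emph{globally Lipschitz} truncation $b_n$. On $\{\tau^\epsilon_n>T\}$ the global solutions coincide with these localized ones, so the difference $Z^{\epsilon,u}_{z,n}-Z^{0,u}_{z,n}$ can be estimated by the elementary Lipschitz Gr\"onwall argument already used in Proposition \ref{prop:LULDP}, yielding a bound $C\epsilon^2 e^{C_{\sigma,n}T}$ on the full probability space. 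Chebyshev then handles $p_1$, and the non-explosion bound \eqref{Znbnd} handles $p_2$. One takes $\epsilon\to 0$ first, then $n\to\infty$. The dissipativity of $b_2$ and the restriction $\lambda\leq 3$ are thus used \emph{only} to control the tail probability $\pr[\tau^\epsilon_n\leq T]$, not in the difference estimate itself. Your route---carrying the polynomial growth through the difference bound via \eqref{eq:bdifferencebound} and an energy argument---would also close, but it duplicates work that the localization already absorbs.
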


              \begin{proof}    
     The (global) ULDP is equivalent to an Equicontinuous Uniform Laplace Principle (EULP) \cite[Theorem 2.9]{salins2019equivalences}. In view of  \cite[Theorem 2.12]{salins2019equivalences}, the latter holds over bounded subsets of initial data, provided that for any $\delta>0,$ $D\subset\e$ bounded and $N>0,$
    	   \begin{equation}
    	       \label{eq:EULPsufficientGlobal}
            \lim_{\epsilon\to 0}\sup_{z\in D}\sup_{u\in\mathcal{P}_2^N}\pr\bigg[\sup_{t\in[0,T]}\big|Z_z^{\epsilon,u}(t)-Z_z^{u}(t)\big|_{\e}\geq \delta \bigg]=0.
    	   \end{equation}
        Here, $\mathcal{P}_2^N$ is the collection of adapted, $H-$valued stochastic controls $u(t)$ such that $\pr( |u(t)|^2_H\leq N)=1.  $
       
    	   In order to prove the latter, we use the localizing sequences of stopping times $$\tau^{\epsilon,u}_{z,n}:=\inf\{t>0 : |\Pi_1 Z^{\epsilon,u}_{z,n}(t)|_{\infty}\geq n\},\;\;\tau^\epsilon_n:=\tau^{\epsilon,u}_{z,n}\wedge \tau^{0,u}_{z,n},\;\;n\in\N,$$
        where, for each $\epsilon\geq 0$ $Z^{\epsilon,u}_{z,n}$ is the corresponding solution of the localized problem \eqref{eq:controlledquationLipGlobal} (with $W$ replaced by $\epsilon W$). Next, we write
    	   \begin{equation*}\label{eq:p1p2DecompULDP}
    	   \begin{aligned}
    	      \pr\bigg[  \sup_{t\in[0,T]}
           \big|Z_z^{\epsilon,u}(t)-Z_z^{u}(t)\big|_{\e}\geq \delta   \bigg]&=\pr\bigg[  \sup_{t\in[0,T]}\big|Z_z^{\epsilon,u}(t)-Z_z^{u}(t)\big|_{\e}\geq \delta\;,\;\; \tau^{\epsilon}_{n}>T\bigg]\\&+\pr\bigg[  \sup_{t\in[0,T]}\big|Z_{z}^{\epsilon,u}(t)-Z_z^{u}(t)\big|_{\e}\geq \delta\;,\;\; \tau^{\epsilon}_{n}\leq T\bigg]\\&=:p^{\epsilon,u,z}_1+p^{\epsilon,u,z}_2.
    	   \end{aligned}
    	   \end{equation*}
    	  The arguments leading to \eqref{Znbnd} can be used (mutatis mutandis) to obtain
    	    \begin{equation}\label{p2bnd}
    	   \begin{aligned}
    	   \sup_{\epsilon\in(0,1)}\sup_{z\in D}\sup_{u\in\mathcal{P}_2^N}p^{\epsilon,u,z}_2\leq \sup_{\epsilon\in(0,1)}\sup_{z\in D}\sup_{u\in\mathcal{P}_2^N}\bigg(\pr[\tau^{\epsilon,u}_{z,n}\leq T]+\pr[\tau^{0,u}_{z,n}\leq T]\bigg)\longrightarrow 0\;,\text{as}\;\;n\to\infty.
    	   \end{aligned}
    	   \end{equation}
        
        Turning to $p_1$, Chebyshev's inequality  and the fact that on $\{ \tau^{\epsilon}_{n}> T \},$ $Z^{\epsilon,u}_{z,n}=Z^{\epsilon,u}_z, Z^{u}_{z,n}=Z^{u}_z$ furnish
    	   \begin{equation}\label{p1bnd}
    	   \begin{aligned}
    	   	\pr\bigg[  \sup_{t\in[0,T]}\big|Z_z^{\epsilon,u}(t)-Z_z^{u}(t)\big|_{\e}\geq \delta\;,\;\; \tau^{\epsilon}_{n}> T\bigg]&\leq  \pr\bigg[  \sup_{t\in[0,T]}\big|Z_{z,n}^{\epsilon,u}(t)-Z_{z,n}^{0,u}(t)\big|_{\e}\geq \delta\bigg]\\&\leq \frac{1}{\delta}\ex\sup_{t\in[0,T]}\big|Z_{z,n}^{\epsilon,u}(t)-Z_{z,n}^{0,u}(t)\big|_{\e}.
    	   \end{aligned}
       \end{equation}
       \noindent We can now estimate the latter using the local Lipschitz continuity and Gr\"onwall's inequality. Indeed,  
    	   
    	   \begin{equation*}
    	   \begin{aligned}
\big|Z_{z,n}^{\epsilon,u}(t)-Z_{z,n}^{0,u}(t)\big|_{\e}&\leq \bigg|\int_{0}^{t}S(t-s)\big[B_n(Z_{z,n}^{\epsilon,u}(s))-B_n(Z_{z,n}^{0,u}(s))\big]ds\bigg|_{\e}\\&+ \bigg|\int_{0}^{t}S(t-s)\big[\Sigma(Z_{z,n}^{\epsilon,u}(s))-\Sigma(Z_{z,n}^{0,u}(s))\big]u(s)ds\bigg|_{\e}\\&
+\epsilon\bigg|\int_{0}^{t}S(t-s)\Sigma(Z_{z,n}^{\epsilon,u}(s))dW(s)\bigg|_{\e}\\&\leq C\int_{0}^{t}\big|b_n(\Pi_1Z_{z,n}^{\epsilon,u}(s))-b_n(\Pi_1Z_{z,n}^{0,u}(s))\big|_{H}ds\\&
+C\int_{0}^{t}\big|\big[\sigma(\Pi_1Z_{z,n}^{\epsilon,u}(s))-\sigma(\Pi_1Z_{z,n}^{0,u}(s))\big]u(s)\big|_{H}ds+2C\epsilon\sup_{t\in[0,T]}\big|\Gamma_n^\epsilon(t)\big|_\e
    	   \end{aligned}
    	   \end{equation*}
where $\Gamma^\epsilon_n(t)$ is the stochastic convolution.
 Thus, from the Lipschitz continuity of $b_n$ and the (global) Lipschitz continuity of $\sigma$ (Assumption \ref{Assumption:sigmaGlobal}) there exists for each $n$ a constant $L_n>0$ such that 
\begin{equation*}
\begin{aligned}
\big|Z_{z,n}^{\epsilon,u}(t)-Z_{z,n}^{0,u}(t)\big|_{\e}&\leq L_n\int_{0}^{t}\big|\Pi_1Z_{z,n}^{\epsilon,u}(s))-\Pi_1Z_{z,n}^{0,u}(s))\big|_{\infty}ds\\&
+C_{\sigma}\int_{0}^{t}\big|\Pi_1Z_{z,n}^{\epsilon,u}(s)-\Pi_1Z_{z,n}^{0,u}(s)\big|_{\infty}|u(s)|_{H}ds+C\epsilon\sup_{t\in[0,T]}|\Gamma_n^\epsilon(t)|_\e.
\end{aligned}
\end{equation*}

\noindent From the $L^2-$bound on $u$ and Gr\"onwall's inequality we get

\begin{equation*}
\begin{aligned}
\ex\sup_{t\in[0,T]}\big|Z_{z,n}^{\epsilon,u}(t)-Z_{z,n}^{0,u}(t)\big|^2_{\e}\leq C \epsilon^2e^{C_{\sigma, n}T}\sup_{\epsilon\in(0,1), n\in\N}\ex\sup_{t\in[0,T]}\big|\Gamma_n^\epsilon(t)\big|^2_\e\leq C\epsilon^2e^{C_{\sigma, n}T},
\end{aligned}
\end{equation*}
 \noindent where the last estimate follows by applying Lemma \ref{lem:StochasticConvolutionApriori} with $\Psi_2=0, p=2$ and using the boundedness of $\sigma.$  Combining the latter with \eqref{p1bnd}, \eqref{p2bnd} and taking limits, first as $\epsilon\to0 $ and then as $n\to\infty,$ we see that \eqref{eq:EULPsufficientGlobal} holds true for any $\delta, T>0$ and any bounded set $D\subset\e.$ 
\end{proof}

 \appendix
 \section{}\label{section:Appendix}

 \subsection{Proof of Lemma \ref{lem:StochasticConvolutionApriori}}
 \label{App:StochConv}
 
 \begin{proof}
 Recall that $\hat{\cdot}$ denotes anti-differentiation (see Section \ref{Sec:Notation}). Since we do not require uniform estimates, it suffices to prove the case $\epsilon=1.$ Furthermore, from linearity we can take $\Psi_2=0$ (the general case will then follow by substituting $\Psi_1$ by $\Psi_2-\Psi_1).$ To this end, let $\Psi\in L^p(\Omega; C([0,T]; C_0(0,\ell))).$ Then $\Gamma:=\Gamma_{\Psi}$ is a mild solution of
 $$ \partial_t^2\Gamma+\alpha\partial_t\Gamma=\partial^2_{x}\Gamma+\Psi\dot{W}\;, \Gamma(0)=\partial_t\Gamma(0)=0.     $$
 Hence, letting $G=e^{\alpha t/2}\Gamma$ and applying the chain rule it holds that 
 $$  \partial_t^2G=\partial^2_{x}G+\frac{\alpha^2}{4}G+e^{\alpha t/2}\Psi\dot{W}, G(0)=\partial_tG(0)=0,$$
in the sense of distributions, or in mild form
$$G(t)=\frac{\alpha^2}{4}\int_0^ t S_0(t-s)G(s)ds+\int_0^t S_0(t-s)(0, e^{\alpha s/2}\Psi(s))dW(s).$$
\noindent The above, along with Gr\"onwall's inequality, furnishes
\begin{equation*}
\begin{aligned}
    \sup_{t\in[0,T]}|G(t)|_{\e}\leq \exp(C T)\sup_{t\in[0,T]}\bigg|\int_0^t S_0(t-s)(0, e^{\alpha s/2}\Psi(s))dW(s)   \bigg|_{\e},
\end{aligned} 
\end{equation*}
for a constant $C>0$ that depends on the wave semigroup, $T$ and $\alpha.$
Moreover, in view of \eqref{equation:D'Alembert1}, \eqref{equation:D'Alembert2}
\begin{equation*}
\begin{aligned}
    \Pi_1\int_0^t S_0(t-s)(0, e^{\alpha s/2}\Psi(s))dW(s)
  =\frac{1}{2}\int_0^t\int_{x-(t-s)}^{x+(t-s)}e^{\alpha s/2}\Psi(y,s)dW(y,s)
\end{aligned} 
\end{equation*}
and 
\begin{equation*}
\begin{aligned}
     -\widehat{\Pi}_2\int_0^t S_0(t-s)(0, e^{\alpha s/2}\Psi(s))dW(s)= \frac{1}{2}\int_0^t\bigg(\int_{0}^{x+(t-s)}+\int_0^{x-(t-s)}\bigg)e^{\alpha s/2}\Psi(y,s)dW(y,s),
\end{aligned} 
\end{equation*}
where $W$ is interpreted as a Brownian sheet in $[0,
\infty)\times\R.$
Thus, from the BDG inequality,
\begin{equation*}
\begin{aligned}
    \ex \bigg|\Pi_1\int_0^t S_0(t-s)(0, e^{\alpha s/2}\Psi(s))dW(s)   \bigg|_{\infty}^p&\leq C_p\bigg(\int_0^t\int_{x-(t-s)}^{x+(t-s)}e^{\alpha s}\ex\Psi^2(y,s)dyds\bigg)^{p/2}\\&
    \leq C(t-s)e^{p\alpha T/2}\ex\sup_{t\in[0,T]}|\Psi(s)|^p_\infty
\end{aligned}
\end{equation*}
and similarly 

\begin{equation*}
\begin{aligned}
   \ex \bigg|\Pi_2 \int_0^t S_0(t-s)(0, e^{\alpha s/2}\Psi(s))dW(s)   \bigg|_{C^{-1}}^p&= \ex \bigg|\widehat{\Pi}_2\int_0^t S_0(t-s)(0, e^{\alpha s/2}\Psi(s))dW(s)   \bigg|_{\infty}^p\\&\leq C\ex\sup_{t\in[0,T]}|\Psi(s)|^p_\infty,
\end{aligned}
\end{equation*}
where $C$ depends on $T, \ell, \alpha.$ The last two estimates yield

\begin{equation*}
\begin{aligned}
    \ex\sup_{t\in[0,T]}|\Gamma_{\Psi}(t)|^p_{\e}= e^{-\frac{p\alpha t}{2}}\ex\sup_{t\in[0,T]}|G(t)|^p_{\e}\leq C \ex\sup_{t\in[0,T]}|\Psi(s)|^p_\infty.
\end{aligned} 
\end{equation*}
The proof is complete.
 \end{proof}

\subsection{On the subdifferential of the norm on a Banach space}\label{App:Subdifferential} The following fact is stated in \cite[Equation (D.2)]{da2014stochastic} without proof. For the sake of completeness and readers' convenience we present a proof here.

\begin{lem}{\cite[Equation (D.2)]{da2014stochastic}} Let $E$ be a Banach space. For any $x,y\in E$ we have
$$D^+|x|_E(y):=\limsup_{h\to 0}\frac{|x+hy|_E-|x|_E}{h}=\max\big\{\langle y, x^\star\rangle; x^\star\in\partial|x|_{E}  \big\}.          $$   
\end{lem}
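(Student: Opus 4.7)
The plan is to reduce the identity to the classical convex-analytic fact that the one-sided directional derivative of a convex function equals the maximum of the continuous linear functionals it dominates. Write $f(\cdot):=|\cdot|_E$ and, for fixed $x,y\in E$,
$$ \varphi_y(h):=\frac{f(x+hy)-f(x)}{h},\qquad h\in\mathbb{R}\setminus\{0\}. $$

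First I would establish monotonicity: $\varphi_y$ is non-decreasing on $\mathbb{R}\setminus\{0\}$. For $0<h_1<h_2$ this follows from convexity of $f$ applied to $x+h_1y=(1-h_1/h_2)\,x+(h_1/h_2)(x+h_2y)$, and the cases $h_1<h_2<0$ and $h_1<0<h_2$ are analogous. Since $f$ is $1$-Lipschitz, $|\varphi_y(h)|\le|y|_E$, so the one-sided limits
$$ D_r^+(y):=\lim_{h\downarrow 0}\varphi_y(h)=\inf_{h>0}\varphi_y(h),\qquad D_r^-(y):=\lim_{h\uparrow 0}\varphi_y(h)=\sup_{h<0}\varphi_y(h) $$
exist in $\mathbb{R}$, and monotonicity gives $D_r^-(y)\le D_r^+(y)$. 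Consequently $D^+|x|_E(y)=\limsup_{h\to 0}\varphi_y(h)=D_r^+(y)$, and it only remains to prove that $D_r^+(y)=\max\{\langle y,x^\star\rangle:x^\star\in\partial|x|_E\}$.

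The $\ge$ direction is immediate: for any $x^\star\in\partial f(x)$ and $h>0$, the subgradient inequality yields $\langle y,x^\star\rangle\le\varphi_y(h)$, and taking $h\downarrow 0$ gives $\langle y,x^\star\rangle\le D_r^+(y)$. For the reverse inequality and to show the supremum is attained, I would appeal to Hahn--Banach. The map $p:E\to\mathbb{R}$, $p(z):=D_r^+(z)$, is positively homogeneous by construction and subadditive: writing $x+h(z_1+z_2)=\tfrac12(x+2hz_1)+\tfrac12(x+2hz_2)$, applying convexity of $f$, dividing by $h$ and letting $h\downarrow 0$ yields $p(z_1+z_2)\le p(z_1)+p(z_2)$. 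Moreover $|p(z)|\le|z|_E$. On $\mathbb{R}y$ define $L(ty):=tp(y)$; the sublinearity bound $-p(-y)\le p(y)$ ensures $L\le p$ on $\mathbb{R}y$. Hahn--Banach produces a linear extension $\tilde L:E\to\mathbb{R}$ with $\tilde L\le p$ everywhere, and $|\tilde L(z)|\le\max(p(z),p(-z))\le|z|_E$ shows $\tilde L\in E^\star$. Finally, for any $z\in E$ the monotonicity of $\varphi_{z-x}$ evaluated at $h=1$ gives
$$ f(z)-f(x)=\varphi_{z-x}(1)\ge D_r^+(z-x)\ge \tilde L(z-x), $$
so $\tilde L\in\partial|x|_E$, while $\tilde L(y)=p(y)=D_r^+(y)$ realizes the maximum.

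The only genuine subtlety is ensuring that the functional produced by Hahn--Banach lies simultaneously in $E^\star$ and in $\partial|x|_E$; both are taken care of by the Lipschitz bound $p\le|\cdot|_E$ together with the minorization $\varphi_{z-x}(1)\ge D_r^+(z-x)$, which itself follows directly from the monotonicity of the difference quotient established at the outset.
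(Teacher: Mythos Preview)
Your proof is correct and follows the standard convex-analytic route: you identify the one-sided directional derivative $p(\cdot)=D_r^+(\cdot)$ as a sublinear functional dominated by the norm, then invoke Hahn--Banach to produce a subgradient attaining the value $p(y)$. The paper takes a different path: it picks a sequence $h_n\downarrow 0$ realising the $\limsup$, chooses $x_n^\star\in\partial|x+h_n y|_E$, and extracts a weak-$\star$ convergent subsequence via Alaoglu; the limit $x^\star$ is then shown to lie in $\partial|x|_E$ and to satisfy $\langle y,x^\star\rangle\ge D^+|x|_E(y)$. Your argument has the advantage of working verbatim in any Banach space, whereas the paper's extraction of a convergent \emph{subsequence} from Alaoglu tacitly uses sequential weak-$\star$ compactness of the dual unit ball, which requires $E$ to be separable (this is harmless in the paper's application to $E=C_0(0,\ell)$, but is a genuine restriction in general). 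Conversely, the paper's approach avoids verifying sublinearity of $p$ and setting up the Hahn--Banach extension, trading that for a softer compactness-and-continuity argument.
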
	
	\begin{proof}
		Let $h \in \mathbb{R}$. For any $x^\star \in \partial|x|_E$,
		\begin{equation*}
			|x+hy|_E - |x|_E \geq \left<x+hy,x^\star\right> - \left<x,x^\star \right>\geq h \left<y,x^\star\right>.
		\end{equation*}
		For $h>0$,
		\begin{equation*}
			\frac{|x + hy|_E - |x|_E}{h} \geq \left<y,x^\star\right>.
		\end{equation*}
		This is true for all $x^\star \in \partial |x|_E$, so 
		\begin{equation} \label{eq:geq-max}
			\liminf_{h \downarrow 0} \frac{|x + hy|_E - |x|_E}{h}\geq \max\{\left<y,x^\star\right>: x^\star \in \partial|x|_E\}.
		\end{equation}
		To prove the other direction, chose a sequence $h_n \downarrow 0$ such that
		\begin{equation*}
			\lim_{n \to \infty} \frac{|x + h_n y|_E - |x|_E}{h_n}  = \limsup_{h \downarrow 0} \frac{|x + hy|_E - |x|_E}{h}. 
		\end{equation*}
		Let $x_n^\star \in \partial|f + h_ng|_E$. Note that $\{x_n^\star; n\in\N\}\subset B_{E^\star}(0,1).$ By virtue of Alaoglu's theorem $B_{E^\star}(0,1)$ is  weak-$\star$ compact. Hence there exists a subsequence (relabelled $h_n$) such that
		$x_{n}^\star$ converges in weak-$\star$ topology to some $x^\star\in E^{\star}$ and, as a consequence, 
		\begin{align*}
			&\left<x,x^\star_{n} \right> \to \left<x,x^\star\right>\\
			&\left<y, x^\star_{n} \right> \to \left<y, x^\star \right>.
		\end{align*}
		Therefore, 
		\begin{equation*}
			\lim_{n \to \infty} \frac{|x+h_n y|_E - |x|_E}{h_n} \leq \lim_{n \to \infty}\frac{\left<x, x_{n}^\star\right> + h_n \left<x, x_{n}^\star\right> - \left<x, x_{n}^\star\right>}{h_n} = \left<y,x^\star\right>.
		\end{equation*}
		Now we claim that $x^\star$ in $\partial |x|_E.$ Indeed, we know that 
		\begin{equation*}
			\lim_{n \to \infty} |x + h_n g|_E = |x|_E
		\end{equation*}
		and that
		\begin{equation*}
			\lim_{n \to \infty} |x + h_n y|_E =\lim_{n \to \infty}\left( \left< x,x_{n}^\star \right> + h_n \left< y, x_{n}^\star\right>\right)  = \left<x,x^\star\right>.
		\end{equation*}
		Hence $x^\star \in \partial |x|_E$.
		We have proved that 
		\begin{equation} \label{eq:leq-max}
			\limsup_{h \downarrow 0} \frac{|x + hy|_E - |x|_E}{h} \leq \left<y,x^\star \right>_E.
		\end{equation}
		From \eqref{eq:geq-max} and \eqref{eq:leq-max} the proof is complete.
	\end{proof}

  \bibliographystyle{plain}
	   \nocite{}
	   \bibliography{References}

\begin{thebibliography}{10}

\bibitem{adams2003sobolev}
Robert~A Adams and John~JF Fournier.
\newblock {\em Sobolev spaces}.
\newblock Elsevier, 2003.

\bibitem{agresti2024delayed}
Antonio Agresti.
\newblock Delayed blow-up and enhanced diffusion by transport noise for systems of reaction--diffusion equations.
\newblock {\em Stochastics and Partial Differential Equations: Analysis and Computations}, 12(3):1907--1981, 2024.

\bibitem{barashkov2024eyring}
Nikolay Barashkov and Petri Laarne.
\newblock Eyring--{K}ramers law for the hyperbolic $\phi^4$ model.
\newblock {\em arXiv preprint arXiv:2410.03495}, 2024.

\bibitem{campbell1983resonance}
David~K Campbell, Jonathan~F Schonfeld, and Charles~A Wingate.
\newblock Resonance structure in kink-antikink interactions in $\varphi^4$ theory.
\newblock {\em Physica D: Nonlinear Phenomena}, 9(1-2):1--32, 1983.

\bibitem{cerrai2003stochastic}
Sandra Cerrai.
\newblock Stochastic reaction-diffusion systems with multiplicative noise and non-{L}ipschitz reaction term.
\newblock {\em Probability Theory and Related Fields}, 125(2):271--304, 2003.

\bibitem{cerrai2006PTRFsmoluchowski1}
Sandra Cerrai and Mark Freidlin.
\newblock On the {S}moluchowski-{K}ramers approximation for a system with an infinite number of degrees of freedom.
\newblock {\em Probability theory and related fields}, 135(3):363--394, 2006.

\bibitem{cerrai2006smoluchowski}
Sandra Cerrai and Mark Freidlin.
\newblock Smoluchowski-{K}ramers approximation for a general class of {SPDE}s.
\newblock {\em Journal of Evolution Equations}, 6(4):657--689, 2006.

\bibitem{cerrai2011approximation}
Sandra Cerrai and Mark Freidlin.
\newblock Approximation of quasi-potentials and exit problems for multidimensional {RDE}’s with noise.
\newblock {\em Transactions of the American Mathematical Society}, 363(7):3853--3892, 2011.

\bibitem{cerrai2014smoluchowski}
Sandra Cerrai and Michael Salins.
\newblock Smoluchowski--{K}ramers approximation and large deviations for infinite dimensional gradient systems.
\newblock {\em Asymptotic Analysis}, 88(4):201--215, 2014.

\bibitem{cerrai2016smoluchowski}
Sandra Cerrai and Michael Salins.
\newblock Smoluchowski--{K}ramers approximation and large deviations for infinite-dimensional nongradient systems with applications to the exit problem.
\newblock {\em The Annals of Probability}, 44(4):2591--2642, 2016.

\bibitem{chen2005smoluchowski}
Zhiwei Chen and Mark Freidlin.
\newblock {S}moluchowski--{K}ramers approximation and exit problems.
\newblock {\em Stochastics and Dynamics}, 5(04):569--585, 2005.

\bibitem{chenal1997uniform}
Fabien Chenal and Annie Millet.
\newblock Uniform large deviations for parabolic {SPDE}s and applications.
\newblock {\em Stochastic Processes and their Applications}, 72(2):161--186, 1997.

\bibitem{chitour2024p}
Yacine Chitour, Meryem Kafnemer, Patrick Martinez, and Benmiloud Mebkhout.
\newblock ${L}^p$ asymptotic stability of 1{D} damped wave equation with nonlinear distributed damping.
\newblock {\em arXiv preprint arXiv:2406.12085}, 2024.

\bibitem{chitour2024exponential}
Yacine Chitour and Hoai-Minh Nguyen.
\newblock Exponential decay of solutions of damped wave equations in one dimensional space in the ${L}^p$ framework for various boundary conditions.
\newblock {\em ESAIM: Control, Optimisation and Calculus of Variations}, 30:38, 2024.

\bibitem{da1991minimum}
G~Da~Prato, AJ~Pritchard, and J~Zabczyk.
\newblock On minimum energy problems.
\newblock {\em SIAM Journal on Control and Optimization}, 29(1):209--221, 1991.

\bibitem{da2014stochastic}
Giuseppe Da~Prato and Jerzy Zabczyk.
\newblock {\em Stochastic equations in infinite dimensions}.
\newblock Cambridge university press, 2014.

\bibitem{day1987recent}
Martin~V Day.
\newblock Recent progress on the small parameter exit problem.
\newblock {\em Stochastics: An International Journal of Probability and Stochastic Processes}, 20(2):121--150, 1987.

\bibitem{day1989boundary}
Martin~V Day.
\newblock Boundary local time and small parameter exit problems with characteristic boundaries.
\newblock {\em SIAM journal on mathematical analysis}, 20(1):222--248, 1989.

\bibitem{day1990large}
Martin~V Day.
\newblock Large deviations results for the exit problem with characteristic boundary.
\newblock {\em Journal of mathematical analysis and applications}, 147(1):134--153, 1990.

\bibitem{debussche2013dynamics}
Arnaud Debussche, Michael H{\"o}gele, and Peter Imkeller.
\newblock {\em The dynamics of nonlinear reaction-diffusion equations with small {L}{\'e}vy noise}, volume 2085.
\newblock Springer, 2013.

\bibitem{dembo2009large}
Amir Dembo and Ofer Zeitouni.
\newblock {\em Large deviations techniques and applications}, volume~38.
\newblock Springer Science \& Business Media, 2009.

\bibitem{faris1982large}
William~G Faris and Giovanni Jona-Lasinio.
\newblock Large fluctuations for a nonlinear heat equation with noise.
\newblock {\em Journal of Physics A: Mathematical and General}, 15(10):3025, 1982.

\bibitem{freidlin2004some}
Mark Freidlin.
\newblock Some remarks on the {S}moluchowski--{K}ramers approximation.
\newblock {\em Journal of Statistical Physics}, 117(3):617--634, 2004.

\bibitem{freidlin1998random}
Mark~Iosifovich Freidlin and Alexander~D Wentzell.
\newblock {\em Random perturbations}.
\newblock Springer, 1998.

\bibitem{FuYongZhang_ExactControlability2007}
Xiaoyu Fu, Jiongmin Yong, and Xu~Zhang.
\newblock Exact controllability for multidimensional semilinear hyperbolic equations.
\newblock {\em SIAM Journal on Control and Optimization}, 46(5):1578--1614, 2007.

\bibitem{gallay1993center}
Th~Gallay.
\newblock A {C}enter-{S}table {M}anifold {T}heorem for {D}ifferential {E}quations in {B}anach {S}paces.
\newblock {\em Communications in Mathematical Physics}, 152(2):249--268, 1993.

\bibitem{gasteratos2023importance}
Ioannis Gasteratos, Michael Salins, and Konstantinos Spiliopoulos.
\newblock Importance sampling for stochastic reaction--diffusion equations in the moderate deviation regime.
\newblock {\em Stochastics and Partial Differential Equations: Analysis and Computations}, pages 1--70, 2023.

\bibitem{kafnemer2022lp}
Meryem Kafnemer, Benmiloud Mebkhout, Fr{\'e}d{\'e}ric Jean, and Yacine Chitour.
\newblock ${L}^p$-asymptotic stability of 1{D} damped wave equations with localized and linear damping.
\newblock {\em ESAIM: Control, Optimisation and Calculus of Variations}, 28:1, 2022.

\bibitem{lasiecka1991exact}
Irena Lasiecka and Roberto Triggiani.
\newblock Exact controllability of semilinear abstract systems with application to waves and plates boundary control problems.
\newblock {\em Applied Mathematics and Optimization}, 23(1):109--154, 1991.

\bibitem{lipshutz2018exit}
David Lipshutz.
\newblock Exit time asymptotics for small noise stochastic delay differential equations.
\newblock {\em Discrete and Continuous Dynamical Systems}, 38(6):3099--3138, 2018.

\bibitem{martirosyan2017large}
Davit Martirosyan.
\newblock Large deviations for stationary measures of stochastic nonlinear wave equations with smooth white noise.
\newblock {\em Communications on Pure and Applied Mathematics}, 70(9):1754--1797, 2017.

\bibitem{millet2001nonlinear}
Annie Millet and Pierre-Luc Morien.
\newblock On a nonlinear stochastic wave equation in the plane: existence and uniqueness of the solution.
\newblock {\em The Annals of Applied Probability}, 11(3):922--951, 2001.

\bibitem{Munkres}
James~R. Munkres.
\newblock {\em Topology}.
\newblock Prentice Hall, Inc., 2nd edition, 2000.

\bibitem{newhall2017metastability}
Katherine~A Newhall and Eric Vanden-Eijnden.
\newblock Metastability of the nonlinear wave equation: Insights from transition state theory.
\newblock {\em Journal of Nonlinear Science}, 27:1007--1042, 2017.

\bibitem{pazy1983semigroups}
A~Pazy.
\newblock Semigroups of linear operators and applications to {P}artial {D}ifferential {E}quations.
\newblock {\em Appl. Math. Sci.}, 44, 1983.

\bibitem{salins2014general}
Michael Salins.
\newblock On a general approach to {F}reidlin-{W}entzell exit problems for stochastic equations in {B}anach spaces.
\newblock {\em arXiv preprint arXiv:1410.0225}, 2014.

\bibitem{salins2019equivalences}
Michael Salins.
\newblock Equivalences and counterexamples between several definitions of the uniform large deviations principle.
\newblock {\em Probability Surveys}, 16:99--142, 2019.

\bibitem{salins2021existence}
Michael Salins.
\newblock Existence and uniqueness for the mild solution of the stochastic heat equation with non-{L}ipschitz drift on an unbounded spatial domain.
\newblock {\em Stochastics and Partial Differential Equations: Analysis and Computations}, 9(3):714--745, 2021.

\bibitem{salins2019uniform}
Michael Salins, Amarjit Budhiraja, and Paul Dupuis.
\newblock Uniform large deviation principles for {B}anach space valued stochastic evolution equations.
\newblock {\em Transactions of the American Mathematical Society}, 372(12):8363--8421, 2019.

\bibitem{salins2021metastability}
Michael Salins and Konstantinos Spiliopoulos.
\newblock Metastability and exit problems for systems of stochastic reaction--diffusion equations.
\newblock {\em The Annals of Probability}, 49(5):2317--2370, 2021.

\bibitem{sattinger1968global}
David~H Sattinger.
\newblock On global solution of nonlinear hyperbolic equations.
\newblock {\em Archive for Rational Mechanics and Analysis}, 30:148--172, 1968.

\bibitem{sattinger1968stability}
David~H Sattinger.
\newblock Stability of nonlinear hyperbolic equations.
\newblock {\em Archive for Rational Mechanics and Analysis}, 28:226--244, 1968.

\bibitem{temam2012infinite}
Roger Temam.
\newblock {\em Infinite-dimensional dynamical systems in mechanics and physics}, volume~68.
\newblock Springer Science \& Business Media, 2012.

\bibitem{treves2016topological}
Fran{\c{c}}ois Treves.
\newblock {\em Topological Vector Spaces, Distributions and Kernels}, volume~25.
\newblock Elsevier, 2016.

\bibitem{tugaut2012exit}
Julian Tugaut.
\newblock Exit problem of {M}c{K}ean-{V}lasov diffusions in convex landscapes.
\newblock {\em Electron. J. Probab}, 17(76):1--26, 2012.

\bibitem{Zuazua1993}
E.~Zuazua.
\newblock Exact controllability for semilinear wave equations in one space dimension.
\newblock {\em Annales de l'Institut Henri Poincaré C, Analyse non linéaire}, 10(1):109--129, 1993.

\end{thebibliography}

 \end{document}